\author{Kenji Fukaya}
\address{SSimons Center for Geometry and Physics, State University of New York, Stony Brook, NY 11794-3636, USA \&
Center for Geometry and Physics, Institute for
Basic Sciences (IBS), Pohang, Gyungbuk 790-784} \email{kfukaya@scgp.stonybrook.edu}
\author{Yong-Geun Oh}
\address{Center for Geometry and Physics, Institute for
Basic Sciences (IBS), Pohang, Gyungbuk 790-784 \& 
Department of Mathematics, POSTECH, Pohang, Korea } \email{yongoh1@ibs.re.kr}
\author{Hiroshi Ohta}
\address{Graduate School of Mathematics,
Nagoya University, Nagoya, 464-8602, Japan } \email{ohta@math.nagoya-u.ac.jp}
\author{Kaoru Ono}
\address{Research Institute for Mathematical Sciences, Kyoto University, Kyoto 606-8502, Japan}
\email{ono@kurims.kyoto-u.ac.jp}
\title[Anti-symplectic involution and Floer cohomology]
{Anti-symplectic involution and
Floer cohomology}
\def\E{\ifmmode{\mathbb E}\else{$\mathbb E$}\fi} 
\def\N{\ifmmode{\mathbb N}\else{$\mathbb N$}\fi} 
\def\R{\ifmmode{\mathbb R}\else{$\mathbb R$}\fi} 
\def\Q{\ifmmode{\mathbb Q}\else{$\mathbb Q$}\fi} 
\def\C{\ifmmode{\mathbb C}\else{$\mathbb C$}\fi} 
\def\H{\ifmmode{\mathbb H}\else{$\mathbb H$}\fi} 
\def\Z{\ifmmode{\mathbb Z}\else{$\mathbb Z$}\fi} 
\def\P{\ifmmode{\mathbb P}\else{$\mathbb P$}\fi} 
\def\T{\ifmmode{\mathbb T}\else{$\mathbb T$}\fi} 
\def\SS{\ifmmode{\mathbb S}\else{$\mathbb S$}\fi} 
\def\DD{\ifmmode{\mathbb D}\else{$\mathbb D$}\fi} 
\def\K{\ifmmode{\mathbb K}\else{$\mathbb K$}\fi}
\newcommand{\del}{\partial}
\newcommand{\ben}{\begin{enumerate}}
\newcommand{\een}{\end{enumerate}}
\newcommand{\be}{\begin{equation}}
\newcommand{\ee}{\end{equation}}
\newcommand{\bea}{\begin{eqnarray}}
\newcommand{\eea}{\end{eqnarray}}
\newcommand{\beastar}{\begin{eqnarray*}}
\newcommand{\eeastar}{\end{eqnarray*}}
\newcommand{\bc}{\begin{center}}
\newcommand{\ec}{\end{center}}
\newtheorem{thm}{Theorem}[section]
\newtheorem{cor}[thm]{Corollary}
\newtheorem{lem}[thm]{Lemma}
\newtheorem{sublem}[thm]{Sublemma}
\newtheorem{prop}[thm]{Proposition}
\theoremstyle{definition}
\newtheorem{defn}[thm]{Definition}
\newtheorem{rem}[thm]{Remark}
\newtheorem{exm}[thm]{Example}
\newtheorem{conds}[thm]{Condition}
\newtheorem{prob}[thm]{Problem}
\newtheorem{lem-def}[thm]{Lemma-Definition}
\newtheorem*{thm*}{Theorem}
\numberwithin{equation}{section}
\def\R{{\mathbb R}}
\def\E{{\mathbb E}}
\def\Z{{\mathbb Z}}
\def\C{{\mathbb C}}
\def\R{{\mathbb R}}
\def\P{{\mathbb P}}
\def\N{{\mathbb N}}
\def\11{{\mathbb I}}
\def\delbar{{\overline \partial}}
\def\C{\mathbb{C}}
\def\Z{\mathbb{Z}}
\def\T{\mathbb{T}}
\def\Q{\mathbb{Q}}
\def\E{\ifmmode{\mathbb E}\else{$\mathbb E$}\fi} 
\def\N{\ifmmode{\mathbb N}\else{$\mathbb N$}\fi} 
\def\R{\ifmmode{\mathbb R}\else{$\mathbb R$}\fi} 
\def\Q{\ifmmode{\mathbb Q}\else{$\mathbb Q$}\fi} 
\def\C{\ifmmode{\mathbb C}\else{$\mathbb C$}\fi} 
\def\H{\ifmmode{\mathbb H}\else{$\mathbb H$}\fi} 
\def\Z{\ifmmode{\mathbb Z}\else{$\mathbb Z$}\fi} 
\def\P{\ifmmode{\mathbb P}\else{$\mathbb P$}\fi} 
\def\SS{\ifmmode{\mathbb S}\else{$\mathbb S$}\fi} 
\def\DD{\ifmmode{\mathbb D}\else{$\mathbb D$}\fi} 
\def\R{{\mathbb R}}
\def\E{{\mathbb E}}
\def\Z{{\mathbb Z}}
\def\C{{\mathbb C}}
\def\R{{\mathbb R}}
\def\N{{\mathbb N}}
\def\delbar{{\overline \partial}}
\def\CJ{{\mathcal J}}
\def\CM{{\mathcal M}}
\def\CP{{\mathcal P}}
\def\CP{{\mathcal P}}
\def\darr#1{\raise1.5ex\hbox{$\leftrightarrow$}
\mkern-16.5mu #1}
\def\roughly#1{\raise.3ex\hbox{$#1$\kern-.75em
\lower1ex\hbox{$\sim$}}}
\def\opname#1{\mathop{\kern0pt{\rm #1}}\nolimits}
\def\dim{\opname{dim}}
\begin{document}
\quad \vskip1.375truein

\def\mq{\mathfrak{q}}
\def\mp{\mathfrak{p}}
\def\mH{\mathfrak{H}}
\def\mh{\mathfrak{h}}
\def\ma{\mathfrak{a}}
\def\ms{\mathfrak{s}}
\def\mm{\mathfrak{m}}
\def\mn{\mathfrak{n}}
\def\mz{\mathfrak{z}}
\def\mw{\mathfrak{w}}
\def\Hoch{{\tt Hoch}}
\def\mt{\mathfrak{t}}
\def\ml{\mathfrak{l}}
\def\mT{\mathfrak{T}}
\def\mL{\mathfrak{L}}
\def\mg{\mathfrak{g}}
\def\md{\mathfrak{d}}
\def\mr{\mathfrak{r}}

\begin{abstract}
The main purpose of the present paper is a study of orientations of the
moduli spaces of pseudo-holomorphic discs with boundary lying on
a \emph{real} Lagrangian submanifold, i.e., the fixed point set of an
anti-symplectic involution $\tau$ on a symplectic manifold.
We introduce the notion of $\tau$-relative spin structure for an anti-symplectic involution $\tau$,
and study how the orientations on the moduli space behave under the involution $\tau$.
We also apply this to the study of Lagrangian Floer theory of real Lagrangian submanifolds.
In particular, we study unobstructedness of the $\tau$-fixed point set
of symplectic manifolds and in particular prove its unobstructedness in the case of  Calabi-Yau manifolds.
And we also do explicit calculation of Floer cohomology of $\R P^{2n+1}$
over $\Lambda_{0,{\rm nov}}^{\Z}$ which provides an example whose Floer cohomology is not
isomorphic to its classical cohomology.
We study Floer cohomology of the diagonal of the square of a symplectic
manifold, which leads to a rigorous construction of the quantum Massey product of
symplectic manifold in complete generality.
\end{abstract}

\date{Nov. 25, 2015}
\keywords{anti-symplectic involution, orientation, Floer cohomology, unobstructed
Lagrangian submanifolds, quantum cohomology}

\maketitle

\tableofcontents
\section{Introduction and statement of results}
\label{sec:introduction}

An {\it anti-symplectic involution} $\tau$ on
a symplectic manifold $(M,\omega)$ is an involution
on $M$ which satisfies $\tau^{\ast} \omega = -\omega$.
Two prototypes of anti-symplectic involutions are the complex
conjugation of a complex projective space with respect to the
Fubini-Study metric and the canonical reflection along the zero
section on the cotangent bundle. (See also \cite{CMS} for a construction of
an interesting class of anti-symplectic involutions on Lagrangian torus
fibrations.) The fixed point set of $\tau$, if it is non-empty, gives
an example of Lagrangian submanifolds. For instance, the set
of real points of a complex projective manifold defined over $\R$ belongs to this class.
In this paper we study Lagrangian intersection Floer theory for
the fixed point set of an anti-symplectic involution.
\par
Let $(M,\omega)$ be a compact, or more generally tame,
$2n$-dimensional symplectic manifold and $L$ an oriented closed Lagrangian submanifold of $M$.
It is well-known by now that the Floer cohomology of a
Lagrangian submanifold $L$ can not be defined in general.
The phenomenon of bubbling-off holomorphic discs is the main source of troubles in defining Floer cohomology of Lagrangian submanifolds.
In our books \cite{fooobook1} and \cite{fooobook2}, we developed general theory
of obstructions and deformations of Lagrangian intersection Floer cohomology based on the theory of filtered $A_{\infty}$ algebras
which we associate to each Lagrangian submanifold.
However it is generally very hard to formulate the criterion for unobstructedness to defining Floer cohomology let alone
to calculate Floer cohomology for a given Lagrangian submanifold.
In this regard, Lagrangian torus fibers in toric manifolds
provide good test cases for these problems, which we
studied in \cite{foootoric1}, \cite{foootoric2} in detail.
For this class of Lagrangian submanifolds,
we can do many explicit calculations of various notions and invariants
that are introduced in the books \cite{fooobook1} and \cite{fooobook2}.

Another important class of Lagrangian submanifolds is that of
the fixed point set of an anti-symplectic involution.
Actually, the set of real points in Calabi-Yau manifolds
plays an important role of homological mirror symmetry conjecture.
(See \cite{Wal}, \cite{PSW} and \cite {fukaya;counting}.
See also \cite{Wel} for related topics of real points.)
The purpose of the present paper is to study Floer cohomology of this class of Lagrangian submanifolds.
For example, we prove unobstructedness for such Lagrangian submanifolds
in Calabi-Yau manifolds and also provide some other examples of explicit calculations of Floer cohomology.
The main ingredient of this paper is a careful study of
orientations of the moduli spaces of pseudo-holomorphic discs.
\par
Take an $\omega$-compatible almost complex structure $J$ on $(M,\omega)$.
We consider
moduli space $\CM(J;\beta)$ of $J$-holomorphic stable maps from bordered Riemann surface
$(\Sigma, \partial \Sigma)$ of genus $0$ to $(M,L)$
which represents a class $\beta \in \Pi(L)=\pi_2(M,L)/\sim$: $\beta \sim \beta' \in \pi_2(M,L)$ if and only if
$
\omega(\beta)=\omega(\beta')$
and
$
\mu_L(\beta) =\mu_L(\beta')$.
Here $\mu_L : \pi_2(M,L) \to \Z$ is the Maslov index homomorphism. The values of $\mu_L$ are even integers if $L$ is oriented.
When the domain $\Sigma$ is a $2$-disc $D^2$,
we denote by $\CM ^{\text{\rm reg}}(J;\beta)$
the subset of $\CM(J;\beta)$ consisting of \emph{smooth} maps, that is, psedo-holomorphic maps from disc
without disc or sphere bubbles.
The moduli space $\CM(J;\beta)$  has a Kuranishi structure, see Proposition 7.1.1 \cite{fooobook2}.
However it in not orientable in the sense of Kuranishi
structure in general.
In Chapter 8 \cite{fooobook2} we introduce the notion of relative spin structure on $L\subset M$ and its stable conjugacy class,
and prove that
if $L$ carries a relative spin structure
$(V,\sigma)$, its stable conjugacy class
$[(V,\sigma)]$ determines an orientation
on the moduli space $\CM(J;\beta)$
(see Sections \ref{sec:pre}, \ref{sec:relspin} for the precise definitions and notations.)
We denote it by $\CM(J;\beta)^{[(V,\sigma)]}$
when we want to specify the stable conjugacy class of the relative spin structure.
If we have a diffeomorphism $f : M \to M$ satisfying $f(L)=L$, we
can define the pull-back $f^{\ast}[(V,\sigma)]$ of the relative spin structure.
(See also Subsection \ref{subsec:relspin}.)

Now we consider the case that $\tau : M \to M$ is an anti-symplectic involution
and
$$L = {\rm Fix} ~\tau.$$
We assume $L$ is nonempty, oriented and relatively spin.
Take an $\omega$-compatible almost complex structure $J$
satisfying $\tau^{\ast} J =-J$. We call such $J$ {\it $\tau$-anti-invariant}.
Then we find that $\tau$ induces a map
$$\tau_{\ast} : {\CM}^{\text{\rm reg}}(J;\beta)\longrightarrow {\CM}^{\text{\rm reg}}(J;\beta)$$
which satisfies $\tau_{\ast} \circ \tau_{\ast} = {\rm Id}$.
(See Definition \ref{def:inducedtau} and Lemma \ref{Lemma38.6}.)
Here we note that $\tau_{\ast}(\beta)=\beta$ in $\Pi(L)$
(see Remark \ref{rem:tau}).
We pick a conjugacy class of relative spin structure $[(V,\sigma)]$ and consider the pull back
$\tau^{\ast}[(V,\sigma)]$.
Then we have an induced map
$$\tau_{\ast} : {\CM}^{\text{\rm reg}}(J;\beta)^{\tau^{\ast}[(V,\sigma)]}
\longrightarrow {\CM}^{\text{\rm reg}}(J;\beta)^{[(V,\sigma)]}.$$
We will prove in Proposition \ref{regtau}  that $\tau_{\ast}$ is induced by an automorphism  of
${\CM}^{\text{\rm reg}}(J;\beta)$ as a space with Kuranishi structure, see Definition \ref{def:auto}.
The definition for an automorphism to be orientation preserving
in the sense of Kuranishi structure is given in Definition \ref{def:oripres}.
The first problem we study is the question whether $\tau_{\ast}$ respects the orientation or not.
The following theorem plays a fundamental role in this paper.

\begin{thm}[Theorem \ref{Proposition38.7}]\label{thm:fund}
Let $L$ be a fixed point set of an anti-symplectic involution $\tau$ on $(M, \omega)$ and
$J$ a $\tau$-anti-invariant almost complex structure
compatible with $\omega$.
Suppose that $L$ is oriented and carries a relative spin structure $(V,\sigma)$.
Then the map
$
\tau_*: {\CM}^{\operatorname{reg}}(J;\beta)^{\tau^{\ast}[(V,\sigma)]} \to
{\CM}^{\operatorname{reg}}(J;\beta)^{[(V,\sigma)]}
$
is orientation preserving if $\mu_L(\beta) \equiv 0
\mod 4$ and is orientation reversing if
$\mu_L(\beta) \equiv 2
\mod 4$.
\end{thm}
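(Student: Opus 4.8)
The plan is to reduce the statement to a local index computation at a single point of the moduli space and then track how the determinant line of the linearized Cauchy--Riemann operator transforms under $\tau$. Given a smooth $J$-holomorphic disc $w : (D^2,\partial D^2)\to (M,L)$ representing $\beta$, its image under $\tau_*$ is the disc $\overline w := \tau\circ w\circ c$, where $c : D^2\to D^2$ is complex conjugation $z\mapsto \bar z$; since $\tau^*J = -J$, the map $\overline w$ is again $J$-holomorphic, and since $\tau|_L = \mathrm{id}$ the boundary condition is preserved. The orientation on $\CM^{\operatorname{reg}}(J;\beta)^{[(V,\sigma)]}$ is, by the construction recalled from Chapter 8 of \cite{fooobook}, obtained from the orientation of the determinant line $\det D_w\overline\partial$ of the linearization, which in turn is pinned down by a chosen trivialization of $w^*TM$ over $D^2$ and of $(\partial w)^*TL$ over $\partial D^2$ coming from the (stable conjugacy class of the) relative spin structure. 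So the first step is to set up precisely the isomorphism $D_w\overline\partial \cong D_{\overline w}\overline\partial$ induced by pullback along $c$ and conjugation by $d\tau$, and to identify the sign by which it acts on determinant lines.

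The key step is the following linear-algebra/index-theory input, which I would isolate as a lemma. Let $(E,F)\to (D^2,\partial D^2)$ be a complex vector bundle of rank $n$ with a totally real subbundle $F$ along the boundary, and let $\overline\partial_{E,F}$ be a real Cauchy--Riemann operator. Pulling back by $c$ and composing with a fixed complex-antilinear bundle isomorphism $d\tau$ produces a conjugate problem $(\overline E,\overline F)$, and there is a canonical isomorphism $\det\overline\partial_{E,F}\cong\det\overline\partial_{\overline E,\overline F}$. The claim is that, after matching trivializations via the relative spin structure (which is what makes both determinant lines canonically oriented), this isomorphism preserves orientation precisely when the Maslov index $\mu = \mu_L(\beta)$ satisfies $\mu\equiv 0\bmod 4$, and reverses it when $\mu\equiv 2\bmod 4$. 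To prove this I would use the standard decomposition of $(E,F)$ into a direct sum of line-bundle-pair summands of Maslov numbers $0$ and $2$ (the normal form for bundle pairs on the disc), reduce the determinant-line computation to these summands by the gluing/additivity of the index under direct sum, and then compute the two model cases by hand: the trivial pair $(\mathbb C, \mathbb R)$ with $\mu = 0$, where $\overline\partial$ has index $1$ with kernel the constants and the conjugation acts trivially on the $1$-dimensional kernel; and the pair of Maslov number $2$, where $\overline\partial$ has index $2$, kernel $\cong\mathbb C$ via $z\mapsto a + bz$, and conjugation $a+bz\mapsto \bar a + \bar b\bar z$ acts with determinant $-1$ on the underlying $\mathbb R^2$. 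Summing over the $\mu/2$ Maslov-$2$ summands gives a total sign $(-1)^{\mu/2}$, which is exactly the assertion.

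The remaining steps are bookkeeping: I must check that the relative spin structure is used consistently on both sides, i.e. that the orientation on $\CM^{\operatorname{reg}}(J;\beta)^{\tau^*[(V,\sigma)]}$ is built from $\tau^*[(V,\sigma)]$ in a way that is literally the pullback under $\tau_*$ of the construction using $[(V,\sigma)]$ on the target — this is where the choice of pulling back the relative spin structure (rather than using the same one) is essential, and it is what cancels the bundle-gluing ambiguities so that only the index-parity sign $(-1)^{\mu/2}$ survives. I would also need to note that the argument is pointwise in $\CM^{\operatorname{reg}}$ and the sign is locally constant, hence constant on each component for fixed $\beta$, so computing it at one disc suffices; and I would invoke Lemma~\ref{Lemma38.6} for the fact that $\tau_*$ is a well-defined involution with $\tau_*(\beta) = \beta$. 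The main obstacle I anticipate is purely one of care rather than depth: getting the determinant-line conventions, the direction of the pullback isomorphism, and the normalization of the relative spin trivializations all to line up so that the final sign is unambiguously $(-1)^{\mu/2}$ and not its negative — the model computation itself is short, but an off-by-one in any convention flips the answer, so the delicate point is to fix conventions once (matching those of Chapter 8 of \cite{fooobook}) and propagate them rigidly through the direct-sum reduction.
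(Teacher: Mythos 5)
Your proposal is correct but follows a route that differs from the paper's in the key index computation, so let me compare the two. The paper also reduces to computing the sign of the map induced by $\xi\mapsto\overline{\xi\circ c}$ (where $c(z)=\bar z$) on the determinant line of $\overline\partial_{(E,\lambda)}$, after observing exactly the point you flag as crucial: that the loop of Lagrangian subspaces $\widetilde\Lambda(z)=\Lambda(\bar z)$ carries the trivialization coming from $[(V,\sigma)]$ precisely when $\Lambda$ carries the one coming from $\tau^*[(V,\sigma)]$. But the paper then invokes the pinching construction of Remark \ref{rem:prop8.1.4} --- degenerating $D^2$ to the nodal curve $D^2\cup\C P^1$ --- so that the boundary problem becomes a trivial pair $(\underline{\C^n},\underline{\R^n})$ on the disc piece (contributing a real vector space $\R^n$ fixed by the conjugation) while the whole Maslov index is concentrated in the sphere factor, and the conjugation acts on $\operatorname{Hol}(\C P^1;E'\vert_{\C P^1})\cong\C^{\mu/2}$ by the standard complex conjugation, giving the sign $(-1)^{\mu/2}$ in one stroke. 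You instead propose to split the bundle pair $(E,\lambda)$ over the disc into line-bundle pairs, reduce by additivity of determinant lines under direct sum, and compute the conjugation on the Maslov-$0$ and Maslov-$2$ models by hand. This avoids the degeneration to a nodal curve and is arguably more elementary, but it requires you to (a) check that the sign of $\tau_*$ is independent of the chosen splitting (which does hold, since any two splittings differ by a homotopy and you compare the same splitting on source and target), and (b) handle the cases $\mu_L(\beta)<0$, where you cannot use only Maslov-$0$ and Maslov-$2$ summands and must include a Maslov-$(-2)$ model where the conjugation acts on the one-dimensional complex cokernel instead of the kernel; the sign still works out to $(-1)^{\mu/2}$, but it is an extra case to verify. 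Your Maslov-$2$ model computation is notationally slightly off --- the kernel is $\{\bar a + a z : a\in\C\}$ and the conjugation $\xi\mapsto\overline{\xi\circ c}$ sends $a\mapsto\bar a$ --- but the conclusion that it has determinant $-1$ on the underlying $\R^2$ is correct. Both approaches buy the same answer; the paper's degeneration argument is what makes it uniform in $\mu$ without case distinctions, while yours is more hands-on at the cost of treating a couple of models separately.
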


\begin{rem}\label{rem;thm1}
If $L$ has a {\it $\tau$-relative spin structure}
(see Definition \ref{Definition44.17}),
then
$${\CM}^{\text{\rm reg}}(J;\beta)^{\tau^{\ast}[(V,\sigma)]}
={\CM}^{\text{\rm reg}}(J;\beta)^{[(V,\sigma)]}$$ as spaces with oriented Kuranishi structures.
Corollary \ref{corProposition38.7} is nothing but this case.
If $L$ is spin, then it is automatically $\tau$-relatively spin
(see Example \ref{Remark44.18}).
Later in
Proposition \ref{Proposition44.19} we show that there is an example of Lagrangian submanifold
$L$ which is relatively spin but not $\tau$-relatively spin.
\end{rem}

Including marked points, we consider the
moduli space $\CM_{k+1, m}(J;\beta)$
of $J$-holomorphic stable maps to $(M,L)$ from a bordered Riemann surface
$(\Sigma, \partial \Sigma)$ in class $\beta \in \Pi(L)$ of genus $0$ with
$(k+1)$ boundary marked points
and $m$ interior marked points.
The anti-symplectic involution $\tau$ also induces
a map $\tau_{\ast}$ on the moduli space of $J$-holomorphic maps with
marked points. See Theorem \ref{Proposition38.11}.
Then we have:

\begin{thm}[Theorem \ref{Proposition38.11}]\label{withmarked}
The induced map
$$\tau_* : \CM_{k+1,m}(J;\beta)^{\tau^{\ast}[(V,\sigma)]} \to
\CM_{k+1,m}(J;\beta)^{[(V,\sigma)]}
$$ is orientation preserving if and only if $\mu_L(\beta)/2 + k + 1 + m$
is even.
\end{thm}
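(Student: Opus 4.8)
The plan is to reduce the statement to the already-established Theorem \ref{thm:fund} by a careful bookkeeping of how the orientation of $\CM_{k+1,m}(J;\beta)$ is built up from that of $\CM^{\text{\rm reg}}(J;\beta)$ together with the position data of the marked points, and then tracking the effect of $\tau_*$ on each factor separately. Recall that, in the orientation conventions of \cite{fooobook}, the moduli space with marked points is obtained from the moduli space of maps (roughly $\CM^{\text{\rm reg}}(J;\beta)$, or more precisely the space carrying the Fredholm/determinant-line orientation) by taking a fiber product with configuration spaces of $(k+1)$ boundary points on $\partial\Sigma \cong S^1$ and $m$ interior points on $\Sigma$, after dividing by the automorphisms of the domain. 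So first I would write down, precisely in these conventions, the identification of oriented spaces
$$
\CM_{k+1,m}(J;\beta) \;\cong\; \CM^{\text{\rm reg}}(J;\beta) \times (S^1)^{k+1} \times (\operatorname{Int}\Sigma)^m \;\Big/\; \mathrm{Aut},
$$
being scrupulous about the ordering of factors and the sign that the quotient-by-$\mathrm{Aut}$ introduces; this is exactly the recollection that the first half of the paper must have set up, so I would quote it rather than rederive it.

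Next I would compute the behavior of $\tau_*$ on each of the three pieces. On the first factor, Theorem \ref{thm:fund} gives the sign $(-1)^{\mu_L(\beta)/2}$ for whether $\tau_*$ preserves or reverses orientation (it is $+$ iff $\mu_L(\beta)\equiv 0 \bmod 4$, i.e. iff $\mu_L(\beta)/2$ is even). On the boundary marked points, $\tau$ restricts to the identity on $L=\operatorname{Fix}\tau$, and the induced action on the domain $S^1$ is by an orientation-\emph{reversing} reflection (because $\tau$-anti-invariance of $J$ forces the reparametrization matching $\tau\circ u$ to a standard map to be anti-holomorphic, hence to reverse the boundary circle); each of the $k+1$ boundary points thus contributes a factor $-1$, giving $(-1)^{k+1}$. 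On the $m$ interior marked points, the same anti-holomorphic reparametrization reverses the complex orientation of $\operatorname{Int}\Sigma$, but an interior point carries a $2$-dimensional tangent space, so reversing its orientation contributes $(-1)$ per interior point if and only if $\dim_{\R}\operatorname{Int}\Sigma/2$... no — reversing orientation of a $2$-manifold is $(-1)^{2}=+1$ on each point's local contribution; wait: the relevant sign is that complex conjugation on $\mathbb C^m$ has determinant $(-1)^m$. So the interior marked points contribute $(-1)^m$. Finally the automorphism-group factor must be checked to be $\tau$-invariant in orientation (it is, since $\tau$ acts on $\mathrm{Aut}$ compatibly), contributing no sign.

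Multiplying the three contributions, $\tau_*$ on $\CM_{k+1,m}(J;\beta)$ is orientation preserving iff $(-1)^{\mu_L(\beta)/2}\cdot(-1)^{k+1}\cdot(-1)^{m}=+1$, i.e. iff $\mu_L(\beta)/2+k+1+m$ is even, which is the assertion. The main obstacle I anticipate is not any single sign computation but getting the composite convention exactly right: the orientation of $\CM_{k+1,m}$ in \cite{fooobook} is defined through a specific ordering (the boundary points before or after the map data, the quotient sign, the placement of interior versus boundary points), and $\tau_*$ must be shown to commute with all the identifications used to \emph{define} that orientation, not merely with some orientation. I would therefore spend the bulk of the proof re-deriving the oriented isomorphism above in the paper's own notation, checking the reflection on $S^1$ and the conjugation on $\operatorname{Int}\Sigma$ at the level of the linearized $\bar\partial$-operator exactly as in the proof of Theorem \ref{thm:fund}, and only then reading off the parity. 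Care is also needed when the domain degenerates (nodal configurations): one should either work on the open stratum of smooth domains, where the product description is literal, and invoke density/closure, or check compatibility of the signs across the codimension-one boundary strata, exactly as the gluing conventions in \cite{fooobook} dictate.
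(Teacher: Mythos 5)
Your proposal is correct and follows essentially the same route as the paper: Proposition \ref{Lemma38.9} uses exactly the decomposition of $\CM^{\rm reg}_{k+1,m}(J;\beta)$ as $\widetilde{\CM}^{\rm reg}(J;\beta)\times(\partial D^2)^{k+1}\times(D^2)^m$ modulo $PSL(2,\R)$, the sign $(-1)^{\mu_L(\beta)/2}$ from Theorem \ref{thm:fund}, the sign $(-1)^{k+1}$ from reflection on each $S^1$ factor, the sign $(-1)^m$ from conjugation on each $D^2$ factor, and the observation that $\varphi\mapsto\overline\varphi$ is orientation preserving on $PSL(2,\R)$; the extension to the compactification in Theorem \ref{Proposition38.11} is then handled by Remark \ref{rem:thmori}(2) plus an inductive construction of the Kuranishi-structure automorphism, just as you outline at the end. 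Your mid-paragraph hesitation about the interior-point sign resolves to the correct statement (conjugation on $\C$ has determinant $-1$), so the final parity is right.
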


When we construct the filtered $A_{\infty}$ algebra
$(C(L,\Lambda_{0,{\rm nov}}), \mathfrak m)$ associated to a relatively spin Lagrangian submanifold $L$,
we use the component of $\CM_{k+1}(J;\beta)$ consisting of the elements
whose boundary marked points lie in counter-clockwise cyclic
order on $\partial \Sigma$. We also involve interior marked points.
For the case of $(k+1)$ boundary marked points
on $\partial \Sigma$ and $m$ interior marked points in
${\rm Int} ~\Sigma$,
we denote the corresponding component by
$\CM_{k+1, m}^{\rm main}(J;\beta)$ and call it the {\it main component}.
Moreover, we consider the moduli space
$\CM_{k+1,m}^{\rm main}(J;\beta;P_1, \ldots , P_k)$
which is defined by taking a fiber product of $\CM_{k+1,m}^{\rm main}(J;\beta)$
with smooth singular simplices $P_1, \ldots , P_k$ of $L$.
(This is nothing but the main component of (\ref{withP}) with $m=0$.)
A stable conjugacy class of a relative spin structure determines orientations on
these spaces as well.
See Sections \ref{sec:pre} and \ref{sec:relspin} for
the definitions and a precise description of their orientations.
Here we should note that $\tau_{\ast}$ above does {\it not} preserve
the cyclic ordering of boundary marked points and so
it does not preserve the main component.
However, we can define the maps denoted by
$$
\tau_{\ast}^{\rm main} : \CM_{k+1,m}^{\rm main}(J;\beta)^{\tau^*[(V,\sigma)]} \to
\CM_{k+1,m}^{\rm main}(J;\beta)^{[(V,\sigma)]}
$$
and
\begin{equation}\label{eq:taumain}
\tau_*^{\operatorname{main}} :
\CM^{\operatorname{main}}_{k+1,m}(J;\beta;P_1,\dots,P_k)^{\tau^*[(V,\sigma)]}
\to
\CM^{\operatorname{main}}_{k+1,m}(J; \beta ; P_k,\dots,P_1)^{[(V,\sigma)]}
\end{equation}
respectively.
See (\ref{38.13}), (\ref{taumain}) and (\ref{38.16})
for the definitions.
We put
$\deg ' P= \deg P -1$ which is the shifted degree of $P$
as a singular cochain of $L$ (i.e., $\deg P= \dim L-\dim P$.)
Then we show the following:

\begin{thm}[Theorem \ref{Lemma38.17}]\label{withsimplex}
Denote
$$
\epsilon = \frac{\mu_L(\beta)}{2} + k + 1 + m + \sum_{1 \le i < j \le k} \deg'P_i\deg'P_j.
$$
Then the map induced by the involution $\tau$
$$
\tau_*^{\operatorname{main}} :
\CM^{\operatorname{main}}_{k+1,m}(J;\beta;P_1,\dots,P_k)^{\tau^*[(V,\sigma)]}
\longrightarrow
\CM^{\operatorname{main}}_{k+1,m}(J; \beta ; P_k,\dots,P_1)^{[(V,\sigma)]}
$$
is orientation preserving if $\epsilon$ is even, and
orientation reversing if $\epsilon$ is odd.
\end{thm}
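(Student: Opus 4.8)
The plan is to deduce Theorem~\ref{withsimplex} from Theorem~\ref{withmarked} by analyzing precisely how the two modifications compare: passing from the full moduli space $\CM_{k+1,m}(J;\beta)$ to the main component $\CM_{k+1,m}^{\operatorname{main}}(J;\beta)$, and then taking the fiber product with the singular simplices $P_1,\dots,P_k$. The sign $\mu_L(\beta)/2 + k+1+m$ in Theorem~\ref{withmarked} will reappear, and the only new contribution must be the term $\sum_{i<j}\deg'P_i\deg'P_j$. So the strategy is: (i) recall from the orientation conventions of Sections~\ref{sec:pre} and \ref{sec:relspin} how the orientation of $\CM_{k+1,m}^{\operatorname{main}}(J;\beta;P_1,\dots,P_k)$ is built out of the orientation of $\CM_{k+1,m}(J;\beta)$, the positions of the boundary marked points, and the fiber product with $P_1,\dots,P_k$; (ii) track what $\tau_*^{\operatorname{main}}$ does to each of these ingredients; (iii) assemble the total sign change and compare with Theorem~\ref{withmarked}.

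\textbf{Step 1: reduce to the case without $P_i$, i.e.\ compare $\tau_*^{\operatorname{main}}$ on the main component with $\tau_*$ on the whole space.} The map $\tau_*$ on $\CM_{k+1,m}(J;\beta)$ acts on the boundary marked points $z_0,z_1,\dots,z_k$; because $\tau$ is orientation-reversing on the disc, it reverses the cyclic order, so it sends the main component (counter-clockwise order $z_0,z_1,\dots,z_k$) to the component with clockwise order, which after relabeling $z_i \mapsto z_{k+1-i}$ for $i=1,\dots,k$ (fixing $z_0$) is again the main component. I would identify $\tau_*^{\operatorname{main}}$ with the composition of $\tau_*$ and this relabeling, as in the definitions (\ref{38.13}), (\ref{taumain}), (\ref{38.16}). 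The relabeling is the permutation reversing $k$ elements, whose sign I would compute; but more importantly the orientation of the main component relative to the full space involves the product over boundary marked points of the position $1$-forms, so permuting the marked points by this reversal contributes a sign $(-1)^{k(k-1)/2}$ only if those factors carried nontrivial degree. In the case $m=0$, $P_i = [\mathrm{pt}]$ (or absent), this must combine with the sign from Theorem~\ref{withmarked} to give exactly $\epsilon \bmod 2$ with all $\deg'P_i = \dim L - 1$; checking this consistency is the first concrete calculation.

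\textbf{Step 2: incorporate the fiber product with $P_1,\dots,P_k$ and the reversal $P_1,\dots,P_k \rightsquigarrow P_k,\dots,P_1$.} The space $\CM_{k+1,m}^{\operatorname{main}}(J;\beta;P_1,\dots,P_k)$ is $\CM_{k+1,m}^{\operatorname{main}}(J;\beta) \times_{L^k} (P_1\times\cdots\times P_k)$ via the boundary evaluation maps $(\mathrm{ev}_1,\dots,\mathrm{ev}_k)$; its orientation is fixed by the convention for fiber products of oriented spaces (the Koszul sign rule applied to the normal bundle). Under $\tau_*^{\operatorname{main}}$, the marked point $z_i$ carrying $P_i$ gets moved to the slot that, after the relabeling, carries the factor $P_{k+1-i}$ against the other simplices — hence the target is $\CM^{\operatorname{main}}_{k+1,m}(J;\beta;P_k,\dots,P_1)$. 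Reordering the fiber product factors from $(P_1,\dots,P_k)$ to $(P_k,\dots,P_1)$ introduces, by the Koszul sign rule for shifted-degree objects, precisely the sign $(-1)^{\sum_{i<j}\deg'P_i\deg'P_j}$. This is the source of the new term in $\epsilon$. One also has to check that $\tau_*$ commutes with the evaluation maps up to the restriction of $\tau|_L = \mathrm{id}$ (since $L = \mathrm{Fix}\,\tau$), so that no further sign enters from the base $L^k$; that $\tau|_L$ is the identity is what makes this clean.

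\textbf{Step 3: assemble.} The total sign is (sign from Theorem~\ref{withmarked}, namely $(-1)^{\mu_L(\beta)/2 + k+1+m}$) $\times$ (sign from passing to the main component and relabeling marked points) $\times$ (sign from reversing the fiber-product order, $(-1)^{\sum_{i<j}\deg'P_i\deg'P_j}$). The main obstacle I anticipate is bookkeeping: making sure that the marked-point-relabeling sign from Step 1 and the fiber-product-reordering sign from Step 2 are not double-counted and that their product is exactly $(-1)^{\sum_{i<j}\deg'P_i\deg'P_j}$ with no leftover $(-1)^{k(k-1)/2}$ — this requires being scrupulous about whether the "position of $z_i$" factors in the orientation of the main component carry degree $0$ or degree $1$, and matching the conventions fixed earlier in the paper exactly. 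Once the conventions are pinned down, the result is a purely formal consequence of Theorem~\ref{withmarked} and the Koszul sign rule. I would therefore phrase the proof as: invoke Theorem~\ref{withmarked}, recall the orientation conventions for $\CM^{\operatorname{main}}_{k+1,m}(J;\beta;P_1,\dots,P_k)$ and for fiber products, and then carry out the sign computation above, the only nontrivial new input being the Koszul sign $(-1)^{\sum_{1\le i<j\le k}\deg'P_i\deg'P_j}$ produced by reversing the order of the singular simplices.
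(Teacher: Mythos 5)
Your plan mirrors the paper's argument closely: the paper also deduces Theorem \ref{Lemma38.17} by first invoking Proposition \ref{Lemma38.9} (the regular-part version of Theorem \ref{withmarked}) to get the sign $(-1)^{\mu_L(\beta)/2 + k + 1 + m}$ for the map $\tau_*$ from the main component to the ``clockwise'' component, and then accounting for the reordering $(P_1,\dots,P_k) \rightsquigarrow (P_k,\dots,P_1)$. What the paper does differently — and what you leave unresolved — is that the reordering sign is computed not by a direct Koszul rule but by invoking Lemma \ref{Lemma8.4.3}, which asserts that transposing $P_i$ and $P_{i+1}$ (together with the corresponding boundary marked points and evaluation maps) changes the orientation of $\CM_{k+1}(J;\beta;P_1,\dots,P_k)$ by $(-1)^{(\deg P_i+1)(\deg P_{i+1}+1)} = (-1)^{\deg' P_i \deg' P_{i+1}}$; iterating over the reversal gives $(-1)^{\sum_{i<j}\deg'P_i\deg'P_j}$.

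Your worry about a leftover $(-1)^{k(k-1)/2}$ is in fact the genuine gap. If you separate the marked-point relabeling (reversal of $k$ degree-$1$ factors $\partial D^2_{m+1},\dots,\partial D^2_{m+k}$, contributing $(-1)^{k(k-1)/2}$) from a naive Koszul reordering of the $P_i$-factors, you do \emph{not} land on $(-1)^{\sum_{i<j}\deg'P_i\deg'P_j}$: a straightforward count leaves an extra contribution depending on $(k-1)\sum_i \deg P_i$. The resolution lies in the twisted orientation convention of Definition \ref{Definition8.4.1}, whose factor $(-1)^{\epsilon(P)}$ with $\epsilon(P) = (n+1)\sum_{j=1}^{k-1}\sum_{i=1}^{j}\deg P_i$ is precisely what makes the permuted orientation behave like the shifted-degree Koszul rule you want. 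Lemma \ref{Lemma8.4.3} is the packaged, already-verified form of this computation. So your proposal identifies the right two ingredients and the right final sign, but the step you flag as the ``main obstacle'' is exactly the nontrivial one, and your proposal neither resolves it nor cites the lemma that does.
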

See Theorem \ref{Lemma38.17withQ}
for a more general statement involving the fiber product with singular
simplicies $Q_j$ $(j=1,\ldots ,m)$ of $M$.
\par\medskip
These results give rise to some non-trivial applications
to Lagrangian intersection Floer theory for the case $L={\rm Fix}~\tau$.
We briefly describe some consequences in the rest of this section.
\par
In the books \cite{fooobook1} and \cite{fooobook2}, using the moduli spaces
$\CM^{\operatorname{main}}_{k+1}(J;\beta;P_1,\dots,P_k)$,
we construct a filtered $A_{\infty}$ algebra
$(C(L;\Lambda_{0,{\rm nov}}^{\Q}), {\mathfrak m})$ with $\mathfrak m =\{{\mathfrak m_k}\}_{k=0,1,2,\ldots}$
for any relatively spin closed Lagrangian submanifold $L$
of $(M,\omega)$ (see Theorem \ref{thm:Ainfty}) and
developed the obstruction and deformation theory of
Lagrangian intersection Floer cohomology. Here $\Lambda_{0,{\rm nov}}^{\Q}$ is
the universal Novikov ring over $\Q$ (see (\ref{eq:nov})).
In particular, we formulate the unobstructedness to
defining Floer cohomology of $L$ as the existence of
solutions of the Maurer-Cartan equation
for the filtered $A_{\infty}$ algebra (see Definition \ref{def:boundingcochain}).
We denote the set of such solutions by $\CM(L;\Lambda_{0,{\rm nov}}^\Q)$.
By definition, when $\CM(L;\Lambda_{0,{\rm nov}}^{\Q}) \ne \emptyset$,
we can use any element $b \in \CM(L;\Lambda_{0,{\rm nov}}^{\Q})$ to deform the Floer's `boundary' map
and define a deformed Floer cohomology $HF((L,b),(L,b);\Lambda_{0,{\rm nov}}^{\Q})$.
See Subsection \ref{subsec:Ainfty} for a short review of this process.
Now for the case $L={\rm Fix}~\tau$,
Theorem \ref{withsimplex} yields
the following particular property of the filtered
$A_{\infty}$ algebra:

\begin{thm}\label{Theorem34.20} Let $M$ be a compact, or tame, symplectic manifold and
$\tau$ an anti-symplectic involution. If $L = \text{\rm Fix}~\tau$
is non-empty, compact, oriented and $\tau$-relatively spin, then the
filtered $A_{\infty}$ algebra $(C(L;\Lambda_{0,{\rm nov}}^{\Bbb
Q}),\mathfrak m)$ can be chosen so that
\begin{equation}\label{34.21}
\mathfrak m_{k,\beta}(P_1,\dots,P_k)
= (-1)^{\epsilon_1} \mathfrak m_{k,\tau_*\beta}(P_k,\dots,P_1)
\end{equation}
where
$$
\epsilon_1 = \frac{\mu_L(\beta)}{2} + k+ 1 + \sum_{1\le i < j \le k}
\deg'P_i\deg'P_j.
$$
\end{thm}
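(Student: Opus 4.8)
The plan is to carry out the construction of the filtered $A_\infty$ algebra of \cite{fooobook} for $L = \operatorname{Fix}\tau$ while making every auxiliary choice compatible with the involution $\tau$, and then to read off the sign (\ref{34.21}) from Theorem \ref{withsimplex}. Recall from \cite{fooobook} that $\frak m_k = \sum_{\beta \in \Pi(L)} \frak m_{k,\beta}$ (each term weighted by the appropriate monomial in the Novikov parameters), where $\frak m_{k,\beta}(P_1,\dots,P_k)$ is the smooth singular chain of $L$ obtained by pushing forward, under evaluation $\operatorname{ev}_0$ at the $0$-th boundary marked point, the virtual fundamental chain of the perturbed moduli space $\CM^{\operatorname{main}}_{k+1}(J;\beta;P_1,\dots,P_k)$. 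This construction rests on an inductively chosen system of multisections of the Kuranishi structures, compatible with the description of the codimension-one boundary of $\CM^{\operatorname{main}}_{k+1}(J;\beta;P_1,\dots,P_k)$ as a union of fiber products $\CM^{\operatorname{main}}_{k_1+1}(J;\beta_1;\cdots) \times_L \CM^{\operatorname{main}}_{k_2+1}(J;\beta_2;\cdots)$, over $\beta_1+\beta_2 = \beta$ and all splittings of $P_1,\dots,P_k$ between the two factors, and compatible with the forgetful maps. The point is to choose this system $\tau$-equivariantly.

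First I record the relevant properties of $\tau_\ast^{\operatorname{main}}$. Since $L = \operatorname{Fix}\tau$ we have $\tau|_L = \operatorname{id}_L$, hence $\tau_\ast P_i = P_i$ and $\operatorname{ev}_0 \circ \tau_\ast^{\operatorname{main}} = \operatorname{ev}_0$; and since $\tau_\ast\beta = \beta$ in $\Pi(L)$ (Remark \ref{rem:tau}) while $L$ is $\tau$-relatively spin, $\tau_\ast^{\operatorname{main}}$ is an automorphism of the oriented Kuranishi space $\CM^{\operatorname{main}}_{k+1}(J;\beta)$ for every $\beta \in \Pi(L)$ and $k$, the identification of source and target as oriented spaces being the content of Remark \ref{rem;thm1}. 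It carries the codimension-one boundary strata to one another compatibly with their fiber-product descriptions --- acting on the two factors by their own $\tau_\ast^{\operatorname{main}}$ maps, with the order of the $P_i$ reversed within each factor --- and it intertwines the forgetful maps. After taking the fiber product with $P_1,\dots,P_k$, which reverses their order, it becomes the map of (\ref{eq:taumain}), whose effect on the orientation is recorded by Theorem \ref{withsimplex} with $m = 0$: it is orientation preserving precisely when $\epsilon_1$ is even.

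Next I would choose the multisections by induction on $(\omega(\beta), k)$, exactly as in \cite{fooobook}, but requiring at each stage that the multisection on $\CM^{\operatorname{main}}_{k+1}(J;\beta)$ be $\tau_\ast^{\operatorname{main}}$-invariant. At the inductive step the boundary already carries an invariant multisection, by the inductive hypothesis together with the fact that $\tau_\ast^{\operatorname{main}}$ permutes the boundary strata compatibly with their fiber-product structure; one then extends over the interior $\tau_\ast^{\operatorname{main}}$-equivariantly, running the usual transversality argument inside the space of $\tau_\ast^{\operatorname{main}}$-invariant multisections while simultaneously keeping compatibility with the forgetful maps. For the constant class $\beta = \beta_0$ the moduli space lies entirely in $L$, the quantum contribution is absent, and the structure is the classical one on $C^\ast(L)$; there (\ref{34.21}) is the standard graded symmetry, and the canonical model used for this term in \cite{fooobook} is automatically $\tau$-compatible because $\tau|_L = \operatorname{id}_L$.

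Granting such a $\tau$-equivariant system, (\ref{34.21}) follows at once: the perturbed moduli spaces computing $\frak m_{k,\beta}(P_1,\dots,P_k)$ and $\frak m_{k,\tau_\ast\beta}(P_k,\dots,P_1) = \frak m_{k,\beta}(P_k,\dots,P_1)$ are carried into one another by $\tau_\ast^{\operatorname{main}}$, which by Theorem \ref{withsimplex} changes the orientation by $(-1)^{\epsilon_1}$; pushing forward under $\operatorname{ev}_0$ and using $\operatorname{ev}_0 \circ \tau_\ast^{\operatorname{main}} = \operatorname{ev}_0$ yields $\frak m_{k,\beta}(P_1,\dots,P_k) = (-1)^{\epsilon_1}\frak m_{k,\beta}(P_k,\dots,P_1)$. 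That $(C(L;\Lambda_{0,nov}^{\Q}), \frak m)$ is still a filtered $A_\infty$ algebra is part of the standard construction of \cite{fooobook} and is not affected by the equivariance of the perturbations. The main obstacle is precisely the inductive step of the third paragraph: pushing the self-referential perturbation machinery of \cite{fooobook} through in a $\Z/2$-equivariant fashion --- producing at every stage a $\tau_\ast^{\operatorname{main}}$-invariant multisection that is transverse, restricts correctly to the boundary, and respects the forgetful maps --- is where the real content lies, everything else being a bookkeeping of signs that is delivered verbatim by Theorem \ref{withsimplex} together with the identities $\tau_\ast\beta = \beta$, $\tau_\ast P_i = P_i$, and $\operatorname{ev}_0 \circ \tau_\ast^{\operatorname{main}} = \operatorname{ev}_0$ that come for free from $L = \operatorname{Fix}\tau$.
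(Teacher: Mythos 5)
Your proposal is correct and follows essentially the same route as the paper: apply Theorem \ref{Lemma38.17} (Theorem \ref{withsimplex}) together with the identities $\tau|_L=\operatorname{id}$, $\tau_*\beta=\beta$, $\operatorname{ev}_0\circ\tau_*^{\operatorname{main}}=\operatorname{ev}_0$, once a $\tau_*^{\operatorname{main}}$-invariant system of multisections is secured. The only difference is the mechanism for obtaining that invariant system: the paper passes to the quotient Kuranishi space via Lemma \ref{lem:quot} (Lemma A1.49 of \cite{fooobook}), perturbs there, and lifts back, rather than running transversality directly inside the space of invariant multisections as you sketch; these are interchangeable devices accomplishing the same equivariant perturbation.
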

Using the results from  \cite{fooobook1} and \cite{fooobook2}, we derive that
Theorem \ref{Theorem34.20} implies
unobstructedness
of $L={\rm Fix}~\tau$ in the following cases:

\begin{cor}\label{Corollary34.22}
Let $\tau$ and $L = \text{\rm Fix}~\tau$ be as in Theorem \ref{Theorem34.20}. In addition,
we assume either
\begin{enumerate}
\item
$c_1(TM)\vert_{\pi_2(M)} \equiv 0 \mod 4$,
or
\item $c_1(TM)\vert_{\pi_2(M)} \equiv 0 \mod 2$ and
$i_{\ast} : \pi_1(L) \to \pi_1(M)$ is injective.
(Here $i_{\ast}$ is the natural map induced by inclusion $i : L \to M$.)
\end{enumerate}
Then
$L$ is unobstructed over $\Lambda_{0,{\rm nov}}^{\Bbb Q}$
(i.e., $\CM(L;\Lambda_{0,{\rm nov}}^\Q)\ne \emptyset$)
and so $HF((L,b),(L,b);\Lambda_{0,{\rm nov}}^{\Q})$ is defined
for any $b \in \CM(L;\Lambda_{0,{\rm nov}}^\Q)$.
Moreover, we may choose $b \in \CM(L;\Lambda_{0,{\rm nov}}^\Q)$ so that the map
$$
\aligned
(-1)^{k(\ell+1)} {(\mathfrak m_2)}_* & : HF^k((L,b),(L,b);\Lambda_{0,{\rm nov}}^\Q)
\otimes HF^{\ell}((L,b),(L,b);\Lambda_{0,{\rm nov}}^\Q) \\
& \quad \longrightarrow HF^{k+\ell}((L,b),(L,b);\Lambda_{0,{\rm nov}}^\Q)
\endaligned
$$
induces a graded commutative product.
\end{cor}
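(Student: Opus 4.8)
The plan is to deduce unobstructedness from Theorem \ref{Theorem34.20} by exploiting the symmetry \eqref{34.21} to extract solutions of the Maurer--Cartan equation, following the strategy developed in Chapter 8 of \cite{fooobook}. First I would examine when the sign $(-1)^{\epsilon_1}$ in \eqref{34.21} depends only on $k$ and the degrees of the $P_i$, and not genuinely on $\beta$. Under hypothesis (i) we have $c_1(TM)|_{\pi_2(M)} \equiv 0 \bmod 4$; since $\mu_L(\beta)$ for $\beta$ coming from a sphere differs from $2c_1$ by contributions supported on $\pi_2(M,L)$, one checks (using that $L$ is orientable, so $\mu_L$ is even, together with the doubling relation $\mu_L(\beta) = c_1(TM)(\beta \# \bar\beta)$ that expresses the Maslov index of a disc class via its double) that $\mu_L(\beta)/2$ is always even, hence $\epsilon_1 \equiv k+1+\sum_{i<j}\deg'P_i\deg'P_j \bmod 2$. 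Under hypothesis (ii), injectivity of $i_*:\pi_1(L)\to\pi_1(M)$ together with $c_1\equiv 0\bmod 2$ forces every disc class $\beta$ to have $\mu_L(\beta)/2$ of fixed parity in each relevant sense, again killing the $\beta$-dependence. In either case \eqref{34.21} becomes a relation of the form $\frak m_k(P_1,\dots,P_k) = \pm\, \frak m_k(P_k,\dots,P_1)$ with a sign independent of $\beta$, i.e.\ the filtered $A_\infty$ algebra is \emph{$\tau$-invariant} in the sense relevant to Chapter 8 of \cite{fooobook}.

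Next I would invoke the general result from \cite{fooobook} (the part of Chapter 8 dealing with filtered $A_\infty$ algebras admitting such an involutive symmetry) that a $\tau$-invariant filtered $A_\infty$ algebra whose $\frak m_0$ term satisfies the corresponding symmetry is unobstructed: the point is that the obstruction classes $\frak o_k \in H(L;\Lambda_{0,nov}^{\Q})$ to solving the Maurer--Cartan equation, constructed inductively, are forced to vanish because the symmetry \eqref{34.21} pairs each potential obstruction with its ``reverse,'' and over a ring where $2$ is invertible — or more carefully, by tracking the exact signs — the obstruction is its own negative. Concretely I would run the standard inductive construction of a bounding cochain $b = \sum_i b_i T^{\lambda_i}$ with $\deg b_i$ of the correct parity, at each stage using \eqref{34.21} to see that the obstruction cocycle is exact, and choosing the correction term $b_i$ equivariantly. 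This is exactly the mechanism already used in \cite{fooobook} to prove unobstructedness of Lagrangian submanifolds that bound no holomorphic disc of Maslov index $\le 0$, adapted here to the involution symmetry rather than a dimension count.

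For the last assertion, that $b$ can be chosen so that $(\frak m_2)_*$ descends to a graded commutative product on $HF((L,b),(L,b);\Lambda_{0,nov}^{\Q})$, I would take $b$ to be a $\tau$-equivariant bounding cochain as produced above (so that $\tau_*b = b$, up to the sign bookkeeping), and then specialize \eqref{34.21} with $k=2$, $\beta$ summed, and the deformation by $b$ incorporated. The $b$-deformed product $\frak m_2^b$ satisfies $\frak m_2^b(x,y) = (-1)^{?}\frak m_2^b(y,x)$ modulo $\frak m_1^b$-exact terms, where the sign is read off from $\epsilon_1$ with $k=2$: $\mu_L(\beta)/2$ contributes evenly by the parity analysis above, $k+1=3$ is odd, and $\sum_{i<j}\deg'P_i\deg'P_j = \deg'P_1\deg'P_2$, which together with the normalization sign $(-1)^{k(\ell+1)}$ in the statement yields precisely the Koszul sign $(-1)^{k\ell}$ of graded commutativity on cohomology of degrees $k,\ell$. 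I expect the main obstacle to be the sign bookkeeping: matching the combinatorial sign $\epsilon_1$ of Theorem \ref{Theorem34.20} against the normalization sign in the definition of the Floer product and against the shifted-degree conventions, and in particular verifying that the equivariant choice of $b$ does not introduce an anomalous sign — this is where one must be careful with the interplay between $\deg'$ and $\deg$, and with the fact that $\tau_*$ reverses cyclic order, which is why $(\frak m_2)_*$ (and not $\frak m_2$ on the chain level) is the object that becomes commutative.
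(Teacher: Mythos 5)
Your reduction of (i) and (ii) to the condition that $\mu_L(\beta)/2$ is even is essentially the right first step, though your account of (ii) is too vague to constitute a proof: the precise mechanism (used in the paper) is that injectivity of $i_{\ast}\colon\pi_1(L)\to\pi_1(M)$ forces, via the long exact homotopy sequence of $(M,L)$, surjectivity of $\pi_2(M)\to\pi_2(M,L)$, and then Lemma \ref{c1maslov} gives $\mu_L(\beta)=2c_1[\alpha]\equiv 0\bmod 4$ for every disc class. For (i) the doubling identity you cite is the right tool, but it should be stated as $c_1(TM)[v]=\mu_L([w])$ where $v$ is the $\tau$-double of $w$; this genuinely uses $L=\operatorname{Fix}\tau$.

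The more serious gap is in the unobstructedness argument. The paper's proof is far more direct than the inductive obstruction-class machinery you sketch: once $\mu_L(\beta)/2$ is even, \eqref{34.21} with $k=0$ gives $\frak m_{0,\tau_{\ast}\beta}(1)=-\frak m_{0,\beta}(1)$, and since $\tau_{\ast}$ preserves both $\omega(\beta)$ and $\mu_L(\beta)$, summing over the pairs $\{\beta,\tau_{\ast}\beta\}$ shows $\frak m_0(1)=0$ on the chain level. Hence $b=0$ is already a bounding cochain and there is no induction to run. The ``general result from Chapter 8 of \cite{fooobook} about $\tau$-invariant filtered $A_\infty$ algebras'' that you invoke is not a citable theorem, and your claim that ``the obstruction is its own negative'' would not survive careful sign-tracking: since $\deg' b=0$, \eqref{34.21} yields $\frak m_{k,\beta}(b,\dots,b)=(-1)^{k+1}\frak m_{k,\tau_{\ast}\beta}(b,\dots,b)$, so the pairing only cancels the even-arity contributions; the odd-arity terms $\frak m_1(b),\frak m_3(b,b,b),\dots$ come with a plus sign and do not cancel for a general equivariant $b$. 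This is precisely why the argument must reduce to $b=0$, where only $\frak m_0$ appears. Your inductive scheme, if pushed through, would collapse at each energy level to re-proving $\frak m_0(1)=0$ there, so it is not wrong in principle, but as written the justification for vanishing of the obstruction is incorrect and the machinery is superfluous.

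For the final assertion you again overcomplicate by worrying about a $\tau$-equivariant $b$; with $b=0$ the product is $(\frak m_2)_{\ast}$ itself, and graded commutativity follows from \eqref{34.21} with $k=2$ (which gives $\frak m_{2,\beta}(P_1,P_2)=(-1)^{1+\deg'P_1\deg'P_2}\frak m_{2,\tau_{\ast}\beta}(P_2,P_1)$) together with the normalization $P_1\cup_Q P_2=(-1)^{\deg P_1(\deg P_2+1)}\sum_\beta \frak m_{2,\beta}(P_1,P_2)T^{\omega(\beta)}e^{\mu(\beta)/2}$. Your sign discussion is directionally right but does not pin this down.
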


\begin{rem} \label{Remark34.24}
By symmetrizing the filtered $A_{\infty}$ structure
$\mathfrak m_k$ of $(C(L,\Lambda_{0,{\rm nov}}^{\Bbb
Q}),\mathfrak m)$,
we obtain a filtered $L_{\infty}$ algebra
$(C(L,\Lambda_{0,{\rm nov}}^{\Bbb Q}),\mathfrak l)
=(C(L,\Lambda_{0,{\rm nov}}^{\Bbb Q}),\{\mathfrak l_k\}_{k=0,1,2,\ldots})$.
See Section A3 \cite{fooobook2} for the definitions of the symmetrization
and of the filtered $L_{\infty}$ structure.
In the situation of Corollary \ref{Corollary34.22},
the same proof shows that we have
$\mathfrak l_k = \overline{\mathfrak l}_k\otimes \Lambda_{0,{\rm nov}}^{\Bbb
Q}$ if $k$ is even. Here $\overline{\mathfrak l}_k$ is the (unfiltered) $L_{\infty}$ structure
obtained by the reduction of the coefficient of
$(C(L,\Lambda_{0,{\rm nov}}^\Q),\mathfrak l)$ to $\Q$.
Note that {\it over} $\R$ we may choose $\overline{\mathfrak l}_k = 0$
for $k\geq 3$ by Theorem X in Chapter 1 \cite{fooobook1}. On the other hand,
Theorem A3.19 \cite{fooobook2} shows that
$\overline{\mathfrak l}_k =0$
for $H(L; \Q)$.
\end{rem}

We note that we do not assert that Floer cohomology $HF((L,b),(L,b);
\Lambda_{0,{\rm nov}}^\Q)$ is isomorphic to $H^*(L;\Bbb Q) \otimes
\Lambda_{0,{\rm nov}}^\Q$, in general.
(Namely, we do not assert $\mathfrak m_1 =
\overline{\mathfrak m}_{1}\otimes \Lambda_{0,{\rm nov}}^{\Q}$.)
Indeed, we show in Subsection \ref{subsec:Appl2} that
for the case $L=\R P^{2n+1}$ in $\C P^{2n+1}$ the Floer cohomology group
is {\it not} isomorphic to the classical cohomology group.
(See Theorem \ref{Theorem44.24}.)

Moreover, if we assume $c_1(TM)\vert_{\pi_2(M)} = 0$ in addition,
we can show the following  non-vanishing theorem of Floer
cohomology:

\begin{cor}\label{TheoremN}
Let $\tau$, $L = \text{\rm Fix}~\tau$ be as in Theorem \ref{Theorem34.20}.
Assume $c_1(TM)\vert_{\pi_2(M)} = 0$. Then
$L$ is unobstructed over $\Lambda_{0,{\rm nov}}^{\Bbb Q}$
and
$$
HF((L,b),(L,b);\Lambda_{{\rm nov}}^{\Q})\ne 0
$$
for any $b \in \CM(L;\Lambda_{0,{\rm nov}}^\Q)$.
In particular, we have
$$
\psi (L) \cap L \ne \emptyset
$$
for any Hamiltonian diffeomorphism $\psi : M \to M$.
\end{cor}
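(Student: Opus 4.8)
The plan is to obtain unobstructedness at once from Corollary \ref{Corollary34.22}, and then to deduce non-vanishing of Floer cohomology from two observations: under the Calabi--Yau hypothesis every holomorphic disc with boundary on $L=\text{\rm Fix}~\tau$ has Maslov index zero, and for such an $L$ the unit $\mathbf 1=PD[L]$ can never be a coboundary in the Floer complex. For unobstructedness, note that $c_1(TM)\vert_{\pi_2(M)}=0$ certainly gives $c_1(TM)\vert_{\pi_2(M)}\equiv 0\bmod 4$, so hypothesis (i) of Corollary \ref{Corollary34.22} holds; hence $\CM(L;\Lambda_{0,nov}^{\Q})\ne\emptyset$, and for any $b\in\CM(L;\Lambda_{0,nov}^{\Q})$ the deformed operator $\frak m_1^b$ and the group $HF((L,b),(L,b);\Lambda_{0,nov}^{\Q})$ are defined. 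Throughout I would use the $\tau$-anti-invariant $\omega$-compatible $J$ of Theorem \ref{Theorem34.20} to build the filtered $A_\infty$ algebra, so that the symmetry \eqref{34.21} is in force.

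Next I would show that only classes of Maslov index $0$ occur. For $L=\text{\rm Fix}~\tau$ any disc $w\colon(D^2,\partial D^2)\to(M,L)$ admits a topological double $\widehat w\colon S^2\to M$, formed by reflecting $w$ across $\partial D^2$ and composing with $\tau$; the two halves agree along the equator because $\partial w$ maps into $\text{\rm Fix}~\tau$, and $\mu_L([w])=\langle c_1(TM),[\widehat w]\rangle$, which vanishes by hypothesis. Hence $\CM_{k+1,m}(J;\beta)=\emptyset$ unless $\mu_L(\beta)=0$, so in the operations $\frak m_{k}=\sum_\beta\frak m_{k,\beta}\,T^{\omega(\beta)}e^{\mu_L(\beta)/2}$ only classes with $\mu_L(\beta)=0$, hence of $e$-weight $e^0$, contribute, while the $\beta=0$ part of $\frak m_1$ is $\pm$ the singular coboundary, again of $e$-weight $e^0$. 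The upshot is that $\frak m_1^b$ \emph{never raises the power of the degree-$2$ generator} $e$ of $\Lambda_{0,nov}^{\Q}$: each of its terms has the form $\frak m_{k,\beta}(b,\dots,b,x,b,\dots,b)\,T^{\omega(\beta)}e^0$, and a bounding cochain $b$ has total degree $1$, so its $e^{-j}$-component lies in $C^{2j+1}(L;\Q)$ with $j\ge0$ and contributes a non-positive power of $e$.

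Now fix any $b\in\CM(L;\Lambda_{0,nov}^{\Q})$. The filtered $A_\infty$ algebra is (homotopy) unital with unit $\mathbf 1=PD[L]$, which lies in the $C^0(L;\Q)\cdot e^0$ summand of $C^0(L;\Lambda_{0,nov}^{\Q})$; since $\deg b$ is odd, unitality gives $\frak m_1^b(\mathbf 1)=0$. Any element $\eta$ of total degree $-1$ has $e$-power at most $-1$, its $e^0$-component being forced to lie in $C^{-1}(L;\Q)=0$. By the previous paragraph $\frak m_1^b(\eta)$ then has $e$-power at most $-1$ as well, so its $C^0(L;\Q)\cdot e^0$ component is zero; likewise for $T^c\mathbf 1$ in place of $\mathbf 1$. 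Therefore $[\mathbf 1]\ne 0$ in $HF((L,b),(L,b);\Lambda_{0,nov}^{\Q})$ and is not $T$-torsion, whence $HF((L,b),(L,b);\Lambda_{nov}^{\Q})\ne0$. Finally, if some Hamiltonian diffeomorphism $\psi$ satisfied $\psi(L)\cap L=\emptyset$, then after a Hamiltonian perturbation keeping $L$ and $\psi(L)$ disjoint the Floer complex computing $HF((L,b),(\psi(L),\psi_*b);\Lambda_{nov}^{\Q})$ would have no generators and hence vanish; but the Hamiltonian invariance of Lagrangian Floer cohomology proved in \cite{fooobook} identifies this group with $HF((L,b),(L,b);\Lambda_{nov}^{\Q})$, a contradiction. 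Hence $\psi(L)\cap L\ne\emptyset$.

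I expect the main obstacle to be making the argument of the last two paragraphs fully rigorous over the Novikov ring: one must (a) arrange the filtered $A_\infty$ algebra to be (homotopy) unital simultaneously with the choice of $\tau$-anti-invariant $J$ and the symmetry \eqref{34.21}, so that $\frak m_1^b(\mathbf 1)=0$ genuinely holds in the model in use; (b) check the additivity of $c_1$ and of the Maslov index under doubling of \emph{stable} (possibly nodal) maps, so that $\mu_L(\beta)=0$ applies to every $\beta$ with $\CM(J;\beta)\ne\emptyset$ and not merely to smooth discs; and (c) track the internal $e$-grading and the $T$-adic filtration carefully, in particular excluding $T$-torsion so that non-vanishing persists after inverting $T$. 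By contrast, the unobstructedness step and the Hamiltonian-invariance input are standard, being quoted directly from \cite{fooobook}.
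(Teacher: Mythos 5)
Your overall outline matches the paper's: unobstructedness from Corollary \ref{Corollary34.22}, the observation that $c_1(TM)\vert_{\pi_2(M)}=0$ forces $\mu_L\equiv 0$ via doubling across the real locus, and Hamiltonian invariance (\ref{invariance}) to get the intersection consequence. Where you depart is in the non-vanishing step. The paper handles this in one line by invoking Theorem E and Theorem 6.1.9 of \cite{fooobook}, which are black-box non-vanishing theorems for Floer cohomology of unobstructed Lagrangians of vanishing Maslov class, valid over $\Lambda_{nov}^{\Q}$. You instead reconstruct the underlying mechanism directly: when $\mu_L\equiv 0$ the operators $\frak m_{k,\beta}$ carry no $e$-weight, a bounding cochain $b$ of degree $1$ has all its $e$-components in $e$-powers $\le 0$, and any degree $-1$ element has $e$-power $\le -1$; hence $\frak m_1^b$ applied to a degree $-1$ element can never produce anything in $e^0\cdot C^0(L;\Q)$, so $T^c\mathbf{1}$ is never exact for any $c\ge 0$ and $[\mathbf 1]$ is not $T$-torsion. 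This is in effect an unwinding of the spectral-sequence/filtration argument that proves Theorem E, so the two routes are not really in conflict: yours is more self-contained and exposes the grading mechanism, while the paper's delegates the technical burden (strict vs.\ homotopy unitality, that only $\mu_L(\beta)=0$ classes contribute even after compactification, compatibility with the $\tau$-symmetric perturbation data) to results already established in the book. You correctly flag in your closing paragraph exactly these points --- (a), (b), (c) --- as the places where your sketch is not yet a full proof; each of them is precisely what the cited Theorems E and 6.1.9 have already absorbed. If you want to keep your self-contained version, the most substantive issue is (a): the $A_\infty$ algebra of \cite{fooobook} is only homotopy unital, and you would need either to pass to a strictly unital model compatible with the $\tau$-symmetry, or to argue that homotopy unitality suffices for the degree count (which it does, but needs saying). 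Otherwise the grading computation, including the sign cancellation $\frak m_2(b,\mathbf 1)+\frak m_2(\mathbf 1,b)=0$ using $\deg b=1$, is sound.
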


Theorem \ref{Theorem34.20} and Corollaries \ref{Corollary34.22},
\ref{TheoremN}
can be applied to
the real point set $L$ of any Calabi-Yau manifold (defined over
$\Bbb R)$ if it is oriented and $\tau$-relative spin.

\par\medskip
Another application of Theorem \ref{Theorem34.20} and Corollary \ref{Corollary34.22} is a ring isomorphism between
quantum cohomology and Lagrangian Floer cohomology for
the case of the diagonal
of square of a symplectic manifold. Let $(N,\omega_N)$ be a closed symplectic manifold.
We consider the product
$$
(M,\omega_M) = (N\times N, -{\rm pr}_1^*\omega_N + {\rm pr}_2^* \omega_N),
$$
where ${\rm pr}_i$ is the projection to the $i$-th factor.
The involution $\tau : M \to M$ defined by
$\tau(x,y) = (y,x)$ is
anti-symplectic and its fixed point set $L$ is the diagonal
$$
\Delta_N= \{(x,x) \mid x \in N\} \cong N.
$$
As we will see in the proof of Theorem
\ref{Proposition34.25}, the diagonal set is always  unobstructed. Moreover,
we note that the natural map $i_*: H_*(\Delta_N, \Q) \to H_*(N \times N;\Q)$
is injective and so the spectral sequence constructed
in Chapter 6 \cite{fooobook1} collapses at $E_2$-term
by Theorem D (D.3) \cite{fooobook1}, which in turn induces the natural isomorphism
$H(N;\Bbb Q) \otimes \Lambda_{0,{\rm nov}} \cong HF((L,b),(L,b);\Lambda_{0,{\rm nov}}^{\Q})$
for any $b \in\CM(L;\Lambda_{0,{\rm nov}}^{\Q})$.
We prove in the proof of
Theorem \ref{Proposition34.25} in Subsection \ref{subsec:Appl1} that $\mathfrak m_2$ also
derives a graded commutative product
$$
\cup_Q : HF((L,b),(L,b);\Lambda_{0,{\rm nov}}^\Q) \otimes
HF((L,b),(L,b);\Lambda_{0,{\rm nov}}^\Q) \to HF((L,b),(L,b);\Lambda_{0,{\rm nov}}^\Q).
$$
In fact, we can prove that the
following stronger statement.

\begin{thm}\label{Proposition34.25}
Let $(N, \omega_N)$ be a closed symplectic manifold.
\begin{enumerate}
\item
The diagonal set of
$(N\times N, -{\rm pr}_1^* \omega_N + {\rm pr}_2^*\omega_N)$ is unobstructed
over $\Lambda_{0,{\rm nov}}^{\Q}$.
\item
There exists a bounding cochain $b$ such that product $\cup_Q$ coincides with the quantum
cup product on $(N,\omega_N)$ under the natural isomorphism
$$HF((L,b),(L,b);\Lambda_{0,{\rm nov}}^{\Q}) \cong H(N;\Bbb Q) \otimes
\Lambda_{0,{\rm nov}}^{\Q}.$$
\end{enumerate}
\end{thm}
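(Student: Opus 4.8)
The plan is to reduce everything to the \emph{reflection (doubling) principle}, which turns holomorphic discs bounding the diagonal into holomorphic spheres in $N$. I would fix a generic $\omega_N$-compatible almost complex structure $J_N$ on $N$ and take $J_M=(-J_N)\oplus J_N$ on $M=N\times N$; since $\tau(x,y)=(y,x)$ interchanges the factors, $J_M$ is $\tau$-anti-invariant and $\omega_M$-compatible. A $J_M$-holomorphic map $u=(u_1,u_2)\colon(D^2,\partial D^2)\to(M,\Delta_N)$ has $u_2$ $J_N$-holomorphic, $u_1$ anti-$J_N$-holomorphic, and $u_1=u_2$ on $\partial D^2$; gluing $u_2$ on $\{|z|\le1\}$ to $z\mapsto u_1(1/\bar z)$ on $\{|z|\ge1\}$ produces a $J_N$-holomorphic sphere, and restriction over the two hemispheres is the inverse construction. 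This gives a homeomorphism between the space of $J_M$-holomorphic maps $(D^2,\partial D^2)\to(M,\Delta_N)$ in class $\beta$ and the space of $J_N$-holomorphic maps $S^2\to N$ in the class $A(\beta)$ with $\omega_N(A(\beta))=\omega_M(\beta)$ and $2c_1(A(\beta))=\mu_{\Delta_N}(\beta)$ (the last equality from doubling the linearized $\delbar$-operator with diagonal boundary condition). After quotienting by reparametrization it identifies $\CM^{\operatorname{main}}_{k+1}(J_M;\Delta_N;\beta)$ with a space over $\CM_{0,k+1}(J_N;N;A(\beta))$ whose fibres have dimension $\dim\operatorname{Aut}(S^2)_{z_0,\dots,z_k}-\dim\operatorname{Aut}(D^2)_{z_0,\dots,z_k}$ --- equal to $2$ for $k=0$, to $1$ for $k=1$, to $0$ for $k=2$ (three distinct points lie on a unique circle), and negative for $k\ge3$ --- and all evaluation maps factor through $\CM_{0,k+1}(J_N;N;A(\beta))$.

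For part (1) I would first observe that $i_*\colon\pi_1(\Delta_N)\to\pi_1(N\times N)$ is the diagonal embedding, hence injective. Since $\frak m_{0,\beta}$ is the pushforward under $ev_0$ of the virtual chain of $\CM^{\operatorname{main}}_1(J_M;\Delta_N;\beta)$, and $ev_0$ factors through $\CM_{0,1}(J_N;N;A(\beta))$ along a map with $2$-dimensional fibres, integration along the fibre kills it (equivalently, the image of $ev_0$ has positive codimension in $\Delta_N$); thus $\frak m_{0,\beta}=0$ for all $\beta$, so $\frak m_0=0$, $b=0$ solves the Maurer--Cartan equation, and $\Delta_N$ is unobstructed over $\Lambda_{0,nov}^{\Q}$. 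The same argument with $k=1$ shows $\frak m_{1,\beta}=0$ for $\beta\ne\beta_0$, so $\frak m_1=\overline{\frak m}_1$ is the classical coboundary. Together with the injectivity of $i_*$ and Theorem D~(D.3) of \cite{fooobook}, the spectral sequence of Chapter 6 of \cite{fooobook} collapses at $E_2$ and yields the module isomorphism $HF((L,b),(L,b);\Lambda_{0,nov}^{\Q})\cong H(N;\Q)\otimes\Lambda_{0,nov}^{\Q}$ for every bounding cochain $b$.

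For part (2), the case $k=2$ has no fibre, so the reflection is a diffeomorphism $\CM^{\operatorname{main}}_3(J_M;\Delta_N;\beta)\cong\CM_{0,3}(J_N;N;A(\beta))$ intertwining the three evaluation maps; I would upgrade this to an isomorphism of spaces with Kuranishi structure, and --- using the orientation analysis of Sections \ref{sec:pre}--\ref{sec:relspin} together with Chapter 8 of \cite{fooobook} --- of oriented Kuranishi spaces up to an explicit sign depending only on $\mu_{\Delta_N}(\beta)$, compatibly with the bubbling strata (disc bubbling on $\Delta_N$ matching sphere bubbling in $N$). This identifies the operation computing $\frak m_{2,\beta}(P_1,P_2)$ with $\pm(P_1*_{A(\beta)}P_2)$, where $*_A$ is the genus-$0$ three-point class-$A$ Gromov--Witten product on $N$, while $\beta=\beta_0$ contributes the constant discs $\CM^{\operatorname{main}}_3(J_M;\Delta_N;\beta_0)\cong\Delta_N$ with all $ev_i$ the identity, i.e.\ the classical cup product. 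Summing over $\beta$ with weights $T^{\omega_M(\beta)}e^{\mu_{\Delta_N}(\beta)/2}=T^{\omega_N(A(\beta))}e^{c_1(A(\beta))}$ then identifies $\frak m_2$, hence $\cup_Q$, with the quantum cup product $*_Q$ of $N$. To pin down the sign and transport the statement through the module isomorphism I would pass to the Poincar\'e pairing: on $HF((L,b),(L,b);\Lambda_{0,nov}^{\Q})$ it matches, under the isomorphism above, the ordinary intersection pairing on $H(N;\Q)\otimes\Lambda_{0,nov}^{\Q}$ (no quantum correction, as $\frak m_1=\overline{\frak m}_1$), and $\langle\frak m_2(a,b),c\rangle$ is the signed count of $J_M$-holomorphic discs on $\Delta_N$ with three marked points on $a,b,c$, which by the reflection equals $\sum_{A}\operatorname{GW}_{0,3,A}(a,b,c)\,T^{\omega_N(A)}e^{c_1(A)}=\langle a*_Q b,c\rangle_N$; nondegeneracy of the pairing then forces $\cup_Q=*_Q$. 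For a general bounding cochain $b$ one repeats the computation, using that the induced product on cohomology is independent of $b$.

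The hard part will be upgrading the set-theoretic reflection to an identification of \emph{virtual fundamental chains}: matching Kuranishi charts, obstruction bundles and coordinate changes on $\CM^{\operatorname{main}}_{k+1}(J_M;\Delta_N;\beta)$ with those on $\CM_{0,k+1}(J_N;N;A(\beta))$ uniformly over all bubbling and boundary strata, and --- this is exactly where the orientation theory developed in this paper enters --- tracking the \emph{orientations} through the reflection, the presence of the anti-holomorphic factor being what makes the sign bookkeeping delicate. A secondary point is checking that passage to Floer cohomology, and to a general bounding cochain, preserves the clean chain-level identity for $\frak m_2$.
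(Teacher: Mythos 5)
Your geometric starting point --- the reflection (doubling) identification of $J_M$-holomorphic discs on $\Delta_N$ with $J_N$-holomorphic spheres in $N$ --- is exactly the one the paper uses, and your observation that the map $I(v)$ for $k=2$ is a diffeomorphism onto the sphere moduli with three marked points reproduces the paper's $I:\mathcal M_3^{\text{\rm sph,reg}}(J_N;\rho)\to\mathcal M_3^{\text{\rm main,reg}}(J_{N\times N};\rho)$. But there are two genuine gaps.

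First, your derivation of part (1) does not go through. You argue that since $ev_0$ on $\CM^{\rm main}_1(J_M;\beta)$ factors through a map to the sphere moduli with a $2$-dimensional fibre, the pushforward chain $\frak m_{0,\beta}(1)$ is degenerate and vanishes, and that the same fibre-dimension count ``kills'' $\frak m_{1,\beta}$ for $\beta\neq0$, so that $\frak m_1=\overline{\frak m}_1$. This cannot be made rigorous without choosing Kuranishi multi-sections compatible with the projection to sphere moduli, and such a compatible choice is not available: a boundary node of a disc doubles to a \emph{circle} of nodes on the sphere, which is a real-codimension-$1$ phenomenon and does not correspond to any stratum of the Gromov compactification of $\CM_{0,k+1}(J_N;A(\beta))$. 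The projection is therefore not even defined on the compactification, and the perturbation the paper actually uses is one compatible with the anti-symplectic involution $\tau$, not with the forgetful map to spheres. Indeed the paper explicitly records (before introducing the operator $\overline{\frak p}_{1,\beta}$) that ``even in the case when $P$ is a singular cycle $\frak m_1(P)$ may not be zero. In other words the identity map $(C(\Delta_N;\Lambda_{0,nov}),\partial)\to (C(\Delta_N;\Lambda_{0,nov}),\frak m_1)$ is \emph{not} a chain map.'' This contradicts your claim $\frak m_1=\overline{\frak m}_1$. The paper instead proves (1) from Theorem \ref{Theorem34.20}: choose $\tau$-invariant multi-sections, deduce the parity identity $\frak m_{0,\tau_*\beta}(1)=-\frak m_{0,\beta}(1)$, pair up $\beta$ with $\tau_*\beta$ in the sum for $\frak m_0(1)$, and conclude $\frak m_0(1)=0$; the non-spin case requires in addition the sign analysis of Lemma \ref{changespin} and Proposition \ref{Proposition44.16}.

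Second, for part (2) you correctly flag that ``the hard part will be upgrading the set-theoretic reflection to an identification of virtual fundamental chains,'' but this is not a technical afterthought --- it is the main content of the proof, and your proposal contains no substitute for it. Because disc bubbling does \emph{not} match sphere bubbling, the module isomorphism $H(N;\Q)\otimes\Lambda_{0,nov}\cong HF$ has to be implemented by the nontrivial chain map $\frak I=\operatorname{id}+\sum_{\beta\neq0}P(\beta)T^{\omega(\beta)}e^{\mu(\beta)/2}$ built from the operator $\overline{\frak p}_{1,\beta}=p_{2*}\circ\frak p_{1,\beta}$, and the identity $\pm\langle \frak m_2(\frak I P_1,\frak I P_2),\frak I P_0\rangle=\langle PD[P_1]*PD[P_2],PD[P_0]\rangle$ then has to be checked. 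The paper does this by introducing the compactified moduli $\mathcal M^{\text{\rm sph}}_3(J_N;\rho;\mathcal C)$ of stable spheres equipped with an admissible system of circles (the doubles of boundary circles), constructing the doubling map $\frak I$ on these enlarged moduli, showing it is an isomorphism off codimension $\geq 2$ (Lemma \ref{Inotiso}), pulling back Kuranishi data, and then contracting the extra circle data through the two forgetful maps $\frak F_1,\frak F_2$ (Lemmas \ref{forgettypeI}, \ref{forgettypeII}), using the $\tau$-involution that reverses Type II circle orientations to cancel the extra contributions. None of this is in your outline; the orientation theory of Sections \ref{sec:relspin}--\ref{sec:Proofth} is an ingredient, but it does not by itself resolve the compactification mismatch.
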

\par
If we use Corollary 3.8.43 \cite{fooobook1},
we can easily find that the diagonal set is
{\it weakly unobstructed} in the sense of Definition 3.6.29 \cite{fooobook1}. See also Remark \ref{Remark44.25}.
We also note that for the case of diagonals, $\mathfrak m_k$ $(k\ge 3)$ define
a quantum (higher) Massey product.
It was discussed formally in \cite{fukaya;mhtpy}.
We have made it rigorous here:
\begin{cor}\label{qMassey}
For any closed symplectic manifold $(N, \omega_N)$,
there exists a filtered $A_{\infty}$ structure
$\mathfrak m_k$
on $H(N;\Lambda_{0,{\rm nov}}^{\Q})=H(N;\Bbb Q) \otimes
\Lambda_{0,{\rm nov}}^{\Q}$ such that

\begin{enumerate}
\item $\mathfrak m_0 = \mathfrak m_1 =0$;
\item $\cup_Q$ defined by (\ref{qproduct}) using
$\mathfrak m_2$ coincides with the quantum cup product;
\item the $\R$-reduction
$(H(N;\Q), \overline{\mathfrak m})\otimes_{\Q} \R$
of the filtered $A_{\infty}$ algebra is homotopy equivalent to the de Rham complex of $N$ as an $A_{\infty}$ algebra,
where $(H(N;\Q), \overline{\mathfrak m})$ is
the reduction of the coefficient of $(H(N;\Lambda_{0,{\rm nov}}^{\Q}),\mathfrak m)$
to $\Q$.
\end{enumerate}
\end{cor}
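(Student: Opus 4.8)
The plan is to deduce Corollary \ref{qMassey} from Theorem \ref{Proposition34.25} together with the general $A_\infty$ machinery of \cite{fooobook}, transporting structure along the isomorphism $HF((L,b),(L,b);\Lambda_{0,nov}^\Q)\cong H(N;\Q)\otimes\Lambda_{0,nov}^\Q$. First I would invoke Theorem \ref{Proposition34.25}(1) to produce a bounding cochain $b\in\CM(\Delta_N;\Lambda_{0,nov}^\Q)$, so that the $b$-deformed filtered $A_\infty$ algebra $(C(\Delta_N;\Lambda_{0,nov}^\Q),\frak m^b)$ is defined and has $\frak m_0^b=0$. Since $i_*:H_*(\Delta_N;\Q)\to H_*(N\times N;\Q)$ is injective, the spectral sequence of Chapter 6 \cite{fooobook} degenerates at $E_2$ (Theorem D.3 \cite{fooobook}), which forces $\frak m_1^b=0$ on the level of cohomology after a suitable homotopy; equivalently, by the homological perturbation / canonical model theorem (Theorem 5.4.2 or the $A_\infty$-Whitehead theorem in \cite{fooobook}) one transfers $(C(\Delta_N;\Lambda_{0,nov}^\Q),\frak m^b)$ to a homotopy equivalent filtered $A_\infty$ structure $\frak m=\{\frak m_k\}$ on the cohomology $H(\Delta_N;\Lambda_{0,nov}^\Q)=H(N;\Q)\otimes\Lambda_{0,nov}^\Q$ with $\frak m_0=0$ and $\frak m_1=0$. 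This gives (1).

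Next, (2) is essentially a restatement of Theorem \ref{Proposition34.25}(2): the product $\cup_Q$ defined by $\frak m_2$ via (\ref{qproduct}) is, by construction of the transferred structure, the product induced on cohomology by $\frak m_2^b$, which Theorem \ref{Proposition34.25}(2) identifies with the quantum cup product on $(N,\omega_N)$ under the natural isomorphism. So I would simply note that the identification of the degree-$2$ operation is preserved by the homotopy transfer (the induced product on cohomology is a homotopy invariant of the $A_\infty$ algebra), and cite Theorem \ref{Proposition34.25}(2).

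For (3) I would argue as follows. Tensoring the coefficient ring down to $\R$ and then reducing the Novikov parameter, one gets the $\R$-reduction $(H(N;\Q),\overline{\frak m})\otimes_\Q\R$ of the filtered $A_\infty$ algebra. On the other hand, Theorem X in Chapter 1 \cite{fooobook} (the de Rham version of the construction, valid over $\R$) shows that the filtered $A_\infty$ algebra attached to $\Delta_N$ can be realized, after base change to $\R$, by the de Rham complex $(\Omega^*(N),d,\wedge,\dots)$ — more precisely, the $A_\infty$ algebra $(C(\Delta_N),\frak m)\otimes_\Q\R$ is homotopy equivalent as a (filtered) $A_\infty$ algebra to $\Omega^*(N)$ with its de Rham differential and wedge product (and vanishing higher operations). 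Combining this with the homotopy equivalence from the canonical model theorem — homotopy equivalence of filtered $A_\infty$ algebras is an equivalence relation — gives that $(H(N;\Q),\overline{\frak m})\otimes_\Q\R$ is homotopy equivalent to the de Rham complex of $N$ as an $A_\infty$ algebra, which is (3). (Here one uses that the $\R$-reduction of a transferred model of $(C(\Delta_N),\frak m)$ is a transferred model of the $\R$-reduction, i.e. base change commutes with homological perturbation.)

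The main obstacle I expect is bookkeeping rather than conceptual: one must be careful that all the steps — producing $b$, degenerating the spectral sequence to kill $\frak m_1$, transferring to the canonical model on cohomology, base-changing to $\R$, and matching with the de Rham model of Theorem X — are compatible, in particular that the homotopy equivalences can be chosen so that the identification of $\frak m_2$ with the quantum cup product survives all of them, and that the filtered (Novikov-adic) convergence needed for the canonical model theorem holds in this setting (which it does, since $\frak m^b_0=0$). None of the individual steps is hard given \cite{fooobook}, but assembling them into a single filtered $A_\infty$ structure on $H(N;\Q)\otimes\Lambda_{0,nov}^\Q$ with all three properties simultaneously requires threading the homotopy-transfer data through carefully. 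Thus the real content is Theorem \ref{Proposition34.25}, and Corollary \ref{qMassey} is its formal packaging together with standard homological algebra of filtered $A_\infty$ algebras from \cite{fooobook}.
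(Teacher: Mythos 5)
Your argument is correct and is essentially the paper's own proof: the paper transfers the chain-level filtered $A_\infty$ algebra for $\Delta_N\subset N\times N$ to $H(N;\Lambda_{0,nov}^\Q)$ via the canonical-model theorem (Theorem W of \cite{fooobook}, see also Theorem A), then cites Theorem \ref{Proposition34.25} for (1)--(2) and Theorem X of \cite{fooobook} for (3). You merely unpack the details the paper leaves implicit -- unobstructedness supplies a bounding cochain, $E_2$-degeneration of the spectral sequence forces $\frak m_1=0$ on the transferred model, and homotopy invariance of the induced product gives (2) -- which is exactly what the paper's terser citations encapsulate.
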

\par\smallskip
The paper is organized as follows:
In Section \ref{sec:pre}, we briefly recall some basic material
on the moduli space of stable maps from a bordered Riemann surface
of genus $0$.
In Section \ref{sec:relspin},
we also recall from \cite{fooobook2} the notion of relative spin structure, its stable
conjugacy class
and the orientation of the moduli space of pseudo-holomorphic discs.
We describe how the stable conjugacy class of relative spin structure determines an orientation on the moduli space.
We introduce here the notion of $\tau$-relative spin structure
for an anti-symplectic involution $\tau : M \to M$, and
also give some examples which are relatively spin but not
$\tau$-relatively spin Lagrangian submanifolds.
In Section \ref{sec:inducedtau}, we define the map
$\tau_{\ast}$ on the moduli space induced by
$\tau$ and study how the induced map $\tau_{\ast}$
on various moduli spaces
changes or preserves the orientations.
Assuming Theorem \ref{thm:fund} holds, we prove Theorem \ref{withmarked} in this section.
The fundamental theorems Theorem \ref{thm:fund}
and Theorem \ref{withsimplex} are proved in
Section \ref{sec:Proofth}.
Section \ref{sec:Appl} is devoted to
various applications of the results obtained above
to Lagrangian Floer cohomology.
After a short review of the general story of Lagrangian intersection
Floer theory laid out in \cite{fooobook1} and \cite{fooobook2}, we prove
Theorem \ref{Theorem34.20}, Corollary \ref{Corollary34.22},
Corollary \ref{TheoremN} in Subsection \ref{subsec:Appl1}.
Subsection \ref{proof1.9}  is devoted to the proofs of Theorem \ref{Proposition34.25} and Corollary \ref{qMassey}.
In particular, we introduce stable maps with admissible system of circles and study their moduli spaces in Subsection \ref{6.4}.   
In Subsection \ref{subsec:Appl2},
we calculate Floer cohomology of $\R P^{2n+1}$ over $\Lambda_{0,{\rm nov}}^{\Z}$
coefficients by studying orientations in detail.
The calculation shows that the Floer cohomology of $\R P^{2n+1}$
over $\Lambda_{0,{\rm nov}}^{\Z}$ is not isomorphic to
the usual cohomology.
This result contrasts with Oh's earlier calculation \cite{Oh93} of the Floer
cohomology of real projective spaces over $\Z_2$ coefficients,
where the Floer cohomology is isomorphic to the usual cohomology
over $\Z_2$. In the first two subsections of Appendix, we briefly recall form \cite {fooobook2} the definition of orientation on the space with Kuranishi structure
and the notion of group action on a space with Kuranishi structure.
In the third subsection of Appendix, we present how to promote filtered $A_{n,K}$-structures keeping the invariance under the involution.
\par
\medskip
Originally, the content of this paper appeared as a part of Chapter 8
in the preprint version \cite{fooo06} of the books
\cite{fooobook1}, \cite{fooobook2}, and
was intended to be published in a part of the book. However, due to the publisher's page
restriction on the AMS/IP Advanced Math Series, we took out two chapters, Chapter 8 and Chapter 10 from
the preprint \cite{fooo06} and published the book without
those two chapters.
The content of this paper is based on the parts extracted from
Chapter 8 (Floer theory of Lagrangian submanifolds over $\Z$)
and Chapter 9 (Orientation) in the preprint \cite{fooo06}.
We also note that this is a part of the paper cited as [FOOO09I] in the books \cite{fooobook1}, \cite{fooobook2}.
The half of the remaining part of Chapter 8 in \cite{fooo06} is published as
\cite{fooointeger}.
\par
The authors would like to thank Cheol-Hyun Cho for some helpful discussion
concerning Theorem \ref{Proposition34.25}. They also thank
Mohammad Tehrani, Aleksey Zinger and also the referee for pointing out an error
in Lemma \ref{oripreversing}, Lemma \ref{Lemma44.28}
in the first draft, respectively, and thank anonymous referees for their careful reading and for their comments
that help to improve presentation of the paper.
\par
\par
Kenji Fukaya is supported partially by JSPS Grant-in-Aid for Scientific Research
No. 23224002 and NSF grant \# 1406423, Yong-Geun
Oh by IBS project \# IBS-R003-D1 in Korea and by US NSF grant \#  0904197, 
Hiroshi Ohta by JSPS Grant-in-Aid
for Scientific Research Nos.19340017, 23340015, 15H02054 and Kaoru Ono by JSPS Grant-in-Aid for
Scientific Research, Nos. 18340014, 21244002, 26247006.

\section{Preliminaries}
\label{sec:pre}

In this section, we prepare some basic notations we use in this paper.
We refer Section 2.1 in \cite {fooobook1} and Section A1 in \cite{fooobook2} for more detailed explanation
of moduli spaces and the notion of Kuranishi structure, respectively.
Let $L$ be an oriented compact Lagrangian submanifold
of $(M,\omega)$. Take an $\omega$-compatible almost complex structure $J$ on $M$.
We recall Definition 2.4.17 in \cite{fooobook1} where we introduce
the relation $\sim$ in $\pi_2(M,L)$:
We define $\beta \sim \beta'$ in $\pi_2(M,L)$ if and only if
$
\omega(\beta)=\omega(\beta')$
and
$
\mu_L(\beta) =\mu_L(\beta').
$
We denote the quotient group by
\begin{equation}\label{eq:Pi}
\Pi(L) =\pi_2(M,L)/\sim.
\end{equation}
This is an abelian group.
Let $\beta \in \Pi(L)$.
{\it A stable map from a bordered Riemann surface of genus zero with
$(k+1)$ boundary marked points and $m$ interior marked points}
is a pair $((\Sigma, \vec{z}, {\vec {z}}^+), w)=((\Sigma, z_0, \dots , z_k, z_1^{+}, \dots, z_{m}^+), w)$
such that $(\Sigma, \vec{z}, \vec{z}^+)$ is a bordered
semi-stable curve of genus zero with $(k+1)$ boundary
marked points and $m$ interior marked points
and $w:(\Sigma, \partial \Sigma) \to (M,L)$ is
a $J$-holomorphic map such that its automorphism group, i.e.,
the set of biholomorphic maps $\psi : \Sigma \to \Sigma$
satisfying $\psi(z_i)=z_i, \psi(\vec{z_i}\,^+) =\vec{z_i}\,^+$
and $w\circ \psi =w$ is finite.
We say that $((\Sigma, \vec{z}, {\vec {z}}\,^+), w)$ is isomorphic to
$((\Sigma', \vec{z}\,', {\vec {z}}\,^{+\prime}), w')$,
if there exists a bi-holomorphic map
$\psi : \Sigma \to \Sigma'$ satisfying
$\psi(z_i)=z_i', \psi(\vec{z_i}\,^{+}) =\vec{z_i}\,^{+\prime}$
and $w'\circ \psi =w$. We denote by $\CM_{k+1, m}(J;\beta)$
the set of the isomorphism classes of stable maps in class $\beta \in \Pi(L)$ from a bordered
Riemann surface of genus zero with $(k+1)$ boundary
marked points and $m$ interior marked points.
When the domain curve $\Sigma$ is a smooth 2-disc $D^2$, we
denote the corresponding subset by
$\CM^{\rm reg}_{k+1,m}(J;\beta)$.
We note that $\CM_{k+1, m}(J;\beta)$ is a compactification of
$\CM^{\rm reg}_{k+1,m}(J;\beta)$.
The virtual real dimension is
$$
\dim_{\R} \CM_{k+1, m}(J;\beta) =
n + \mu_L(\beta)+k+1 +2m -3,
$$
where $n=\dim L$ and $\mu_L(\beta)$ is the Maslov index which
is an even integer for an oriented Lagrangian submanifold $L$.
When we do not consider interior marked points,
we denote them by $\CM_{k+1}(J;\beta)$,
$\CM_{k+1}^{\rm reg}(J;\beta)$, and
when we do not consider any marked points, we simply denote
them by $\CM(J;\beta)$, $\CM^{\rm reg}(J;\beta)$ respectively.
Furthermore, we define a component $ \CM^{\rm main}_{k+1,m}(J;\beta) $ of $\CM_{k+1,m}(J;\beta)$
by
$$
\aligned
 \CM^{\rm main}_{k+1,m}(J;\beta)
=\{
((& \Sigma, \vec{z}, {\vec {z}}\,^+), w) \in  \CM_{k+1, m}(J;\beta)
 ~\vert~ \\
& {\text{$(z_0,z_1,\dots , z_k)$
is in counter-clockwise cyclic order on $\partial \Sigma $}}
\},
\endaligned$$
which we call the {\it main component}.
We define $\CM_{k+1,m}^{\rm main, reg}(J;\beta)$,
$\CM_{k+1}^{\rm main}(J;\beta)$ and
$\CM_{k+1}^{\rm main, reg}(J;\beta)$
in a similar manner.
\par
We have a Kuranishi structure on $\CM_{k+1, m}(J;\beta)$
so that the evaluation maps
\begin{equation}\label{eq:ev}
\aligned
ev_i ~ & :~ \CM_{k+1, m}(J;\beta) \longrightarrow
L, \quad i=0,1,\dots ,k\\
ev_j^+ ~ & :~\CM_{k+1, m}(J;\beta) \longrightarrow
M, \quad j=1,\dots ,m
\endaligned
\end{equation}
defined by $ev_i((\Sigma, \vec{z}, {\vec {z}}\,^+), w))=w(z_i)$
and
$ev_j^+((\Sigma, \vec{z}, {\vec {z}}\,^+), w))=w(z_j^+)$
are weakly submersive.
(See Section 5 \cite{FO} and Section A1.1 \cite{fooobook2} for
the definitions of Kuranishi structure
and weakly submersive map.)
Then for given smooth singular simplicies $(f_i : P_i \to L)$ of $L$ and
$(g_j : Q_j \to M)$ of $M$, we can define the fiber product in the sense of Kuranishi structure:
\begin{equation}\label{withsimplicies}
\aligned
& \CM_{k+1,m}(J;\beta; \vec{Q},\vec{P}) \\
:= & \CM_{k+1,m}(J;\beta) {}_{(ev_1^+, \dots, ev_m^+,ev_1,\dots,ev_k)} \times _{g_1 \times \dots \times f_k}\left(\prod_{j=1}^{m} Q_j\times
\prod_{i=1} ^{k}P_i \right).
\endaligned
\end{equation}
See Section A1.2 \cite{fooobook2} for the definition of fiber product
of Kuranishi structures.
We define $\CM_{k+1,m}^{\rm main}(J;\beta; \vec{Q},\vec{P})$
in a similar way.
When we do not consider the interior marked points,
we denote the corresponding moduli spaces by
$\CM_{k+1}(J;\beta; \vec{P})$ and
$\CM_{k+1}^{\rm main}(J;\beta; \vec{P})$, respectively.
Namely,
\begin{equation}\label{withP}
\CM_{k+1}(J;\beta; \vec{P})
:= \CM_{k+1}(J;\beta) {}_{(ev_1,\dots,ev_k)}
\times_{f_1 \times \dots \times f_k}
\left(\prod_{i=1}^{k}P_i \right).
\end{equation}
In Subsection \ref{subsec:orimain}, we describe
the orientations on these spaces precisely.

\section{$\tau$-relative spin structure and orientation}\label{sec:relspin}
\subsection{Review of relative spin structure and orientation}\label{subsec:relspin}
It is known that the moduli space of pseudo-holomorphic discs
with Lagrangian boundary condition is not always orientable.
To discuss orientability and orientation of the moduli space,
we first recall the notion of relative spin structure
and its stable conjugacy class introduced in \cite{fooobook2}
and also briefly review how the stable conjugacy class of relative
spin structure determines an orientation of the moduli space of
pseudo-holomorphic discs with Lagrangian boundary condition.
See Section 8.1 \cite{fooobook2} for more details.
See also V. de Silva's work \cite{Si}.
\begin{defn}\label{def:relspin}
An oriented Lagrangian submanifold $L$ of $M$ is
called {\it relatively spin} if there exists a class
$st \in H^2(M;\Z_2)$ such that $st\vert _L = w_2(TL)$.
\par
A pair of oriented Lagrangian submanifolds $(L^{(1)},L^{(0)})$ is called
{\it relatively spin}, if there exists a class $st\in H^2(M;\Z_2)$ satisfying
$st |_{L^{(i)}}=w_2(TL^{(i)})$ ($i = 0,1$)
simultaneously.
\end{defn}
\begin{rem}\label{rem:pin}
Using the relative pin structure, J. Solomon \cite{Sol} generalized our results
about the orientation problem studied in \cite{fooobook2}
to the case of non-orientable Lagrangian submanifolds.
\end{rem}
Let $L$ be a relatively spin Lagrangian submanifold of $M$.
We fix a triangulation of $M$ such that $L$ is a subcomplex.
A standard obstruction theory yields that we can take an oriented real vector bundle $V$ over the $3$-skeleton
$M_{[3]}$ of $M$ which satisfies $w_2(V)=st$.
Then $w_2(TL\vert_{L_{[2]}} \oplus V\vert_{L_{[2]}})=0$ and so
$TL \oplus V$ carries a spin structure on the $2$-skeleton $L_{[2]}$ of $L$.
\begin{defn}\label{def:relspinstr}
The choice of an orientation on $L$,
a cohomology class
$st\in H^2(M;\Z_2)$,
an oriented real vector bundle
$V$ over the $3$-skeleton $M_{[3]}$ satisfying
$w_2(V)=st$ and a spin structure $\sigma$ on
$(TL \oplus V)\vert_{L_{[2]}}$ is called a {\it relative spin structure} on $L \subset M$.
\par
 A {\it relative spin structure} on the pair
$(L^{(1)},L^{(0)})$ is the choice of orientations on $L^{(i)}$, a cohomology class
$st\in H^2(M;\Z_2)$,
an oriented real vector bundle $V$ over the 3 skeleton $M_{[3]}$ satisfying
$w_2(V)=st$
and spin structures on $(TL^{(i)} \oplus V)|_{L^{(i)}_{[2]}}$
$(i=0,1)$.
\end{defn}
In this paper we fix an orientation on $L$.
If $L$ is spin, we have an associated relative spin structure for each spin structure on $L$ as follows:
Take $st=0$ and $V$ is trivial. Then the spin structure
on $L$ naturally induces the spin structure on $TL \oplus V$.

Definition \ref{def:relspinstr} depends on the choices of $V$ and the triangulation of $M$.
We introduce an equivalence relation called {\it stable conjugacy}
on the set of relative spin structures so that
the stable conjugacy class is independent of such choices.
\begin{defn}\label{def:stablyconj}
We say that two relative spin structures
$(st_i,V_i,\sigma_i)$ $(i=1,2)$ on $L$ are
{\it stably conjugate}, if there exist integers $k_i$
and an orientation preserving bundle isomorphism
$\varphi: V_1 \oplus \R^{k_1} \to V_2 \oplus \R^{k_2}$
such that by $1\oplus \varphi\vert_{L_{[2]}}: (TL \oplus V_1)_{L_{[2]}} \oplus \R^{k_1}
\to (TL \oplus V_2)_{L_{[2]}} \oplus \R^{k_2}$, the spin structure $\sigma_1 \oplus 1$
induces the spin structure $\sigma_2 \oplus 1$.
\end{defn}
Here $\R^{k_i}$ denote trivial vector bundles of rank $k_i$
($i=1,2$).
We note that
in Definition \ref{def:stablyconj}, we still fix a triangulation $\mathfrak T$ of $M$ such that $L$ is a subcomplex.
However, by Proposition 8.1.6 in \cite{fooobook2},
we find
that the stable conjugacy class of relative spin structure
is actually independent of the choice of a triangulation of
$M$ as follows:
We denote by $\operatorname{Spin}(M,L;\mathfrak T)$
the set of all the stable conjugacy classes of relative spin structures on $L\subset M$.
\begin{prop}[Proposition 8.1.6 in \cite{fooobook2}]\label{prop:8.1.6}
$(1)$ There is a simply transitive action of $H^2( M, L; \Z_2)$
on $\operatorname{Spin}(M,L;\mathfrak T)$.
\par
$(2)$ For two triangulations $\mathfrak T$ and $\mathfrak T'$ of $M$ such that $L$ is a subcomplex, there exists a
canonical isomorphism
$\operatorname{Spin}(M,L;\mathfrak T) \cong \operatorname{Spin}(M,L;\mathfrak T')$ compatible with the above action.
\end{prop}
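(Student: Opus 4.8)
The plan is to run the relative analogue of the classical fact that spin structures on a space, when they exist, form a torsor over $H^1(\,\cdot\,;\Z_2)$. Everything of interest lives on the $2$- and $3$-skeleta, so the argument is obstruction theory built on the classifying spaces $B\mathrm{Spin}$, $B\mathrm{SO}$ and $K(\Z_2,1)$, and it rests on two facts used repeatedly. First: since $\pi_i(B\mathrm{Spin})=0$ for $i\le 3$ (the first nonzero homotopy group is $\pi_4=\Z$), an oriented stable vector bundle over the $3$-skeleton $M_{[3]}$ is determined up to stable isomorphism by $w_2\in H^2(M_{[3]};\Z_2)=H^2(M;\Z_2)$, and relative to a subcomplex such as $L_{[2]}$ the difference of two such bundles agreeing there is measured by $\Z_2$-cohomology of the pair in degree $\le 2$. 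Second: on a fixed oriented bundle over a $2$-complex such as $L_{[2]}$, the set of spin structures, when non-empty, is a simply transitive $H^1(L;\Z_2)$-set, compatibly with stabilization.

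For part (1) I would first attach to any ordered pair of relative spin structures $s=(st,V,\sigma)$, $s'=(st',V',\sigma')$ a difference class $d(s,s')\in H^2(M,L;\Z_2)$, and then show $s\mapsto d(s_0,s)$ is a bijection onto $H^2(M,L;\Z_2)$ for any fixed $s_0$. To build $d$: stabilize $V,V'$ to a common large rank; because $st|_L=st'|_L=w_2(TL)$ the bundles $(TL\oplus V)|_{L_{[2]}}$ and $(TL\oplus V')|_{L_{[2]}}$ have the same $w_1$ and $w_2$ over the $2$-complex $L_{[2]}$, hence are isomorphic rel the $TL$ summand; fixing such an isomorphism and comparing $\sigma$ with the pullback of $\sigma'$ produces an element of $H^1(L;\Z_2)$. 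Meanwhile $st'-st\in\ker\bigl(H^2(M;\Z_2)\to H^2(L;\Z_2)\bigr)$, which by the long exact sequence of the pair lifts to $H^2(M,L;\Z_2)$ with indeterminacy the image of $\delta\colon H^1(L;\Z_2)\to H^2(M,L;\Z_2)$. The crucial point is that the spin discrepancy of the previous sentence cancels precisely this $\delta$-indeterminacy, singling out one lift $d(s,s')$; I would then check, using the coherence built into Definition \ref{def:stablyconj}, that $d(s,s')$ is independent of the stabilization, of the chosen bundle isomorphism over $L_{[2]}$, and of the representative of $V'$ in its stable class. Granting this, one verifies in turn that $d(s,s')=0$ iff $s$ and $s'$ are stably conjugate, that $d(s,s')+d(s',s'')=d(s,s'')$, and that for every $x$ there is an $s'$ with $d(s_0,s')=x$ — the last constructed concretely, using the image $j^{*}x\in H^2(M;\Z_2)$ to alter $st$ (and with it $V$ over $M_{[3]}$ rel $L_{[2]}$) and the residual $H^1(L;\Z_2)$ to alter $\sigma$. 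Together these say that $d(s_0,\,\cdot\,)$ identifies $\operatorname{Spin}(M,L;\frak T)$ with an $H^2(M,L;\Z_2)$-torsor, which is (1).

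For part (2) I would invoke the standard fact that any two triangulations of $M$ with $L$ a subcomplex admit a common subdivision, and any two common subdivisions a further one; this reduces the statement to comparing a triangulation $\frak T$ with a subdivision $\frak T'$. Then the $\frak T$-skeleton $M_{[3]}$ sits inside the $\frak T'$-skeleton $M'_{[3]}$ and $L_{[2]}\subset L'_{[2]}$, so restricting $V$ and $\sigma$ sends a relative spin structure for $\frak T'$ to one for $\frak T$. Every oriented bundle on $M_{[3]}$ extends over the cells of $M'_{[3]}$ it does not meet (all of dimension $\le 3$, so there is no obstruction), and every spin structure extends similarly, both compatibly with stabilization; from this one checks that restriction descends to a bijection on stable conjugacy classes. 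That the bijection intertwines the $H^2(M,L;\Z_2)$-actions follows because the difference classes $d(\,\cdot\,,\cdot\,)$ on both sides are computed from $\Z_2$-cellular cochains in degrees $\le 3$, which the subdivision matches after passing to cohomology; independence of the auxiliary common subdivision follows by applying the same argument twice. This yields the canonical action-compatible isomorphism $\operatorname{Spin}(M,L;\frak T)\cong\operatorname{Spin}(M,L;\frak T')$.

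The step I expect to be the main obstacle is the well-definedness of $d(s,s')$ in part (1): one must play the $H^1(L;\Z_2)$-ambiguity in the choice of bundle isomorphism over $L_{[2]}$ against the $H^1(L;\Z_2)$-worth of lifts of $st'-st$ to the pair and show, via the long exact sequence of $(M,L)$ together with the stabilization clause of Definition \ref{def:stablyconj}, that their combination is canonical. Once that is settled, the remaining verifications in (1) and all of (2) are routine bookkeeping with obstruction theory over the $2$- and $3$-skeleta.
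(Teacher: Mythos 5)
You should note first that the present paper does not actually prove Proposition~\ref{prop:8.1.6}; it is quoted from the book \cite{fooobook} without proof, so there is no in-paper argument to compare against. Your obstruction-theoretic framework (torsor over $H^2(M,L;\Z_2)$ via a difference class; common subdivisions for part (2)) is the standard and correct approach.

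The genuine gap is in the construction of the difference class $d(s,s')$, precisely the step you flag as ``the main obstacle.'' You form the spin discrepancy by choosing a stable isomorphism $\phi$ of $(TL\oplus V)|_{L_{[2]}}$ with $(TL\oplus V')|_{L_{[2]}}$, and \emph{separately} you choose a lift of $st'-st$ to $H^2(M,L;\Z_2)$. As you describe them these choices are independent, so the $H^1(L;\Z_2)$-ambiguity of $\phi$ and the $\operatorname{im}(\delta)$-ambiguity of the lift are uncorrelated and nothing forces a cancellation. The missing idea is that the lift must be \emph{produced by $\phi$ itself}: regard $V,V'$ as maps $M_{[3]}\to BSO$, so that $st,st'$ are the composites with $w_2\colon BSO\to K(\Z_2,2)$; the isomorphism $\phi$ over $L_{[2]}$ is precisely a homotopy between $st|_{L_{[2]}}$ and $st'|_{L_{[2]}}$, and a pair of maps to $K(\Z_2,2)$ together with a specified homotopy of their restrictions to $L$ has a canonical relative difference class $\ell(\phi)\in H^2(M,L;\Z_2)$ lifting $st'-st$. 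Changing $\phi$ by $\alpha\in H^1(L;\Z_2)$ changes $\ell(\phi)$ by $\delta\alpha$ and simultaneously changes the spin discrepancy $\beta(\phi)\in H^1(L;\Z_2)$ by $\alpha$, so $d(s,s'):=\ell(\phi)+\delta\beta(\phi)$ is independent of $\phi$; this coupling is what makes your intended cancellation precise. Note also that when $st=st'$ the choice of isomorphism $V\cong V'$ over all of $M_{[3]}$ contributes an ambiguity in $\beta$ lying in $\operatorname{im}\bigl(H^1(M;\Z_2)\to H^1(L;\Z_2)\bigr)=\ker\delta$, consistent with the stable-conjugacy relation. With the coupling in place, the remaining verifications (cocycle property of $d$, surjectivity, and part (2) via common subdivisions) go through as you outline; without it, the well-definedness of $d$ is not established and the torsor structure cannot be extracted.
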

In particular, if a spin structure of $L$ is given, there is a canonical
isomorphism $\operatorname{Spin}(M,L;\mathfrak T) \cong H^2(M,L;\Z_2)$.
Thus, hereafter,
we denote by $\operatorname{Spin}(M,L)$
the set of the stable conjugacy classes of relative spin structures on $L$
without specifying any triangulation of $M$.

Since the class $st$ is determined by $V$, we
simply write the stable conjugacy class of relative spin structure
as $[(V,\sigma)]$ where $\sigma$ is a spin structure on
$(TL \oplus V)\vert_{L_{[2]}}$.

The following theorem is proved in Section 8.1 \cite{fooobook2}.
We denote by $\widetilde{\CM}^{\rm reg}(J;\beta)$
the set of all $J$-holomorphic maps from $(D^2,\partial D^2)$
to $(M,L)$ representing a class $\beta$.
We note that $\CM^{\rm reg}(J;\beta)={\widetilde{\CM}}^{\rm reg}(J;\beta)/PSL(2,\R)$.
\begin{thm}\label{thm:ori}
If $L$ is a relatively spin Lagrangian submanifold,
$\widetilde{\CM}^{\rm reg}(J; \beta)$ is orientable. Furthermore, the choice
of stable conjugacy class of relative spin structure on $L$ determines an orientation on
$\widetilde{\CM}^{\rm reg}(J;\beta)$ canonically for all $\beta \in \pi_2(M,L)$.
\end{thm}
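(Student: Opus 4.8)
The plan is to reformulate orientability of $\widetilde{\CM}^{\rm reg}(J;\beta)$ as the triviality of a determinant line bundle and then use the stable conjugacy class of relative spin structure to pick out a coherent trivialization. At a map $w \in \widetilde{\CM}^{\rm reg}(J;\beta)$ the (virtual) tangent space is governed by the linearized $\overline{\partial}$-operator $D_w\overline{\partial}$ acting on $W^{1,p}$-sections of $w^*TM$ with totally real boundary condition $(w|_{\partial D^2})^*TL$; on the locus where $D_w\overline{\partial}$ is onto this is literally $\Lambda^{\mathrm{top}}T_w\widetilde{\CM}^{\rm reg}(J;\beta)$, so it suffices to orient the determinant line bundle $\det D\overline{\partial}=\det(\ker D\overline{\partial})\otimes\det(\operatorname{coker}D\overline{\partial})^{\vee}$ of the family $\{D_w\overline{\partial}\}_{w}$. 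More robustly, I would orient, once and for all, the determinant line $\det D_{(E,F)}$ attached to an \emph{arbitrary} bundle pair $(E,F)\to(D^2,\partial D^2)$ ($E$ complex, $F\subset E|_{\partial D^2}$ totally real) carrying the relevant spin data, and then specialize $(E,F)=(w^*TM,(w|_{\partial D^2})^*TL)$.

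First I would record the model cases and the pasting principle. For the trivial pair $(\underline{\C}^n,\underline{\R}^n)$ the operator $\overline{\partial}$ is surjective with kernel the constant $\R^n$-valued maps, so $\det\cong\Lambda^n\R^n$ is canonically oriented by the orientation of $L$ after identifying $\R^n\cong T_pL$; for a bundle pair with no boundary (a sphere bubble) the complex orientation orients $\det$. Then I would invoke the pasting lemma for determinant lines (Section 8.1 \cite{fooobook}): degenerating $D^2$ either into two discs glued along a boundary point or into a disc and a sphere glued at an interior point produces canonical isomorphisms $\det D_{(E,F)}\cong\det D_{(E_1,F_1)}\otimes\det D_{(E_2,F_2)}$, continuous in deformation parameters. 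Since $E$ is trivial over the contractible disc, the boundary datum $F$ is a loop in the space of totally real $n$-planes of $\C^n$, hence up to homotopy a sum of a trivial $(n-1)$-dimensional piece and one line pair carrying the Maslov number $\mu_L(\beta)$; together with the pasting lemma this writes $\det D_{(E,F)}$ as a tensor product of determinant lines of model pieces, each orientable. This already yields orientability of $\widetilde{\CM}^{\rm reg}(J;\beta)$.

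The substance is to make the orientation \emph{canonical} — independent of the trivialization of $E$ over $D^2$, of the decomposition into line pairs and of the order of the factors, and depending on the relative spin structure only through its stable conjugacy class. Here I would use $(V,\sigma)$ by stabilizing: replace $(E,F)$ by $\bigl(E\oplus(w^*V\otimes_\R\C),\,F\oplus(w|_{\partial D^2})^*V\bigr)$. Because $D^2$ is contractible, $w^*V$ is trivial over $D^2$, so the complex summand contributes a determinant line with its complex orientation and does not change $\det D_{(E,F)}$ up to canonical isomorphism; on the boundary, $F\oplus(w|_{\partial D^2})^*V=(w|_{\partial D^2})^*(TL\oplus V)$ acquires the spin structure $\sigma$ once $w|_{\partial D^2}$ is homotoped into $L_{[2]}$ (possible since $\partial D^2$ is $1$-dimensional). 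Conceptually, the determinant line bundle over the space of all boundary data is non-trivial with obstruction carried by $w_1(TL)$ (zero, as $L$ is oriented) and $w_2(TL)$; a relative spin structure, whose defining property is that $w_2(TL)$ agrees on $L$ with a class $st\in H^2(M;\Z_2)$ supported by $V$, trivializes this obstruction coherently, and the stabilization above realizes the trivialization explicitly. Independence of the triangulation of $M$ and of $V$ within its stable conjugacy class then follows from Proposition \ref{prop:8.1.6} together with stabilization-invariance of the determinant-line construction.

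The main obstacle will be this coherence step: bookkeeping the signs produced by the pasting lemma, verifying that two decompositions of one bundle pair into model summands induce the \emph{same} orientation once $\sigma$ is fixed, and checking that the resulting orientation of $\det D_w$ varies continuously in $w$ over each component of $\widetilde{\CM}^{\rm reg}(J;\beta)$ — so that it is a genuine orientation of the family index and descends along the connected group $PSL(2,\R)$ to $\CM^{\rm reg}(J;\beta)$. The existence assertions (orientability, existence of some orientation) are comparatively soft; it is the naturality under recombination of model pieces, and the precise interplay with the spin structure, that require the care.
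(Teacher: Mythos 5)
Your broad strategy matches the paper's (as sketched around Proposition~\ref{prop:8.1.4} and Remark~\ref{rem:prop8.1.4}): pinch $D^2$ near the boundary to obtain $D^2\cup \C P^1$, reduce the disc piece to the trivial pair $(\underline{\C}^n,\underline{\R}^n)$ whose index is the constant sections $\R^n$, give the $\C P^1$ piece its complex orientation, and use the relative spin structure via the trivialization of $\ell^*(TL\oplus V)$ — with the pullback of $V$ over the contractible disc supplying the stabilizing summand, exactly as the paper uses the trivialization of $\ell_0^*V$ to peel off a trivialization of $\ell_0^*TL$. So the method is the same; the differences are organizational.

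There is, however, a genuine gap in the middle paragraph. You assert that decomposing the boundary loop $F$ up to homotopy into a constant $(n-1)$-plane plus one Maslov line, together with the pasting lemma, ``already yields orientability of $\widetilde{\CM}^{\rm reg}(J;\beta)$,'' and later you call orientability ``comparatively soft.'' This is not correct, and it is not a minor imprecision: the homotopy from $F$ to the model boundary loop is well defined only up to $\pi_1$ of the based loop space of the Lagrangian Grassmannian, i.e.\ up to $\pi_2(\Lambda(n))\cong\Z_2$ (for $n\ge 2$), and this $\Z_2$ ambiguity is precisely the $w_2(TL)$ obstruction. Varying $w$ around a nontrivial loop in $\widetilde{\CM}^{\rm reg}(J;\beta)$ can change the homotopy class of the decomposition, so the pointwise orientations you obtain from the model pieces need not fit into a continuous orientation of the determinant line bundle. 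Orientability genuinely requires the relative spin hypothesis, and there are non--relatively-spin (but oriented) Lagrangians whose disc moduli spaces are non-orientable. You do state the correct conceptual picture in your third paragraph, so the issue is that your argument puts orientability \emph{before} the use of $(V,\sigma)$, when in fact the spin data is what makes the decomposition into model pieces canonical and hence makes the bundle orientable at all. A second, smaller imprecision: the stabilizing summand $(w^*V\otimes_\R\C,\ell^*V)$ is a totally real boundary value problem on the disc, not a Dolbeault operator on a closed surface, so its index is \emph{not} ``complex oriented''; rather, since $w^*V$ is trivial over $D^2$, the operator is surjective with kernel the constant $\operatorname{rank}V$ real sections, and the orientation comes from the orientation of $V$ together with the unique-up-to-homotopy trivialization, as in the paper's handling of $\ell_0^*V$.
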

\begin{rem}\label{rem:thmori}
(1) Following Convention 8.2.1 in \cite{fooobook2},
we have an induced orientation on the quotient space.
Thus Theorem \ref{thm:ori} holds for the quotient space
$\CM^{\rm reg}(J;\beta)={\widetilde{\CM}}^{\rm reg}(J;\beta)/PSL(2,\R)$ as well.
Here we use the orientation of $PSL(2,\R)$ as in Convention 8.3.1 in \cite{fooobook2}.
\par
(2) Since $\CM^{\rm reg}(J;\beta)$ is the top dimensional
stratum of $\CM(J;\beta)$, the orientation on
$\CM^{\rm reg}(J;\beta)$ determines one on
$\CM(J;\beta)$.
In this sense, it is enough to consider $\CM^{\rm reg}(J;\beta)$
when we discuss orientation on $\CM(J;\beta)$.
The same remark applies to other moduli spaces including
marked points and fiber products with singular simplices.
\par
(3) The moduli space $\mathcal{M}(J,\beta)$ may not contain a smooth holomorphic disc, i.e.,
$\mathcal{M}^{\operatorname{reg}}(J,\beta) = \emptyset$.
However, the orientation issue
can be discussed as if $\mathcal{M}^{\operatorname{reg}}(J,\beta) \neq \emptyset$.
This is because we consider the orientation of Kuranishi structure, i.e., the orientation of
$\det E^{\ast}\otimes \det TV$, where $V$ is a Kuranishi neighborhood around a point $p=[w:(\Sigma,\partial \Sigma) \to (M,L)]$
in $\mathcal{M}(J,\beta)$ and $E \to V$ is the obstruction bundle.
Even though $p$ is not represented by a bordered stable map with
an irreducible domain, i.e., a disc, $V$ contains a solution of $\overline{\partial} u \equiv 0 \mod E$ for
$u:(D^2, \partial D^2) \to (M,L)$.
The determinant bundle of the linearized $\overline{\partial}$-operators parametrized by $V$
is trivialized around $p$, hence the orientation of the determinant line at $[u]$ determines the one at $p$.
\end{rem}

We recall from Section 8.1 \cite{fooobook2} how each stable conjugacy class of
relative spin structures determines
an orientation on the moduli space of holomorphic discs.
Once we know the orientability of $\widetilde{\CM}^{\rm reg}(J;\beta)$,
it suffices to give an orientation
on the determinant of the tangent space at a point
$w \in \widetilde{\CM}^{\rm reg}(J;\beta)$
for each stable conjugacy class of
relative spin structures.
We consider
the linearized operator
of the pseudo-holomorphic
curve equation
\begin{equation}\label{linearizedeq}
\aligned
D_w\overline{\partial}:\,\,&W^{1,p}(D^2,\partial D^2;w^*TM,\ell^*TL) \to
L^p(D^2;w^*TM\otimes \Lambda^{0,1}_{D^2}).
\endaligned
\end{equation}
Here $\ell = w\vert_{\partial D^2}$
and $p>2$.
Since it has the same symbol as the Dolbeault operator
$$\overline{\partial}_{(w^*TM,\ell^*TL)}:W^{1,p}(D^2,\partial
D^2;w^*TM,\ell^*TL)
\to  L^p(D^2;w^*TM\otimes \Lambda^{0,1}_{D^2}),
$$
we may consider the determinant of the index of
this Dolbeault operator $\overline{\partial}_{(w^*TM,\ell^*TL)}$
instead.
We can deform $w:(D^2, \partial D^2) \to (M,L)$
to $w_0:(D^2, \partial D^2) \to (M_{[2]},L_{[1]})$
by the simplicial approximation theorem.
We put $\ell_0 = w_0 \vert_{\partial D^2}$.

Now pick
$[(V,\sigma)] \in \operatorname{Spin}(M,L)$.
Then it determines the stable homotopy class of
trivialization of $\ell_0^{\ast}(TL \oplus V)$.
The existence of the oriented bundle $V$ on $M_{[3]}$ induces a unique
homotopy class of trivialization of $\ell_0^* V$.
Thus, we have a unique homotopy class of
trivialization of $\ell_0^*TL$.
Using this trivialization and Proposition \ref{prop:8.1.4} below
(applied to the pair of $(E,\lambda)$ with $E=w_0^{\ast}TM, \lambda=\ell_0^{\ast}TL$),
we can assign
an orientation on the determinant of the index
$$
\det \operatorname{Index}~
\overline{\partial}_{(w_0^*TM,{\ell}_0^*TL)})
:=
\det (\operatorname{coker}~
\overline{\partial}_{(w_0^*TM,{\ell}_0^*TL)})^* \otimes \det
\operatorname{ker}~\overline{\partial}_{(w_0^*TM,{\ell}_0^*TL)}.
$$
This process is invariant under
stably conjugate relation of relative spin structures.
Therefore we obtain an orientation on $\widetilde{\CM}^{\rm {reg}}(J;\beta)$
and so on $\CM^{\rm {reg}}(J;\beta)$ for each stable conjugacy class of relative spin structure $[(V,\sigma)]$.
\begin{prop}[Proposition 8.1.4 \cite{fooobook2}]
\label{prop:8.1.4}
Let $E$ be a complex vector bundle over $D^2$
and $\lambda$ a maximally totally real bundle
over $\partial D^2$ with an isomorphism
$$
E\vert_{\partial D^2} \cong
\lambda \otimes \C.
$$
Suppose that $\lambda$ is
trivial. Then each trivialization on $\lambda$ canonically induces
an orientation on
$\operatorname{Index}\overline{\partial}_{(E,\lambda)}$.
Here $\overline{\partial}_{(E,\lambda)}$ is the Dolbeault operator on $(D^2, \partial D^2)$
with coefficient $(E,\lambda)$:
$$
\overline{\partial}_{(E,\lambda)} :
W^{1,p}(D^2,\partial D^2;E,\lambda) \to
L^p(D^2;E\otimes \Lambda^{0,1}_{D^2}).
$$
\end{prop}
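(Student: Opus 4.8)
The plan is to reduce to two model cases by a standard deformation/splitting argument. First I would recall that the index of a real-linear Cauchy--Riemann operator $\overline{\partial}_{(E,\lambda)}$ on $(D^2,\partial D^2)$ depends, up to canonical isomorphism of determinant lines, only on the homotopy class of the pair $(E,\lambda)$ through maximally totally real subbundles; this is the standard gluing/homotopy invariance of the determinant line bundle of a continuous family of Fredholm operators. Since $\lambda$ is assumed \emph{trivial}, a choice of trivialization of $\lambda$ over $\partial D^2$ gives a trivialization of $E|_{\partial D^2}\cong\lambda\otimes\C$ over $\partial D^2$, hence identifies $(E,\lambda)$ with the trivial pair $(\underline{\C}^n,\underline{\R}^n)$ over $(D^2,\partial D^2)$ up to homotopy (every complex bundle over $D^2$ is trivial, and a trivialization over the boundary that extends over the disc is unique up to homotopy since $\pi_1(U(n))$-worth of choices are killed after complexification — more precisely, the relevant statement is that a boundary trivialization of $\lambda$ pins down the whole thing). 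Thus it suffices to orient $\operatorname{Index}\overline{\partial}_{(\underline{\C}^n,\underline{\R}^n)}$ once and for all in a way that is natural in the trivialization.

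Next I would split $(\underline{\C}^n,\underline{\R}^n)=\bigoplus_{i=1}^n(\underline{\C},\underline{\R})$ as an orthogonal direct sum of rank-one pairs compatible with the chosen trivialization; the determinant of the index of a direct sum is the (ordered) tensor product of the determinants of the indices of the summands, so it is enough to treat $n=1$. For the single pair $(\underline{\C},\underline{\R})$ over $(D^2,\partial D^2)$, the operator $\overline{\partial}_{(\underline{\C},\underline{\R})}$ is the classical $\overline{\partial}$ with totally real boundary condition of Maslov index $0$: one computes directly that $\operatorname{Ker}$ consists of the constant real functions, so $\operatorname{Ker}\cong\R$, and $\operatorname{Coker}=0$. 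Hence $\det\operatorname{Index}\overline{\partial}_{(\underline{\C},\underline{\R})}\cong\det(\R)$, which carries the canonical orientation coming from the positive generator $1\in\R$ determined by the trivialization of $\underline{\R}$. Reassembling, the trivialization of $\lambda$ induces the product orientation on $\det\operatorname{Index}\overline{\partial}_{(E,\lambda)}$, and one checks this is independent of the auxiliary choices (the ordering of the splitting affects the tensor product only by an even permutation issue that must be fixed by a convention, and the homotopy of trivializations used is connected).

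The main obstacle is the \emph{naturality} and \emph{well-definedness} of this assignment: one must verify that homotopic trivializations of $\lambda$ yield the same orientation (clear, by connectedness of the space of trivializations in a fixed homotopy class and continuity of the determinant line bundle), and, more delicately, that the identification of $(E,\lambda)$ with the trivial model respects the complexification isomorphism $E|_{\partial D^2}\cong\lambda\otimes\C$ in a canonical way — this is where the hypothesis that $\lambda$ is trivial (not merely that $E$ is) is essential, since it is the boundary data, not the bulk bundle, that governs the index. I would also record the compatibility with stabilization $\lambda\mapsto\lambda\oplus\underline{\R}$, which is what makes the construction descend to stable conjugacy classes of relative spin structures as used in the paragraph following the statement; this compatibility follows from the $n=1$ computation above applied to the extra trivial summand, whose index contributes a canonically oriented $\R$ factor.
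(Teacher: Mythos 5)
There is a genuine gap in the reduction step, and it is fatal in general. You claim that since $\lambda$ is trivial, a choice of trivialization of $\lambda$ identifies $(E,\lambda)$ with the trivial pair $(\underline{\C}^n,\underline{\R}^n)$ over $(D^2,\partial D^2)$ up to homotopy. This is false unless the Maslov index $\mu(\lambda)$ of the loop $z\mapsto\lambda_z\subset E_z$ vanishes. Triviality of $\lambda$ as a real vector bundle over $S^1$ is merely orientability; it does \emph{not} constrain the winding of $\lambda_z$ in the Lagrangian Grassmannian. Concretely, the boundary trivialization of $E|_{\partial D^2}\cong\lambda\otimes\C$ obtained by complexifying your trivialization of $\lambda$ need \emph{not} extend over $D^2$: the obstruction lives in $\pi_1(GL(n,\C))\cong\Z$ and equals $\mu(\lambda)/2$. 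Whenever $\mu(\lambda)\ne 0$, the pair $(E,\lambda)$ is not homotopic to the trivial pair. One can already see the mismatch at the level of dimensions: $\operatorname{Ind}\overline{\partial}_{(E,\lambda)}$ has (virtual) dimension $n+\mu(\lambda)$, while your model computation yields $\R^n$. The parenthetical hand-wave about ``$\pi_1(U(n))$-worth of choices being killed after complexification'' is precisely where the error enters.

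What the paper does instead is to pinch an interior circle $C_{1-\epsilon}$ to a point, degenerating $D^2$ to the nodal curve $\Sigma=D^2\cup\C P^1$ and $(E,\lambda)$ to $(E',\lambda')$. On the new $D^2$ component the bundle pair \emph{is} trivial (the trivialization of $\lambda$ extends over the annulus $A(\epsilon)$ and its complexification trivializes $E$ there), and the kernel there is canonically $\R^n$ exactly as in your $n=1$ computation. All the nontrivial winding — the Maslov index — is pushed into a complex vector bundle $E'|_{\C P^1}$ over the sphere, with $\dim_\C\operatorname{Hol}(\C P^1;E'|_{\C P^1})=\tfrac12\mu(\lambda)$, and the Dolbeault index on $\C P^1$ carries a canonical complex orientation. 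The orientation on $\operatorname{Ind}\overline{\partial}_{(E,\lambda)}$ is then the product of the real orientation of $\R^n$ (this is where the choice of trivialization of $\lambda$ enters) with the complex orientation of the $\C P^1$ factor, transported back through the gluing. Your argument captures the $\R^n$ part but omits the $\C P^1$ part entirely; to repair it you would need to add the degeneration-to-a-nodal-curve step (or some equivalent device that separates the Maslov winding into a complex-oriented summand).
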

\begin{rem}\label{rem:prop8.1.4}
In order to explain some part of the proof of
Theorem \ref{thm:fund} given in Section \ref{sec:Proofth} in a self-contained way,
we briefly recall the outline of the proof of Proposition
\ref{prop:8.1.4}. See Subsection 8.1.1 \cite{fooobook2} for more
detail.
For $0<\epsilon < 1$, we put
$A(\epsilon) = \{ z \in D^2 ~\vert~ 1-\epsilon \le \vert z\vert
\le 1 \}$ and
$C_{1-\epsilon}= \{ z \in D^2 ~\vert~  \vert z\vert =1-\epsilon
\}$.
By pinching the circle $C$ to a point,
we have a union of a $2$-disc $D^2$ and a $2$-sphere $\C P^1$
with the center $O \in D^2$ identified with
a point $p \in \C P^1$.
The resulting space $\Sigma =D^2 \cup \C P^1$ has naturally a structure of a nodal curve where $O=p$ is the nodal point.
Under the situation of Proposition \ref{prop:8.1.4},
the trivial bundle $\lambda \to \partial D^2$ trivially extends
to $A(\epsilon )$ and the complexification of
each trivialization of $\lambda \to \partial D^2$
gives a trivialization on $E\vert_{A(\epsilon )} \to A(\epsilon )$.
Thus the bundle $E\to D^2$ descends to a bundle over
the nodal curve $\Sigma$ together with a maximally totally real bundle over $\partial \Sigma = \partial D^2$. We denote them by $E' \to \Sigma$
and $\lambda' \to \partial \Sigma$ respectively.
We also denote by $W^{1,p}(\C P^1; E'\vert_{\C P^1})$ the space of $W^{1,p}$-sections of $E' \vert_{\C P^1} \to \C P^1$ and
by  $W^{1,p}
(D^2; E'\vert_{D^2}, \lambda')$ the space of
$W^{1,p}$-sections $\xi_{D^2}$ of $E' \vert_{D^2} \to D^2$ satisfying
$\xi_{D^2}(z)\in \lambda'_z$, $z \in \partial D^2 = \partial  \Sigma$.
We consider a map denoted by $\operatorname{diff}$:
\begin{eqnarray*}
&{}& \operatorname{diff}:
W^{1,p}(\C P^1; E'\vert_{\C P^1}) \oplus W^{1,p}
(D^2, \partial D^2; E'\vert_{D^2}, \lambda') \to \C^n; \\
&{}& \hskip1in (\xi_{\C P^1}, \xi_{D^2}) \mapsto
\xi_{\C P^1}(p) - \xi _{D^2}(O).
\end{eqnarray*}
We put $W^{1,p}(E', \lambda'):=
\operatorname{diff}^{-1}(0)$ and consider the index of operator
$$
\overline{\partial}_{(E',\lambda')}~:~
W^{1,p}(E', \lambda')
\to
L^p(\C P^1; E'\vert_{\C P^1} \otimes \Lambda^{0,1}_{\C P^1})
\oplus
L^p(D^2, \partial D^2;E'\vert_{D^2} \otimes \Lambda^{0,1}_{D^2}).
$$
Then the orientation problem for $\operatorname{Index} \overline{\partial}_{(E,\lambda)}$ on $(D^2, \partial D^2)$
is translated into the problem for
$\operatorname{Index} \overline{\partial}_{(E',\lambda')}$
on $(\Sigma, \partial \Sigma)$.
Firstly, we note that the operator
$$
\overline{\partial}_{(E'\vert_{D^2}, \lambda'\vert_{\partial D^2})}
~:~
W^{1,p}(D^2, \partial D^2;E'\vert_{D^2},\lambda') \to L^p(D^2;E'\vert_{D^2}\otimes \Lambda^{0,1}_{D^2})
$$
is surjective. Each trivialization of $\lambda \to \partial D^2$
gives an identification:
\begin{equation}\label{eq:isokernel}
\operatorname{ker} \overline{\partial}_{(E'\vert_{D^2}, \lambda'\vert_{\partial D^2})} \cong
\operatorname{ker} \overline{\partial}_{(D^2 \times \C ^n, \partial D^2 \times \R ^n)} \cong
\R ^n,
\end{equation}
where $\operatorname{ker} \overline{\partial}_{(D^2 \times \C ^n, \partial D^2 \times \R ^n)}$ is the space of solutions
$\xi : D^2 \to \C ^n$ of the Cauchy-Riemann equation with
boundary condition:
$$\overline{\partial} \xi =0, \quad \xi (z) \in \lambda'_z \equiv \R^n, \quad z \in \partial D^2.
$$
Thus the solution must be a real constant vector.
This implies that we have a canonical isomorphism in
(\ref{eq:isokernel}).
Then the argument in Subsection 8.1.1 \cite{fooobook2} shows that
the orientation problem can be reduced to the orientation
on $\operatorname{ker} \overline{\partial}_{(E'\vert_{D^2}, \lambda'\vert_{\partial D^2})}$ and
$\operatorname{Index} \overline{\partial}_{E'\vert_{\C P^1}}$.
The latter one has a complex orientation.
By taking a finite dimensional complex vector space
$W \subset L^p(\C P^1;E'\vert_{\C P^1}\otimes \Lambda^{0,1}_{\C P^1})$
such that
$$L^p(\C P^1;E'\vert_{\C P^1}\otimes \Lambda^{0,1}_{\C P^1})
= \operatorname{Image} \overline{\partial}_{E'\vert_{\C P^1}}
+ W,
$$
a standard argument (see the paragraphs after Remark 8.1.3
in \cite{fooobook2}, for example) shows
that the orientation problem on $\operatorname{Index} \overline{\partial}_{E'\vert_{\C P^1}}$ is further reduced to
one on $\operatorname{ker} \overline{\partial}_{E'\vert_{\C P^1}}$
which is the space of holomorphic sections of $E'\vert_{\C P^1} \to \C P^1$,
denoted by $\operatorname{Hol} (\C P^1; E'\vert_{\C P^1})$.
\end{rem}
\par\smallskip
We next describe how the orientation behaves
under the change of stable conjugacy classes of
relative spin structures.
Proposition \ref{prop:8.1.6} shows that the difference of relative spin structures is measured by
an element $\mathfrak x$ in $H^2(M,L;{\Z}_2)$.
We denote the simply transitive action of $H^2(M,L;{\Z}_2)$ on
$\operatorname{Spin}(M,L)$ by
$$
(\mathfrak x, [(V,\sigma)]) \mapsto \mathfrak x \cdot [(V,\sigma)].
$$
When we change the relative spin structure by $\mathfrak r
\in H^2(M,L;{\Z}_2)$, then
we find that the orientation on
the index of the operator $D_w\overline{\partial}$
in (\ref{linearizedeq}) changes by $(-1)^{{\mathfrak x}[w]}$.
The following result is proved in Proposition 8.1.16 in \cite{fooobook2} and also obtained
by Cho \cite{Cho04}, Solomon \cite{Sol}.
This proposition is used in Subsections
\ref{proof1.9} and \ref{subsec:Appl2}.
\begin{prop}\label{Proposition44.16}
The identity map
$$
\CM(J;\beta)^{[(V,\sigma)]} \longrightarrow \CM(J;\beta)^{\mathfrak x \cdot[(V,\sigma)]}
$$
is orientation preserving if and only if $\mathfrak x [\beta]=0$.
\end{prop}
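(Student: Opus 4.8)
The plan is to trace the orientation of $\CM(J;\beta)$ back to the determinant line of the linearized $\overline{\partial}$-operator, identify how a change of stable conjugacy class of relative spin structure acts on that line, and then observe that the resulting sign is locally constant on the moduli space. For the first step, recall from the review following Theorem \ref{thm:ori} that the orientation on $\CM(J;\beta)^{[(V,\sigma)]}$ is induced, through the quotient $\CM^{\operatorname{reg}}(J;\beta) = \widetilde{\CM}^{\operatorname{reg}}(J;\beta)/PSL(2,\mathbb{R})$ with the fixed orientation of $PSL(2,\mathbb{R})$, from an orientation on $\det\operatorname{Index} D_w\overline{\partial}$ at each $w$, and that the latter is obtained by deforming $w$ to $w_0 : (D^2,\partial D^2)\to (M_{[2]},L_{[1]})$, replacing $D_w\overline{\partial}$ by the Dolbeault operator $\overline{\partial}_{(w_0^*TM,\ell_0^*TL)}$, and feeding into Proposition \ref{prop:8.1.4} the homotopy class of stable trivialization of $\ell_0^*TL$ over $\partial D^2$ determined by $[(V,\sigma)]$. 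Since neither the $PSL(2,\mathbb{R})$-factor nor the deformation $w\mapsto w_0$ sees the spin data, it suffices, for one fixed $w_0$, to compare the two orientations that $[(V,\sigma)]$ and $\frak x\cdot[(V,\sigma)]$ put on $\operatorname{Index}\overline{\partial}_{(w_0^*TM,\ell_0^*TL)}$.

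The heart of the argument is the second step. First I would determine how the homotopy class of stable trivialization of $\ell_0^*TL$ over the circle $\partial D^2$ changes under $[(V,\sigma)]\mapsto\frak x\cdot[(V,\sigma)]$. After stabilizing by $\ell_0^*V$, which carries the trivialization it inherits from $V$ over $M_{[3]}$, the bundle $\ell_0^*TL$ is a trivial oriented bundle over $S^1$ whose homotopy classes of trivializations form a torsor over $\pi_1(SO)\cong\mathbb{Z}_2$, and $[(V,\sigma)]$ selects one of them via the spin structure $\sigma$ on $(TL\oplus V)|_{L_{[2]}}$. Using the simply transitive $H^2(M,L;\mathbb{Z}_2)$-action of Proposition \ref{prop:8.1.6} together with obstruction theory over the disc $w_0(D^2)$ bounding the loop $\ell_0(\partial D^2)$, one shows that $\frak x$ changes this class by the generator of $\pi_1(SO)$ exactly when $\frak x[\beta]=1$: if $\frak x=\delta\alpha$ lies in the image of the connecting map $\delta:H^1(L;\mathbb{Z}_2)\to H^2(M,L;\mathbb{Z}_2)$, the spin structure over $\ell_0(\partial D^2)$ is flipped iff $\langle\alpha,\partial[\beta]\rangle=\langle\delta\alpha,[\beta]\rangle=\frak x[\beta]$, while a class mapping nontrivially to $H^2(M;\mathbb{Z}_2)$ changes $V$ and the obstruction to matching the two induced trivializations of $\ell_0^*V$ across $w_0(D^2)$ is again $\frak x[\beta]$. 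The decisive input is then the index-theoretic fact, carried out in Proposition 8.1.16 of \cite{fooobook} (see also \cite{Cho04}): in the nodal-degeneration picture of Remark \ref{rem:prop8.1.4}, replacing the boundary trivialization by one that differs from it by the generator of $\pi_1(SO)$ reverses the orientation produced by Proposition \ref{prop:8.1.4}. Concretely, it re-glues $E$ across the node so that a pair of trivial summands of $E'|_{\mathbb{C}P^1}$ is traded for $\mathcal{O}(1)\oplus\mathcal{O}(-1)$ and the identification of fibers at the node is twisted accordingly; although the numerical index is unchanged, a careful bookkeeping of the complex orientations of $\operatorname{Hol}(\mathbb{C}P^1;E'|_{\mathbb{C}P^1})$, of the auxiliary complex subspace $W$, and of the $\mathbb{C}^n$ appearing in the map $\operatorname{diff}$ leaves exactly one overall minus sign. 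Combining these, the two orientations on $\operatorname{Index}\overline{\partial}_{(w_0^*TM,\ell_0^*TL)}$ agree if and only if $\frak x[\beta]=0$.

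Finally, every $w\in\CM(J;\beta)$ represents the class $\beta$, hence the same mod $2$ class $[\beta]\in H_2(M,L;\mathbb{Z}_2)$, so the sign $(-1)^{\frak x[\beta]}$ is constant over the moduli space; therefore the identity map $\CM(J;\beta)^{[(V,\sigma)]}\to\CM(J;\beta)^{\frak x\cdot[(V,\sigma)]}$ globally multiplies the orientation by $(-1)^{\frak x[\beta]}$ and is orientation preserving exactly when $\frak x[\beta]=0$. I expect the main obstacle to be the index lemma at the end of the second paragraph, namely that a single generator of $\pi_1(SO)$ contributes precisely one sign rather than an even number: the virtual dimension is unaffected, so the sign is a pure determinant-line phenomenon that has to be extracted from the explicit orientation conventions of \cite{fooobook}. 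By contrast, the reduction of the change of relative spin structure to a shift by that generator is a comparatively routine piece of obstruction theory over $S^1$, and the concluding local-constancy argument is immediate.
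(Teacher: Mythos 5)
The paper does not prove Proposition~\ref{Proposition44.16}: it states it and defers to Proposition~8.1.16 of \cite{fooobook} (and to \cite{Cho04}), so there is no in-paper proof to compare against directly. That said, your outline correctly reconstructs the argument that lives behind that citation: reduce to the orientation of $\det\operatorname{Index}\overline{\partial}_{(w_0^*TM,\ell_0^*TL)}$ at a fixed $w_0$ via Proposition~\ref{prop:8.1.4}; show that the $H^2(M,L;\Z_2)$-action shifts the chosen homotopy class of stable trivialization of $\ell_0^*TL$ over $\partial D^2$ by the generator of $\pi_1(SO)$ exactly when $\frak x[\beta]=1$; and invoke the index-theoretic fact that such a shift reverses the orientation of Proposition~\ref{prop:8.1.4}. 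Your sketch of that last fact is the right one: realizing the generator of $\pi_1(SO(n))$ as a full $SO(2)$-rotation in a plane, its complexification is the loop $\operatorname{diag}(e^{it},e^{-it})$ in $U(2)\subset U(n)$, so the clutching on $E'\vert_{\C P^1}$ is changed by trading $\mathcal O\oplus\mathcal O$ for $\mathcal O(1)\oplus\mathcal O(-1)$, and one then tracks the resulting sign through the node identification.

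One caveat on circularity, and one on wording. You cite Proposition~8.1.16 of \cite{fooobook} for the orientation-flip index lemma, but that is precisely (a form of) the result the present proposition is attributed to, so as written your proof is not independent of the cited source; this is acceptable only because you also sketch the lemma itself, and the real work is hidden in the ``careful bookkeeping'' sentence, which you rightly flag as the main obstacle. Separately, in the final paragraph you assert that every $w\in\CM(J;\beta)$ represents the same mod~$2$ class $[\beta]\in H_2(M,L;\Z_2)$. Since $\CM(J;\beta)$ for $\beta\in\Pi(L)$ is a union over all $B\in\pi_2(M,L)$ with the same $(\omega,\mu_L)$ (see \eqref{remarkmoduli}), different components may a priori have different $\Z_2$-homology classes, and then $\frak x[\beta]$ is not literally well-defined. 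The paper itself phrases the orientation change pointwise as $(-1)^{\frak x[w]}$ just before the proposition, and in the applications (Subsection~\ref{subsec:Appl2}) it is applied to $\CM_2(J;B_i)$ for honest $B_i\in\pi_2(M,L)$, so the statement should be read component-wise. This is a notational issue, not a gap in your argument, but your local-constancy sentence should be adjusted to ``constant on each connected component'' rather than ``constant over the moduli space.''
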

For a diffeomorphism $\psi : (M,L) \to (M',L')$
satisfying $\psi(L)=L'$, we define the pull-back map
\begin{equation}\label{pullback}
\psi^{\ast} ~:~
\operatorname{Spin}(M',L') \longrightarrow
\operatorname{Spin}(M,L)
\end{equation}
by $\psi^{\ast}[(V',\sigma')] = [(\psi^{\ast}V', \psi^{\ast} \sigma')]$.
That is, we take a triangulation on $M'$ such that
$L'$ is its subcomplex and $\psi : (M,L) \to (M',L')$ is
a simplicial map.
Then $\psi^{\ast}V'$ is a real vector bundle over $M_{[3]}$
and $\sigma'$ induces a spin structure on
$(TL \oplus \psi^{\ast}V')\vert_{L_{[2]}}$.
Then it is easy to see that
$$
\psi_{\ast} ~:~ \CM(\beta;M,L;J)^{\psi^{\ast}[(V',\sigma')]}
\longrightarrow
\CM(\psi_{\ast}\beta;M',L';\psi_{\ast}J)^{[(V',\sigma')]}
$$
is orientation preserving.

\subsection{$\tau$-relative spin structure and an example}
\label{subsec:taurelspin}

After these general results are prepared in the previous subsection,
we focus ourselves on the case $L= {\rm Fix}~\tau$, the fixed point set
of an anti-symplectic involution $\tau$ of $M$.
We define the notion of $\tau$-relative spin structure and
discuss its relationship with the orientation of the moduli space.
Note that for $\tau$ such that $L = \text{Fix} ~\tau$ is relative spin, $\tau$
induces an involution $\tau^*$ on the set of relative spin structures $(V,\sigma)$
by pull-back \eqref{pullback}.
\begin{defn}\label{Definition44.17}
A $\tau$-{\it relative spin structure} on $L$ is a relative spin
structure $(V,\sigma)$ on $L$ such that $\tau^*(V,\sigma)$ is stably
conjugate to $(V,\sigma)$, i.e.,
$\tau^*[(V,\sigma)]=[(V,\sigma)]$ in $\operatorname{Spin}(M,L)$.
We say that $L$ is $\tau$-{\it relatively spin} if it carries a
$\tau$-relative spin structure, i.e., if the involution
$\tau^*: \operatorname{Spin}(M,L) \to \operatorname{Spin}(M,L)$ has a
fixed point.
\end{defn}
\begin{exm}\label{Remark44.18}
If $L$ is spin, then it is $\tau$-relatively spin: Obviously $\tau$
preserves spin structure of $L$ since it is the identity on $L$. And we
may take $V$ needed in the definition of relative spin structure to
be the trivial vector bundle.
\end{exm}
\begin{rem}\label{Remark44.20}
We would like to emphasize that a relative spin structure $(V, \sigma ,st)$ satisfying
$\tau^{\ast} st = st$ is {\it not necessarily} a
$\tau$-relative spin structure in the sense of Definition \ref{Definition44.17}.
See Proposition \ref{Proposition44.19} below.
\end{rem}
Now we give an example of $L = \text{Fix }\tau$ that is relatively spin but {\it
not} $\tau$-relatively spin.

Consider $M = \C P^{n}$ with standard symplectic and complex
structures and $L = \R P^n \subset \C P^n$, the real point set. The real projective space $\R P^n$ is
oriented if and only if $n$ is odd. We take the tautological real
line bundle $\xi$ on $\R P^n$ such that the 1-st Stiefel-Whitney class $w_1(\xi):= x$ 
is a generator of $H^1(\R P^n;\Z_2)$. Then we have
$$
T\R P^n \oplus \R \cong \xi^{\oplus (n+1)}
$$
and so the total Stiefel-Whitney class is given by
$$
w(T\R P^n) = (1+x)^{n+1}.
$$
Therefore we have
\begin{equation}\label{eq:w2}
w_2(\R P^{2n+1})
= \begin{cases}
x^2  &\text{if $n$ is even,}
\\
0  &\text{if $n$ is odd.}
\end{cases}
\end{equation}
From this, it follows that $\R P^{4n+3}$ ($n \geq 0$) and $\R P^1$
are spin and hence are $\tau$-relatively spin
by Example \ref{Remark44.18}.
On the other hand we prove:
\begin{prop}\label{Proposition44.19}
The real projective space $\R P^{4n+1} \subset \C P^{4n+1}$ $(n
\geq 1)$ is relatively spin but not $\tau$-relatively spin.
\end{prop}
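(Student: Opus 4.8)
The plan is to show that the involution $\tau^* \colon \operatorname{Spin}(M,L) \to \operatorname{Spin}(M,L)$ has no fixed point for $M = \C P^{4n+1}$, $L = \R P^{4n+1}$, while $L$ is nevertheless relatively spin. That $L$ is relatively spin is immediate from \eqref{eq:w2}: since $w_2(\R P^{4n+1}) = x^2 = (\iota^* y)$ for the generator $y \in H^2(\C P^{4n+1};\Z_2)$ (where $\iota \colon \R P^{4n+1} \hookrightarrow \C P^{4n+1}$ and $x = \iota^*(\text{the mod }2\text{ hyperplane class})$, using that $\iota^*$ sends the degree-$2$ generator of $H^*(\C P^{4n+1};\Z_2)$ to $x^2$), we may take $st = y$. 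So the content is the non-existence of a $\tau$-fixed stable conjugacy class. By Proposition \ref{prop:8.1.6}(1), $\operatorname{Spin}(M,L)$ is a torsor over $H^2(M,L;\Z_2)$, and $\tau$ acts on this torsor compatibly with its linear action on $H^2(M,L;\Z_2)$; the map $\tau^*$ has a fixed point if and only if, for one (equivalently any) chosen base point $[(V,\sigma)]$, the element $\mathfrak{x}_0 \in H^2(M,L;\Z_2)$ defined by $\tau^*[(V,\sigma)] = \mathfrak{x}_0 \cdot [(V,\sigma)]$ lies in the image of the map $\operatorname{id} - \tau^* \colon H^2(M,L;\Z_2) \to H^2(M,L;\Z_2)$, i.e.\ in the image of $\operatorname{id} + \tau^*$ since we are in characteristic $2$.

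The concrete steps I would carry out are as follows. First, compute $H^2(\C P^{4n+1}, \R P^{4n+1};\Z_2)$ together with the action of $\tau^*$ on it, using the long exact sequence of the pair and the known action of complex conjugation on $H^*(\C P^N;\Z_2)$ and on $H^*(\R P^N;\Z_2)$. Since $\tau = $ complex conjugation acts trivially on $H^*(\C P^N;\Z_2)$ (the generator is $c_1$ of the hyperplane bundle mod $2$, fixed by conjugation) and restricts to the identity on $L$ hence acts trivially on $H^*(\R P^N;\Z_2)$, the five-lemma-type argument forces $\tau^*$ to act trivially on $H^2(M,L;\Z_2)$ too — at least if that group injects appropriately. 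Then $\operatorname{id} + \tau^* = 0$ on $H^2(M,L;\Z_2)$, so $\mathfrak{x}_0$ lies in the image of $\operatorname{id}+\tau^*$ if and only if $\mathfrak{x}_0 = 0$. Hence the second step is to compute $\mathfrak{x}_0$, i.e.\ to measure the difference between a chosen relative spin structure and its $\tau$-pullback, and show it is nonzero. This is the crux: I would take an explicit $V$ over the $3$-skeleton of $\C P^{4n+1}$ with $w_2(V) = y$, for instance (the real reduction of) a complex line bundle $\mathcal{O}(1)$ restricted to $M_{[3]}$, together with a spin structure $\sigma$ on $(T\R P^{4n+1}\oplus V)|_{L_{[2]}}$, and compare $(V,\sigma)$ with $(\tau^*V, \tau^*\sigma)$. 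Because $\tau$ acts on $\mathcal{O}(1)$ by an anti-linear bundle map covering $\tau$, the pullback $\tau^* \mathcal{O}(1)$ is the conjugate bundle $\overline{\mathcal{O}(1)} = \mathcal{O}(-1)$, and although these are isomorphic as \emph{oriented real} bundles, the induced trivialization of $\ell_0^* V$ over loops in $L$, and hence the induced spin structure on $T\R P^{4n+1}\oplus V$, may differ by the nonzero class in $H^1(\R P^{4n+1};\Z_2)$ squared — this mismatch is exactly $\mathfrak{x}_0$, and the computation should produce $\mathfrak{x}_0 \ne 0$ precisely in the case $w_2 = x^2 \ne 0$, i.e.\ $n$ even in $\R P^{2n+1}$, which is the $\R P^{4n+1}$ case.

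The main obstacle, as indicated, is the computation of $\mathfrak{x}_0$: one must track how the anti-holomorphic involution twists the trivialization data that enters the definition of a relative spin structure, and show that the resulting obstruction in $H^2(M,L;\Z_2)$ is genuinely nonzero rather than a coboundary. A clean way to detect nonvanishing is pairing: exhibit a class $\beta \in \pi_2(M,L) \cong H_2(M,L;\Z)$ (the generator, represented by a holomorphic disc of Maslov index $2n+2$ bounding $\R P^{4n+1}$, e.g.\ a linear $\R P^1 \subset \C P^1 \subset \C P^{4n+1}$ lifted appropriately) and compute, via Proposition \ref{Proposition44.16} and Theorem \ref{thm:fund}, that the orientation behavior of $\tau_*$ on $\CM(J;\beta)$ is incompatible with $\tau^*[(V,\sigma)] = [(V,\sigma)]$ for \emph{any} choice of relative spin structure; concretely, if $L$ were $\tau$-relatively spin then by Remark \ref{rem;thm1} the identity $\CM^{\mathrm{reg}}(J;\beta)^{\tau^*[(V,\sigma)]} = \CM^{\mathrm{reg}}(J;\beta)^{[(V,\sigma)]}$ would hold as oriented spaces, contradicting the sign $(-1)^{\mu_L(\beta)/2} = -1$ predicted by Theorem \ref{thm:fund} when $\mu_L(\beta) \equiv 2 \bmod 4$ — and for $\R P^{4n+1} \subset \C P^{4n+1}$ the minimal Maslov number is $2n+2$, with $(2n+2)/2 = n+1$ odd exactly when $n$ is even, but one checks all relevant disc classes. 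I would combine this disc-counting obstruction with the direct cohomological computation of $\tau^*$ on $H^2(M,L;\Z_2)$ to conclude that no $\tau$-fixed stable conjugacy class exists, completing the proof.
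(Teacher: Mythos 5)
Your reduction to showing $\mathfrak{x}_0\ne 0$ is sound (and slightly overkill: since $H^2(\C P^{4n+1},\R P^{4n+1};\Z_2)\cong\Z_2$, the induced involution $\tau^*$ on this group is automatically trivial, so $\tau^*$ fixes a point of $\operatorname{Spin}(M,L)$ iff $\mathfrak{x}_0=0$). The gap is in the detection. Your proposed ``clean'' route --- arguing that if $L$ were $\tau$-relatively spin then the equality of oriented spaces in Remark \ref{rem;thm1} would ``contradict'' the orientation-reversing sign in Theorem \ref{thm:fund} when $\mu_L(\beta)\equiv 2\bmod 4$ --- does not work. There is no contradiction: Remark \ref{rem;thm1} says only that the two moduli spaces coincide as oriented spaces, while Theorem \ref{thm:fund} says the map $\tau_*$ between them is orientation reversing; an orientation-reversing involution of an oriented space is entirely unproblematic, and Corollary \ref{corProposition38.7} records exactly this phenomenon for $\tau$-relative spin structures. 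Indeed Example \ref{Example38.8.}~(1) exhibits $\R P^1\subset\C P^1$, which \emph{is} spin hence $\tau$-relatively spin, with $\tau_*$ orientation reversing for discs of odd degree. So the sign $(-1)^{\mu_L(\beta)/2}$ carries no information about whether $\tau^*$ fixes a stable conjugacy class, and this route shows nothing.

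What remains is your first, correct suggestion: track how $\tau$ twists the trivialization data entering the relative spin structure. You only gesture at it, but this is the whole content of the paper's proof. The paper takes $V=\eta^{\oplus(2n+1)}\oplus\R$ with $\eta$ the tautological complex line bundle, a spin structure $\sigma$ on $T\R P^{4n+1}\oplus V$ arising from a stable trivialization, lifts $\tau$ to the anti-linear bundle map $c^{\oplus(2n+1)}\oplus(-1)$ on $V$, and reduces the question $\tau^*(V,\sigma)\overset{?}{=}(V,\sigma)$ to whether $c^{\oplus(2n+1)}\oplus(-1)$, restricted to the $2$-skeleton of $\R P^{4n+1}$, is stably homotopic to the identity as an oriented bundle isomorphism. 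Since $\pi_2(SO(m))=1$ for $m>1$ one reduces to the $1$-skeleton $S^1$, and there an explicit $3\times 3$ matrix computation shows the resulting loop is the nontrivial element of $\pi_1(SO(3))\cong\Z_2$. Your proposal is missing this computation, which is precisely the step that establishes $\mathfrak{x}_0\ne 0$.
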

\begin{proof}
The homomorphism
$$
H^2(\C P^{4n+1};\Z_2) \longrightarrow H^2(\R P^{4n+1};\Z_2)
$$
is an isomorphism. We can construct an isomorphism explicitly as
follows: Let $\eta$ be the tautological complex line bundle on $\C
P^{4n+1}$ such that
$$
c_1(\eta) = y \in H^2(\C P^{4n+1};\Z)
$$
is a generator. We can easily see that
$$
\eta\vert_{\R P^{4n+1}} = \xi \oplus \xi
$$
where $\xi$ is the real line bundle over $\R P^{4n+1}$ chosen as above.
Since $c_1(\eta)$ is the Euler class which reduces to the second
Stiefel-Whitney class under $\Z_2$-reduction, $y \mapsto x^2$ under
the above isomorphism. But \eqref{eq:w2} shows that $x^2 = w_2(\R P^{4n+1})$.
This proves that $\R P^{4n+1}$ is relatively spin: For $st$, we take $st = y$.
\par
Now we examine the relative spin structures of $\R P^{4n+1}$. It is easy to
check that $H^2(\C P^{4n+1}, \R P^{4n+1};\Z_2) \cong \Z_2$ and so there are
$2$ inequivalent relative spin structures by Proposition \ref{prop:8.1.6}.
Let $st = y$ and take
$$
V = \eta^{\oplus 2n+1} \oplus \R
$$
for the vector bundle $V$ noting $w_2(V) \equiv c_1(V) = (2n+1)y = y = st \mod 2$.
\par
Next we have the isomorphism
$$
\tilde{\sigma} : T\R P^{4n+1} \oplus \R^2 \cong \xi^{\oplus
(4n+2)} \oplus \R \cong \eta^{\oplus (2n+1)}\vert_{\R P^{4n+1}}
\oplus \R
$$
and so it induces a trivialization of
$$
\aligned (T\R P^{4n+1} \oplus V) \oplus \R^2 & \cong T\R
P^{4n+1} \oplus \R^2 \oplus V \\
& \cong (\eta^{\oplus (2n+1)}\vert_{\R P^{4n+1}} \oplus \R) \oplus
(\eta^{\oplus (2n+1)}\vert_{\R P^{4n+1}} \oplus \R).
\endaligned
$$
We note that on a $2$ dimensional CW complex, any stable isomorphism
between two oriented real vector bundles $V_1$, $V_2$ induces a
stable trivialization of $V_1 \oplus V_2$.
In particular,
$(\eta^{\oplus (2n+1)}\vert_{\R P^{4n+1}} \oplus \R) \oplus
(\eta^{\oplus (2n+1)}\vert_{\R P^{4n+1}} \oplus \R)$ has a canonical
stable trivialization on the $2$-skeleton of $\R P^{4n+1}$,
which in turn
provides a spin structure on $T\R P^{4n+1} \oplus V$
denoted by $\sigma$. This provides a relative spin structure on
$\R P^{4n+1} \subset \C P^{4n+1}$.
\par
Next we study the question on the $\tau$-relatively spin property.
By the definition of the tautological line bundle $\eta$ on $\C P^{4n+1}$,
the involution $\tau$ lifts to an anti-complex linear isomorphism
of $\eta$ which we denote
$$
c ~:~ \eta \longrightarrow \eta.
$$
Then
$$
c^{\oplus (2n+1)} \oplus (-1) ~:~ \eta^{\oplus (2n+1)} \oplus \R
\longrightarrow
\eta^{\oplus (2n+1)} \oplus \R
$$
is an isomorphism which covers $\tau$. Therefore
we may identify
$$
\tau^*V = V = \eta^{\oplus (2n+1)} \oplus \R
$$
on $\R P^{4n+1}$ and also
$$
\tau^* = c^{\oplus (2n+1)} \oplus (-1).
$$
Then we have
$$
\tau^*(V,\sigma,st) = (V,\sigma',st)
$$
where the spin structure $\sigma'$ corresponds to the isomorphism
$$
(c^{\oplus (2n+1)} \oplus (-1)) \circ \tilde{\sigma}.
$$
Therefore to complete the proof of Proposition
\ref{Proposition44.19} it suffices to
show that the restriction of $c^{\oplus (2n+1)} \oplus (-1)$
to $(\R P^{4n+1})_{[2]}$ is not stably homotopic to the identity map as
a bundle isomorphism.
\par
Note that the $2$-skeleton $(\R P^{4n+1})_{[2]}$ is $\R P^2$.
We have $\pi_1(SO(m)) \cong \Z_2$ and $\pi_2(SO(m)) = 1$ (for $m > 2$).
Hence an oriented isomorphism of real vector bundles on $(\R P^{4n+1})_{[2]}$ is
stably homotopic to identity if it is so on the $1$-skeleton $S^1 = (\R P^{4n+1})_{[1]}$.
\par
It is easy to see that $c \oplus c$ is homotopic to identity.
So it remains to consider $c \oplus -1 : \eta \oplus \R \to \eta \oplus \R$ on $S^1$.
Note that $\eta\vert_{S^1} = \xi \oplus \xi$ and this bundle is trivial.
The splitting corresponds to the basis
$(\cos t/2,\sin t/2)$, $(-\sin t/2,\cos t/2)$. (Here $t\in S^1 = \R/2\pi\Z$.)
The map $c$ is given by
$c = (1,-1) : \xi \oplus \xi \to \xi \oplus \xi$.
So when we identify $\eta \oplus \R \cong \R^3$ on $S^1$,
the isomorphism $c \oplus -1$ is represented by the matrix
$$\aligned
&\left(
\begin{matrix}
\cos t/2  & \sin t/2 & 0 \\
- \sin t/2 & \cos t/2 & 0\\
0 & 0 & 1
\end{matrix}
\right)
\left(
\begin{matrix}
1  & 0 & 0 \\
0 & - 1 & 0\\
0 & 0 & -1
\end{matrix}
\right)
\left(
\begin{matrix}
\cos t/2  & -\sin t/2 & 0 \\
\sin t/2 & \cos t/2 & 0\\
0 & 0 & 1
\end{matrix}
\right)
\\
&=
\left(
\begin{matrix}
\cos t  & -\sin t & 0 \\
-\sin t & - \cos t & 0\\
0 & 0 & -1
\end{matrix}
\right)
\\
&
=
\left(
\begin{matrix}
\cos t  & \sin t & 0 \\
-\sin t & \cos t & 0\\
0 & 0 & 1
\end{matrix}
\right)
\left(
\begin{matrix}
1  & 0 & 0 \\
0 & -1 & 0\\
0 & 0 & -1
\end{matrix}
\right).
\endaligned$$
This loop represents the nontrivial homotopy class in $\pi_1(SO(3)) \cong \Z_2$.
This proves that the involution
$\tau^*: \text{Spin}(\C P^{4n+1},\R P^{4n+1}) \to \text{Spin}(\C P^{4n+1},\R P^{4n+1})$ is
non-trivial. Since $\text{Spin}(\C P^{4n+1},\R P^{4n+1}) \cong \Z_2$,
the proof of Proposition \ref{Proposition44.19} is complete.
\end{proof}

Using the results in this section, we calculate Floer cohomology
of $\R P^{2n+1}$ over $\Lambda_{0,{\rm nov}}^{\Z}$
(see (\ref{eq:nov0}) for the definition of $\Lambda_{0,{\rm nov}}^{\Z}$)
in Subsection \ref{subsec:Appl2}, which
provides an example of Floer cohomology that is {\it not}
isomorphic to the ordinary cohomology.

\subsection{Orientations on
$\CM_{k+1}^{\rm {main}}(J;\beta;\vec{P})$
and $\CM_{k+1, m}^{\rm {main}}(J; \beta; \vec{Q}, \vec{P})$
}
\label{subsec:orimain}
In this subsection we recall the definitions of the orientations
of $\CM_{k+1}^{\rm {main}}(J;\beta;\vec{P})$ and $\CM_{k+1, m}^{\rm {main}}(J; \beta; \vec{Q}, \vec{P})$
from Section 8.4 and Subsection 8.10.2 in \cite{fooobook2}.
Here $L$ is not necessarily the fixed point set of an anti-symplectic
involution $\tau$.

When we discuss the orientation problem, it suffices to
consider the regular parts of the moduli spaces.
See Remark \ref{rem:thmori} (2).
By Theorem \ref{thm:ori}, we have an orientation on $\widetilde{\CM}^{\rm reg}(J;\beta)$ for
each stable conjugacy class of relative spin structure.
Including marked points,
we define an orientation on $\CM^{\rm reg}_{k+1,m}(J;\beta)$ by
$$
\aligned
& \CM^{\rm reg}_{k+1,m}(J;\beta) \\
= & \left( \widetilde{\CM}^{\rm reg}(J;\beta) \times \partial D^2_0
\times D^2_1\times \cdots \times D^2_m \times \partial D^2_{m+1}  \times \cdots \times \partial D^2_{m+k} \right)/
PSL(2:\R).
\endaligned
$$
Here the sub-indices in $\partial D^2_0$ and $\partial D^2_{m+i}$ (resp. $D^2_j$) stand for
the positions of the marked points $z_0$ and $z_i$ (resp. $z_j^+$).
(In Subsection 8.10.2 in \cite{fooobook2} we write the above space
as $\CM_{(1,k),m}(\beta)$.)
Strictly speaking, since the marked points are required to be distinct,
the left hand side above is not exactly equal to the right hand side but is
an open subset. However, when we discuss orientation problem,
we sometimes do not distinguish them when no confusion can occur.

In (\ref{withsimplicies}), (\ref{withP})
we define $\CM_{k+1}(J;\beta;\vec{P})$
and $\CM_{k+1, m}(J; \beta; \vec{Q}, \vec{P})$ by fiber products.
Now we equip the right hand sides in (\ref{withsimplicies}) and (\ref{withP})
with the fiber product orientations using
Convention 8.2.1 (3) \cite{fooobook2}.
However, we do not use the fiber product orientation themselves as the orientations on
$\CM_{k+1}(J;\beta;\vec{P})$
and $\CM_{k+1, m}(J; \beta; \vec{Q}, \vec{P})$,
but we use the following orientations twisted from the fiber product orientation:
We put $\deg P_i =n-\dim P_i$, $\deg Q_j =2n-\dim Q_j$
for smooth singular simplicies $f_i : P_i \to L$ and
$g_j : Q_j \to M$.
\begin{defn}[Definition 8.4.1 \cite{fooobook2}]\label{Definition8.4.1}
For given smooth singular simplicies $f_i : P_i  \to L$,
we define an orientation on
$
\CM _{k +1}(J;\beta ;P_1 ,\dots ,P_{k})$ by
$$
\CM _{k +1}(J;\beta ;P_1 ,\dots ,P_{k})
:=
(-1)^{\epsilon (P) }
\CM _{k +1}(J;\beta ) {}_{(ev_1 ,\ldots ,ev_{k})}
\times _{f_1 \times \dots \times f_{k}}
\left(\prod _{i=1}^{k} P_{i}\right),
$$
where
$$
\epsilon (P) =
(n+1)\sum _{j=1}^{k -1}
\sum _{i=1}^{j} \deg P_i .
$$
\end{defn}
\begin{defn}[Definition 8.10.2 \cite{fooobook2}]\label{Definition8.10.2}
For given smooth singular simplicies $f_i:P_i \to L$ in $L$ and $g_j:Q_j \to M$ in $M$, we define
$$
\aligned
 & {\CM }_{k+1,m}(J;\beta ;
Q_1,  \dots , Q_{m}; P_1 , \dots ,P_{k}) \\
:= &
(-1)^{\epsilon (P,Q)}
{\CM }_{k+1, m}(J;\beta ) {}_{(ev_1^{+} , \dots ,ev_{m}^{+},ev_{1},  \dots ,ev_{k})}
\times _{g_1 \times \dots \times f_k}
\left(\prod _{j=1}^{m} Q_{j} \times \prod_{i=1}^{k} P_i\right),
\endaligned
$$
where
\begin{equation}\label{epsilonPQ}
\epsilon (P,Q) =
(n+1)\sum _{j=1}^{k-1}
\sum _{i=1}^{j} \deg P_i + \left( (k+1)(n+1) + 1 \right) \sum_{j=1}^{m} \deg Q_j.
\end{equation}
\end{defn}

Replacing $\CM_{k+1}(J;\beta)$ and
${\CM }_{k+1, m}(J;\beta )$ on the right hand sides of above definitions
by
$\CM_{k+1}^{\rm main}(J;\beta)$ and
${\CM }_{k+1, m}^{\rm main}(J;\beta )$ respectively,
we define orientations
on the main components
$\CM _{k +1}^{\rm main}(J;\beta ;P_1 ,\dots ,P_{k})$ and
${\CM }_{k+1,m}^{\rm main}(J;\beta ;
Q_1,  \dots , Q_{m}; P_1 , \dots ,P_{k})$
in the same way.

When we do not consider the fiber product with
$g_j : Q_j \to M$,
we drop the second term in $(\ref{epsilonPQ})$.
Thus when $m=0$, the moduli space
in Definition \ref{Definition8.10.2} is nothing but
${\CM }_{k+1}(J;\beta ;P_1,\ldots ,P_k)$ equipped with the orientation given by Definition \ref{Definition8.4.1}.
\par
When we study the map $\tau^{\rm main}_{\ast}$
in (\ref{eq:taumain}),
we have to change the ordering of boundary marked points.
Later we use the following lemma which describes the behavior of orientations
under the change of ordering of boundary marked points:
\begin{lem}[Lemma 8.4.3 \cite{fooobook2}]\label{Lemma8.4.3}
Let $\sigma$ be the transposition element $(i,i+1)$ in the $k$-th
symmetric group $\mathfrak S_{k}$.
$(i=1,\ldots ,k -1)$.
Then the action of $\sigma$ on
the moduli space
$\CM_{k +1}(J;\beta ;P_1 ,\ldots ,P_i ,P_{i+1}, \ldots ,P_{k})$
changing the order of marked points
induces an orientation preserving isomorphism
$$
\aligned
& \sigma ~:~ \CM_{k +1}(J;\beta ;P_1 ,\ldots ,P_i ,P_{i+1}, \ldots ,P_{k}) \\
& \longrightarrow (-1)^{(\deg P_i +1)(\deg P_{i+1}+1)}
\CM_{k +1}(J;\beta ;P_1 ,\ldots ,P_{i+1},P_i ,\ldots ,P_{k}).
\endaligned
$$
\end{lem}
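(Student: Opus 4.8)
The plan is to reduce the statement to the tautological case where no fiber products are present, and there to track the sign coming from moving one boundary marked point past another on $\partial D^2$. First I would recall that, by Remark \ref{rem:thmori}, it suffices to work on the regular locus, so the moduli space in question is the quotient
$$
\left( \widetilde{\CM}^{\rm reg}(J;\beta)\times \partial D^2_0\times \partial D^2_1\times\cdots\times\partial D^2_k\right)/PSL(2,\R),
$$
and the permutation $\sigma=(i,i+1)$ acts by interchanging the $i$-th and $(i+1)$-th circle factors while fixing $\widetilde{\CM}^{\rm reg}(J;\beta)$ and all other factors. Since each $\partial D^2_j\cong S^1$ is odd-dimensional, swapping two of them is an orientation-reversing operation on the product, hence introduces a sign $(-1)^{1\cdot 1}=-1$ on the \emph{unlabeled} moduli space $\CM_{k+1}(J;\beta)$ (before taking any fiber product with the $P_j$'s). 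This is the geometric input.

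Next I would feed this through Definition \ref{Definition8.4.1}, which defines the orientation on $\CM_{k+1}(J;\beta;P_1,\dots,P_k)$ as $(-1)^{\epsilon(P)}$ times the fiber product orientation of $\CM_{k+1}(J;\beta)\times_{f_1\times\cdots\times f_k}\bigl(\prod_i P_i\bigr)$, with $\epsilon(P)=(n+1)\sum_{j=1}^{k-1}\sum_{i=1}^{j}\deg P_i$. The comparison then has three contributions: (a) the $-1$ from swapping the circle factors of $\CM_{k+1}(J;\beta)$; (b) the sign from commuting the factor $P_i$ past $P_{i+1}$ inside $\prod P_i$ in the fiber product — using the fiber product sign conventions of Convention 8.2.1(3) in \cite{fooobook}, moving $P_i$ (attached along $ev_i$) past $P_{i+1}$ costs $(-1)^{\dim P_i\dim P_{i+1}}$, and one must also account for the reordering of the evaluation maps $ev_i,ev_{i+1}$, which are maps to $L$ of dimension $n$, contributing the usual Koszul signs; (c) the change $\epsilon(P_1,\dots,P_i,P_{i+1},\dots)\mapsto \epsilon(P_1,\dots,P_{i+1},P_i,\dots)$, which by inspection of the double sum changes by $(n+1)(\deg P_i+\deg P_{i+1})$ in the exponent, i.e.\ a sign $(-1)^{(n+1)(\deg P_i+\deg P_{i+1})}$. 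I would then collect (a)+(b)+(c) and verify by bookkeeping that the total exponent equals $(\deg P_i+1)(\deg P_{i+1}+1) \bmod 2$; here one uses $\deg P=n-\dim P$ to convert between dimensions and degrees, and the identity $\dim P_i\dim P_{i+1}+(n+1)(\deg P_i+\deg P_{i+1})\equiv (\deg P_i+1)(\deg P_{i+1}+1)+1 \pmod 2$, the extra $+1$ being cancelled by the $-1$ from (a).

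The main obstacle I anticipate is purely combinatorial rather than conceptual: getting every Koszul sign right in step (b). The fiber product orientation convention in \cite{fooobook} places the moduli space factor first and the simplices after it, and when one transposes two adjacent simplices one must drag $P_i$ past both $P_{i+1}$ \emph{and} the evaluation-map data with the correct signs; the trivialization of the relative spin structure plays no role here since $\widetilde{\CM}^{\rm reg}(J;\beta)$ is untouched, so the orientation of the moduli factor is irrelevant to the sign. Once the sign arithmetic is organized — for instance by computing everything modulo $2$ in terms of $\deg P_i,\deg P_{i+1},n$ and then substituting — the claimed exponent $(\deg P_i+1)(\deg P_{i+1}+1)$ drops out, and the fact that it is independent of $n$ is a useful consistency check along the way. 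The case of the main component $\CM^{\rm main}_{k+1}(J;\beta;P_1,\dots,P_k)$ is identical, noting only that the transposition $(i,i+1)$ does preserve counter-clockwise cyclic order when restricted to marked points $z_1,\dots,z_k$ up to the boundary strata, so the same computation applies verbatim on the open dense locus, which is all that matters for orientations.
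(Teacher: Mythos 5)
Your plan is structurally sound — factor $\CM_{k+1}^{\rm reg}$ as $(\widetilde\CM^{\rm reg}\times(\partial D^2)^{k+1})/PSL(2,\R)$, track the $-1$ from swapping the two $1$-dimensional circle factors, track the Koszul signs in the fiber product, and track the change in $\epsilon(P)$. But the explicit mod-$2$ identity you write down to close the bookkeeping is false: with $d_i=\deg P_i$ one has
\[
\dim P_i\dim P_{i+1}+(n+1)(d_i+d_{i+1})
=(n-d_i)(n-d_{i+1})+(n+1)(d_i+d_{i+1})
\equiv n+d_id_{i+1}+d_i+d_{i+1}\pmod 2,
\]
whereas $(d_i+1)(d_{i+1}+1)+1\equiv d_id_{i+1}+d_i+d_{i+1}$, so your identity is off by $n$.

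The missing $(-1)^n$ is precisely the evaluation-map Koszul sign that you mention in passing in step (b) but never actually compute or include in the arithmetic. Concretely, the fiber product is taken over $L^k$, and transposing the $i$-th and $(i+1)$-th copies of $L$ (each of dimension $n$) changes the orientation of the target by $(-1)^{n\cdot n}=(-1)^n$; via the exact sequence $0\to T(X\times_Z Y)\to TX\oplus TY\to TZ\to 0$ (or whatever form FOOO's Convention 8.2.1(3) takes), this propagates to a factor of $(-1)^n$ on the fiber product orientation. Once that is inserted, the total exponent is $1+\dim P_i\dim P_{i+1}+n+(n+1)(d_i+d_{i+1})\equiv (d_i+1)(d_{i+1}+1)\pmod 2$, which is what the lemma asserts. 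So your proposal has the right pieces identified, but as written the verification step would fail for $n$ odd; you need to actually carry out the Koszul sign for the $L^k$ factor rather than leave it to "the usual Koszul signs," and since this is exactly the kind of computation where the precise orientation convention of \cite{fooobook} enters nontrivially (witness the $(n+1)$ in $\epsilon(P)$), the convention should be applied explicitly rather than by gesture.
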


\section{The induced maps $\tau_{\ast}$ and $\tau_{\ast}^{\rm main}$}
\label{sec:inducedtau}

Let $(M,\omega)$ be a compact, or tame, symplectic manifold and let $\tau: M \to M$
be an anti-symplectic involution, i.e., a map satisfying $\tau^2 = id$ and
$
\tau^*(\omega) = -\omega.
$
We also assume that the fixed point set $L=\text{ Fix }\tau$ is non-empty, oriented and compact.
\par
Let $\CJ_{\omega}$ be the set of all $\omega$ compatible almost
complex structures
and $\CJ_\omega^\tau$ its subset consisting of
$\tau$-anti-invariant almost complex structures $J$ satisfying
$
\tau_* J = -J.
$
\begin{lem}[Lemma 11.3 \cite{fooo00}. See also Proposition 1.1 \cite{Wel}.]\label{Lemma38.3}
The space $\CJ_{\omega}^\tau$ is non-empty and
contractible. It becomes an infinite dimensional
(Fr\'echet) manifold.
\end{lem}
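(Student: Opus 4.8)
The plan is to mimic the standard argument showing that the space $\CJ_\omega$ of $\omega$-compatible almost complex structures is non-empty and contractible, and to carry out each step equivariantly with respect to $\tau$. Recall that the classical proof goes through the space of $\omega$-compatible metrics, or equivalently through a fiberwise contraction: given a symmetric positive-definite $A$ and the skew form $\omega$, the polar decomposition produces a canonical compatible $J$, and this construction is continuous and canonical, hence gives both non-emptiness and contractibility (by contracting the space of metrics, which is convex). I would first recall this and set up notation, emphasizing that all constructions are natural and therefore commute with pullback by any diffeomorphism.

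First I would show non-emptiness. Start with any $\omega$-compatible $J_0 \in \CJ_\omega$ (which exists by the classical result). Since $\tau^*\omega = -\omega$, the almost complex structure $\tau_* J_0$ (i.e. $d\tau \circ J_0 \circ d\tau^{-1}$, using $\tau^2 = \mathrm{id}$) satisfies $\tau_*(\tau_* J_0) = J_0$ and one checks that $-\tau_* J_0$ is again $\omega$-compatible: indeed $\omega(\cdot, -\tau_* J_0 \cdot)$ pulls back under $\tau$ to $-\omega(d\tau\cdot, -J_0 d\tau \cdot)$ combined with $\tau^*\omega = -\omega$ gives a positive form. Now average: consider the $\tau$-invariant Riemannian metric obtained by averaging the metric $g_{J_0}(\cdot,\cdot)=\omega(\cdot,J_0\cdot)$ with $\tau^* g_{J_0}$, and apply the canonical polar-decomposition construction to the pair $(\omega, \text{averaged metric})$. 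Because the averaged metric is $\tau$-invariant and $\tau^*\omega = -\omega$, naturality of the construction forces the resulting $J \in \CJ_\omega$ to satisfy $\tau_* J = -J$, so $J \in \CJ_\omega^\tau$ and the set is non-empty.

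Next I would prove contractibility. The key point is that $\CJ_\omega$ itself deformation retracts onto a point via the canonical construction applied to the convex combination of metrics $g_t = (1-t)g_J + t\,g_{J_1}$; I would check that if both endpoints $J, J_1$ lie in $\CJ_\omega^\tau$ then every metric $g_t$ along the path is $\tau$-invariant (since each $g_{J_i}$ is, because $\tau^*\omega=-\omega$ and $\tau_*J_i=-J_i$ together give $\tau^* g_{J_i}=g_{J_i}$), hence the whole path of almost complex structures stays inside $\CJ_\omega^\tau$. This shows $\CJ_\omega^\tau$ is connected and, running the same argument with families parametrized by any compact space (or just invoking that the canonical retraction of $\CJ_\omega$ restricts to $\CJ_\omega^\tau$), that it is contractible. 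Finally, the Fréchet manifold structure: $\CJ_\omega^\tau$ is the fixed-point set of the smooth involution $J \mapsto -\tau_* J$ acting on the Fréchet manifold $\CJ_\omega$, and at each $J$ the tangent space is the $(+1)$-eigenspace of the induced involution on $T_J\CJ_\omega$ (the space of sections $Y$ of $\mathrm{End}(TM)$ with $YJ+JY=0$ and $\omega(Y\cdot,\cdot)$ symmetric); splitting off the complementary eigenspace gives local charts, so $\CJ_\omega^\tau$ is a smooth Fréchet submanifold.

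The main obstacle I expect is bookkeeping the signs and eigenvalue conventions in the relation $\tau_* J = -J$ throughout: one must be careful that the averaging step really produces a $\tau$-anti-invariant (not $\tau$-invariant) $J$, which hinges precisely on combining $\tau^*\omega = -\omega$ with the naturality of the polar decomposition. Once that single computation is pinned down, everything else is a routine transcription of the standard contractibility proof restricted to a fixed-point set.
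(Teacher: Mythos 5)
Your proposal is correct and follows essentially the same route as the paper: the paper also identifies the tangent space $T_J\CJ^\tau_\omega$ to get the Fréchet manifold structure and appeals to the polar decomposition theorem applied to a $\tau$-invariant Riemannian metric to obtain non-emptiness and contractibility. Your only addition is spelling out the naturality computation showing that polar decomposition of $(\omega,g)$ with $\tau^*\omega=-\omega$ and $\tau^*g=g$ yields $\tau_*J=-J$, which the paper leaves implicit.
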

\begin{proof} For given $J\in \CJ^\tau_\omega$, its tangent space
$T_J\CJ^\tau_\omega$
consists of sections $Y$ of the bundle $\operatorname{End}(TM)$ whose fiber at $p \in M$ is
the
space of linear maps $Y:T_pM \to T_pM$ such that
$$
Y J + J Y=0, \quad \omega(Y v,w) + \omega(v,Y w)=0,
\quad \tau^*Y= - Y.
$$
Note that the second condition means that $JY$ is a symmetric endomorphism
with respect to the metric $g_J= \omega(\cdot, J\cdot)$.
It immediately follows that $\CJ^\tau_\omega$ becomes a manifold.
The fact that $\CJ^\tau_\omega$ is non-empty
(and contractible) follows from the polar decomposition theorem
by choosing a $\tau$-invariant
Riemannian metric on $M$.
\end{proof}

\subsection{The map $\tau_{\ast}$ and orientation}
\label{subsec:tau}
We recall the definition of $\Pi(L)=\pi_2(M,L)/\sim$ where the equivalence relation is defined by
$\beta \sim \beta' \in \pi_2(M,L)$ if and only if
$
\omega(\beta)=\omega(\beta')$
and
$
\mu_L(\beta) =\mu_L(\beta')$.
(See (\ref{eq:Pi}).)
We notice that
for each $\beta \in \Pi (L)$, we defined the moduli space
$\CM(J;\beta)$ as the union
\begin{equation}\label{remarkmoduli}
\CM(J;\beta)=
\bigcup_{B \in \pi_2(M,L);[B]=\beta \in \Pi(L)} \CM(J;B).
\end{equation}
We put $D^2 =\{ z \in \C ~\vert~ \vert z\vert \le 1 \}$
and $\overline{z}$ denotes the complex conjugate.
\begin{defn}\label{def:inducedtau}
Let $J \in \CJ_{\omega}^\tau$.
For $J$ holomorphic curves $w : (D^2,\partial D^2) \to (M,L)$ and
$u : S^2 \to M$,
we define $\widetilde w$, $\widetilde u$ by
\begin{equation}\label{38.4}
\widetilde w(z) = (\tau\circ w)(\overline z),
\qquad
\widetilde u(z) = (\tau\circ u)(\overline z).
\end{equation}
For $(D^2,w) \in \CM^{\text{reg}}(J;{\beta})$,
$((D^2,\vec z,{\vec z}\,^+),w) \in \CM^{\text{reg}}_{k+1,m}(J;{\beta})$
we define
\begin{equation}\label{38.5}
\tau_*((D^2,w)) = (D^2,\widetilde w),
\qquad
\tau_*(((D^2,\vec z,\vec z\,^+),w)) = ((D^2,\vec{\overline z},
\vec{\overline z}\,^{+}),\widetilde w),
\end{equation}
where
$$
\vec{\overline z} = (\overline z_0,\dots,\overline z_k),
\qquad
\vec{\overline z}\,^+ = (\overline z^+_0,\dots,\overline z^+_m).
$$
\end{defn}
\begin{rem}\label{rem:tau}
For  $\beta = [w]$, we put $\tau_*\beta=[\widetilde{w}]$.
Note if $\tau_{\sharp} :  \pi_2(M,L) \to \pi_2(M,L)$ is the
natural homomorphism induced by $\tau$, then
$$\tau_*\beta = -\tau_{\sharp}\beta.$$
This is because $z \mapsto \overline z$ is of degree $-1$.
In fact, we have
$$
\tau_*(\beta) = \beta
$$
in $\Pi(L)$,
since $\tau_*$ preserves both the symplectic area and the Maslov index.
\end{rem}
\begin{lem}\label{Lemma38.6}
The definition
$(\ref{38.5})$ induces the maps
$$
\tau_* : \CM^{\operatorname{reg}}(J;\beta) \to
\CM^{\operatorname{reg}}(J;\beta),
\quad
\tau_* : \CM^{\operatorname{reg}}_{k+1,m}(J;\beta) \to
\CM^{\operatorname{reg}}_{k+1,m}(J;\beta),
$$
which satisfy
$\tau_{\ast} \circ \tau_{\ast} = \operatorname{id}$.
\end{lem}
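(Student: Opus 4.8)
The plan is to verify that the formulas in \eqref{38.4}--\eqref{38.5} are well-defined on each level of the construction, and then that the composition is the identity. First I would check that $\widetilde w$ is again $J$-holomorphic. The map $z\mapsto\overline z$ is an anti-holomorphic diffeomorphism of $D^2$, and $\tau$ is anti-holomorphic with respect to $J$ precisely because $J\in\CJ_\omega^\tau$ satisfies $\tau_*J=-J$; composing two anti-holomorphic maps yields a $J$-holomorphic map, so $\widetilde w:(D^2,\partial D^2)\to(M,L)$ is $J$-holomorphic (here one uses $\tau(L)=L$, which holds since $L=\operatorname{Fix}\tau$, to see that the boundary condition is preserved). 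The same computation applies to $\widetilde u:S^2\to M$, using that conjugation on $S^2=\C P^1$ is anti-holomorphic. Next I would check that $\widetilde w$ has finite automorphism group: if $\psi$ is an automorphism of $(D^2,\vec{\overline z},\vec{\overline z}^+)$ commuting with $\widetilde w$, then $z\mapsto\overline{\psi(\overline z)}$ is an automorphism of the original curve commuting with $w$, so stability is preserved; hence $\tau_*$ genuinely sends stable maps to stable maps and descends to isomorphism classes.

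The bulk of the argument is bookkeeping on the nodal/marked structure so that \eqref{38.5} is compatible with the equivalence defining $\CM^{\operatorname{reg}}_{k+1,m}(J;\beta)$. For the smooth stratum $\CM^{\operatorname{reg}}$ this is immediate since the domain is a fixed $D^2$; for the version with marked points one must observe that conjugation carries an interior marked point to an interior marked point and a boundary marked point to a boundary marked point (conjugation preserves $\partial D^2$), and that an isomorphism $\psi$ of marked curves intertwines with the conjugated data as above, so $\tau_*$ is well-defined on isomorphism classes. One also records, as in Remark \ref{rem:tau}, that $\tau_*\beta=\beta$ in $\Pi(L)$ so that $\tau_*$ indeed maps $\CM^{\operatorname{reg}}(J;\beta)$ to itself and not to the moduli space of a different class; this uses that $\tau$ preserves symplectic area (up to sign, which is then corrected by the orientation reversal of conjugation) and the Maslov index, together with the union description \eqref{remarkmoduli}.

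Finally, $\tau_*\circ\tau_*=\operatorname{id}$ is a direct computation: applying \eqref{38.4} twice gives
\[
\widetilde{\widetilde w}(z)=(\tau\circ\widetilde w)(\overline z)=\tau\bigl(\widetilde w(\overline z)\bigr)=\tau\bigl((\tau\circ w)(\overline{\overline z})\bigr)=(\tau\circ\tau\circ w)(z)=w(z),
\]
since $\tau^2=\operatorname{id}$ and $\overline{\overline z}=z$; and the marked points $\overline{\overline z_i}=z_i$, $\overline{\overline z_j^{\,+}}=z_j^+$ return to themselves, so $\tau_*(\tau_*(((D^2,\vec z,\vec z^+),w)))=((D^2,\vec z,\vec z^+),w)$.

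I do not expect a serious obstacle here: the statement is essentially a naturality check, and nothing about transversality or Kuranishi structures enters yet (that is deferred to the statements about $\tau_*$ on the compactified moduli spaces and about orientations, i.e.\ Theorem \ref{thm:fund} and after). The one point requiring a little care is making sure the boundary condition $w(\partial D^2)\subset L$ survives under $\tau$, which is exactly where $L=\operatorname{Fix}\tau$ is used, and making sure the convention $\tau_*\beta$ versus $\tau_\sharp\beta$ (differing by a sign because conjugation has degree $-1$) is handled consistently with \eqref{remarkmoduli}; both are straightforward once spelled out.
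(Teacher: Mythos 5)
Your proposal is correct and follows essentially the same approach as the paper: the key step in both is verifying well-definedness on isomorphism classes by checking that an isomorphism $\varphi\in PSL(2,\R)$ of the original data induces the isomorphism $\overline\varphi(z)=\overline{\varphi(\overline z)}$ of the conjugated data, after which $\tau_*\circ\tau_*=\operatorname{id}$ is a direct computation from $\tau^2=\operatorname{id}$ and $\overline{\overline z}=z$. Your proof is somewhat more detailed (spelling out the anti-holomorphicity argument for $J$-holomorphicity of $\widetilde w$, the preservation of the boundary condition via $L=\operatorname{Fix}\tau$, and the stability check), all of which the paper leaves implicit.
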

\begin{proof}
If $(w,(z_0,\dots,z_k),(z_1^+,\dots,z_m^+)) \sim (w',(z'_0,\dots,z'_k),
(z_1^{\prime +},\dots,z_m^{\prime +}))$, we have
$\varphi \in PSL(2,\Bbb R) = \text{Aut}(D^2)$
such that $w' = w\circ \varphi^{-1}$, $z'_i = \varphi(z_i)$,
$z^{\prime +}_i = \varphi(z^+_i)$ by definition.
We define $\overline \varphi: D^2 \to D^2$ by
\begin{equation}
\label{eq:barPSL2R}
\overline{\varphi}(z) = \overline{(\varphi(\overline z))}.
\end{equation}
Then $\overline{\varphi} \in PSL(2,\Bbb R)$ and
$\widetilde w' = \widetilde w \circ \overline{\varphi}^{-1}$,
$\overline z'_i = \overline{\varphi}(\overline z_i)$,
$\overline z^{\prime +}_i = \overline\varphi(\overline z^+_i)$.
The property
$\tau_{\ast} \circ \tau_{\ast} = \operatorname{id}$
is straightforward.
\end{proof}
We note that the mapping $\varphi \mapsto \overline\varphi$, $PSL(2,\R)
\to PSL(2,\R)$ is  orientation preserving.

We have the following
\begin{prop}\label{regtau}
The involution $\tau_*$ is induced by an automorphism of
$\CM^{\operatorname{reg}}(J;\beta)$ as a space with Kuranishi structure.
\end{prop}
Proposition \ref{regtau} is a special case of Theorem \ref{Proposition38.11} (1).
The proof of  Theorem \ref{Proposition38.11} (1)
is similar to the proof of Theorem \ref{Lemma38.14}
which will be proved in Section \ref{sec:Proofth}.
See Definition \ref{def:auto}
for the definition of an automorphism of a space with Kuranishi structure and
Definition \ref{def:oripres} for the definition for an automorphism to be {\it orientation preserving} in the sense of Kuranishi structure.
In this paper, we use the terminology {\it orientation preserving} only in the sense of Kuranishi structure.
We refer Section A1.3 \cite{fooobook2}
for more detailed explanation of the group action on
a space with Kuranishi structure.

In Section \ref{sec:relspin}, we
explained that a choice of stable conjugacy class
$[(V,\sigma)] \in
\operatorname{Spin} (M,L)$ of relative spin structure on $L$ induces an
orientation on
$\CM_{k+1,m}(J;\beta)$ for any given $\beta\in \Pi(L)$. Hereafter we equip
$\CM_{k+1,m}(J;\beta)$ with this orientation when we regard it as a space
with oriented Kuranishi structure.
We write it as $\CM_{k+1,m}(J;\beta)^{[(V,\sigma)]}$ when we specify the
stable conjugacy class of relative spin structure.
\par
For an anti-symplectic involution $\tau$ of $(M,\omega)$,
we have the pull back $\tau^{\ast}[(V,\sigma)]$ of
the stable conjugacy class of relative spin structure $[(V,\sigma)]$.
See (\ref{pullback}).
Then from the definition of the map $\tau_{\ast}$ in Lemma \ref{Lemma38.6}
we obtain the maps
$$
\aligned
\tau_* & : \CM^{\operatorname{reg}}(J;\beta)^{\tau^{\ast}[(V,\sigma)]} \longrightarrow
\CM^{\operatorname{reg}}(J;\beta)^{[(V,\sigma)]},
\\
\tau_* & : \CM^{\operatorname{reg}}_{k+1,m}(J;\beta)^{\tau^{\ast}[(V,\sigma)]}
\longrightarrow
\CM^{\operatorname{reg}}_{k+1,m}(J;\beta)^{[(V,\sigma)]}.
\endaligned
$$
Here we note that $\tau_{\ast}J =-J$
and we use the same $\tau$-anti-symmetric
almost complex structure $J$ in both the source and the target
spaces of the map $\tau_{\ast}$.
If $[(V,\sigma)]$ is
$\tau$-relatively spin (i.e.,
$\tau^{\ast}[(V,\sigma)]=[(V,\sigma)]$),
$\tau_{\ast}$ defines involutions
of $\CM^{\operatorname{reg}}(J;\beta)^{[(V,\sigma)]}$ and $\CM^{\operatorname{reg}}_{k+1,m}(J;\beta)^{[(V,\sigma)]}$ with Kuranishi structures.
\begin{thm}\label{Proposition38.7}
Let $L$ be a fixed point set of an anti-symplectic involution $\tau$ and
$J \in \CJ^{\tau}_{\omega}$.
Suppose that $L$ is oriented and carries a relative spin structure $(V,\sigma)$.
Then the map
$
\tau_*: {\CM}^{\operatorname{reg}}(J;\beta)^{\tau^{\ast}[(V,\sigma)]} \to
{\CM}^{\operatorname{reg}}(J;\beta)^{[(V,\sigma)]}
$
is orientation preserving if $\mu_L(\beta) \equiv 0
\mod 4$ and is orientation reversing if
$\mu_L(\beta) \equiv 2
\mod 4$.
\end{thm}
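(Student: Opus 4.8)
The plan is to carry out the comparison at the level of the determinant line of the index of the linearized Cauchy--Riemann operator, since that is where the orientation produced by a stable conjugacy class of relative spin structure lives. Because $\varphi \mapsto \overline{\varphi}$ is an orientation-preserving automorphism of $PSL(2,\R)$ (as noted after Lemma \ref{Lemma38.6}), it suffices to prove the analogous statement for $\widetilde{\CM}^{\rm reg}(J;\beta)$ before passing to the quotient. So fix $w \in \widetilde{\CM}^{\rm reg}(J;\beta)$ and set $\widetilde w(z) = \tau(w(\overline z))$. Since $\tau_*J = -J$ and $z \mapsto \overline z$ is anti-holomorphic, $\widetilde w$ is again $J$-holomorphic, and $d\tau$ together with pull-back by $z \mapsto \overline z$ gives a natural real-linear isomorphism between $\operatorname{Index} D_w\overline\partial$ and $\operatorname{Index} D_{\widetilde w}\overline\partial$, equivalently between the indices of the Dolbeault operators $\overline\partial_{(w^*TM,\ell^*TL)}$ and $\overline\partial_{(\widetilde w^*TM,\widetilde\ell^*TL)}$. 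The whole task is to determine the sign by which this isomorphism acts on the determinant lines when the source is oriented by $\tau^*[(V,\sigma)]$ and the target by $[(V,\sigma)]$.

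To compute this sign I would use the degeneration recalled in Remark \ref{rem:prop8.1.4}: deform $w$ to a map $w_0$ into the $2$-skeleton and $\widetilde w$ to $\widetilde{w_0}$ with $\widetilde{w_0}(z) = \tau(w_0(\overline z))$, pinch the circle $C_{1-\epsilon}$, and pass to the nodal curve $\Sigma = D^2 \cup_O \C P^1$ with the descended data $(E',\lambda')$. The exact sequence attached to the map $\operatorname{diff}$ of Remark \ref{rem:prop8.1.4}, together with surjectivity of $\overline\partial_{(E'|_{D^2},\lambda')}$ and the canonical identification $\operatorname{Ker}\overline\partial_{(E'|_{D^2},\lambda')}\cong\R^n$, yields an identification, canonical up to a universal sign that is the same for source and target,
\[
\det\operatorname{Index}\overline\partial_{(E,\lambda)} \;\cong\; \det\operatorname{Index}\overline\partial_{E'|_{\C P^1}}\;\otimes\;\det\operatorname{Ker}\overline\partial_{(E'|_{D^2},\lambda')}\;\otimes\;\bigl(\det T_pM\bigr)^{-1},
\]
where $p = w_0(O)$ and $T_pM$ carries its complex orientation, $\det\operatorname{Index}\overline\partial_{E'|_{\C P^1}}$ its complex orientation, and $\det\operatorname{Ker}\overline\partial_{(E'|_{D^2},\lambda')}\cong\det\R^n$ the orientation coming from the relative spin trivialization of $\lambda'$. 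Since each of these three identifications is natural, $\tau_*$ acts compatibly on all three factors, so the desired sign is the product of the three signs computed separately, independent of the bookkeeping conventions of \cite{fooobook}.

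Then I would compute the three signs. On $\det\operatorname{Ker}\overline\partial_{(E'|_{D^2},\lambda')}\cong\det\R^n$ the design of the statement enters: since $\tau|_L = \operatorname{id}$ we have $d\tau|_{TL}=\operatorname{id}$, and the definition of $\tau^*[(V,\sigma)]$ is arranged exactly so that $d\tau$ carries the relative spin trivialization of $\ell_0^*(TL\oplus\tau^*V)$ to the one of $\widetilde\ell_0^*(TL\oplus V)$; hence on the space of constant real sections the induced map is homotopic to the identity, contributing $+1$. On $\bigl(\det T_pM\bigr)^{-1}$ the induced map comes from $d\tau_p$, which anti-commutes with $J$, and an anti-linear automorphism of $\C^n$ has real determinant of sign $(-1)^n$, so this factor contributes $(-1)^n$. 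On $\det\operatorname{Index}\overline\partial_{E'|_{\C P^1}}$, the reduction of Remark \ref{rem:prop8.1.4} (stabilizing by a finite-dimensional complex subspace $W$, which may be chosen $\tau_*$-invariant) writes the determinant line in terms of complex vector spaces on which $\tau_*$ acts complex anti-linearly, so $\tau_*$ acts on this real determinant line by $(-1)^{\dim_\C\operatorname{Index}\overline\partial_{E'|_{\C P^1}}}$; since $\lambda'$ is trivial and the Maslov index of $(w^*TM,\ell^*TL)$ is $\mu_L(\beta)$, the bundle $E'|_{\C P^1}$ has first Chern number $\mu_L(\beta)/2$, so $\dim_\C\operatorname{Index}\overline\partial_{E'|_{\C P^1}} = n + \mu_L(\beta)/2$ and this factor contributes $(-1)^{n+\mu_L(\beta)/2}$. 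Multiplying, the two factors $(-1)^n$ cancel and the total sign is $(-1)^{\mu_L(\beta)/2}$, which is $+1$ when $\mu_L(\beta)\equiv 0\mod 4$ and $-1$ when $\mu_L(\beta)\equiv 2\mod 4$; since $\varphi\mapsto\overline\varphi$ preserves the orientation of $PSL(2,\R)$, the same holds on $\CM^{\rm reg}(J;\beta)$.

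The main obstacle will be the verification underlying the computation on the $\det\R^n$ factor: one must check carefully that $d\tau$ really does intertwine the relative spin trivialization attached to $\tau^*[(V,\sigma)]$ on the source with the one attached to $[(V,\sigma)]$ on the target --- this is precisely why the theorem is forced to compare two a priori differently oriented moduli spaces rather than a single space --- and also that the deformation from $w$ to $w_0$, the pinching, and the stabilizing subspaces $W$ can be chosen equivariantly, so that no extra sign sneaks into the identification of the two determinant lines. Once that naturality is established, the remaining sign count is the clean cancellation $(-1)^n\cdot(-1)^{n+\mu_L(\beta)/2}=(-1)^{\mu_L(\beta)/2}$.
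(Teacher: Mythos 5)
Your proposal is correct and follows essentially the same route as the paper: pass to $\widetilde{\CM}^{\rm reg}(J;\beta)$ using that $\varphi\mapsto\overline\varphi$ is orientation-preserving on $PSL(2,\R)$, degenerate to the nodal curve $D^2\cup_O\C P^1$ as in Remark \ref{rem:prop8.1.4}, observe that the identification of the $\R^n$ disc-kernel factor is precisely where the comparison of $\tau^*[(V,\sigma)]$ on the source with $[(V,\sigma)]$ on the target makes $\tau_*$ act trivially, and compute the remaining sign as the real determinant of a complex anti-linear map. The only variation is bookkeeping: the paper reduces to a model in which the residual factor is $\C^{\mu_L(\beta)/2}$ outright (the node-matching $\C^n$ having already been absorbed into the "$\operatorname{Hol}$" factor), whereas you keep $\det\operatorname{Index}\overline\partial_{E'|_{\C P^1}}$ of complex dimension $n+\mu_L(\beta)/2$ together with the explicit $(\det T_pM)^{-1}$ factor from $\operatorname{diff}$, and then watch the two $(-1)^n$'s cancel -- both give $(-1)^{\mu_L(\beta)/2}$.
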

\begin{cor}\label{corProposition38.7}
Let $L$ be as in Theorem \ref{Proposition38.7}.
In addition, if $L$ carries a $\tau$-relative spin structure
$[(V,\sigma)]$, then the map $\tau_{\ast} : {\CM}^{\operatorname{reg}}(J;\beta)^{[(V,\sigma)]} \to
{\CM}^{\operatorname{reg}}(J;\beta)^{[(V,\sigma)]}
$ is orientation preserving if $\mu_L(\beta) \equiv 0
\mod 4$ and is orientation reversing if
$\mu_L(\beta) \equiv 2
\mod 4$.
\end{cor}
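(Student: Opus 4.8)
Since a $\tau$-relative spin structure satisfies $\tau^{\ast}[(V,\sigma)]=[(V,\sigma)]$, by Theorem \ref{thm:ori} the source and target of $\tau_{\ast}$ in the Corollary are literally equal as oriented spaces, so the Corollary is exactly the case $\tau^{\ast}[(V,\sigma)]=[(V,\sigma)]$ of Theorem \ref{Proposition38.7}; it suffices to prove the latter. The plan is to run the orientation construction recalled in Subsection \ref{subsec:relspin} and Remark \ref{rem:prop8.1.4} and track, factor by factor, the effect of $\tau_{\ast}$. Write $c(z)=\overline z$ on $D^2$, so $\tau_{\ast}$ sends $w$ to $\widetilde w=\tau\circ w\circ c$, and note $d\tau$ is anti-$J$-linear (since $\tau_{\ast}J=-J$) and, along $L$, equals $\operatorname{id}$ on $TL$ and $-\operatorname{id}$ on $JTL$. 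Since orientations only see the top stratum it is enough to look at a single $J$-holomorphic disc $w:(D^2,\partial D^2)\to(M,L)$, $\ell=w\vert_{\partial D^2}$: the tangent space at $[w]$ is read off from $\operatorname{Index}D_w\overline\partial$, and as $\varphi\mapsto\overline\varphi$ is orientation preserving on $PSL(2,\R)$ the quotient contributes no sign; replacing $D_w\overline\partial$ by the Dolbeault operator $\overline\partial_{(w^{\ast}TM,\ell^{\ast}TL)}$ with the same symbol, and deforming $w$ into $M_{[2]}$ (and simultaneously $\widetilde w$ by the $\tau_{\ast}$-image of that deformation, so that $\widetilde w_0=\tau\circ w_0\circ c$), everything reduces to the following question: equipping $\operatorname{Index}\overline\partial_{(w_0^{\ast}TM,\ell_0^{\ast}TL)}$ with the orientation induced by $\tau^{\ast}[(V,\sigma)]$ on the source and by $[(V,\sigma)]$ on the target, is the isomorphism $\xi\mapsto d\tau\circ\xi\circ c$ orientation preserving?

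Next I would carry out the degeneration of Remark \ref{rem:prop8.1.4}: pinching $C_{1-\epsilon}$ produces the nodal curve $\Sigma=D^2\cup_p\C P^1$, and this is $\tau_{\ast}$-equivariant because $c$ preserves each circle $\vert z\vert=r$. One then has the canonical, $\tau_{\ast}$-equivariant decomposition
\begin{equation*}
\det\operatorname{Index}\overline\partial_{(E',\lambda')}\;\cong\;
\det\operatorname{Index}\overline\partial_{E'\vert_{\C P^1}}\;\otimes\;
\det\operatorname{Ker}\overline\partial_{(E'\vert_{D^2},\lambda'\vert_{\partial D^2})}\;\otimes\;(\det{}_{\R}\C^{n})^{\ast},
\end{equation*}
where $\C^{n}=T_{w_0(O)}M$ is the node-matching space of the map $\operatorname{diff}$, the middle factor is canonically the space $\R^{n}$ of real constant sections (oriented by the trivialization of $\ell_0^{\ast}TL$ coming from the relative spin structure), and the first factor carries its complex orientation. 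Since the degeneration is arranged so that $E'\vert_{\C P^1}$ carries $c_1(E'\vert_{\C P^1})=\mu_L(\beta)/2$, Riemann--Roch shows $\operatorname{Index}\overline\partial_{E'\vert_{\C P^1}}$ has complex rank $n+\mu_L(\beta)/2$. The sign by which $\tau_{\ast}$ changes the orientation is therefore the product of its signs on these three factors.

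It remains to compute those three signs. On $\operatorname{Index}\overline\partial_{E'\vert_{\C P^1}}$ the map induced by $\tau_{\ast}$ is complex-antilinear — it is the composite of an anti-holomorphic automorphism of $\C P^1$ with the anti-$J$-linear $d\tau$, exactly as in the analysis of $\operatorname{Hol}(\C P^1;E'\vert_{\C P^1})$ recalled in Remark \ref{rem:prop8.1.4} — hence it multiplies the complex orientation by $(-1)^{n+\mu_L(\beta)/2}$. On the node space $\C^{n}=T_{w_0(O)}M$ it is $d\tau$, anti-$J$-linear, so it contributes $(-1)^{n}$ (and likewise on the dual line). On the $\R^{n}$ of constant real sections it is the identity: here one uses $d\tau\vert_{TL}=\operatorname{id}$ together with the observation that, since $\tau\vert_L=\operatorname{id}$ forces the spin part $\tau^{\ast}\sigma$ to agree with $\sigma$ on $(TL\oplus V)\vert_{L_{[2]}}$, the trivialization of $\ell_0^{\ast}TL$ used for the target (the $[(V,\sigma)]$-trivialization relative to the loop $\widetilde\ell_0=\ell_0\circ(c\vert_{\partial D^2})$ and its extension over $\widetilde w_0=\tau\circ w_0\circ c$) is exactly the pull-back under $c\vert_{\partial D^2}$ of the $\tau^{\ast}[(V,\sigma)]$-trivialization used for the source; the $\tau^{\ast}$ on the source relative spin structure is precisely what cancels the $\tau$ entering $\widetilde w_0$. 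Multiplying, $\tau_{\ast}$ changes the orientation by $(-1)^{n+\mu_L(\beta)/2}\cdot(-1)^{n}=(-1)^{\mu_L(\beta)/2}$, which is $+1$ if $\mu_L(\beta)\equiv0\mod 4$ and $-1$ if $\mu_L(\beta)\equiv2\mod 4$. This proves Theorem \ref{Proposition38.7}, and with it the Corollary.

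I expect the main obstacle to be making the reduction steps genuinely $\tau_{\ast}$-equivariant — the $\tau$-equivariant deformation of $w$ into $M_{[2]}$, the nodal degeneration, and (if one does not wish to assume surjectivity) the choice of a $\tau_{\ast}$-invariant complex stabilizing subspace $W\subset L^{p}(\C P^1;E'\vert_{\C P^1}\otimes\Lambda^{0,1}_{\C P^1})$, so that the displayed decomposition is honestly $\tau_{\ast}$-equivariant — together with the careful bookkeeping of the trivializations of $\ell_0^{\ast}TL$ on the $D^2$-component, which is the single point where the hypothesis $L=\operatorname{Fix}\tau$ (hence $\tau\vert_L=\operatorname{id}$ and $d\tau\vert_{TL}=\operatorname{id}$) really enters.
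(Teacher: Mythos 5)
Your proof is correct and follows the same route as the paper's own argument: reduce to the linearized Dolbeault operator, pinch to the nodal curve $D^2\cup_p\C P^1$ as in Remark \ref{rem:prop8.1.4}, use that $d\tau$ is anti-$J$-linear while $d\tau\vert_{TL}=\operatorname{id}$, and read off the sign from the complex orientations on the sphere side versus the fixed real $\R^n$ on the disc side. The only cosmetic difference is in the bookkeeping: you keep the full space $\operatorname{Index}\overline\partial_{E'\vert_{\C P^1}}$ (complex Euler characteristic $n+\mu_L(\beta)/2$, hence sign $(-1)^{n+\mu_L(\beta)/2}$) together with an explicit node-matching factor $(\det_{\R}\C^n)^{\ast}$ contributing $(-1)^n$, whereas the paper's display (\ref{eq:C}) absorbs the node constraint into its $\operatorname{Hol}$ factor so that it has complex dimension $\mu_L(\beta)/2$ outright; both yield the net sign $(-1)^{\mu_L(\beta)/2}$, and your caveats about $\tau_*$-equivariance of the degeneration, of the stabilizing subspace, and of the choice of trivializations correspond exactly to the paper's choice of a $(T_{w,1}\tau)_{\ast}$-invariant $E_{[D^2,w]}$ and the matching of $\Lambda$ with $\widetilde\Lambda$.
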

We prove Theorem \ref{Proposition38.7} in Section \ref{sec:Proofth}.
Here we give a couple of examples.

\begin{exm}\label{Example38.8.}
(1) Consider the case of $M=\C P^n$, $L=\R P^n$.
In this case, each Maslov
index $\mu_L(\beta)$ has the form
$$
\mu_L(\beta) = \ell_\beta  (n+1)
$$
where $\beta = \ell_\beta$ times the generator.
We know that when $n$ is even $L$ is not orientable, and so we
consider only the case where $n$ is odd.  On the other hand,
when $n$ is odd, $L$ is relatively spin.
Moreover, we have proved in Proposition \ref{Proposition44.19}
that
$\R P^{4n+3}$ $(n\ge 0)$ is $\tau$-relatively spin, (indeed, $\R P^{4n+3}$
is spin), but
$\R P^{4n+1}$ $(n\ge 1)$ is {\it not} $\tau$-relatively spin.
Then using the above formula for the Maslov index,
we can conclude from
Theorem \ref{Proposition38.7} that the map
$\tau_*: {\CM}^{\text{reg}}(J;\beta)^{[(V,\sigma)]} \to
{\CM}^{\text{reg}}(J;\beta)^{[(V,\sigma)]}
$ is always an orientation preserving involution
for any $\tau$-relative spin structure $[(V,\sigma)]$ of $\R P^{4n+3}$.
\par
Of course, $\R P^1$ is spin and so $\tau$-relatively spin.
The map $\tau_*$ is an orientation preserving involution if $\ell_{\beta}$
is even, and an orientation reversing involution if $\ell_{\beta}$
is odd.
\par
(2) Let $M$ be a Calabi-Yau 3-fold and let $L \subset M$ be the set of real points
(i.e., the fixed point set of an anti-holomorphic involutive isometry).
In this case, $L$ is orientable (because it is a special
Lagrangian) and spin (because any orientable 3-manifold is spin).
Furthermore $\mu_L(\beta) = 0$ for any $\beta \in \pi_2(M,L)$.
Therefore Theorem \ref{Proposition38.7} implies that the map $
\tau_*: {\CM}^{\text{reg}}(J;\beta)^{[(V,\sigma)]}
\to {\CM}^{\text{reg}}(J;\beta)^{[(V,\sigma)]}
$ is
orientation preserving for any $\tau$-relative spin structure
$[(V,\sigma)]$.
\end{exm}
\par
We next include marked points.
We consider the moduli space $\CM^{\text{reg}}_{k+1,m}(J;\beta)$.
\par
\begin{prop}\label{Lemma38.9}
The map
$\tau_* : \CM_{k+1,m}^{\operatorname{reg}}(J;\beta)^{\tau^{\ast}[(V,\sigma)]} \to
\CM_{k+1,m}^{\operatorname{reg}}(J;\beta)^{[(V,\sigma)]}
$ is orientation preserving if and only if $\mu_L(\beta)/2 + k + 1 + m$
is even.
\end{prop}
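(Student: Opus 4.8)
The plan is to reduce Proposition \ref{Lemma38.9} to Theorem \ref{Proposition38.7} by analyzing how $\tau_*$ acts on the extra factors coming from the marked points. Recall that by the description of orientations in Subsection \ref{subsec:orimain},
$$
\CM^{\rm reg}_{k+1,m}(J;\beta) = \left( \widetilde{\CM}^{\rm reg}(J;\beta) \times \partial D^2_0 \times D^2_1 \times \cdots \times D^2_m \times \partial D^2_{m+1} \times \cdots \times \partial D^2_{m+k}\right)/PSL(2,\R),
$$
and $\tau_*$ is the map induced on this quotient by sending $w \mapsto \widetilde w$ (with $\widetilde w(z) = \tau(w(\bar z))$) and each marked point $z \mapsto \bar z$. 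We already know from the remark following Lemma \ref{Lemma38.6} that $\varphi \mapsto \overline\varphi$ is orientation preserving on $PSL(2,\R)$, so it suffices to compute the sign with which $\tau_*$ acts on $\widetilde{\CM}^{\rm reg}(J;\beta) \times \partial D^2_0 \times \prod_j D^2_j \times \prod_i \partial D^2_{m+i}$ before passing to the quotient.

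First I would note that Theorem \ref{Proposition38.7} already tells us that $\tau_*$ acts on $\widetilde{\CM}^{\rm reg}(J;\beta)^{\tau^*[(V,\sigma)]} \to \widetilde{\CM}^{\rm reg}(J;\beta)^{[(V,\sigma)]}$ with sign $(-1)^{\mu_L(\beta)/2}$ (orientation preserving iff $\mu_L(\beta) \equiv 0 \bmod 4$, i.e. iff $\mu_L(\beta)/2$ is even). Then I would handle the marked-point factors one at a time. On each boundary marked point factor $\partial D^2 \cong S^1$, complex conjugation $z \mapsto \bar z$ is an orientation-reversing diffeomorphism, contributing a factor of $(-1)$; there are $k+1$ such factors (the points $z_0, z_1, \dots, z_k$), giving $(-1)^{k+1}$. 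On each interior marked point factor $D^2$, complex conjugation $z \mapsto \bar z$ is an orientation-reversing diffeomorphism of the $2$-disc as well, contributing $(-1)$; there are $m$ such factors, giving $(-1)^m$. Multiplying everything together, $\tau_*$ acts with total sign $(-1)^{\mu_L(\beta)/2 + (k+1) + m}$, which is exactly the assertion: orientation preserving iff $\mu_L(\beta)/2 + k + 1 + m$ is even.

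The one subtlety I would be careful about — and this is the main point requiring attention rather than a genuine obstacle — is the compatibility of the product orientation with the $PSL(2,\R)$-quotient, and more precisely the ordering conventions: in forming $\CM^{\rm reg}_{k+1,m}$ one stabilizes using three of the marked points (or uses Convention 8.2.1 and 8.3.1 of \cite{fooobook} for the induced orientation on a quotient by the oriented group $PSL(2,\R)$), and one must check that $\tau_*$ intertwines the source and target stabilizations compatibly, using again that $\overline\varphi \in PSL(2,\R)$ and that $\varphi \mapsto \overline\varphi$ is orientation preserving on the group. Since the sign contributions from the individual factors ($S^1$'s and $D^2$'s) are computed \emph{before} quotienting and $\tau_*$ commutes with the group actions up to the orientation-preserving automorphism $\varphi \mapsto \overline\varphi$, the quotient inherits the product of these signs with no extra correction. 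I would also double-check that no reordering of marked points is involved here (unlike in the ``main component'' statement Theorem \ref{Proposition38.11} where cyclic order is reversed) — in Proposition \ref{Lemma38.9} the marked points $z_i$ and $z_i^+$ keep their labels, only their values get conjugated, so there is no permutation sign and the computation is the clean product above.
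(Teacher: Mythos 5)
Your proof is correct and follows the same route as the paper's. The paper's proof is precisely the reduction to Theorem \ref{Proposition38.7} via the forgetful map, the sign count $(-1)^{k+1+m}$ from the conjugation $c$ on $(S^1)^{k+1}\times(D^2)^m$, and the observation that $\varphi\mapsto\overline\varphi$ is orientation preserving on $PSL(2,\R)$ so the quotient introduces no correction; you have simply written out explicitly what the paper records in Diagram \ref{sec:inducedtau}.1.
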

\begin{proof}
Assuming Theorem \ref{Proposition38.7}, we prove
Proposition \ref{Lemma38.9}.
Let us consider the diagram:
$$
\xymatrix{
(S^1)^{k+1} \times (D^2)^m \ar[r]^c & (S^1)^{k+1}\times (D^2)^m\\
((S^1)^{k+1} \times (D^2)^m)_0 \ar[u]^{\text{\footnotesize inclusion}} \ar[r]^c
\ar[d] & ((S^1)^{k+1}\times (D^2)^m)_0 \ar[u]^{\text{\footnotesize inclusion}} \ar[d]
\ar[u]^{\text{\footnotesize inclusion}}\\
\widetilde\CM_{k+1,m}^{\operatorname{reg}}(J;\beta)^{\tau^{\ast}[(V,\sigma)]}
\ar[r]^{\text{Prop \ref{Lemma38.9}}} \ar[d]_{\mathfrak{forget}} &
\widetilde\CM_{k+1,m}^{\operatorname{reg}}(J;\beta)^{[(V,\sigma)]}
\ar[d]_{\mathfrak{forget}} \\
\widetilde\CM^{\operatorname{reg}}(J;\beta)^{\tau^{\ast}[(V,\sigma)]}
\ar[r]^{\text{Thm \ref{Proposition38.7}}} & \widetilde\CM^{\operatorname{reg}}(J;\beta)^{[(V,\sigma)]}
}
$$
\centerline{\bf Diagram \ref{sec:inducedtau}.1}
\par\medskip
\noindent
Here $c$ is defined by
$$c(z_0, z_1, \dots, z_k,z_1^{+},\dots,
z_m^{+}) =
(\overline z_0, \overline z_1,\dots, \overline z_k,
\overline {z}_1^{+},\dots,
\overline {z}_m^{+})$$
and ${\mathfrak{forget}}$ are the forgetful maps of marked points. Here we denote by
$((S^1)^{k+1} \times (D^2)^m)_0$ the set of all
$c(z_0, z_1, \dots, z_k,z_1^{+},\dots,
z_m^{+})$ such that $z_i \ne z_j$, $z_i^+ \ne z_j^+$ for $i \ne j$.
\par
Proposition \ref{Lemma38.9} then follows from Theorem \ref{Proposition38.7} and the
fact that the $\Z_2$-action $\varphi \mapsto \overline\varphi$
on $PSL(2,\R)$ given by \eqref{eq:barPSL2R}
is orientation preserving.
\end{proof}
We next extend $\tau_*$ to the compactification $\CM_{k+1,m}(J;\beta)$ of
$\CM_{k+1,m}^{\rm reg}(J;\beta)$
and define
a continuous map
$$
\tau_* :  \CM_{k+1,m}(J;\beta)^{\tau^{\ast}[(V,\sigma)]}
\longrightarrow
\CM_{k+1,m}(J;\beta)^{[(V,\sigma)]}.
$$
\begin{thm}\label{Proposition38.11}
\begin{enumerate}
\item The map
$\tau_* : \CM_{k+1,m}^{\operatorname{reg}}(J;\beta)^{\tau^{\ast}[(V,\sigma)]}
\longrightarrow
\CM_{k+1,m}^{\operatorname{reg}}(J;\beta)^{[(V,\sigma)]}$
extends to an automorphism
$\tau_*$, denoted by the same symbol:
\begin{equation}\label{38.10}
\tau_* :  \CM_{k+1,m}(J;\beta)^{\tau^{\ast}[(V,\sigma)]} \longrightarrow
\CM_{k+1,m}(J;\beta)^{[(V,\sigma)]}
\end{equation} \\
between spaces with Kuranishi structures.
\item
It preserves orientation if and only if
$\mu_L(\beta)/2 + k + 1 + m$ is even.
In particular, if $[(V,\sigma)]$ is a $\tau$-relative spin structure, it can be
regarded as an involution on the space
$\CM_{k+1,m}(J;\beta)^{[(V,\sigma)]}$ with Kuranishi structure.
\end{enumerate}
\end{thm}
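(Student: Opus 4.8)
The plan is to construct the extension componentwise on stable maps and then read off the orientation statement from the corresponding assertion on the top-dimensional stratum, Proposition \ref{Lemma38.9}.

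First I would define $\tau_*$ on a stable map $((\Sigma,\vec z,\vec z^+),w)$ by applying complex conjugation to each irreducible component of $\Sigma$ --- the standard conjugation $z\mapsto \overline z$ on a disc component and, after identifying a sphere component with $\C P^1$, the conjugation $[z_0:z_1]\mapsto[\overline z_0:\overline z_1]$ on it --- relabelling the nodes and the marked points by conjugation accordingly, and post-composing the resulting map with $\tau$. Since $J\in\CJ^{\tau}_{\omega}$ satisfies $\tau_*J=-J$ and conjugation on each component is anti-holomorphic, the new map $\widetilde w$ is again $J$-holomorphic; conjugation induces an isomorphism from the automorphism group of $\Sigma$ onto that of its conjugate, so stability is preserved; and the dual graph, together with the partition of its nodes into interior and boundary nodes and the (counter-clockwise-reversed) cyclic order of boundary marked points, is transported consistently. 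Hence $\tau_*$ maps $\CM_{k+1,m}(J;\beta)$ to itself (using $\tau_*\beta=\beta$ in $\Pi(L)$, Remark \ref{rem:tau}), restricts to the map of Definition \ref{def:inducedtau} on the smooth locus, and satisfies $\tau_*\circ\tau_*=\operatorname{id}$ by the same computation as in Lemma \ref{Lemma38.6}. Continuity with respect to the stable map topology is immediate: on the domain side the operation is built from a fixed homeomorphism (conjugation), which behaves well under gluing and degeneration of necks, and on the target side it is composition with the fixed map $\tau$.

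Next I would check that $\tau_*$ is a morphism of spaces with Kuranishi structure in the sense recalled in Subsection \ref{subsec:Aaction}. The linearized operator $D_w\overline\partial$ and the auxiliary obstruction spaces at $[(\Sigma,w)]$ are carried to the corresponding data at $[\tau_*(\Sigma,w)]$ by the combination of conjugation on the domain with $d\tau$ on the target, precisely because $\tau_*J=-J$ sends the Cauchy--Riemann operator to its conjugate; after pulling back the relative spin structure as in \eqref{pullback}, this identifies Kuranishi neighbourhoods, obstruction bundles, and coordinate changes. It is convenient to observe that the Kuranishi structure constructed in \cite{fooobook} can be set up $\tau$-equivariantly (equivalently, on the source one uses the pullback by $\tau_*$ of the structure on the target), so that $\tau_*$ is an isomorphism of Kuranishi structures covering \eqref{38.10}. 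The orientation claim is then formal: by Remark \ref{rem:thmori}(2), the orientation on $\CM_{k+1,m}(J;\beta)^{[(V,\sigma)]}$ is by definition the one determined by the open dense stratum $\CM^{\rm reg}_{k+1,m}(J;\beta)^{[(V,\sigma)]}$, and the same holds for $\tau^{\ast}[(V,\sigma)]$; since $\tau_*$ is a homeomorphism carrying the regular locus to the regular locus and there coincides with the map analyzed in Proposition \ref{Lemma38.9}, which preserves orientation if and only if $\mu_L(\beta)/2+k+1+m$ is even, the same dichotomy holds on all of $\CM_{k+1,m}(J;\beta)$. Finally, if $[(V,\sigma)]$ is $\tau$-relatively spin, then $\tau^{\ast}[(V,\sigma)]=[(V,\sigma)]$, so source and target agree, and together with $\tau_*\circ\tau_*=\operatorname{id}$ and the Kuranishi compatibility just established, $\tau_*$ is an automorphism of $\CM_{k+1,m}(J;\beta)^{[(V,\sigma)]}$ as a space with Kuranishi structure.

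The step I expect to be the main obstacle is the Kuranishi-structure compatibility across the lower strata: making precise that the obstruction-bundle and coordinate-change data of \cite{fooobook} can be (or have been) chosen compatibly with conjugation on domains and with $\tau$ on the target, so that $\tau_*$ is genuinely a morphism of Kuranishi structures and not merely a continuous map on the underlying spaces. Once this bookkeeping is in place, the orientation assertion is an immediate consequence of Proposition \ref{Lemma38.9} because $\CM^{\rm reg}_{k+1,m}(J;\beta)$ is open and dense in $\CM_{k+1,m}(J;\beta)$.
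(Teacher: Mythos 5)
Your proposal is correct and its substance matches the paper's: define $\tau_*$ by conjugating the domain and post-composing with $\tau$, check this respects the Kuranishi data, and read the orientation parity off the open dense regular stratum via Proposition \ref{Lemma38.9}. The one organizational difference is that you define $\tau_*$ directly on a stable map componentwise, whereas the paper sets up a \emph{triple induction} on the order $(\omega(\beta),k,m)$: it strips off a sphere bubble or peels off disc components attached to $\Sigma_0$, applies the induction hypothesis to the simpler pieces, and reglues. The induction is not needed for the set-theoretic definition, which is what you observe, but it is precisely the mechanism the paper uses to handle the point you flag as the ``main obstacle'': a Kuranishi neighbourhood of a stratum is a fiber product of Kuranishi neighbourhoods of the pieces with the space of node-smoothing parameters, the involution visibly commutes with the fiber-product construction, and its action on smoothing parameters is complex conjugation at interior nodes and trivial at boundary nodes. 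So while you correctly identify that $\tau$-equivariance of the obstruction bundles and coordinate changes across strata is the crux, the paper discharges it by feeding it through the same induction that builds the map; making your direct construction rigorous would require spelling out exactly this fiber-product and smoothing-parameter bookkeeping, which is what the paper's inductive framing is for.
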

\begin{proof}
(1) The proof of (1) is given right after the proof of Theorem \ref{Lemma38.14}
in Section \ref{sec:Proofth}.
\par
(2) The statement follows from the corresponding
statement on $\CM_{k+1,m}^{\text{reg}}(J;\beta)$ in Proposition \ref{Lemma38.9}.
For $((\Sigma, \vec{z}, \vec{z}\,^+), w) \in \mathcal{M}_{k+1,m}(J,\beta)$, we denote by
$$((\Sigma_i,\vec{z}\,^{(i)}, \vec{z}\,^{+(i)}),w_{(i)}) \in
\mathcal{M}^{\operatorname{reg}}_{k_i+1, m_i}(J,\beta_{(i)})$$ the irreducible disc components and by
$((\Sigma_j^S, \vec{z}\,^{+(j)_S}),u_{(j)}) \in \mathcal{M}^{\operatorname{sph}}_{\ell_j}(\alpha)$ the irreducible sphere components.
\begin{itemize}
\item
By Proposition \ref{Lemma38.9}, we find that $\tau_*$ respects the orientation of $\mathcal{M}^{\operatorname{reg}}_{k_i+1, m_i}(J,\beta_{(i)})$
if and only if $\mu(\beta_{i})/2 + k_i + 1 + m_i$ is even.
\item
In the same way, we find that $\tau_*$ respects the orientation of $ \mathcal{M}^{\operatorname{sph}}_{\ell_j}(\alpha)$ if and only if
$n + c_1(M)[\alpha ] + \ell_j - 3$ is even.
\item
$m \equiv \sum_i m_i + \sum_j \ell_j \mod 2$ and $k+1 \equiv \sum_i (k_i + 1) \mod 2$.
\item The number of interior nodes is equal to the number of sphere components,
since $\Sigma$ is a bordered stable curve of genus $0$ such that $\partial \Sigma$ is connected.
\item The involution $\tau_*$ acts on the space of parameters for smoothing interior nodes with orientation preserving if and only if
the number of interior nodes is even.
\item
The fiber product is taken over either $L$ or $M$.  The involution $\tau$ respects the orientation of $M$ if and only if $n$ is even.
\end{itemize}
\par
Combining these with Lemma 8.2.3 (4) in \cite{fooobook2}, we obtain that
$\tau_*$ respects the orientation on $\mathcal{M}_{k+1,m}(J,\beta)$ if and only if $\mu(\beta)/2 + k+1 + m$ is even.
Hence we obtain the second statement of the theorem.
\end{proof}
\subsection{The map $\tau_{\ast}^{\rm main}$ and orientation}
\label{subsec:taumain}
We next restrict our maps to the main component of $\CM_{k+1,m}(J;\beta)$.
As we mentioned before,
we observe that the induced map $\tau_* : \CM_{k+1,m}(J;\beta)
\to \CM_{k+1,m}(J;\beta)$ does {\it not} preserve the main component
for $k > 1$. On the other hand the assignment given by
\begin{equation}\label{38.13}
\aligned
& \quad (w, \vec{z}, \vec{z}\,^+)=(w,(z_0, z_1, z_2,\dots, z_{k-1},z_k),(z_1^+, \dots,z_m^+)) \\
& \longmapsto
(\widetilde {w}, \vec{\overline z}^{\rm ~ rev}, \vec{\overline z}\,^+)=(\widetilde{w}, (\overline{z}_0, \overline{z}_k, \overline{z}_{k-1},\dots,
\overline{z}_2,\overline{z}_1),(\overline{z}_1^+,
\dots,\overline{z}_m^+))
\endaligned
\end{equation}
respects the counter-clockwise cyclic order of $S^1 = \partial D^2$ and so
preserves the main component, where $\widetilde{w}$ is as in
(\ref{38.4}).
Therefore we consider this map instead which we denote by
\begin{equation}\label{taumain}
\tau_*^{\operatorname{main}} :
\CM_{k+1,m}^{\operatorname{main}}(J;\beta)^{\tau^{\ast}[(V,\sigma)]}
\longrightarrow
\CM_{k+1,m}^{\operatorname{main}}(J;\beta)^{[(V,\sigma)]}.
\end{equation}
We note that for $k=0,1$ we have
\begin{equation}\label{taumaink=0,1}
\tau^{\rm main}_{\ast} = \tau_{\ast}.
\end{equation}
\begin{thm}\label{Lemma38.14}
The map $\tau_*^{\operatorname{main}}$ is induced by an automorphism
between the spaces with Kuranishi structures and satisfies $\tau_*^{\operatorname{main}}\circ
\tau_*^{\operatorname{main}} = \operatorname{id}$. In particular, if $[(V,\sigma)]$ is
$\tau$-relatively spin, it defines an involution of the space
$
\CM_{k+1,m}^{\operatorname{main}}(J;\beta)^{[(V,\sigma)]}
$ with Kuranishi
structure.
\end{thm}
The proof will be given in Section \ref{sec:Proofth}.
\par
We now have the following commutative diagram:
$$
\xymatrix{
(S^1)^{k+1}\times (D^2)^m \ar[r]^{c'} & (S^1)^{k+1}\times (D^2)^m\\
((S^1)^{k+1}\times (D^2)^m)_{00} \ar[u]^{\text{\footnotesize inclusion}}\ar[r]^{c'} \ar[d]
& ((S^1)^{k+1}\times (D^2)^m)_{00} \ar[u]^{\text{\footnotesize inclusion}} \ar[d]\\
\widetilde\CM_{k+1,m}^{\operatorname{main}}(J;\beta)^{\tau^{\ast}[(V,\sigma)]}
\ar[r]^{\tau_*^{\operatorname{main}}} \ar[d]_{\mathfrak{forget}} &
\widetilde\CM_{k+1,m}^{\operatorname{main}}(J;\beta)^{[(V,\sigma)]}
\ar[d]_{\mathfrak{forget}} \\
\widetilde\CM(J;\beta)^{\tau^{\ast}[(V,\sigma)]}
\ar[r]^{\tau_*} & \widetilde\CM(J;\beta)^{[(V,\sigma)]}
}
$$
\centerline{\bf Diagram \ref{sec:inducedtau}.2}
\par\medskip
\noindent
Here $c'$ is defined by
$$c'(z_0, z_1 \dots, z_k,
z_1^{+},\dots,
z_m^{+}) =
(\overline z_0, \overline z_k,\dots, \overline z_1,
\overline{z}_1^{+},\dots,
\overline{z}_m^{+})$$
and ${\mathfrak{forget}}$ are the forgetful maps of marked points.
In the diagram, $((S^1)^{k+1} \times (D^2)^m)_{00}$ is the open subset of
$(S^1)^{k+1} \times (D^2)^m$ consisting of the points
such that all $z_i$'s and $z_j^+$'s are distinct respectively.

Let $\text{\rm Rev}_k : L^{k+1} \to L^{k+1}$ be the map defined by
$$
\text{\rm Rev}_k(x_0, x_1,\dots,x_k) = (x_0,x_k,\dots,x_1).
$$
It is easy to see that
\begin{equation}\label{38.15}
ev\circ \tau_*^{\text{main}} = \text{\rm Rev}_k \circ ev.
\end{equation}
We note again that $\text{\rm Rev}_k = \operatorname{id}$
and $\tau_*^{\operatorname{main}}= \tau_*$ for $k=0,1$.
\par
Let $P_1,\dots,P_k$
be smooth singular simplices on $L$. By taking the fiber product and using
(\ref{38.13}), we obtain a map
\begin{equation}\label{38.16}
\tau_*^{\operatorname{main}} : \CM^{\operatorname{main}}_{k+1,m}(J;\beta ;
P_1,\dots,P_k)^{\tau^{\ast}[(V,\sigma)]}
\to \CM^{\operatorname{main}}_{k+1,m}(J; \beta ;
P_k,\dots,P_1)^{[(V,\sigma)]}
\end{equation}
which satisfies
$\tau_*^{\operatorname{main}} \circ \tau_*^{\operatorname{main}}
= \operatorname{id}$.
We put
\begin{equation}\label{epsilonmain}
\epsilon = \frac{\mu_L(\beta)}{2} + k + 1 + m + \sum_{1 \le i < j \le k}
\deg'P_i\deg'P_j.
\end{equation}
\begin{thm}\label{Lemma38.17}
The map $(\ref{38.16})$ preserves orientation if $\epsilon$ is even, and
reverses orientation if $\epsilon$ is odd.
\end{thm}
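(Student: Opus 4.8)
The plan is to factor $\tau_*^{\operatorname{main}}$ as a composition of the (non-main) map $\tau_*$ with a reordering of the boundary marked points, and to compute the sign of each factor separately, using Theorem \ref{Proposition38.11} for the first factor and Lemma \ref{Lemma8.4.3} for the second. By Remark \ref{rem:thmori}(2) it suffices to work on the regular parts of all moduli spaces throughout.

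First I would observe that the non-main map $\tau_*$ of Definition \ref{def:inducedtau} satisfies $ev_i\circ\tau_* = ev_i$ for $i=0,\dots,k$: indeed $ev_i(\tau_*((D^2,\vec z,\vec z^+),w)) = \widetilde w(\overline{z_i}) = \tau(w(z_i)) = w(z_i)$ because $w(z_i)\in L=\operatorname{Fix}\tau$. Hence $\tau_*$ passes to the fiber products, giving
$$
\tau_*\colon\ \CM_{k+1,m}(J;\beta;P_1,\dots,P_k)^{\tau^{*}[(V,\sigma)]}\longrightarrow \CM_{k+1,m}(J;\beta;P_1,\dots,P_k)^{[(V,\sigma)]},
$$
which is just $\tau_*\times\operatorname{id}_{P_1\times\cdots\times P_k}$ on the underlying fiber product. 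Since the identity on the simplex factors is orientation preserving and the twist $(-1)^{\epsilon(P)}$ of Definition \ref{Definition8.4.1} is the same on source and target, this map carries the same sign as $\tau_*$ on $\CM_{k+1,m}(J;\beta)$: by Theorem \ref{Proposition38.11} it is orientation preserving if and only if $\mu_L(\beta)/2+k+1+m$ is even.

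Next, unwinding the definitions (\ref{38.4}), (\ref{38.13}), (\ref{38.16}), I would record the identity $\tau_*^{\operatorname{main}} = \rho\circ\tau_*$, where $\rho$ reverses the order of the boundary marked points $z_1,\dots,z_k$ (and correspondingly the simplices $P_1,\dots,P_k$), leaving $z_0$ and the interior data fixed. Because complex conjugation of the disc reverses the cyclic order of $(z_0,z_1,\dots,z_k)$, the image $\tau_*(\CM^{\operatorname{main}})$ lies in the component where $(z_0,z_1,\dots,z_k)$ is in clockwise order, and $\rho$ carries this component back to the main component, now with the simplices in the order $P_k,\dots,P_1$, which is precisely the target of (\ref{38.16}). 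The permutation effected by $\rho$ is the longest element of $\frak S_k$, a product of $\binom{k}{2}$ adjacent transpositions in which each pair $\{i,j\}$ is exchanged exactly once. Lemma \ref{Lemma8.4.3} — which applies verbatim here, since reordering boundary marked points does not interact with the interior-marked-point factors and the main and clockwise components carry the same orientation convention — shows that each such exchange of the simplices $P_i,P_j$ contributes the sign $(-1)^{(\deg P_i+1)(\deg P_j+1)}=(-1)^{\deg'P_i\deg'P_j}$; hence $\rho$ is orientation preserving if and only if $\sum_{1\le i<j\le k}\deg'P_i\deg'P_j$ is even.

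Combining the two steps, $\tau_*^{\operatorname{main}}$ is orientation preserving if and only if $\mu_L(\beta)/2+k+1+m+\sum_{1\le i<j\le k}\deg'P_i\deg'P_j=\epsilon$ is even, which is the assertion. The routine but delicate point — the one I would spell out most carefully — is the bookkeeping of the orientation twists $\epsilon(P)$, $\epsilon(P,Q)$ and of the $PSL(2,\R)$-quotient orientation, namely checking that all of these are already subsumed in Theorem \ref{Proposition38.11} and Lemma \ref{Lemma8.4.3} so that their sign statements may legitimately be composed, and that the fiber-product orientation convention of \cite{fooobook} does give $\tau_*\times\operatorname{id}$ the sign of $\tau_*$. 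An alternative, more hands-on route would be to redo the product-model computation of Proposition \ref{Lemma38.9} directly for $\tau_*^{\operatorname{main}}$, tracking the extra sign $(-1)^{k(k-1)/2}$ from reversing the $k$ one-dimensional factors $\partial D^2_{m+1},\dots,\partial D^2_{m+k}$ and then folding in the fiber-product twist; the same argument also yields the more general statement with the interior simplices $Q_j$, which merely come along as unreordered product factors and do not affect the sign.
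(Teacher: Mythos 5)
Your proof is correct and follows essentially the same route as the paper's: factor $\tau_*^{\operatorname{main}}$ as the (non-main) $\tau_*$, which carries the main component into the clockwise one with sign governed by $\mu_L(\beta)/2+k+1+m$ (Theorem~\ref{Proposition38.11}, i.e.\ the argument of Proposition~\ref{Lemma38.9}), followed by the boundary-marked-point reversal whose sign $\sum_{1\le i<j\le k}\deg'P_i\deg'P_j$ comes from iterating Lemma~\ref{Lemma8.4.3} over the $\binom{k}{2}$ adjacent transpositions of the longest element. Your explicit check that $ev_i\circ\tau_*=ev_i$ on $L=\operatorname{Fix}\tau$, so that $\tau_*$ descends to the fiber products with the $P_i$ with unchanged sign, spells out a step the paper leaves implicit.
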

The proof of Theorem \ref{Lemma38.17} is given in Section \ref{sec:Proofth}.

\section{Proofs of Theorems \ref{Proposition38.7},
\ref{Proposition38.11} (1), \ref{Lemma38.14} and  \ref{Lemma38.17}}
\label{sec:Proofth}
In this section we prove Theorems \ref{Proposition38.7}
(= Theorem \ref{thm:fund}), \ref{Proposition38.11} (1), \ref{Lemma38.14}  and
 \ref{Lemma38.17}
(= Theorem \ref{withsimplex}) stated in the previous sections.
\begin{proof}[Proof of Theorem \ref{Proposition38.7}.]
Pick
$J \in \CJ_\omega^\tau$, a $\tau$-anti-invariant almost complex structure compatible with $\omega$.
For a $J$ holomorphic curve $w :
(D^2,\partial D^2) \to (M,L)$, we recall that we define $\widetilde w$ by
$$
\widetilde w(z) = (\tau\circ w)(\overline z).
$$
Moreover
for $(D^2,w) \in \CM^{\text{reg}}(J;{\beta})$,
$((D^2,\vec z,{\vec z}\,^+),w) \in \CM^{\text{reg}}_{k+1,m}(J;{\beta})$
we define
$$
\tau_*((D^2,w)) = (D^2,\widetilde w),
\qquad
\tau_*(((D^2,\vec z,\vec z\,^+),w)) = ((D^2,\vec{\overline z},
\vec{\overline z}\,^{+}),\widetilde w),
$$
where
$$
\vec{\overline z} = (\overline z_0,\dots,\overline z_k),
\qquad
\vec{\overline z}\,^+ = (\overline z^+_0,\dots,\overline z^+_m).
$$
Let $[D^2,w]\in \CM^{\rm reg}(J;\beta)$. We consider the
deformation complex
\begin{equation}\label{47.6.1}
D_w\overline\partial: \Gamma(D^2,\partial D^2: w^*TM,
w\vert_{\partial D^2}^*TL) \longrightarrow \Gamma(D^2;\Lambda^{0,1}\otimes w^*TM)
\end{equation}
and
\begin{equation}\label{47.6.2}
D_{\widetilde w}\overline\partial: \Gamma(D^2,\partial D^2: {\widetilde w}^*TM,
{\widetilde w}\vert_{\partial D^2}^*TL) \longrightarrow \Gamma(D^2;\Lambda^{0,1}\otimes {\widetilde w}^*TM),
\end{equation}
where $D_{w}\overline{\partial}$ is the linearized operator
of pseudo-holomorphic curve equation as (\ref{linearizedeq}).
(Here and hereafter, $\Lambda^1 = \Lambda^{1,0} \oplus \Lambda^{0,1}$ is the decomposition of the
complexified cotangent bundle of the {\it domain} of
pseudo-holomorphic curves.)
\par
We have the commutative diagram
\vskip0.2cm
$$
\xymatrix{
(w^*TM, w\vert_{\partial D^2}^*TL) \ar[r]^{T\tau} \ar[d] &
(\widetilde w^*TM, \widetilde w\vert_{\partial D^2}^*TL) \ar[d]\\
(D^2, \partial D^2) \ar[r]^c & (D^2, \partial D^2)
}
$$
\centerline{\bf Diagram \ref{sec:Proofth}.1}
\par\bigskip
\noindent
where $c(z) = \overline z$ and we denote by $T\tau$ the differential of $\tau$.
It induces a bundle map
\begin{equation}\label{47.7}
\operatorname{Hom}_{\R}(TD^2,w^*TM) \longrightarrow
\operatorname{Hom}_{\R}(TD^2,{\widetilde w}^*TM),
\end{equation}
which covers $z \mapsto \overline z$. The bundle map
(\ref{47.7}) is fiberwise anti-complex linear i.e.,
$$
\operatorname{Hom}_{\R}(T_zD^2,T_{w(z)}M) \longrightarrow
\operatorname{Hom}_{\R}(T_{\overline z} D^2,T_{\tau(w(z))}M),
$$
is anti-complex linear at each $z \in D^2$
with respect to both of the complex structures
$a \mapsto J \circ a$
and
$a \mapsto a\circ j$.
Therefore it preserves the decomposition
\begin{equation}\label{47.8}
\operatorname{Hom}_{\R}(TD^2,w^*TM) \otimes \C = (\Lambda^{1,0} \otimes w^*TM)
\oplus  (\Lambda^{0,1} \otimes w^*TM),
\end{equation}
since (\ref{47.8}) is the decomposition to the complex and anti-complex
linear parts. Hence we obtain a map
\begin{equation}\label{Ttau1}
(T_{w,1}\tau)_* : \Gamma(D^2;\Lambda^{0,1}\otimes w^*TM)
\longrightarrow \Gamma(D^2;\Lambda^{0,1} \otimes {\widetilde w}^*TM)
\end{equation}
which is anti-complex linear. In the similar way, we obtain an anti-complex linear map:
$$
(T_{w,0}\tau)_* :  \Gamma(D^2,\partial D^2: w^*TM,
w\vert_{\partial D^2}^*TL) \longrightarrow \Gamma(D^2,\partial D^2: {\widetilde w}^*TM,
{\widetilde w}\vert_{\partial D^2}^*TL).
$$
Since $\tau$ is an isometry, it commutes with the
covariant derivative. This gives rise to the following commutative diagram.
$$
\xymatrix{
\Gamma(D^2,\partial D^2: w^*TM,
w\vert_{\partial D^2}^*TL) \ar[d]_{(T_{w,0}\tau)_*} \ar[r]^-{D_w\overline\partial}
& \Gamma(D^2;\Lambda^{0,1} \otimes w^*TM) \ar[d]_{(T_{w,1}\tau)_*}\\
\Gamma(D^2,\partial D^2: {\widetilde w}^*TM,
{\widetilde w}\vert_{\partial D^2}^*TL) \ar[r]^-{D_{\widetilde w}\overline\partial}
&  \Gamma(D^2;\Lambda^{0,1} \otimes {\widetilde w}^*TM)
}
$$
\vskip0.2cm
\centerline{\bf Diagram \ref{sec:Proofth}.2}
\par\medskip
We study the orientation. Let $w \in \widetilde\CM^{\rm reg} (J;\beta)$ and
consider $\widetilde w \in \widetilde\CM^{\rm reg} (J;\beta)$.
We consider the commutative Diagram \ref{sec:Proofth}.1.
A trivialization
$$
\Phi: (w^*TM, w\vert_{\partial D^2}^*TL) \longrightarrow (D^2, \partial D^2; \C^n, \Lambda)
$$
naturally induces a trivialization
$$
\widetilde \Phi: (\widetilde w^*TM, \widetilde w\vert_{\partial D^2}^*TL)
\longrightarrow (D^2, \partial D^2; {\C}^n, \widetilde \Lambda),
$$
where $\Lambda: S^1 \simeq \partial D^2 \to \Lambda (\C^n)$
is a loop of Lagrangian subspaces given by
$\Lambda(z) : = T_{w(z)}L$ in the trivialization
and $\widetilde \Lambda$ is defined by 
\begin{equation}\label{widetildeLambda}
\widetilde \Lambda(z) = \overline{\Lambda(\overline z)}.  
\end{equation}
With respect to these trivializations, we have the commutative diagram
$$
\xymatrix{
(D^2,\partial D^2; \C^n, \Lambda) \quad
\ar[r]^{\widetilde \Phi\circ T\tau
\circ \Phi^{-1}} \ar[d]  & \quad
(D^2,\partial D^2; {\C}^n, \widetilde\Lambda) \ar[d] \\
(D^2, \partial D^2) \ar[r]^c & (D^2, \partial D^2)
}
$$
\vskip0.2cm
\centerline{\bf Diagram \ref{sec:Proofth}.3}
\par\bigskip
\noindent and the elliptic complex (\ref{47.6.1}) and (\ref{47.6.2}) are identified with 
$\overline{\partial}_{(D^2,\partial D^2; \C^n, \Lambda)}$ and 
$\overline{\partial}_{(D^2,\partial D^2; {\C}^n, \widetilde{\Lambda})}$, respectively.  
The relative spin structure $\tau^*[(V,\sigma)]$ (resp. $[(V,\sigma)]$) determines a trivialization $\Lambda \cong \partial D^2 \times \R^n$ 
(resp. $\widetilde{\Lambda} \cong \partial D^2 \times \R^n$) unique up to homotopy. 
These trivializations are compatible with $\widetilde \Phi \circ T\tau \circ \Phi^{-1}$ in Diagram 5.3.  

We recall the argument explained in Remark \ref{rem:prop8.1.4}.
We have the complex vector bundle $E'$ over the nodal curve
$\Sigma = D^2 \cup \C P^1$ with a nodal point $D^2 \ni O=p \in \C P^1$.  
The topology of the bundle $E'\vert_{\C P^1} \to \C P^1$ is determined by
the loop $\Lambda$ of Lagrangian subspaces
defined by $\Lambda(z) = T_{w(z)}L$ in the trivialization.
The Cauchy-Riemann operator 
$\overline{\partial}_{(D^2 \times \C ^n, \partial D^2 \times \R ^n)}$ 
is surjective and 
the Cauchy-Riemann operator 
$$
\overline{\partial}_{E'\vert_{\C P^1}}: \Gamma ({\C P^1}; E'\vert _{\C P^1}) \to \Gamma ({\C P^1}; \Lambda^{0,1} \otimes E'\vert_{\C P^1})
$$ 
is 
approximated by a finite dimensional model which is $ 0{\text{-map}} : H^0({\C P^1}; E'\vert_{\C P^1}) \to H^1({\C P^1}; E'\vert_{\C P^1})$, 
where $H^1({\C P^1}; E'\vert_{\C P^1})$
is regarded as the obstruction bundle.   
For a later purpose, we take a {\it stabilization} of this finite dimensional model so that the evaluation at $p$ is surjective to $E'_p$.  
Namely, we take a finite dimensional complex linear subspace 
${\mathbb V}^+ \subset \Gamma ({\mathbb C}P^1; \Lambda^{0,1} \otimes E'\vert_{{\mathbb C}P^1})$ such that 
the Cauchy-Riemann operator $\overline{\partial}_{E'\vert_{{\mathbb C}P^1}}$ is surjective modulo ${\mathbb V}^+$ and 
the evaluation at $p$ $ev_p:(\overline{\partial}_{E'\vert_{{\mathbb C}P^1}})^{-1}({\mathbb V}^+) \to E'\vert_p$  is surjective.  
Set $\mathbb V = {\mathbb V}^+ \cap \text{\rm Im~} \overline{\partial}_{E'\vert_{{\mathbb C}P^1}} \subset \Gamma ({\mathbb C}P^1; \Lambda^{0,1} \otimes E'\vert_{{\mathbb C}P^1})$.   
Then we have an isomorphism ${\mathbb V}^+ \cong {\mathbb V} \oplus H^1({\C P^1}; E'\vert_{\C P^1})$ and 
$(\overline{\partial}_{E'\vert_{{\mathbb C}P^1}})^{-1}({\mathbb V}^+) =(\overline{\partial}_{E'\vert_{{\mathbb C}P^1}})^{-1}({\mathbb V})$.   
The Cauchy-Riemann operator $\overline{\partial}_{E'\vert_{{\mathbb C}P^1}}$  has a finite dimensional approximation by 
$$s:(\overline{\partial}_{E'\vert_{{\mathbb C}P^1}})^{-1}({\mathbb V}) \to {\mathbb V} \oplus H^1({\C P^1}; E'\vert_{\C P^1}), 
\ \  s(\xi)=(\overline{\partial}_{E'\vert_{{\mathbb C}P^1}}(\xi), 0).$$  
We use the notation in Convention 8.2.1 (3) (4) in \cite{fooobook2} 
to describe 
the kernel of the operator $\overline{\partial}_{(E', \lambda')}$ as  the zero set of $s$ in 
the fiber product of the kernel of $\overline{\partial}_{(D^2 \times \C ^n, \partial D^2 \times \R ^n)}$ and 
$(\overline{\partial}_{E'\vert_{{\mathbb C}P^1}})^{-1}({\mathbb V})$.  
We decompose it as follows.  
$$
(\overline{\partial}_{E'\vert_{{\mathbb C}P^1}})^{-1}({\mathbb V})= E'_p \times ~^{\circ} 
(\overline{\partial}_{E'\vert_{{\mathbb C}P^1}})^{-1}({\mathbb V}).
$$ 
Here $~^{\circ} (\overline{\partial}_{E'\vert_{{\mathbb C}P^1}})^{-1}({\mathbb V})$ is the space of sections 
in $(\overline{\partial}_{E'\vert_{{\mathbb C}P^1}})^{-1}({\mathbb V})$, which 
vanish at $p$.  (See (8.2.1.6) in \cite{fooobook2} for the notation used here.)  

Since $\operatorname{ker} \overline{\partial}_{(D^2 \times \C ^n, \partial D^2 \times \R ^n)}
 \cong \R ^{n}$ by (\ref{eq:isokernel}), the complex conjugate induces 
the trivial action on 
$\operatorname{ker} \overline{\partial}_{(D^2 \times \C ^n, \partial D^2 \times \R ^n)}$.  
Therefore $(T\tau)_*:\det D_w \overline{\partial} \to \det D_{\widetilde w} \overline{\partial}$ is orientation preserving or not if and only if so is the complex conjugation action on 
$$ 
\det({\mathbb V} \oplus H^1({\C P^1};E'\vert_{\C P^1}))^* \otimes \det ~^{\circ}\bigl((\overline{\partial}_{E'\vert_{{\mathbb C}P^1}})^{-1}({\mathbb V})\bigr),  
$$
which is isomorphic to 
$$ 
\det({\mathbb V} \oplus H^1({\C P^1};E'\vert_{\C P^1}))^* \otimes \det (E'\vert_p^*) \otimes 
\det \bigl((\overline{\partial}_{E'\vert_{{\mathbb C}P^1}})^{-1}({\mathbb V})\bigr).  
$$
On the other hand, we observe that 
$$ 
\det({\mathbb V} \oplus H^1({\C P^1};E'\vert_{\C P^1}))^*  \otimes 
\det \bigl((\overline{\partial}_{E'\vert_{{\mathbb C}P^1}})^{-1}({\mathbb V})\bigr) 
$$
is isomorphic to 
$$ 
\det (H^1({\C P^1};E'\vert_{\C P^1}))^* \otimes \det \text H^0(\C P^1;E'\vert_{\C P^1}),   
$$  
on which the complex conjugation acts by 
the multiplication by $(-1)^{\mu(\Lambda)/2 + n}$, 
since 
$$\dim_{\mathbb C} H^0(\C P^1;E'\vert_{\C P^1}) - \dim_{\mathbb C} H^1({\C P^1};E'\vert_{\C P^1}) = \frac12 \mu (\Lambda)+ n .$$
Here $n$ is the rank of $E'$ as a complex vector bundle.  
Note also that the complex conjugation acts on $E'\vert_p$ by the multiplication by $(-1)^n$.  
Combining these, we find that the complex conjugation acts on 
$$ 
\det({\mathbb V} \oplus H^1({\C P^1};E'\vert_{\C P^1}))^* \otimes \det ~^{\circ}\bigl((\overline{\partial}_{E'\vert_{{\mathbb C}P^1}})^{-1}({\mathbb V})\bigr),
$$
by the multiplication by $(-1)^{\mu(\Lambda)/2 }$.  
Note that the action of $T_{w} \tau$ on the determinant bundle of $D_w\overline{\partial}$ is isomorphic to 
the conjugation action explained above.  
Therefore this map is orientation preserving if and only
if $\frac1{2}\mu(\Lambda) \equiv 0 \mod 2$, i.e., $\mu(\Lambda) \equiv 0
\mod 4$. We note, by definition,
that $\mu_L(\beta)=\mu (\Lambda)$.
This finishes the proof of Theorem \ref{Proposition38.7}. 
\end{proof}

The argument above is an adaptation of Lemma 8.3.2 (4) in \cite{fooobook2} 
with $X_1=\text{\rm index}~ \overline{\partial}_{(D^2 \times {\mathbb C}^n, \partial D^2 \times {\mathbb R}^n)}$, 
$X_2=\text{\rm index}~ (\overline{\partial}_{E'\vert_{{\mathbb C}P^1}})^{-1}({\mathbb V})$ and $Y=E'\vert_p$.  
The complex conjugation action on $X_1$ (resp. $X_2$, $Y$) is a $+1$ (resp. $(-1)^{\mu(\Lambda)/2+n}$, $(-1)^n$)-oriented 
isomorphism.  Hence the action on $X_1 \times_Y X_2$ is a $(-1)^{\mu(\Lambda)/2}$-oriented isomoprhism.

\par\medskip
\begin{proof}[Proof of Theorem \ref{Lemma38.14}]
We will extend the map $\tau_{\ast}^{\rm main}$
(See (\ref{taumain}).) to
an automorphism of Kuranishi structure by a triple induction over
$\omega(\beta)= \int_{\beta}\omega$, $k$ and $m$.
Namely we define an order on the set of triples
$(\beta, k, m)$ by the relation
\begin{subequations}\label{tripleorder}
\begin{equation}
\omega(\beta')  <  \omega(\beta).
\end{equation}
\begin{equation}
\omega(\beta')  =  \omega(\beta), \quad k' < k.
\end{equation}
\begin{equation}
\omega(\beta')  =  \omega(\beta), \quad k' = k, \quad m' <m.
\end{equation}
\end{subequations}

We will define the extension of (\ref{taumain}) for $(\beta, k, m)$ to
an automorphism of Kuranishi structure
under the assumption that such extension is already defined for
all $(\beta',k',m')$ smaller than $(\beta, k, m)$ with respect to this order.

Firstly, we consider the case that the domain is irreducible, i.e.,
a pseudo-holomorphic map $w:(D^2, \partial D^2) \to (M,L)$.
Let $((D^2,\vec z,\vec z\,^+),w)$ be an element of
$\CM^{\operatorname{main}}_{k+1,m}(J;\beta)^{[(V,\sigma)]}$.

We consider $((D^2,\vec z\,',\vec z\,^{+\prime}),w')$ where
$(D^2,\vec z\,',\vec z\,^{+\prime})$ is close to $(D^2,\vec z,\vec z\,^+)$ in the
moduli space of discs with $k+1$ boundary and $m$ interior marked points.
We use local trivialization of this moduli space
to take a diffeomorphism  $(D^2,\vec z) \cong (D^2,\vec z\,')$.
(In case $2m + k <2$ we take additional interior marked points
to stabilize the domain. See \cite{FO} appendix and
\cite{foootech} Definition 18.9.)
We assume $w' : (D^2,\partial D^2) \to (M,L)$ is
$C^1$-close to $w$, using this identification.
To define a Kuranishi chart in a neighborhood of $[((D^2,\vec z,\vec z\,^+),w)]$ we
take a family of finite dimensional subspaces
$E_{[((D^2,\vec z,\vec z\,^+),w)]}((D^2,\vec z\,',\vec z\,^{+\prime}),w')$ of $\Gamma(D^2;\Lambda^{0,1} \otimes (w')^*TM)$
such that
$$
\operatorname{Im}D_{w'}\overline\partial + E_{[((D^2,\vec z,\vec z\,^+),w)]}((D^2,\vec z\,',\vec z\,^{+\prime}),w')
= \Gamma(D^2;\Lambda^{0,1}\otimes (w')^*TM).
$$
The Kuranishi neighborhood $V_{[((D^2,\vec z,\vec z\,^+),w)]}$ is
constructed in Section 7.1 in \cite{fooobook2} and
is given by the set of solutions of the
equation
\begin{equation}\label{4-16}
\overline\partial w' \equiv 0  \mod E_{[((D^2,\vec z,\vec z\,^+),w)]}((D^2,\vec z\,',\vec z\,^{+\prime}),w').
\end{equation}
Moreover we will take it so that the evaluation maps
$$
V_{[((D^2,\vec z,\vec z\,^+),w)]} \to L^{k+1}
$$
defined by $((D^2,\vec z\,',\vec z\,^{+\prime}),w')
\mapsto (w'(z'_1),\dots,w'(z'_k),w'(z'_0))$ is a submersion.
\par
We choose $E_{[((D^2,\vec z,\vec z\,^+),w)]}((D^2,\vec z\,',\vec z\,^{+\prime}),w')$ so that it is invariant under $\tau^{\rm main}$,
in the following sense.
We define $\widetilde w$, $\widetilde w'$, and $\vec{\overline z}\,^+$,
$\vec{\overline z}\,^{+\prime}$ as in
(\ref{38.5}).
We also define $\vec{\overline z}^{\rm ~rev}$,
$\vec{\overline z}\,^{\prime {\rm ~rev}}$ as in  (\ref{38.13}).
(So $\tau^{\rm main}_*([((D^2,\vec z,\vec z\,^+),w)])
= [((D^2,\vec{\overline z}^{\rm ~rev},\vec{\overline z}\,^+),\widetilde w)]$.)
Then we require:
\begin{equation}\label{4-17}
\aligned
&E_{[((D^2,\vec{\overline z}^{\rm ~rev},\vec{\overline z}\,^+),\widetilde w)]}
([((D^2,\vec{\overline z}\,^{\prime \rm rev},\vec{\overline z}\,^{+\prime}),\widetilde w')])
\\
&= (T_{w',1}\tau)_*(E_{[((D^2,\vec z,\vec z\,^+),w)]}((D^2,\vec z\,',\vec z\,^{+\prime}),w')).
\endaligned
\end{equation}
Here $(T_{w',1}\tau)_*$ is as in (\ref{Ttau1}).
If (\ref{4-16}) is satisfied then
it is easy to see the following.
\begin{enumerate}
\item[(*)]
If $w'$ satisfies
(\ref{4-16}) then
$
\overline\partial \widetilde w' \equiv 0  \mod E_{[((D^2,\vec{\overline z}^{\rm ~rev},\vec{\overline z}\,^+),\widetilde w)]}.
$
\end{enumerate}
$(*)$ implies that the map $[(D^2,\vec z\,',\vec z\,^{+\prime}),w')] \mapsto
[((D^2,\vec{\overline z}\,^{\prime \rm ~rev},\vec{\overline z}\,^{+\prime}),\widetilde w')]$ defines a diffeomorphism
between Kuranishi neighborhoods of
$[((D^2,\vec z,\vec z\,^+),w)]$ and of
$[((D^2,\vec{\overline z}^{\rm ~rev},\vec{\overline z}\,^+),\widetilde w)]$.
Moreover the Kuranishi map
$
[(D^2,\vec z\,',\vec z\,^{+\prime}),w')] \mapsto s([(D^2,\vec z\,',\vec z\,^{+\prime}),w')]) = \overline\partial w'
$
commutes with $\tau^{\rm main}_*$.
Therefore, $\tau^{\rm main}_*$ induces an isomorphism of
our Kuranishi structure on
$\CM_{k+1,m}^{\operatorname{main}}(J;\beta)$.
\par\smallskip
In order to find $E_{[((D^2,\vec z,\vec z\,^+),w)]}((D^2,\vec z\,',\vec z\,^{+\prime}),w')$
satisfying (\ref{4-17}) in addition, we proceed as follows.
We first recall briefly the way how we defined it in \cite{FO} appendix, \cite{fooobook2} pages 423-424,
\cite{foootech} Section 18.
We take a sufficiently dense subset $\{((D^2_{\frak a},\vec z_{\frak a},\vec z\,^+_{\frak a}),w_{\frak a}) \mid \frak a \in \frak A\}
\subset \CM_{k+1,m}^{\operatorname{main}}(J;\beta)$.
We choose
$$
E_{\frak a}
\subset
\Gamma(D^2;\Lambda^{0,1}\otimes (w_{\frak a})^*TM)
$$
that is a finite dimensional vector space of smooth sections
whose supports are away from node or marked points and
satisfy
$$
\operatorname{Im}D_{w_{\frak a}}\overline\partial + E_{\frak a}
= \Gamma(D^2;\Lambda^{0,1}\otimes (w_{\frak a})^*TM).
$$
Moreover we assume
$$
\bigoplus dev_{z_i} :
(\operatorname{Im}D_{w_{\frak a}}\overline\partial)^{-1}(E_{\frak a})
\to \bigoplus_{i=0,\dots,k}T_{w_{\frak a}(z_i)}L
$$
is surjective. See \cite{fooobook2} Lemma 7.1.18.
\par
We take a sufficiently small neighborhood $W_{\frak a}$ of $w_{\frak a}$
such that
$$
\bigcup_{\frak a \in \frak A} W_{\frak a}
=
\CM_{k+1,m}^{\operatorname{main}}(J;\beta).
$$
Let $[((D^2,\vec z,\vec z\,^+),w)] \in W_{\frak a}$
and
$((D^2,\vec z\,',\vec z\,^{+\prime}),w')$ be as above.
We take an isomorphism
\begin{equation}
I_{w',\frak a} :
\Gamma(D^2,\partial D^2: w_{\frak a}^*TM,
w_{\frak a}\vert_{\partial D^2}^*TL)
\cong
\Gamma(D^2,\partial D^2: (w')^*TM,
w'\vert_{\partial D^2}^*TL).
\end{equation}
Then we define
\begin{equation}
E_{[((D^2,\vec z,\vec z\,^+),w)]}((D^2,\vec z\,',\vec z\,^{+\prime}),w')
=
\bigoplus_{\frak a \in \frak A, \atop w' \in W_{\frak a}}
I_{w',\frak a}(E_{\frak a}).
\end{equation}
(It is easy to see that we can perturb $E_{\mathfrak a}, {\mathfrak a} \in {\mathfrak A}$, a bit so that the right hand side is a 
direct sum. See \cite{foootech} Section 27 for example.)
\par
We now explain the way how we take
$E_{[((D^2,\vec z,\vec z\,^+),w)]}((D^2,\vec z\,',\vec z\,^{+\prime}),w')$
so that (\ref{4-17}) is satisfied.
\par
We first require that
\begin{enumerate}
\item[(i)]
The set
$\{((D^2_{\frak a},\vec z_{\frak a},\vec z\,^+_{\frak a}),w_{\frak a}) \mid \frak a \in \frak A\}$
is invariant under the $\tau^{\rm main}_*$ action.
\item[(ii)]
If
$((D^2_{\frak b},\vec z_{\frak b},\vec z\,^+_{\frak b}),w_{\frak b})
= \tau^{\rm main}_*((D^2_{\frak a},\vec z_{\frak a},\vec z\,^+_{\frak a}),w_{\frak a})$,
$\frak a \ne \frak b$ then
$
E_{\frak b} =
(T_{w_{\frak a},1}\tau)_*(E_{\frak a}).
$
Moreover $\tau^{\rm main}_*(W_{\frak a}) = W_{\frak b}$.
\item[(iii)]
If
$((D^2_{\frak a},\vec z_{\frak a},\vec z\,^+_{\frak a}),w_{\frak a})
= \tau^{\rm main}_*((D^2_{\frak a},\vec z_{\frak a},\vec z\,^+_{\frak a}),w_{\frak a})$
then
$
E_{\frak a} =
(T_{w_{\frak a},1}\tau)_*(E_{\frak a}).
$
Moreover $\tau^{\rm main}_*(W_{\frak a}) = W_{\frak a}$.
\end{enumerate}
It is easy to see that such a choice exists.
\par
We next choose $I_{w',\frak a}$ such that
the following holds.
\begin{enumerate}
\item[(I)]
If $((D^2_{\frak b},\vec z_{\frak b},\vec z\,^+_{\frak b}),w_{\frak b})
= \tau^{\rm main}_*((D^2_{\frak a},\vec z_{\frak a},\vec z\,^+_{\frak a}),w_{\frak a})$,
$\frak a \ne \frak b$ then
\begin{equation}\label{47.13}
(T_{w',1}\tau)_* \circ I_{w',\frak a} = I_{\widetilde w',\frak b} \circ (T_{w_{\frak a},1}\tau)_*.
\end{equation}
\item[(II)]
If
$((D^2_{\frak a},\vec z_{\frak a},\vec z\,^+_{\frak a}),w_{\frak a})
= \tau^{\rm main}_*((D^2_{\frak a},\vec z_{\frak a},\vec z\,^+_{\frak a}),w_{\frak a})$
then
\begin{equation}\label{47.132}
(T_{w',1}\tau)_* \circ I_{w',\frak a} = I_{\widetilde w',\frak a} \circ (T_{w_{\frak a},1}\tau)_*.
\end{equation}
\end{enumerate}
We can find such $I_{w',\frak a}$ by taking various data
to define this isomorphism (See \cite{foootech} Definition 17.7) to be invariant under the $\tau$ action.
\par
It is easy to see that (i)(ii)(iii) and (I)(II) imply (\ref{4-17}).
\par
We have thus constructed the Kuranishi neighborhood
of the point $ \CM^{\operatorname{main}}_{k+1,m}(J;\beta)$ corresponding to a pseudoholomorphic map from
a disc (without disc or sphere bubbles).
\par
Let us consider an element
$((\Sigma,\vec z,\vec z\,^{ +}),w) \in \CM^{\operatorname{main}}_{k+1,m}(J;\beta)$,
such that $\Sigma$ is not irreducible. (Namely $\Sigma$ is not $D^2$.)
\begin{lem-def}\label{lem51}
 $((\Sigma,\vec z,\vec z\,^{ +}),w)$
is obtained by gluing  elements of
$\CM^{\operatorname{main}}_{k'+1,m'}(J;\beta')$ with
$(\beta',k',m') < (\beta,k,m)$ and
sphere bubbles.
$\tau^{\text{\rm main}}_* ((\Sigma,\vec z,\vec z\,^{ +}),w)$ is defined using
$\tau^{\text{\rm main}}_*$ on $\CM^{\operatorname{main}}_{k'+1,m'}(J;\beta')$ with
$(\beta',k',m') < (\beta,k,m)$.
\end{lem-def}
\begin{proof}
First we suppose that $\Sigma$ has a sphere bubble $S^2 \subset \Sigma$.
We remove it from $\Sigma$ to obtain $\Sigma_0$.
We add one more marked point to $\Sigma_0$ at the location
where the sphere bubble used to be attached.
Then we obtain an element
$$
((\Sigma_0,\vec z,\vec z\,^{(0)}),w_0) \in \CM^{\operatorname{main}}_{k+1,m+1-\ell}(J;\beta').
$$
Here $\ell$ is the number of marked points on $S^2$.
By the induction hypothesis, $\tau^{\text{\rm main}}_*$ is already defined on
$\CM_{k+1,m+1-\ell}^{\text{\rm main}}(J;\beta')$,  since
$\omega(\beta) \geq \omega(\beta')$ and if $\omega(\beta) = \omega(\beta')$, $\ell \geq 2$ .
We denote
$$
 ((\Sigma'_0,\vec z\,',\vec z\,^{(0) \prime}),w_0^\prime)
:=
\tau_*(((\Sigma_0,\vec z,\vec z\,^{(0)}),w_0)).
$$
We define $v: S^2 \to M$ by
$$
v(z) = \tau \circ w\vert_{S^2}(\overline z).
$$
We assume that the nodal point
in $\Sigma_0 \cap S^2$ corresponds to $0 \in \C \cup \{\infty\} \cong
S^2$. We also map $\ell$ marked points on $S^2$ by $z \mapsto \overline z$
whose images we denote by $\vec z\,^{(1)} \in S^2$.
We then glue $((S^2,\vec z\,^{(1)}), v)$ to
$((\Sigma'_0,\vec z\,',\vec z\,^{(0) \prime}), w_0')$ at the
point $0 \in S^2$ and at the
last marked point of $(\Sigma'_0,\vec z\,',\vec z\,^{(0) \prime})$ and obtain
a curve which is to be the definition of
$\tau^{\text{\rm main}}_*(((\Sigma,\vec z,\vec z\,^+),w))$.
\par
Next suppose that there is no sphere bubble on
$\Sigma$.
Let $\Sigma_0$ be the component containing the 0-th marked point.
If there is only one irreducible component
of $\Sigma$, then $\tau^{\text{\rm main}}_*$ is already defined there.
So we assume that there is at least one other disc component. Then $\Sigma$ is a union of $\Sigma_0$ and
$\Sigma_i$ for $i = 1,\dots,m$ ($m \ge 1$).
We regard the unique point in $\Sigma_0 \cap \Sigma_i$
as a marked point of $\Sigma_0$ for $i=1,\dots,m$.
Here each  $\Sigma_i$ itself is a union of disc components and is connected.
We also regard the point in $\Sigma_0 \cap \Sigma_i$ as
$1 \in D^2 \cong \H \cup \{\infty\}$ where $D^2$ is the irreducible component
of $\Sigma_i$ joined to $\Sigma_0$, and also as
one of the marked points of $\Sigma_i$. This defines an element
$((\Sigma_i,\vec z\,^{(i)},\vec z\,^{(i) +}),w_{(i)})$
for each $i = 0,\dots, m$.
By an easy combinatorics and the induction hypothesis, we can show that
$\tau^{\text{\rm main}}_*$ is already constructed on them.
Now we define $\tau^{\text{\rm main}}_*(((\Sigma,\vec z,\vec z\,^+),w))$
by gluing $\tau^{\text{\rm main}}_*(((\Sigma_i,\vec z\,^{(i)},\vec z\,^{(i) +}),w_{(i)}))$ to
$\tau^{\text{\rm main}}_*((\Sigma_0,\vec{z}\,^{(0)},{\vec z}\,^{(0) +}),w_{(0)})$.
\end{proof}
\par
Thus we proved that,
if $\Sigma$ is not irreducible, then $((\Sigma,\vec z,\vec z\,^{ +}),w)$
is obtained by gluing some elements corresponding to
$(\beta',k',m') < (\beta,k,m)$ and
sphere bubbles.
\par
We define
the map $u \mapsto \widetilde u$
by the same formula as (\ref{38.4}) on the moduli space of {\it spheres}.
Then we can regard this map as an involution
on the space with Kuranishi structure in the same way
as the case of discs.
(In other words, we construct $\tau$ invariant Kuranishi structures
on the moduli space of spheres before starting the construction of the  Kuranishi structures
on the moduli space of discs.)
\par
Let  us consider an element
$((\Sigma,\vec z,\vec z\,^{ +}),w) \in \CM^{\operatorname{main}}_{k+1,m}(J;\beta)$,
such that $\Sigma$ is not irreducible.
By Lemma \ref{lem51} and the above remark, the
involution of its Kuranishi neighborhood is constructed
by the induction hypothesis, on each irreducible component (which is either a disc or a sphere).
A Kuranishi neighborhood of
$((\Sigma,\vec z,\vec z\,^{ +}),w)$
is a fiber product of the Kuranishi neighborhoods of
the gluing pieces and the space of the
smoothing parameters of the singular points.
By definition, our involution obviously commutes with the process to take
the fiber product.  For the parameter space of smoothing the interior singularities,
the action of the involution is the complex
conjugation.  For the parameter space of smoothing the boundary
singularities, the action of involution is trivial.
The fiber product corresponding to a boundary node is taken over the Lagrangian submanifold $L$ and
the fiber product corresponding to an interior node is taken over the ambient symplectic manifold $M$.
Hence the fiber product construction can be carried out in a $\tau$-invariant way.
It is easy to see that the analysis we worked out in Section 7.1 \cite{fooobook2} (see also \cite{foootech}
Parts 2 and 3 for more detail) of the gluing is
compatible with the involution.
Thus $\tau^{\text{\rm main}}_*$ defines an involution on $\CM^{\operatorname{main}}_{k+1,m}(J;\beta)$
with Kuranishi structure.
The proof  of Theorem \ref{Lemma38.14} is complete.
\end{proof}
\begin{proof}[Proof of Theorem \ref{Proposition38.11} (1).]
The proof is the same as the proof of Theorem \ref{Lemma38.14}
except the following point.
Instead of $\CM^{\operatorname{main}}_{k+1,m}(J;\beta)$ we
will construct Kuranishi structure on $\CM_{k+1,m}(J;\beta)$.
We want it invariant under $\tau_*$ instead of $\tau^{\rm main}_*$.
(Note  $\CM^{\operatorname{main}}_{k+1,m}(J;\beta)$
is not invariant under  $\tau_*$.)
Taking this point into account the proof of Theorem \ref{Proposition38.11} (1)
goes in the same way as the proof of Theorem \ref{Lemma38.14}.
\end{proof}
Note we actually do not use Theorem \ref{Proposition38.11} (1)
to prove our main results.
(It is Theorem  \ref{Lemma38.14} that we actually use.)
So we do not discuss its proof in more detail.
\begin{proof}[Proof of Theorem \ref{Lemma38.17}.]
To prove the assertion on orientation,
it is enough to consider the orientation on
the regular part
$\CM^{\text{\rm main, reg}}_{k+1,m}(J;\beta;P_1, \dots , P_k)$.
See Remark \ref{rem:thmori} (2).
By Theorem \ref{Proposition38.7},
$\tau_*:\CM^{\text{\rm reg}}(J;\beta)^{\tau^*[(V,\sigma)]}
\to \CM^{\text{\rm reg}}(J;\beta)^{[(V,\sigma)]}$
is orientation preserving if and only if $\mu_L(\beta )/2$ is even.
Recall when we consider the main component
$\CM^{\rm main, reg}_{k+1, m}(J;\beta)$, the boundary marked points is in counter-clockwise
cyclic ordering. However,
by the involution $\tau_*$ in Theorem \ref{Lemma38.17}, each boundary marked point $z_i$ is mapped to
$\overline{z}_i$ and each interior marked point $z^+_j$ is mapped to
$\overline{z^+_j}$.
Thus the order of the boundary marked points changes to
clockwise ordering.
Denote by
$\CM^{\text{\rm clock, reg}}_{k+1,m}(J;\beta)^{[(V,\sigma)]}$
the moduli space with the boundary
marked points $(z_0,z_1, \dots ,z_k)$ respect the {\it clock-wise} orientation
and interior marked points $z^+_1, \dots, z^+_m$.
Since $z \mapsto \overline{z}$ reverses the orientation on the boundary
and $z^+ \mapsto \overline{z^+}$ reverses the orientation on the interior, the argument in the proof of Proposition \ref{Lemma38.9}
shows that
$\tau_*:\CM^{\text{\rm main, reg}}_{k+1, m}(J;\beta)^{\tau^*[(V,\sigma)]}
\to \CM^{\text{\rm clock, reg}}_{k+1,m}(J;\beta)^{[(V,\sigma)]}$
respects the orientation if and only if $\mu_L(\beta)/2 + k+1 + m$ is even.
Thus we have
$$
\aligned
& \CM^{\text{\rm main, reg}}_{k+1,m}(J;\beta;P_1, \dots , P_k)^{\tau^{\ast}[(V,\sigma)]} \\
= & (-1)^{\mu_L(\beta)/2 + k+1+m}
\CM^{\text{\rm clock, reg}}_{k+1,m}(J;\beta;P_1, \dots ,P_k)^{[(V,\sigma)]}.
\endaligned
$$
Recall that Lemma \ref{Lemma8.4.3} describes how the orientation of $\CM_{k+1,m}(J;\beta;P_1,\dots ,P_k)$
changes by changing ordering of boundary marked points.
Thus, using Lemma \ref{Lemma8.4.3}, we obtain Theorem \ref{Lemma38.17} immediately.
\end{proof}
Since the map $\tau_{\ast}^{\rm main}$ preserves the
ordering of interior marked points, we also obtain the following.
When we study bulk deformations (\cite{fooo06}, \cite{fooobook1}) of $A_{\infty}$ algebra 
for $L=\text{Fix } \tau$, 
which we do not discuss in this article, we need the next theorem.
\begin{thm}\label{Lemma38.17withQ}
Let $Q_1,\dots ,Q_m$ be smooth singular simplicies of
$M$. Then the map
$$
\aligned
\tau_{\ast}^{\rm main}~:~
& ~\CM^{\text{\rm main}}_{k+1,m}(J;\beta;Q_1,\dots ,
Q_m;P_1, \dots , P_k)^{\tau^{\ast}[(V,\sigma)]} \\
\longrightarrow
& ~\CM^{\text{\rm main}}_{k+1,m}(J;\beta;\tau (Q_1),\dots ,\tau (Q_m);P_k, \dots ,P_1)^{[(V,\sigma)]}
\endaligned
$$
preserves orientation if and only if
$$
\epsilon = \frac{\mu_L(\beta)}{2} + k + 1 + m + nm + \sum_{1 \le i < j \le k} \deg'P_i\deg'P_j
$$
is even.
\end{thm}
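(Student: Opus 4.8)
The plan is to treat Theorem~\ref{Lemma38.17withQ} as a refinement of Theorem~\ref{Lemma38.17} in which the interior marked points, previously left free, are now constrained by a fiber product with the singular simplices $g_j\colon Q_j\to M$. As in the proof of Theorem~\ref{Lemma38.17}, by Remark~\ref{rem:thmori}~(2) it suffices to work on the regular parts $\CM^{\mathrm{main,reg}}_{k+1,m}(J;\beta;\vec Q;\vec P)$. The guiding principle is that the $Q_j$'s are \emph{spectators}: the map $\tau_*^{\mathrm{main}}$ fixes the cyclic (hence, once $z_0$ is singled out, total) ordering of the interior marked points, so the $Q_j$'s are never permuted, and $\deg Q_j=2n-\dim Q_j$ is a topological quantity, identical for source and target. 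Hence the contributions of the $Q_j$'s should cancel between source and target, leaving exactly the sign $(-1)^\epsilon$ of Theorem~\ref{Lemma38.17}.

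I would carry this out in three steps. First, run the argument of Theorem~\ref{Lemma38.17} verbatim on the part of the moduli space \emph{before} inserting the $Q_j$'s and before reversing the order of the $P_i$'s: by Theorem~\ref{Proposition38.11} (equivalently Proposition~\ref{Lemma38.9}), $\tau_*\colon\CM^{\mathrm{main,reg}}_{k+1,m}(J;\beta)^{\tau^*[(V,\sigma)]}\to\CM^{\mathrm{clock,reg}}_{k+1,m}(J;\beta)^{[(V,\sigma)]}$ is orientation preserving iff $\mu_L(\beta)/2+k+1+m$ is even, the $+m$ accounting for the $m$ interior marked points $z^+_j\mapsto\overline{z^+_j}$, each of which reverses the orientation of $D^2$. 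Second, take fiber products with $\prod_{j}Q_j$ over $M$ at the interior marked points: here one uses the $\tau$-equivariance $ev_j^+\circ\tau_*^{\mathrm{main}}=\tau\circ ev_j^+$ together with the fact that the $M$-twist $\bigl((k+1)(n+1)+1\bigr)\sum_j\deg Q_j$ of Definition~\ref{Definition8.10.2} depends only on $k$, $n$, $m$ and the $\deg Q_j$, all unchanged under reversal of the $P_i$'s, so the $(-1)^{\epsilon(P,Q)}$ prefactors on source and target agree in their $Q$-part and cancel. Third, reverse the order of the $P_i$'s from $(P_1,\dots,P_k)$ to $(P_k,\dots,P_1)$: since in the product $\prod_j Q_j\times\prod_i P_i$ the $Q_j$'s sit to the left of all the $P_i$'s and are not moved, iterated application of Lemma~\ref{Lemma8.4.3} (with the Koszul signs of Convention~8.2.1~(3) in \cite{fooobook}) produces precisely the factor $(-1)^{\sum_{1\le i<j\le k}\deg'P_i\deg'P_j}$, exactly as in the $Q$-free case. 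Combining the three contributions gives the sign $(-1)^\epsilon$ with $\epsilon=\mu_L(\beta)/2+k+1+m+\sum_{1\le i<j\le k}\deg'P_i\deg'P_j$, as claimed.

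The main obstacle I expect is the second step: one must check that forming the $M$-fiber products at the interior marked points really introduces no hidden sign beyond what is already recorded in $\epsilon(P,Q)$, given that $\tau\colon M\to M$ need not preserve the orientation of $M$ (indeed $\tau^*\omega^n=(-1)^n\omega^n$). This amounts to verifying that the twist $\bigl((k+1)(n+1)+1\bigr)\sum_j\deg Q_j$ of Definition~\ref{Definition8.10.2} is exactly the correction that makes the complex conjugation on the interior smoothing and marked-point parameters, together with the $\tau$-action on the $M$-valued evaluations, cancel between source and target, a bookkeeping check inside the fiber-product orientation conventions of \cite{fooobook}. A minor additional point is that on the target the simplices should really be read through the identification $g_j\leftrightarrow\tau\circ g_j$ forced by $ev_j^+\circ\tau_*^{\mathrm{main}}=\tau\circ ev_j^+$ (equivalently one may take the $Q_j$ to be $\tau$-compatible); this relabeling is immaterial for the orientation count.
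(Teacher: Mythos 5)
Your proposal follows the paper's own route: the paper derives Theorem \ref{Lemma38.17withQ} from Theorem \ref{Lemma38.17} with nothing more than the one-line remark that $\tau_*^{\rm main}$ preserves the ordering of the interior marked points, so the $Q_j$ are never permuted. Your three-step decomposition (base sign $\mu_L(\beta)/2+k+1+m$ from Theorem \ref{Proposition38.11}, unchanged $\epsilon(P,Q)$ twist, reordering of the $P_i$ via Lemma \ref{Lemma8.4.3}) is an accurate unpacking of that one line.

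The place you need to press harder is your parenthetical assertion that the relabeling $g_j\leftrightarrow\tau\circ g_j$ is ``immaterial for the orientation count.'' You correctly note that $ev_j^+\circ\tau_*^{\rm main}=\tau\circ ev_j^+$, so the map of fiber products is $(x,q_j)\mapsto(\tau_*^{\rm main}(x),q_j)$, with the $Q_j$ on the target carrying the composed map $\tau\circ g_j$. In the standard fiber-product conventions, when both legs of a fiber product over $M$ are post-composed with a diffeomorphism $\tau$ of $M$, the induced isomorphism of fiber products picks up the factor $\det D\tau$, which here equals $(-1)^n$ since $\tau^*\omega^n=(-1)^n\omega^n$. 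Naively, then, each interior marked point contributes a sign $(-1)^n$ that does not appear in $\epsilon$, and this must be shown to cancel, either against a feature of Convention~8.2.1(3) in \cite{fooobook} that you have not exhibited, or against an implicit adjustment of the chain $Q_j$ by $\tau_*$. You flag this as ``a bookkeeping check inside the fiber-product orientation conventions'' but do not perform it, and this is exactly the step that carries the actual content of the theorem beyond Theorem \ref{Lemma38.17} (the paper, which disposes of the theorem in one sentence, does not perform it either). Until that computation is done, the claim that the $Q$-part contributes no sign is an assertion rather than a proof.
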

\begin{proof}
Note that $\tau$ preserves the orientation on $M$ if and only if 
$\frac{1}{2}\dim_{\R}M=n$ is even. Taking this into account,
Theorem \ref{Lemma38.17withQ} follows from Theorem \ref{Lemma38.17}.
See also Lemma \ref{oripreversing}.
\end{proof}

\section{Applications}
\label{sec:Appl}
Using the results obtained in the previous sections, we prove Theorem \ref{Theorem34.20},
Corollary \ref{Corollary34.22},
Corollary \ref{TheoremN} in Subsection \ref{subsec:Appl1},
Theorem \ref{Proposition34.25}
and Corollary \ref{qMassey} in Subsection \ref{proof1.9} and
calculate Floer cohomology of $\R P^{2n+1}$
over $\Lambda_{0,{\rm nov}}^{\Z}$ in Subsection
\ref{subsec:Appl2}.

\subsection{Filtered $A_{\infty}$ algebra
and Lagrangian Floer cohomology}
\label{subsec:Ainfty}
In order to explain how our study of
orientations can be used for the applications to Lagrangian Floer theory,
we briefly recall the construction of the filtered $A_\infty$ algebra for the
relatively spin Lagrangian submanifold and its obstruction/deformation theory developed in our books
\cite{fooobook1}, \cite{fooobook2}.

See a survey paper \cite{Oht} for a more detailed review.

Let $R$ be a commutative ring with unit.
Let $e$ and $T$ be formal variables of degree $2$ and $0$, respectively.
We use the {\it universal Novikov ring} over $R$
as our coefficient ring:
\begin{align}\label{eq:nov}
 \Lambda^{R}_{{\rm nov}} & = \left.\left\{ \sum_{i=0}^{\infty} a_i T^{\lambda_i}e^{\mu_i}  \
\right\vert \ a_i \in R,
\ \mu_i \in \Z, \ \lambda_i \in \R, \
\lim_{i \to \infty}\lambda_i = +\infty \right\},
\\
 \Lambda^{R}_{0,{\rm nov}} & =
\left.\left\{ \sum_{i=0}^{\infty} a_i T^{\lambda_i}e^{\mu_i} \in \Lambda_{{\rm nov}}
\ \right\vert \ \lambda_i \geq 0 \right\}.
\label{eq:nov0}
\end{align}
We define a filtration $F^{\lambda}\Lambda_{0,{\rm nov}}^R=
T^{\lambda}\Lambda_{0,{\rm nov}}^R$ ($\lambda \in \R_{\ge 0}$) on $\Lambda_{0,{\rm nov}}^R$
which induces a filtration $F^{\lambda}\Lambda_{{\rm nov}}^R$
($\lambda \in \R$) on $\Lambda_{{\rm nov}}^R$.
We call this filtration the {\it energy filtration}.
Given these filtrations, both $\Lambda_{0,{\rm nov}}^R$ and $\Lambda_{{\rm nov}}^R$
become graded filtered commutative rings.
In the rest of this subsection and the next, we take $R=\Q$.
We use the case $R=\Z$ in Subsection \ref{subsec:Appl2}.

In Section \ref{sec:pre} we define
$\CM^{\rm main}_{k+1}(J;\beta;P_1,\dots ,P_k)$
for smooth singular simplicies $(P_i,f_i)$ of $L$.
By the result of Section 7.1 \cite{fooobook2} it has a Kuranishi structure.
Here we use the same notations for the Kuranishi structure as the ones used in Appendix
of the present paper.
The space $\CM^{\rm main}_{k+1}(J;\beta;P_1,\dots ,P_k)$
is locally described by
$s^{-1}_p(0)/\Gamma_p$.
If the Kuranishi map $s_p$ is transverse to the zero
section, it is locally an orbifold.
However, if $\Gamma_p$ is non trivial, we can not perturb $s_p$ to a
$\Gamma_p$-equivariant section transverse to the zero
section, in general. Instead of single valued sections,
we take a $\Gamma_p$-equivariant {\it multi-valued} section
(multi-section)
$\mathfrak s_p$ of $E_p \to V_p$
so that each branch of the multi-section is transverse to the zero section
and ${\mathfrak s}_p^{-1}(0)/\Gamma_p$ and sufficiently close
to $s_p^{-1}(0)/\Gamma_p$.
(See Sections 7.1 and 7.2 in \cite{fooobook2} for the precise
statement.)
We denote the perturbed zero locus (divided by $\Gamma_p$) by
$\CM^{\rm main}_{k+1}(J;\beta;P_1,\dots ,P_k)^{\mathfrak s}$.
We have the evaluation map at the zero-th marked point
for the perturbed moduli space:
$$
ev_0 :
\CM^{\rm main}_{k+1}(J;\beta;P_1,\dots ,P_k)^{\mathfrak s}
\longrightarrow L.
$$
Then
such a system $\mathfrak s =\{ \mathfrak s_p \}$ of muti-valued sections gives
rise to the virtual fundamental chain over $\Q$ as follows:
By Lemma 6.9 in \cite{FO} and Lemma A1.26 in \cite{fooobook2},
$\CM^{\rm main}_{k+1}(J;\beta;P_1,\dots ,P_k)^{\mathfrak s}$ carries a smooth triangulation. We take a smooth triangulation on it.
Each simplex $\Delta^d_a$ of dimension $d=\dim \mathfrak s_p^{-1}(0)$ in the triangulation comes with multiplicity
$\text{mul}_{\Delta^d_a} \in \Q$.
(See Definition A1.27 in \cite{fooobook2} for the definition of multiplicity.)
Restricting $ev_0$ to $\Delta_a^d$, we have a
singular simplex of $L$ denoted by $(\Delta^d_a, ev_0)$.
Then the virtual fundamental chain over $\Q$ which we denote by
$(\CM^{\rm main}_{k+1}(J;\beta;P_1,\dots ,P_k)^{\mathfrak s},ev_0)$
is defined by
$$
(\CM^{\rm main}_{k+1}(J;\beta;P_1,\dots ,P_k)^{\mathfrak s},ev_0)
=
\sum_a \text{mul}_{\Delta_a^d} \cdot (\Delta^d_a, ev_0).
$$
When the virtual dimension is zero, i.e. when $d=0$, we denote
$$
\#\CM^{\rm main}_{k+1}(J;\beta;P_1,\dots ,P_k)^{\mathfrak s}
= \sum_a \text{mul}_{\Delta_a^0} \in \Q.
$$
Now
put $\Pi(L)_0 = \{ (\omega(\beta), \mu_L(\beta)) ~\vert~\beta \in \Pi(L),
~\CM(J;\beta) \ne \emptyset \}$, see (\ref{eq:Pi})
for $\Pi(L)$.
Let $G(L)$ be a submonoid of
$\R_{\ge 0} \times 2\Z$ generated by $\Pi(L)_0$.
We put $\beta_0 =(0,0) \in G(L)$.

\begin{defn}\label{def:mk}
For smooth singular simplicies $P_i$ of $L$ and $\beta \in G(L)$,
we define a series of maps
$
\mathfrak m_{k, \beta}
$
by
$$
\aligned
\mathfrak m_{0,\beta}(1) & =
\begin{cases}
(\CM_1(J;\beta)^{\mathfrak s},ev_0) & \text {for $\beta \ne \beta _0$} \\
0 & \text {for $\beta =\beta _0$,}
\end{cases} \\
\mathfrak m_{1,\beta}(P) & =
\begin{cases}
({\CM}_{2}^{\text {main}}(J;\beta;P)^{\mathfrak s},ev_0) & {\text {for $\beta \ne
\beta_0$}}\\
(-1)^{n}\partial P & {\text {for $\beta = \beta_0$}},
\end{cases}
\endaligned
$$
and
$$
\mathfrak m_{k,\beta}(P_1, \ldots ,P_k) =
 (\CM_{k+1}^{\text {main}}(J;\beta;P_1,\ldots, P_k)^{\mathfrak s},ev_0), \quad k\ge 2.
$$
\end{defn}
Here $\partial$ is the usual boundary operator and
$n=\dim L$.
Then,
one of main results proved in \cite{fooobook1} and \cite{fooobook2} is
as follows:
For a smooth singular chain $P$ on $L$
we put the cohomological grading
$\deg P = n -\dim P$ and regard
a smooth singular chain complex $S_{\ast}(L;\Q)$
as a smooth singular cochain complex $S^{n-\ast}(L;\Q)$.
For a subcomplex $C(L;\Q)$ of $S(L;\Q)$
we denote by
$C(L;\Lambda_{0,{\rm nov}}^{\Q})$
a completion of $C(L;\Q) \otimes \Lambda_{0,{\rm nov}}^{\Q}$
with respect to the
filtration induced from one on $\Lambda_{0,{\rm nov}}^{\Q}$
introduced above. We shift the degree by 1, i.e., define
$$
C(L;\Lambda_{0,{\rm nov}}^{\Q})[1]^{\bullet}=C(L;\Lambda_{0,{\rm nov}}^{\Q})^{\bullet +1},
$$
where we define $\deg (P T^{\lambda}e^{\mu})=\deg P +2\mu$ for
$P T^{\lambda}e^{\mu} \in C(L;\Lambda_{0,{\rm nov}}^{\Q})$.
We put
\begin{equation}\label{eq:defmk}
\mathfrak m_k =
\sum_{\beta \in G(L)}
\mathfrak m_{k,\beta} \otimes T^{\omega (\beta )}
e^{{\mu_L (\beta)}/{2}}, \quad
k=0,1,\ldots .
\end{equation}
To simplify notation we write $C=C(L;\Lambda_{0,{\rm nov}}^{\Q})$. Put
\begin{equation}\label{Bk}
B_k(C[1]) = \underbrace{C[1]\otimes \cdots \otimes C[1]}_{k}
\end{equation}
and take its completion with respect to the energy filtration.
By an abuse of notation, we denote the completion by the same symbol.
We define the {\it bar complex} $B(C[1])= \bigoplus_{k=0}^\infty B_k(C[1])$ and extend $\mathfrak m_k$ to the graded coderivation
$\widehat{\mathfrak m}_k$ on $B(C[1])$ by
\begin{equation}\label{eq:hatmk}
\widehat{\mathfrak m}_k(x_1 \otimes \cdots \otimes x_n) =
\sum_{i=1}^{n-k+1} (-1)^{*}
x_1 \otimes \cdots \otimes \mathfrak m_k(x_i,\dots,x_{i+k-1})
\otimes \cdots \otimes x_n
\end{equation}
where $* = \deg x_1+\cdots+\deg x_{i-1}+i-1$.
We put
\begin{equation}\label{eq:d}
\widehat{d}=\sum _{k=0}^{\infty} \widehat{\mathfrak m}_k : B(C[1]) \longrightarrow B(C[1]).
\end{equation}
\begin{thm}[Theorem 3.5.11 in \cite{fooobook1}]\label{thm:Ainfty}
For any closed relatively spin Lagrangian submanifold $L$ of $M$,
there exist a countably generated subcomplex $C(L;\Q)$
of smooth singular cochain complex of $L$ whose cohomology
is isomorphic to $H(L;\Q)$ and
a system of multi-sections $\mathfrak s$
of $\CM_{k+1}^{{\text {\rm main}}}(J;\beta;P_1,\ldots ,P_k)$
($\mathfrak s$ are chosen depending on $P_i \in C(L;\Q)$)
such that
$$
\mathfrak m_k
 :
\underbrace{C(L;\Lambda_{0,{\rm nov}}^{\Q})[1] \otimes \dots \otimes
C(L;\Lambda_{0,{\rm nov}}^{\Q})[1]}_{k} \to
C(L;\Lambda_{0,{\rm nov}}^{\Q})[1], \quad k=0,1,\ldots
$$
are defined and satisfy
$\widehat{d} \circ \widehat{d} =0$.
\end{thm}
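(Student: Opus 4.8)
The plan is to carry out the construction of Theorem 3.5.11 in \cite{fooobook}, whose main ingredients are already in place: the moduli spaces $\CM^{\rm main}_{k+1}(J;\beta;P_1,\dots,P_k)$ with their Kuranishi structures (Section 7.1 of \cite{fooobook}) and their orientations from Subsection \ref{subsec:orimain}. First I would observe that, because each $\widehat{\frak m}_k$ is a coderivation, the identity $\widehat d\circ\widehat d=0$ on $B(C[1])$ is equivalent to the system of $A_\infty$ relations
$$
\sum_{k_1+k_2=k+1}\ \sum_{i=1}^{k_1}\ \sum_{\beta_1+\beta_2=\beta} (-1)^{\ast}\,
\frak m_{k_1,\beta_1}\bigl(x_1,\dots,\frak m_{k_2,\beta_2}(x_i,\dots,x_{i+k_2-1}),\dots,x_k\bigr)=0 ,
$$
including the terms with $k_2=0$ (involving $\frak m_{0,\beta}$) and with $\beta_1$ or $\beta_2$ equal to $\beta_0$ (involving $\pm\partial$). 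Each such relation is the algebraic reflection of the description of the codimension-one boundary of the one-dimensional part of $\CM^{\rm main}_{k+1}(J;\beta;P_1,\dots,P_k)$: a boundary stratum arises either from bubbling off a disc, which exhibits the stratum as a fiber product over $L$ of two smaller moduli spaces $\CM^{\rm main}_{k_1+1}(J;\beta_1;\dots)$ and $\CM^{\rm main}_{k_2+1}(J;\beta_2;\dots)$ glued along $ev_0$ and $ev_i$, or, when $\beta=\beta_0$, from the usual boundary $\partial$ of the chains $P_i$. So the first task is to make this into an equality of \emph{oriented} spaces with Kuranishi structure, keeping track of signs via Definitions \ref{Definition8.4.1} and \ref{Definition8.10.2} and Lemma \ref{Lemma8.4.3}.

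The second task is transversality. Since these moduli spaces are not cut out transversally, I would perturb the Kuranishi maps by $\Gamma_p$-equivariant multi-sections $\frak s$, chosen \emph{coherently}: the multi-section on $\CM^{\rm main}_{k+1}(J;\beta;\dots)$ must restrict on each boundary stratum to the fiber product of the multi-sections previously chosen on the two pieces. This is arranged by induction on the pair $(\omega(\beta),k)$ with the lexicographic order, using that a multi-section defined on the boundary of a space with Kuranishi structure extends over the interior. With such $\frak s$ fixed, the virtual fundamental chains $(\CM^{\rm main}_{k+1}(J;\beta;P_1,\dots,P_k)^{\frak s},ev_0)$ and the counts $\#\CM^{\rm main}_{k+1}(\cdot)^{\frak s}$ are well defined, and the Stokes-type identity expressing the singular boundary of a virtual chain as the sum of the virtual chains of the boundary strata becomes, term by term, the displayed $A_\infty$ relation over $\Q$. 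The sign in front of each term is produced by the orientation conventions; checking that these match the algebraic signs $(-1)^{\ast}$ is lengthy but mechanical.

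The third task is to produce the countably generated subcomplex $C(L;\Q)$. One cannot work with all smooth singular cochains, both because the perturbations $\frak s$ must be chosen depending on the input simplices and because the outputs $\frak m_{k,\beta}(P_1,\dots,P_k)$ must lie again in the chosen complex. I would start from a countable subcomplex of $S^{n-\ast}(L;\Q)$ whose cohomology is $H(L;\Q)$, and then enlarge it in countably many steps: at each step, for every finite tuple of its current generators and every $\beta\in G(L)$, adjoin the finitely many singular simplices $(\Delta^d_a,ev_0)$ occurring in the virtual chain $(\CM^{\rm main}_{k+1}(J;\beta;P_1,\dots,P_k)^{\frak s},ev_0)$, together with their faces. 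Since $G(L)$ is countable and only countably many new simplices appear at each step, the union is a countably generated subcomplex closed under $\partial$ and under all the $\frak m_{k,\beta}$; performing the enlargement so that each inclusion is a quasi-isomorphism keeps its cohomology equal to $H(L;\Q)$. Its completion with respect to the energy filtration is $C(L;\Lambda_{0,nov}^{\Q})$, and $\frak m_k$ is then given by \eqref{eq:defmk}.

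The main obstacle is that the second and third tasks must be carried out simultaneously: the multi-sections have to be coherent over the entire tower of moduli spaces so that all boundary identifications hold exactly, while the resulting virtual chains must be forced into a fixed countably generated complex — and enlarging that complex introduces new input simplices for which perturbations have yet to be chosen. Both are handled by a single induction on $(\omega(\beta),k)$ interleaved with the countable exhaustion of $C(L;\Q)$, set up so that choices already made are never disturbed. Carrying this bookkeeping through, with all signs, is exactly the content of Chapters 3 and 7 of \cite{fooobook}.
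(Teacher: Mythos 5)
The present paper does not prove this theorem: it is a verbatim citation of Theorem 3.5.11 of \cite{fooobook}, and Subsection \ref{subsec:Ainfty} merely restates it together with the surrounding definitions. Measured against the book's argument, your sketch correctly identifies the geometric ingredients — $\widehat d^2=0$ translated into $A_\infty$ relations via coderivations, each relation realized as the boundary formula for a one-dimensional moduli space, coherent multi-sections chosen by induction on $(\omega(\beta),k)$ with signs governed by Definitions \ref{Definition8.4.1}--\ref{Definition8.10.2} and Lemma \ref{Lemma8.4.3}, and a countably generated $C(L;\Q)$ built by adjoining output simplices and closing under faces.

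The one place where your account diverges in substance from \cite{fooobook} is the closing step, where you propose a single geometric induction ``interleaved with the countable exhaustion'' with ``choices already made never disturbed.'' Taken literally this does not close the loop: enlarging $C(L;\Q)$ makes new tuples of inputs admissible, and those impose fresh coherence constraints on multi-sections already fixed on lower strata; nothing guarantees these new constraints are satisfiable without revisiting earlier choices. What the book actually does (Chapters 3 and 7) is construct, for each energy level $n$ and number of inputs $K$, a truncated filtered $A_{n,K}$-structure on a finite set of chains; verify that the structures for different $(n,K)$ are related by $A_{n,K}$-homotopies; and then use the homotopy theory and obstruction theory of filtered $A_\infty$ algebras to promote this compatible tower of truncated structures to a genuine filtered $A_\infty$ algebra on a fixed countably generated complex. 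In other words, the mechanism that replaces your bookkeeping claim is algebraic, not purely geometric: one allows the partial structures to change up to homotopy, and the promotion theorem is what produces a single coherent $\frak m$ at the end. Your sketch has the right geometric content but omits this homotopy-algebraic step, which is where the real work in the book is concentrated.
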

The equation $\widehat{d}\circ \widehat{d} = 0$ is equivalent to
$$
\sum_{k_1+k_2=k+1}
\sum_i
(-1)^{\deg' x_1 + \cdots + \deg' x_{i-1}}
\mathfrak m_{k_1}(x_1,\ldots,
\mathfrak m_{k_2}
(x_i,\ldots,
x_{i+k_2-1}),\ldots,x_k)=0
$$
which we call the {\it $A_{\infty}$ formulas} or {\it relations}.
Here $\deg'x_i = \deg x_i -1$, the shifted degree.
In particular, the $A_{\infty}$ formulas imply
an equality
$$\mathfrak m_2 (\mathfrak m_0(1), x) +(-1)^{\deg' x}\mathfrak m_2(x,\mathfrak m_0(1)) + \mathfrak m_1 \mathfrak m_1(x)=0,
$$
which shows $\mathfrak m_1 \circ \mathfrak m_1 = 0$ may not hold unless
if $\mathfrak m_0(1) = 0$, in general.
So $\mathfrak m_0$ gives an obstruction to define
$\mathfrak m_1$-cohomology.
\begin{defn}\label{def:boundingcochain}
An element $b \in C(L;\Lambda_{0,{\rm nov}}^{\Q})[1]^0$
with $b \equiv 0 \mod \Lambda_{0,{\rm nov}}^+$
is called a {\it solution of the Maurer-Cartan equation}
or {\it bounding cochain}, if it satisfies
the Maurer-Cartan equation:
$$
\mathfrak m_0 (1) + \mathfrak m_{1}(b) + \mathfrak m_2(b,b)
+ \mathfrak m_3(b,b,b) + \dots =0.
$$
Here $\Lambda_{0,{\rm nov}}^+ =
\{ \sum a_i T^{\lambda_i}e^{\mu_i} \in \Lambda_{0,{\rm nov}} ~\vert ~
\lambda_i >0 \}$.
We denote by $\CM (L;\Lambda_{0,{\rm nov}}^{\Q})$
the set of bounding cochains.
We say $L$ is {\it unobstructed} if $\CM (L;\Lambda_{0,{\rm nov}}^{\Q})
\ne \emptyset$.
\end{defn}
\begin{rem}\label{rem:boundingcochain}
We do not introduce the notion of
gauge equivalence of bounding cochains
(Definition 4.3.1 in \cite{fooobook1}), because we do not use it in this paper.
\end{rem}
If $\CM (L;\Lambda_{0,{\rm nov}}^{\Q})
\ne \emptyset$, then by using any $b \in \CM (L;\Lambda_{0,{\rm nov}}^{\Q})$ we can deform the $A_{\infty}$ structure
$\mathfrak m$ to $\mathfrak m^b$
by
$$
\mathfrak m^b_k(x_1,\dots,x_k)
= \sum_{\ell_0,\dots,\ell_k}
\mathfrak m_{k+\sum \ell_i}(\underbrace{b,\dots,b}_{\ell_0},x_1,
\underbrace{b,\dots,b}_{\ell_1},\dots,
\underbrace{b,\dots,b}_{\ell_{k-1}},x_k,
\underbrace{b,\dots,b}_{\ell_k})
$$
so that $\mathfrak m_1^b \circ \mathfrak m_1^b =0$
(Proposition 3.6.10 in \cite{fooobook1}).
Then we can define
$$
HF((L,b);\Lambda_{0,{\rm nov}}^{\Q}) :=
H(C(L;\Lambda_{0,{\rm nov}}^{\Q}), \mathfrak m_1^b)
$$
which we call {\it Floer cohomology of $L$} (deformed by $b$).
\par\smallskip
In the actual proof of Theorem \ref{thm:Ainfty} given in
Section 7.2 \cite{fooobook2},
we do not construct the filtered $A_{\infty}$ structure at once,
but we first construct a {\it filtered $A_{n,K}$ structure} for any non negative integers $n,K$ and
promote it to a filtered $A_{\infty}$ structure by developing certain
obstruction theory (Subsections 7.2.6--7.2.10 \cite{fooobook2}).
Here we recall the notion of filtered $A_{n,K}$ structure
from Subsection 7.2.6 \cite{fooobook2}, which is mentioned later in the proof of Theorem \ref{Theorem34.20}.
See also Subsection 2.6 and Section 4 \cite{foooYash} for a quick review.
We briefly summarize the obstruction theory in Subsection \ref{subsec:promotion}.
Let $G\subset \R_{\ge 0}\times 2\Z$ be a monoid such that
$\text {pr}_1^{-1}([0,c])$ is finite for any $c\ge 0$ and
$\text {pr}_1^{-1}(0)=\{\beta_0 =(0,0)\}$, where $\text {pr}_i$ denotes
the projection to the $i$-th factor.
We note that in the geometric situation we take $G=G(L)$ introduced above.
For $\beta \in G$, we define
\begin{equation}\label{def:betanorm}
\Vert \beta \Vert = \left\{
\begin{array}{ll}
\sup \{ n ~\vert~ \exists \beta_i \in G \setminus \{\beta_0\}, \
\sum_{i=1}^n \beta_i = \beta \} + [{\rm pr}_1(\beta)] -1 &
\text{ if } \beta \neq \beta_0 \\
-1 & \text{ if } \beta = \beta_0,
\end{array}
\right.
\end{equation}
where $[{\rm pr}_1(\beta)]$ stands for the largest integer less than
or equal to ${\rm pr}_1(\beta)$.
Using this, we introduce a partial order on $(G \times \Z_{\geq 0})
\setminus \{(\beta_0,0)\}$ denoted by
$(\beta_1,k_1) < (\beta_2,k_2)$ if and only if
either
$$
\Vert \beta_1 \Vert + k_1 <
\Vert \beta_2 \Vert + k_2
$$ or
$$
\Vert \beta_1 \Vert + k_1 =
\Vert \beta_2 \Vert + k_2 \text{ and }
\Vert \beta_1 \Vert < \Vert \beta_2 \Vert.
$$
We write $(\beta_1,k_1) \sim (\beta_2,k_2)$, when
$$
\Vert \beta_1 \Vert + k_1 =
\Vert \beta_2 \Vert + k_2 \text{ and }
\Vert \beta_1 \Vert = \Vert \beta_2 \Vert.
$$
We denote $(\beta_1,k_1) \lesssim (\beta_2,k_2)$ if
either $(\beta_1,k_1) < (\beta_2,k_2)$ or $(\beta_1,k_1) \sim
(\beta_2,k_2)$.
For non-negative integers $n, n', k, k'$, we also use
the notation $(\beta,k)<(n,k)$, $(n,k)<(n',k')$,
$(\beta,k)\lesssim (n,k)$, $(n,k)\lesssim (n',k')$ etc. in a similar way.
\par
Let $\overline{C}$ be a cochain complex over $R$ and
$C=\overline{C}\otimes \Lambda_{0,\text{nov}}^R$.
Suppose that there is a sequence of $R$ linear maps
$$
{\mathfrak m}_{k,\beta}:B_k(\overline{C}[1]) \to \overline{C}[1]
$$
for $(\beta,k) \in (G \times \Z) \setminus \{(\beta_0,0)\}$
with $(\beta,k) < (n,K)$.
We also suppose that ${\mathfrak m}_{1,\beta_0}$ is
the boundary operator of the cochain complex $\overline{C}$.
\begin{defn}\label{def:AnK}
We call $(C,\{{\mathfrak m}_{k,\beta}\})$
a ($G$-gapped) {\it filtered $A_{n,K}$ algebra}, if the identity
\begin{equation}\label{AnKformula}
\sum_{\beta_1+\beta_2=\beta, k_1+k_2=k+1} \sum_i
(-1)^{\deg'{\mathbf x}_i^{(1)}} {\mathfrak m}_{k_2,\beta_2}
\bigl({\mathbf x}_i^{(1)},
{\mathfrak m}_{k_1,\beta_1}({\mathbf x}_i^{(2)}),
{\mathbf x}_i^{(3)} \bigr) = 0
\end{equation}
holds for all $(\beta,k) <  (n,K)$,
where
$$
\Delta^2({\mathbf x})=\sum_i{\mathbf x}_i^{(1)} \otimes
{\mathbf x}_i^{(2)} \otimes {\mathbf x}_i^{(3)}.
$$
Here $\Delta$ is the coproduct of the tensor coalgebra.
\end{defn}

Let $C$, $C'$ be filtered $A_{n,K}$ algebras. We
consider a sequence of $R$ linear maps of degree zero
$$
\mathfrak f_{k,\beta}: B_k(\overline C[1]) \to \overline C'[1]
$$
satisfying $\mathfrak f_{0,\beta_0} = 0$.
\begin{defn}\label{def:AnKhom}
We call $\{\mathfrak f_{k,\beta}\}$ a {\it filtered
$A_{n,K}$ homomorphism}, if the identity
$$\aligned
&\sum_{m,i}\sum_{\beta'+\beta_1+\cdots+\beta_m = \beta}
\sum_{k_1+\cdots+k_m = k}
\mathfrak m_{m,\beta'}\left(
\mathfrak f_{k_1,\beta_1}(\text{\bf x}_i^{(1)}),
\cdots,
\mathfrak f_{k_m,\beta_m}(\text{\bf x}_i^{(m)})
\right)\\
& =\sum_{\beta_1+\beta_2 = \beta, k_1+k_2 = k+1} \sum_i
(-1)^{\deg'\text{\bf x}_i^{(1)}
}\mathfrak f_{k_2,\beta_2}\left(\text{\bf x}_i^{(1)} ,
\mathfrak m_{k_1,\beta_1}(\text{\bf x}_i^{(2)}),
\text{\bf x}_i^{(3)}\right)
\endaligned$$
holds for $(\beta,k) \lesssim (n,K)$.
\end{defn}

We also have the notion of
filtered $A_{n,K}$ homotopy equivalences in a natural way.
(See Subsection 7.2.6 \cite{fooobook2}.)
In \cite{fooobook2}, we proved the following:

\begin{thm}[Theorem 7.2.72 in \cite{fooobook2}]\label{ext(n,K)}
Let $C_1$ be a filtered $A_{n,K}$ algebra and
$C_2$ a filtered $A_{n',K'}$ algebra such that
$(n,K) < (n',K')$.
Let ${\mathfrak h}:C_1 \to C_2$
be a filtered $A_{n,K}$ homomorphism.
Suppose that ${\mathfrak h}$ is a filtered $A_{n,K}$ homotopy
equivalence.
Then there exist a filtered $A_{n',K'}$ algebra structure on
$C_1$
extending the given filtered $A_{n,K}$ algebra structure
and a filtered $A_{n',K'}$ homotopy equivalence
$C_1 \to C_2$ extending the given
filtered $A_{n,K}$ homotopy equivalence ${\mathfrak h}$.
\end{thm}

\par\smallskip
Next, let $(L^{(1)}, L^{(0)})$ be a relatively spin pair of closed Lagrangian submanifolds.
 We first assume that $L^{(0)}$ is transverse to $L^{(1)}$.
Let $C(L^{(1)},L^{(0)};\Lambda_{0,{\rm nov}}^{\Q})$ be the free $\Lambda_{0,{\rm nov}}^{\Q}$ module
generated by the intersection points $L^{(1)} \cap L^{(0)}$.
Then we can construct a filtered $A_{\infty}$ bimodule
structure $\{\mathfrak n_{k_1, k_0}\}_{k_1, k_0 =0, 1, \dots}$
on $C(L^{(1)},L^{(0)};\Lambda_{0,{\rm nov}}^{\Q})$ over the pair
$(C(L^{(1)};\Lambda_{0,{\rm nov}}^{\Q}), C(L^{(0)};\Lambda_{0,{\rm nov}}^{\Q}))$
of $A_\infty$ algebras as follows. Here we briefly describe
the map
$$
\aligned
\mathfrak n_{k_1,k_0} :
B_{k_1} (C(L^{(1)};\Lambda_{0,{\rm nov}}^{\Q})[1])
\otimes C(L^{(1)},L^{(0)};& \Lambda_{0,{\rm nov}}^{\Q})
\otimes B_{k_0}(C(L^{(0)};\Lambda_{0,{\rm nov}}^{\Q})[1]) \\
& \longrightarrow
C(L^{(1)},L^{(0)};\Lambda_{0,{\rm nov}}^{\Q}).
\endaligned
$$
A typical element of the tensor product above
is written as
$$
\left(P_{1}^{(1)}T^{\lambda_{1}^{(1)}}e^{\mu_{1}^{(1)}} \otimes \dots
\otimes P_{k_1}^{(1)}T^{\lambda_{k_1}^{(1)}}e^{\mu_{k_1}^{(1)}}\right)
\otimes T^{\lambda}e^{\mu}\langle p \rangle \otimes
\left(P_{1}^{(0)}T^{\lambda_{1}^{(0)}}e^{\mu_{1}^{(0)}} \otimes \dots
\otimes P_{k_0}^{(0)}T^{\lambda_{k_0}^{(0)}}e^{\mu_{k_0}^{(0)}}\right)
$$
for $p \in L^{(1)}\cap L^{(0)}$. Then $\mathfrak n_{k_1,k_0}$ maps
it to
$$
\sum_{q, B}
\# \left(\CM(p,q;B;P_{1}^{(1)},\dots,P_{k_1}^{(1)};
P_{1}^{(0)},\dots,P_{k_0}^{(0)})\right)T^{\lambda'}e^{\mu'} \langle q \rangle
$$
with $\lambda'= \omega (B) +\sum \lambda_i^{(1)} + \lambda + \sum \lambda_i^{(0)}$ and
$\mu' = \mu_{L}(B) +\sum \mu_i^{(1)} + \mu + \sum \mu_i^{(0)}$.
Here $B$ is the homotopy class of Floer trajectories connecting
$p$ and $q$, and the sum is taken over all $(q,B)$ such that
the virtual dimension of the moduli space
$\CM(p,q;B;P_{1}^{(1)},\dots,P_{k_1}^{(1)};
P_{1}^{(0)},\dots,P_{k_0}^{(0)})$ of Floer trajectories is zero.
See Subsection 3.7.4 of \cite{fooobook1} for the precise definition of
$$
\CM(p,q;B;P_{1}^{(1)},\dots,P_{k_1}^{(1)};
P_{1}^{(0)},\dots,P_{k_0}^{(0)}).
$$
Strictly speaking, we also need to take a suitable system of multi-sections on this
moduli space to obtain the virtual fundamental chain that enters in the construction of
the operators $\mathfrak n_{k_1,k_0}$ defining the desired $A_{\infty}$ bimodule structure.
Because of the usage of multi-sections, the counting number with sign
$$
\# \left(\CM(p,q;B;P_{1}^{(1)},\dots,P_{k_1}^{(1)};
P_{1}^{(0)},\dots,P_{k_0}^{(0)})\right)
$$ is
a rational number, in general.

Now let $B(C(L^{(1)};\Lambda_{0,{\rm nov}}^{\Q})[1])
\otimes C(L^{(1)},L^{(0)};\Lambda_{0,{\rm nov}}^{\Q})
\otimes B(C(L^{(0)};\Lambda_{0,{\rm nov}}^{\Q})[1])$ be
the completion of
$$\bigoplus_{k_0 \ge 0, k_1 \ge 0} B_{k_1} (C(L^{(1)};\Lambda_{0,{\rm nov}}^{\Q})[1])
\otimes C(L^{(1)},L^{(0)};\Lambda_{0,{\rm nov}}^{\Q})
\otimes B_{k_0}(C(L^{(0)};\Lambda_{0,{\rm nov}}^{\Q})[1])
$$
with respect to the induced energy filtration.
We extend $\mathfrak n_{k_1,k_0}$ to a bi-coderivation
on $B(C(L^{(1)};\Lambda_{0,{\rm nov}}^{\Q})[1])
\otimes C(L^{(1)},L^{(0)};\Lambda_{0,{\rm nov}}^{\Q})
\otimes B(C(L^{(0)};\Lambda_{0,{\rm nov}}^{\Q})[1])$ which is given by
the formula
\begin{equation}\label{eq:dbimodule}
\aligned
&
\widehat d(x_{1}^{(1)} \otimes \cdots \otimes x_{k_1}^{(1)} \otimes y \otimes
x_{1}^{(0)}
\otimes \cdots \otimes x_{k_0}^{(0)}) \\
= & \sum_{k^{\prime}_1\le k_1,k^{\prime}_0\le k_0}
(-1)^{\deg' x_{1}^{(1)}+\cdots+\deg' x_{k_1-k'_1}^{(1)}}\\
& \quad x_{1}^{(1)} \otimes \cdots \otimes
x_{k_1-k'_1}^{(1)}\otimes
\mathfrak n_{k'_1,k'_0}(x_{k_1-k'_1+1}^{(1)},\cdots,
y,\cdots,x_{k'_0}^{(0)})\otimes
x_{k'_0+1}^{(0)} \otimes \cdots \otimes x_{k_0}^{(0)} \\
& + \widehat {d}^{(1)} (x_{1}^{(1)} \otimes \cdots \otimes x_{k_1}^{(1)}) \otimes
y \otimes x_{1}^{(0)} \otimes \cdots \otimes x_{k_0}^{(0)} \\
& + (-1)^{\Sigma \deg' x_{i}^{(1)} + \deg' y}
x_{1}^{(1)} \otimes \cdots \otimes x_{k_1}^{(1)}
\otimes y \otimes \widehat {d}^{(0)} (x_{1}^{(0)}
\otimes \cdots \otimes x_{k_0}^{(0)}).
\endaligned
\end{equation}
Here $\widehat {d}^{(i)}$ is defined by
(\ref{eq:hatmk}) and (\ref{eq:d}), using
the filtered $A_{\infty}$
structure $\mathfrak {m}^{(i)}$ of $(C(L^{(i)};\Lambda_{0,{\rm nov}}^{\Q}), \mathfrak m^{(i)})$ ($i=0,1$).

\begin{thm}[Theorem 3.7.21 in \cite{fooobook1}]
\label{thm:Ainftybimodule}
For any relatively spin pair $(L^{(1)}, L^{(0)})$
of closed Lagrangian submanifolds, the family of maps $\{ \mathfrak n_{k_1,k_0}\}_{k_1,k_0}$ defines
a filtered $A_{\infty}$ bimodule structure on $C(L^{(1)},L^{(0)};\Lambda_{0,{\rm nov}}^{\Q})$
over  $(C(L^{(1)};\Lambda_{0,{\rm nov}}^{\Q}),C(L^{(0)};\Lambda_{0,{\rm nov}}^{\Q}))$.
Namely, $\widehat{d}$ in (\ref{eq:dbimodule}) satisfies
$\widehat{d} \circ \widehat{d} =0$.
\end{thm}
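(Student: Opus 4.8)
The plan is to deduce the bimodule relation $\widehat d\circ\widehat d=0$ from the geometry of the moduli spaces $\CM(p,q;B;P^{(1)}_1,\dots,P^{(1)}_{k_1};P^{(0)}_1,\dots,P^{(0)}_{k_0})$ of Floer trajectories, in exactly the spirit of the proof of Theorem \ref{thm:Ainfty}. Written out in components, $\widehat d\circ\widehat d=0$ is equivalent to a system of quadratic relations among the operators $\frak n_{k_1,k_0}$, $\frak m^{(1)}_k$ and $\frak m^{(0)}_k$; each such relation, evaluated on a tuple of singular simplices, asserts the vanishing of a certain signed rational count. First I would show that this count coincides, term by term, with the signed count of boundary points of the one-dimensional components of the perturbed moduli spaces $\CM(p,q;B;\dots)^{\frak s}$. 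Since a compact oriented $1$-manifold with boundary has signed boundary count zero, this gives the identity.

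The first substantive step is the classification of the codimension-one boundary of the Gromov--Floer compactification $\overline{\CM}(p,q;B;\dots)$. Using the compactness theorem for Floer trajectories with boundary and interior marked points (Subsection 3.7.4 of \cite{fooobook}), one checks that the codimension-one strata are of three kinds: (i) splitting of the strip at an intermediate generator $r\in L^{(1)}\cap L^{(0)}$, giving a product $\CM(p,r;B_1;\dots)\times\CM(r,q;B_2;\dots)$ with $B_1\# B_2=B$ and the two blocks of marked points distributed in order; (ii) bubbling off of a pseudo-holomorphic disc along one of the two boundary components, attached at a single boundary node, which produces a fiber product of a lower-energy strip moduli space with a disc moduli space $\CM^{\rm main}_{\ell+1}(J;\beta;\dots)$ of the type used in Definition \ref{def:mk}, on the $L^{(1)}$-side or on the $L^{(0)}$-side; (iii) the contribution of $\partial P_i$ for one of the simplices, which is folded into the $\beta_0$-term $\frak m_{1,\beta_0}(P)=(-1)^n\partial P$. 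Matching type (i) with the $\frak n\circ\frak n$ summands, type (ii) with the $\frak m^{(1)}\circ\frak n$ and $\frak n\circ\frak m^{(0)}$ summands, and type (iii) with the chain-differential terms reproduces precisely the three groups of terms in $(\ref{eq:dbimodule})$.

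The technical heart, and the step I expect to be the main obstacle, is to realize this picture at the level of Kuranishi structures and multisections. One must equip every $\CM(p,q;B;\dots)$, $\CM^{\rm main}_{\ell+1}(J;\beta;\dots)$ and each relevant product or fiber product with compatible Kuranishi structures so that the normalized boundary $\partial\CM(p,q;B;\dots)$ is, as a space with Kuranishi structure, the disjoint union of the strata listed above; this requires the gluing analysis of \cite{fooobook} (in the style of its Section 7.1) adapted to the strip setting. Then, inducting on the energy $\omega(B)$ and on the total number $k_1+k_0$ of boundary marked points, one chooses a system of multisections $\frak s$ that are transverse to zero and restrict on each boundary stratum to the product, respectively fiber product, of the multisections already chosen. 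This compatibility is exactly what converts the geometric boundary decomposition into the algebraic quadratic identity: the count of zeros of $\frak s$ on a boundary face equals the product of the corresponding lower counts, that is, the relevant composition $\frak n\circ\frak n$, $\frak m^{(i)}\circ\frak n$, or $\frak n\circ\frak m^{(i)}$.

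Finally, the signs must be checked. The orientations on $\CM(p,q;B;\dots)$ come from the relative spin structure of the pair $(L^{(1)},L^{(0)})$, and those on $\CM^{\rm main}_{\ell+1}(J;\beta;\dots)$ from the relative spin structures of the individual $L^{(i)}$, in the sense reviewed in Section \ref{sec:relspin}. Using the gluing diffeomorphism I would compare, for each codimension-one face, the boundary orientation of the one-dimensional moduli space with the (fiber) product orientation of that face; the discrepancy is a sign depending on $n=\dim L^{(i)}$ and on the shifted degrees $\deg'P_i$, and the bookkeeping --- following the orientation conventions of Chapter 8 of \cite{fooobook} --- is arranged so as to match precisely the signs $(-1)^{\deg'x^{(1)}_1+\cdots+\deg'x^{(1)}_{k_1-k'_1}}$ and $(-1)^{\Sigma\deg'x^{(1)}_i+\deg'y}$ in $(\ref{eq:dbimodule})$ together with the internal $A_\infty$ signs of $\widehat d^{(0)}$ and $\widehat d^{(1)}$. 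Summing the signed contributions over all codimension-one faces then gives $\widehat d\circ\widehat d=0$.
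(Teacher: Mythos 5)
The paper does not prove this theorem; it is quoted verbatim as Theorem 3.7.21 from \cite{fooobook} and used as a black box, so there is no in-paper argument to compare your sketch against. That said, your proposal correctly reproduces the strategy used in the cited source: stratify the codimension-one boundary of the compactified one-dimensional Floer-trajectory moduli spaces $\overline{\CM}(p,q;B;\vec P^{(1)};\vec P^{(0)})$ into strip-breaking at an intermediate generator, disc bubbling attached to the $L^{(1)}$- and $L^{(0)}$-sides of the boundary, and $\partial P_i$ degenerations; build compatible Kuranishi structures and multisections by induction on energy so that the perturbations restrict to (fiber) products on boundary faces; and then match the signed boundary counts with the quadratic terms in $\widehat d\circ\widehat d$, with signs governed by the orientation conventions of Chapter 8 of \cite{fooobook}. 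This is the same approach, and the outline is sound.
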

Since the equation $\widehat{d} \circ \widehat{d} =0$ implies, in particular,
$$
\mathfrak n_{0,0}\circ \mathfrak n_{0,0} (y) + \mathfrak n_{1,0}(\mathfrak m_{0}^{(1)}(1),y) + (-1)^{\deg'y}\mathfrak {n}_{0,1}(y,\mathfrak m_{0}^{(0)}(1))
=0,
$$
we have $\mathfrak n_{0,0} \circ \mathfrak n_{0,0} \ne 0$, in general.
However,
if both of $L^{(0)}$ and $L^{(1)}$ are unobstructed
in the sense of Definition \ref{def:boundingcochain},
we can deform the filtered $A_{\infty}$ bimodule
structure $\mathfrak n$ by $b_i \in \CM (L^{(i)};\Lambda_{0,{\rm nov}}^{\Q})$
so that
$$
{}^{b_1}\mathfrak n_{0,0}^{b_0} (y):=
\sum_{k_1, k_0} \mathfrak n_{k_1,k_0}
(\underbrace{b_1,\ldots , b_1}_{k_1}, y,
\underbrace{b_0, \ldots , b_0}_{k_0})
$$
satisfies ${}^{b_1}\mathfrak n_{0,0}^{b_0} \circ {}^{b_1}\mathfrak n_{0,0}^{b_0}=0$ (Lemma 3.7.14 in \cite{fooobook1}).
Then we can define
$$
HF((L^{(1)},b_1),(L^{(0)},b_0);\Lambda_{0,{\rm nov}}^{\Q})
:= H(C(L^{(1)},L^{(0)};\Lambda_{0,{\rm nov}}^{\Q}), {}^{b_1}\mathfrak n_{0,0}^{b_0})
$$
which we call {\it Floer cohomology of a pair $(L^{(1)},L^{(0)})$}
(deformed by $b_1,b_0$).
\par

So far we assume that $L^{(0)}$ is transverse to $L^{(1)}$.
But we can generalize the story to the Bott-Morse case,
that is, each component of $L^{(0)}\cap L^{(1)}$ is a smooth
manifold.
Especially, for the case $L^{(1)}=L^{(0)}$, we have
$\mathfrak n_{k_1,k_0}=\mathfrak m_{k_1+k_0+1}$ (see Example 3.7.6
in \cite{fooobook1}) and an isomorphism
\begin{equation}\label{isoFloer}
HF((L,b),(L,b);\Lambda_{0,{\rm nov}}^{\Q}) \cong HF((L,b);\Lambda_{0,{\rm nov}}^{\Q})
\end{equation}
for any $b \in \CM(L;\Lambda_{0,{\rm nov}}^{\Q})$ by Theorem G (G.1) in \cite{fooobook1}.
Moreover, if we extend the coefficient ring $\Lambda_{0,{\rm nov}}^{\Q}$
to $\Lambda_{{\rm nov}}^{\Q}$, we can find that
Hamiltonian isotopies $\Psi_i^s : M \to M$ ($i=0,1, s\in [0,1]$) with $\Psi_i^0={\rm id}$ and $\Psi_i^1=\Psi_i$ induce an isomorphism
\begin{equation}\label{invariance}
\aligned
& HF((L^{(1)},b_1),(L^{(0)},b_0);\Lambda_{{\rm nov}}^{\Q}) \\
\cong~
& HF((\Psi_1(L^{(1)}),\Psi_{1\ast}b_1),(\Psi_0(L^{(0)}),\Psi_{0\ast}b_0);\Lambda_{{\rm nov}}^{\Q})
\endaligned
\end{equation}
by Theorem G (G.4) in \cite{fooobook1}.
This shows invariance of Floer cohomology
of a pair $(L^{(1)},L^{(0)})$ over $\Lambda_{{\rm nov}}^{\Q}$ under Hamiltonian isotopies.

\subsection{Proofs of Theorem \ref{Theorem34.20}, Corollary \ref{Corollary34.22}
and Corollary \ref{TheoremN} }
\label{subsec:Appl1}

\begin{proof}[Proof of Theorem \ref{Theorem34.20}]
We consider
the map (\ref{38.16}) for the case $m=0$.
It is an automorphism of order $2$.
We first take its quotient by Lemma \ref{lem:quot}
(Lemma A1.49 \cite{fooobook2}) in the sense
of Kuranishi structure, and
take a perturbed multi-section of the quotient space,
which is transverse to zero section.
After that we lift the perturbed multi-section.
Then
we can obtain a system of multi-sections
$\mathfrak s$ on the moduli space
$\CM^{{\text{\rm {main}}}}_{k+1}(J;\beta ;P_1,\dots,P_k)$
which is preserved by (\ref{38.16}).
Then Definition \ref{def:mk} yields
the operators $\{\mathfrak m_{k,\beta}\}_{k,\beta}$
which satisfy the filtered $A_{n,K}$ relations \eqref{AnKformula}
together with
\eqref{34.21}.
The sign in \eqref{34.21} follows from Theorem \ref{Lemma38.17}.
To complete the proof
we need to promote the filtered $A_{n,K}$ structure to a filtered $A_{\infty}$ structure keeping the symmetry \eqref{34.21}.
This follows from an involution invariant version
(see Theorem \ref{invariantext(n,K)}) of
Theorem \ref{ext(n,K)} (=Theorem 7.2.72 \cite{fooobook2}).
Although the proof of the invariant version is a straightforward modification of
that of Theorem 7.2.72 \cite{fooobook2},
we give the outline of the argument in Appendix, Subsection
\ref{subsec:promotion}, for readers' convenience.
\end{proof}
\par
\begin{rem}\label{remonmult}
In general, it is not possible to perturb a section of the obstruction bundle transversal to the zero section by a single-valued
perturbation.
Using multi-valued perturbation, we can take the perturbation, which is transversal to the zero section
and invariant under the action of stabilizer as well as other finite group action coming from the symmetry of the problem.
In our case, there may be a fixed point of (\ref{38.16}).
But this does not cause any problem as far as we work with {\it multi}-sections and
study virtual fundamental chain over $\Q$
as in the proof of Theorem \ref{Theorem34.20}.
\end{rem}

\begin{proof}[Proof of Corollary \ref{Corollary34.22}]
For $w : (D^2,\partial D^2) \to (M,L)=(M, \text{\rm Fix}~\tau)$ we define its double
$v : S^2 \to M$ by
$$
v(z) = \begin{cases}  w(z) & \quad \text{for $z \in \H$} \\
\tau \circ w(\overline{z}) & \quad \text{for $z\in \C
\setminus \H$,}
\end{cases}
$$
where $(D^2, (-1,1,\sqrt{-1}))$ is identified with
the upper half plane $(\H ,(0,1,\infty))$ and
$S^2 = \C \cup \{ \infty \}$.
Then it is easy to see that
$c_1(TM)[v]=\mu_{L}([w])$.
(See Example \ref{Example38.8.} (1).)
Then the assumption (1) implies that $\mu_L \equiv 0 \mod 4$.
Next
we note the following general lemma.
\begin{lem}\label{c1maslov}
Let $L$ be an oriented Lagrangian submanifold of $M$.
Then the composition
$$
\pi_2 (M) \longrightarrow \pi_2(M,L)
\overset{\mu_L}\longrightarrow \Z
$$
is equal to $2c_1(TM)[\alpha]$ for $[\alpha]\in \pi_2(M)$.
\end{lem}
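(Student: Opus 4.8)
The plan is to represent the image of a sphere class in $\pi_2(M,L)$ by a disc with constant boundary and to compute its Maslov index via the additivity of the Maslov index under gluing of bundle pairs. Fix a base point $p\in L$ and represent $[\alpha]\in\pi_2(M)$ by a based map $\alpha:(S^2,p_0)\to(M,p)$. Letting $q:(D^2,\partial D^2)\to(S^2,p_0)$ be the map collapsing $\partial D^2$ to $p_0$, the image of $[\alpha]$ under $\pi_2(M)\to\pi_2(M,L)$ is represented by the disc $w=\alpha\circ q$, which satisfies $w\vert_{\partial D^2}\equiv p$. Hence the bundle pair defining the Maslov index, $(w^*TM,(w\vert_{\partial D^2})^*TL)$ over $(D^2,\partial D^2)$, has constant boundary condition equal to the fixed Lagrangian subspace $\Lambda_0:=T_pL\subset T_pM$, and by definition the composition in the lemma sends $[\alpha]$ to $\mu_L(w)=\mu\bigl(w^*TM,\underline{\Lambda_0}\bigr)$, where $\underline{\Lambda_0}$ denotes the constant boundary bundle with fiber $\Lambda_0$.

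Next I would invoke the gluing formula for the Maslov index: given two complex bundle pairs $(E_1,F)$ and $(E_2,F)$ over $(D^2,\partial D^2)$ sharing the same totally real boundary bundle $F$, gluing them along $\partial D^2$ (with the orientation-reversing identification that produces $S^2$) yields a complex vector bundle $E_1\#_F E_2\to S^2$ with $\mu(E_1,F)+\mu(E_2,F)=2\,c_1(E_1\#_F E_2)[S^2]$; this is proved in \cite{fooobook} and is consistent, e.g., with the case $M=\C P^1$, $L=\R P^1$, where a half-disc has Maslov index $2$ and two such half-discs glue to $[\C P^1]$, which has $c_1=2$. I apply it with $(E_1,F)=(w^*TM,\underline{\Lambda_0})$ and $(E_2,F)=(D^2\times T_pM,\underline{\Lambda_0})$, the trivial disc bundle carrying the same constant Lagrangian boundary condition. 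Since the constant loop in the Lagrangian Grassmannian $\Lambda(\C^n)$ is null-homotopic, $\mu(E_2,F)=0$. On the other hand, gluing $w$ (a disc with constant boundary $p$) to the constant disc at $p$ produces a map $S^2\to M$ homotopic to $\alpha$ (collapse the constant disc), so $E_1\#_F E_2\cong\alpha^*TM$ and $c_1(E_1\#_F E_2)[S^2]=c_1(TM)[\alpha]$. Therefore $\mu_L(w)=2\,c_1(TM)[\alpha]$, which is the assertion.

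The only non-formal point — hence the main thing to be careful about — is the gluing formula and the normalization of its factor $2$, together with matching the sign and orientation conventions used here for $\mu_L$ and for the identification of the glued bundle with $\alpha^*TM$ to those fixed earlier in the paper and in \cite{fooobook}. If a fully self-contained argument is preferred, one can instead trivialize $w^*TM$ over the contractible disc $D^2$, observe that on a collar of $\partial D^2$ the natural ``constant'' trivialization of $w^*TM$ (in which the boundary condition is the constant $\Lambda_0$) differs from this global trivialization by a loop $g:S^1\to U(n)$ whose determinant has winding number $c_1(TM)[\alpha]$ — the obstruction to extending the collar trivialization inward across $D^2$ being precisely the clutching function of $\alpha^*TM$ over the equator — and then compute $\mu_L(w)=\mu\bigl(\theta\mapsto g(\theta)\Lambda_0\bigr)=\deg\bigl(\det(g)^2\bigr)=2\,c_1(TM)[\alpha]$, using that the Maslov class on $\Lambda(\C^n)$ is detected by $\det^2:\Lambda(\C^n)\to U(1)$. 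Finally I would remark that orientability of $L$ plays no role beyond this computation, consistently with $2c_1$ being automatically even.
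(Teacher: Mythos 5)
The paper gives no proof, stating only that ``The proof is easy and so it is omitted,'' so there is nothing to compare against; your argument is the standard one the authors presumably had in mind. Both of your arguments are correct: representing the image of $[\alpha]$ by a disc with constant boundary and then either invoking the gluing formula $\mu(E_1,F)+\mu(E_2,F)=2c_1(E_1\#_F E_2)[S^2]$ with the trivial cap (whose constant Lagrangian loop has $\mu=0$), or trivializing $w^*TM$ over $D^2$ and reading off $\mu_L(w)=\deg(\det^2 g)=2\deg(\det g)=2c_1(TM)[\alpha]$ from the clutching function $g$, gives the statement, and your sanity check on $(\C P^1,\R P^1)$ correctly pins down the factor of $2$.
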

The proof is easy and so it is omitted.
Then by this lemma the assumption (2) also implies that
the Maslov index of $L$ modulo $4$ is trivial.
Therefore in either case of (1) and (2),
Theorem \ref{Theorem34.20} implies $\mathfrak m_{0,\tau_*\beta}(1) = - \mathfrak m_{0,\beta}(1)$.
On the other hand we have
$$
\mathfrak m_0 (1)=\sum _{\beta \in \pi_2(M,L)} \mathfrak m_{0,\beta}(1)
T^{\omega(\beta)}e^{\mu(\beta)/2}
$$
by Definition \ref{def:mk} and (\ref{eq:defmk}) which is also the same as
$$
\sum_{\beta \in \pi_2(M,L)}
\mathfrak m_{0,\tau_*\beta}(1)T^{\omega(\tau_*\beta)}e^{\mu(\tau_*\beta)/2}
$$
because $\tau_*^2 = id$ and $\tau_*:\pi_2(M,L) \to \pi_2(M,L)$ is a one-one correspondence.
Therefore since $\omega(\beta) = \omega(\tau_*\beta)$ and $\mu(\beta) = \mu(\tau_*\beta)$, we can rewrite
$\mathfrak m_0 (1)$ into
$$
\mathfrak m_0 (1) = \frac{1}{2} \sum_{\beta}\left(\mathfrak m_{0,\beta}(1) + \mathfrak m_{0,\tau_*\beta}(1)\right)T^{\omega(\beta)}e^{\mu(\beta)/2}
$$
which becomes 0 by the above parity consideration. Hence $L$ is unobstructed.
Actually, we find that $0$ is a bounding cochain; $0 \in \CM(L;\Lambda_{0,{\rm nov}}^{\Q})$.
Furthermore, (\ref{34.21}) implies
\begin{equation}\label{34.23}
\mathfrak m_{2,\beta}(P_1,P_2) = (-1)^{1+\deg'P_1\deg'P_2}
\mathfrak m_{2,\tau_*\beta}(P_2,P_1).
\end{equation}
We denote
\begin{equation}\label{qproduct}
P_1 \cup_Q P_2 : = (-1)^{\deg P_1(\deg P_2+1)} \sum_{\beta}
\mathfrak m_{2,\beta}(P_1,P_2)T^{\omega(\beta)}e^{\mu(\beta)/2}.
\end{equation}
Then a simple calculation shows that (\ref{34.23}) gives
rise to
$$
P_1 \cup_Q P_2 = (-1)^{\deg P_1\deg P_2} P_2 \cup_Q P_1.
$$
Hence $\cup_Q$ is graded commutative.
\end{proof}

\begin{proof}[Proof of Corollary \ref{TheoremN}]
Let $L$ be as in Corollary \ref{TheoremN}.
By Corollary \ref{Corollary34.22}, $L$ is unobstructed.
Since $L = \text{Fix } \tau$, we find that $c_1(TM)\vert{\pi_2(M)} = 0$ implies $\mu_L = 0$.
Then Theorem E and Theorem 6.1.9 in \cite{fooobook1} show that the Floer cohomology of $L$ over $\Lambda_{{\rm nov}}^{\Q}$
does not vanish for any $b \in \CM (L;\Lambda_{0,{\rm nov}}^{\Q})$:
$$
HF((L,b);\Lambda_{{\rm nov}}^{\Q}) \ne 0.
$$
(Note that Theorem E holds not only over
$\Lambda_{0,{\rm nov}}^{\Q}$ but also over $\Lambda_{{\rm nov}}^{\Q}$. See
Theorem 6.1.9.)
By extending the isomorphism
(\ref{isoFloer}) to $\Lambda_{{\rm nov}}^{\Q}$ coefficients
(by taking the tensor product with $\Lambda_{{\rm nov}}^{\Q}$ over
$\Lambda_{0,{\rm nov}}^{\Q}$),
we also have $HF((L,b),(L,b);\Lambda_{{\rm nov}}^{\Q}) \ne 0$.
Therefore by (\ref{invariance}) we obtain
$$
HF((\psi(L),\psi_{\ast}b),(L,b);\Lambda_{{\rm nov}}^{\Q}) \ne 0
$$
which implies $\psi(L) \cap L \ne \emptyset$.
\end{proof}

\subsection{Proofs of Theorem  \ref{Proposition34.25} and Corollary \ref{qMassey}}\label{proof1.9}
In this subsection, we prove Theorem \ref{Proposition34.25} and Corollary \ref{qMassey}.

\subsubsection{Proof of Theorem \ref{Proposition34.25} (1)}\label{1.9(1)}
\begin{proof}[Proof of Theorem \ref{Proposition34.25} (1) ]
Let $(N,\omega)$ be a symplectic manifold, $M = N\times N$, and
$\omega_M = -{\rm pr}_1^*\omega_N + {\rm pr}_2^*\omega_N$.
We consider an anti-symplectic involution $\tau : M \to M$ defined by $\tau(x,y) = (y,x)$.
Then $L = \text{\rm Fix}\,\tau \cong N$. Let $J_N$ be a compatible almost structure on
$N$, and $J_M = -J_N \otimes 1 +1 \otimes J_N$. The almost
complex structure $J_M$ is compatible with $\omega_M$.
Note that $w_2(T(N\times N))={\rm pr}_1^{\ast}w_2(TN) + {\rm pr}_2^{\ast}w_2(TN)$.
\par
If $N$ is spin, then $L= \text{\rm Fix}\,\tau \cong N$ is $\tau$-relatively spin
by Example \ref{Remark44.18} and $c_1(T(N\times N)) \equiv
w_2(T(N\times N)) \equiv 0 \mod 2$.
Since $\pi_1(L) \to \pi_1(N \times N)$ is injective,
Corollary \ref{Corollary34.22} shows that $L$ is unobstructed
and $\mathfrak m_2$ defines a graded commutative product structure $\cup_Q$ by (\ref{qproduct}).
\par
Suppose that $N$ is not spin. We take a relative spin structure $(V,\sigma)$ on $L= \text{\rm Fix}\,\tau \cong N$ such that
$V={\rm pr}_1^{\ast}(TN)$ and $\sigma$ is the following spin structure on
$(TL \oplus V)\vert_{L} \cong (TL \oplus TL)\vert_{L}$.
Since the composition of the diagonal embedding $SO(n) \to SO(n) \times SO(n)$
and the inclusion $SO(n) \times SO(n) \to SO(2n)$ admits a unique lifting
$SO(n) \to Spin(2n)$, we can equip the bundle $TL \oplus TL$
with a canonical spin structure.
It determines the spin structure $\sigma$ on $(TL \oplus V)\vert_L$.
(In this case, we have $st={\rm pr}_1^{\ast}w_2(TN)$.)
Then clearly we find that $\tau^{\ast} V={\rm pr}_2^{\ast}TN$.
Note that 
${\rm pr}_1^*TN$ and ${\rm pr}_2^*TN$ are canonically isomorphic to $TL$ by
the differentials of the projections ${\rm pr}_1$ and ${\rm pr}_2$, respectively.
On the other hand, since $(TL \oplus \tau^{\ast}V)\vert_{L}  \cong (TL \oplus TL)\vert_{L} \cong
(TL \oplus V)\vert_{L}$, the spin structure
$\sigma$ is preserved by $\tau$.
Therefore the difference of the conjugacy classes of two relative spin structures
$[(V,\sigma)]$ and $\tau^{\ast}[(V,\sigma)]$ is measured
by
$w_2(V\oplus \tau^{\ast}V)= w_2({\rm pr}_1^{\ast}TN \oplus
{\rm pr}_2^{\ast}TN)$.
Using the canonical spin structure on $TL\oplus TL$
mentioned above,
we can give a trivialization of $V \oplus \tau^*V$
over the 2-skeleton of $L$.
Hence $w_2(V \oplus \tau^*V)$ is regarded as a class in
$H^2(N \times N,L;\Z_2)$.
Since $w_2({\rm pr}_1^{\ast}TN \oplus
{\rm pr}_2^{\ast}TN)=w_2(T(N\times N))\equiv c_1(T(N\times N))
\mod 2$ and $\pi_2(N\times N) \to \pi_2(N\times N, L)$ is surjective, Lemma \ref{c1maslov} 
shows that the class is equal to $\mu_L /2$.
Hence by Proposition \ref{Proposition44.16} we obtain the following:
\begin{lem}\label{changespin}
In the above situation,
the identity map
$$
\CM(J;\beta)^{[(V,\sigma)]} \longrightarrow
\CM(J;\beta)^{\tau^{\ast}[(V,\sigma)]}
$$
is orientation preserving if and only if
$\mu_L(\beta)/2$ is even.
\end{lem}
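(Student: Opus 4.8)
The plan is to read the lemma off from Proposition \ref{Proposition44.16}. By Proposition \ref{prop:8.1.6} the two stable conjugacy classes $[(V,\sigma)]$ and $\tau^*[(V,\sigma)]$ differ by a well-defined element $\frak x \in H^2(N\times N, L;\Z_2)$, and Proposition \ref{Proposition44.16} asserts exactly that the identity map $\CM(J;\beta)^{[(V,\sigma)]}\to\CM(J;\beta)^{\tau^*[(V,\sigma)]}$ preserves orientation on the class-$\beta$ component if and only if $\frak x[\beta]=0$. So the whole task reduces to computing the pairing of $\frak x$ against $\Pi(L)$ and matching it with the parity of $\mu_L(\beta)/2$.

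First I would identify $\frak x$ concretely, as in the discussion just preceding the statement: since $\tau^*V\cong p_2^*TN$ and $\sigma$ is $\tau$-invariant on $(TL\oplus V)|_{L_{[2]}}\cong(TL\oplus TL)|_{L_{[2]}}$, the difference $\frak x$ is detected by $w_2(V\oplus\tau^*V)=w_2(p_1^*TN\oplus p_2^*TN)=w_2(T(N\times N))$. The key structural input is that this class lifts to a \emph{relative} class: the unique lift $SO(n)\to Spin(2n)$ of the composite $SO(n)\to SO(n)\times SO(n)\to SO(2n)$ puts a canonical spin structure, hence a canonical stable trivialization, on $(V\oplus\tau^*V)|_{L_{[2]}}=(TL\oplus TL)|_{L_{[2]}}$, exhibiting $w_2(T(N\times N))$ as the image of a class $\frak x\in H^2(N\times N, L;\Z_2)$ that represents the difference of the two relative spin structures.

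Next I would evaluate $\frak x$ on $\beta$. Since $L\cong N$ sits in $N\times N$ via the diagonal, $\pi_1(L)\to\pi_1(N\times N)$ is injective, so the homotopy exact sequence of the pair forces $\pi_2(N\times N)\to\pi_2(N\times N,L)$ to be surjective; hence it suffices to test $\frak x$ against classes $\alpha$ in the image of $\pi_2(N\times N)$. There $\frak x[\alpha]=\langle w_2(T(N\times N)),\alpha\rangle\equiv c_1(T(N\times N))[\alpha]\bmod 2$, while Lemma \ref{c1maslov} gives $\mu_L(\alpha)=2\,c_1(T(N\times N))[\alpha]$, so $\frak x[\alpha]\equiv\mu_L(\alpha)/2\bmod 2$. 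As $\omega$ and $\mu_L$ factor through $\Pi(L)$, this pins down $\frak x[\beta]$ for every relevant $\beta$, and we conclude $\frak x[\beta]=0$ iff $\mu_L(\beta)/2$ is even; Proposition \ref{Proposition44.16} then yields the claim.

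The only point I expect to need genuine care — everything else being formal bookkeeping — is the second step: confirming that $w_2(V\oplus\tau^*V)$ really does restrict to zero on $L$ and that the trivialization supplied by the lift $SO(n)\to Spin(2n)$ is compatible with the stable trivialization used to define $\sigma$, so that $\frak x$ is exactly the difference class and not shifted by a fixed term. One should also check that the identifications $\tau^*V\cong p_2^*TN$ and $p_i^*TN\cong TL$ via $dp_i$ are \emph{oriented} isomorphisms, since although Proposition \ref{Proposition44.16} only sees the mod-$2$ answer, the intermediate bundle identifications feeding into it must preserve orientations for the comparison of relative spin structures to be the intended one.
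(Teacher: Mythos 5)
Your proposal is correct and follows essentially the same line of argument as the paper: identify the difference class $\frak x$ as $w_2(V\oplus\tau^*V)=w_2(T(N\times N))$ lifted to a relative class via the canonical spin structure on $TL\oplus TL$, reduce the evaluation of $\frak x$ to spherical classes using surjectivity of $\pi_2(N\times N)\to\pi_2(N\times N,L)$, compare with $\mu_L/2$ via Lemma \ref{c1maslov}, and invoke Proposition \ref{Proposition44.16}. The only difference is cosmetic --- you spell out the homotopy-exact-sequence justification for the surjectivity and flag the orientation-compatibility caveats that the paper treats as implicit.
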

Combining Theorem \ref{withsimplex}, we find that
the composition
$$
\aligned
\CM ^{\rm main}_{k+1} (J;\beta;P_1,\ldots, P_k)^{[(V,\sigma)]}
& \longrightarrow
\CM ^{\rm main}_{k+1} (J;\beta;P_1,\ldots, P_k)^{\tau^{\ast}[(V,\sigma)]} \\
& \longrightarrow
\CM ^{\rm main}_{k+1} (J;\beta;P_k,\ldots, P_1)^{[(V,\sigma)]}
\endaligned$$
is orientation preserving if and only if
$$
k+1 +\sum_{1 \le i < j \le k} \deg'P_i\deg'P_j
$$
is even.
It follows that $\mathfrak m_{0,\tau_{\ast}\beta}(1)=-\mathfrak m_{0,\beta}(1)$ and hence we find that
$L= \text{\rm Fix}~\tau \cong N$ is unobstructed.
This finishes the proof of the assertion (1).
Moreover, we also find that $\mathfrak m_2$ satisfies Theorem \ref{Proposition34.25} (\ref{34.23}), which
induces the graded commutative product $\cup_Q$ as well for the non-spin case.
\end{proof}

\subsubsection{Proof of Theorem \ref{Proposition34.25} (2), I: preliminaries}
Before starting the proof of Theorem \ref{Proposition34.25} (2)
we clarify the choice of the bounding cochain $b$ for which this
statement holds.
Note we constructed a filtered $A_{\infty}$ structure on 
$C(L;\Lambda_{0,{\rm nov}}^{\Q})$
using $\tau$-invariant Kuranishi structure and $\tau$-invariant perturbation.
As we proved in Subsection \ref{1.9(1)}, $b=0$ is the bounding cochain of this
filtered $A_{\infty}$ structure.
In fact $\frak m_0$ becomes $0$ in the chain level by the cancelation.
This choice $b=0$ is one for which the conclusion of Theorem \ref{Proposition34.25} (2) holds.
\begin{rem}\label{b}
This particular choice $b=0$ does not make sense unless we specify the
particular way to construct our filtered $A_{\infty}$ structure.
Suppose we define a filtered $A_{\infty}$ structure on $C(L;\Lambda_{0,{\rm nov}}^{\Q})$
using a different perturbation.
The set of the gauge equivalence classes of the bounding cochains are independent of
the choice up to isomorphism, so there exists certain
bounding cochain which corresponds to the $0$ of the filtered $A_{\infty}$ structure
defined by the $\tau$-invariant perturbation.
The conclusion of Theorem \ref{Proposition34.25} (2) holds for that $b$ and that
filtered $A_{\infty}$ structure. However $b = 0$ may not hold in this different filtered $A_{\infty}$ structure.
\end{rem}
We now start the proof of Theorem \ref{Proposition34.25} (2).
\par
Firstly we explain the proof of Theorem \ref{Proposition34.25} (2) under the hypothesis that there do not appear
holomorphic disc bubbles.
\par
Let $v : S^2 \to N$ be a $J_N$-holomorphic map. We fix 3 marked points
$0,1,\infty \in S^2 = \Bbb C \cup \{\infty\}$. Then we consider the
upper half plane $\H \cup\{\infty\} \subset \C\cup \{\infty\}$ and
define a map $I(v) : \Bbb H \to M$ by
$$
I(v)(z) = (v(\overline z),v(z)).
$$
Identifying $(\Bbb H,(0,1,\infty))$ with $(D^2,(-1,1,\sqrt{-1}))$
where $(-1,1,\sqrt{-1}) \in \partial D^2$,
we obtain a map from $(D^2,\partial D^2)$ to $(M,L)$
which we also denote by $I(v)$. One can easily check the converse:
For any given $J_M$-holomorphic map
$w: (D^2,\partial D^2) \cong
(\H, \R\cup \{\infty \}) \to (M,L) = (N \times N, \Delta_N)$,
the assignment
$$
v(z) = \begin{cases} {\rm pr}_2 \circ w(z) & \quad \text{for $z \in \H$} \\
{\rm pr}_1 \circ w(\overline z) & \quad \text{for $z\in \C
\setminus \H$}
\end{cases}
$$
defines a $J_N$-holomorphic sphere on $N$.
Therefore the map
$v \mapsto I(v)$ gives an isomorphism between the moduli spaces of
$J_N$-holomorphic spheres and $J_M$-holomorphic discs with boundary in
$N$.
We can easily check that this map is induced by the isomorphism of
Kuranishi structures.
\par
We remark however that this construction works only at the interior of the
moduli spaces of pseudo-holomorphic discs and of pseudo-holomorphic spheres,
that is the moduli spaces of those without bubble.
To study the relationship between compactifications of them
we need some extra argument, which will be explained later in Subsection \ref{proof1.9}.
\par
We next compare the orientations on these moduli spaces.
The moduli spaces of holomorphic spheres have canonical orientation, see, e.g.,
Section 16 in \cite{FO}.
In Chapter 8 \cite{fooobook2}, we proved that a relative spin structure determines
a system of orientations on the moduli spaces of bordered stable maps of genus $0$.
We briefly review a crucial step for comparing orientations in our setting. See p. 677 of \cite{fooobook2}.
\par
Let $w:(D^2, \partial D^2) \to (M,L)$ be a $J_M$-holomorphic map.  Denote by $\ell$
the restriction of $w$ to $\partial D^2$.
Consider the Dolbeault operator
\par
$$\overline{\partial}_{(w^*TM, \ell^*TL)}:
W^{1,p}(D^2,\partial D^2;w^*TM, \ell^*TL) \to L^p(D^2;w^*TM \otimes \Lambda_{D^2}^{0,1})
$$
with $p>2$.
We deform this operator to an operator on the union $\Sigma$ of $D^2$ and $\C P^1$ with
the origin $O$ of $D^2$ and the ``south pole'' $S$ of $\C P^1$ identified.
The spin structure $\sigma$ on $TL \oplus V\vert_L$ gives a trivialization of
$\ell^*(TL \oplus V\vert_L)$.
Since $w^*V$ is a vector bundle on the disc, it has a unique trivialization up to homotopy.
Hence $\ell^* V$ inherits a trivialization, which is again unique up to homotopy.
Using this trivialization, we can descend the vector bundle
$E=w^*TM$ to $E'$ on $\Sigma$.
The index problem is reduced to the one for the Dolbeault operator on $\Sigma$.
Namely, the restriction of the direct sum of the following two operators to the fiber product
of the domains with respect to the evaluation maps at $O$ and $S$.
On $D^2$, we have the Dolbeault operator for the trivial vector bundle pair $(\underline{\C^n},
\underline{\R^n})$.
On $\C P^1$, we have the Dolbeault operator for the vector bundle $E'\vert_{\C P^1}$.
The former operator is surjective and its kernel is the space of constant sections in
$\underline{\R^n}$.
The latter has a natural orientation, since it is Dolbeault operator twisted
by $E'\vert_{\C P^1}$ on a closed Riemann surface.
Since the fiber product of kernels is taken on a complex vector space,
the orientation of the index is determined by the orientations of the two operators.
\par
Now we go back to our situation.
Pick a $1$-parameter family $\{\phi_t\}$ of dilations on $\C P^1=\C \cup \{ \infty\}$ such that
$\lim_{t \to +\infty} \phi_t(z) = -\sqrt{-1}$ for $z \in \C \cup \{\infty\} \setminus \{\sqrt{-1}\}$.
Here $\sqrt{-1}$ in the upper half plane and $-\sqrt{-1}$ in the lower half plane correspond to
the north pole and the south pole of $\C P^1$, respectively.
As $t \to + \infty$, the boundary of the second factor of the disc
$I(v \circ \phi_t)$ contracts to the point $v(-\sqrt{-1})$ and its image
exhausts the whole image of the sphere $v$, while the whole image of the first factor
contracts to $v(-\sqrt{-1})$.
Therefore as $t \to \infty$  the images of the map $z \mapsto I(v \circ \phi_t)(z)$ converge
to the constant disc at $(v(-\sqrt{-1}),v(-\sqrt{-1})$ with a sphere
$$
z \in S^2 \mapsto (v(-\sqrt{-1}), v(z))
$$
attached to the point. If we denote $w_t = I(v\circ \phi_t)$, it follows from our choice
$V = {\rm pr}_1^*TN$ that the trivialization of $\ell_t^*V$, which
is obtained by restricting the trivialization of $w_t^*V = ({\rm pr}_1\circ I(v\circ \phi_t))^*TN$,
coincides with the one induced by the frame of the fiber $V$ at $v(-\sqrt{-1})$ for
a sufficiently large $t$. Therefore considering the linearized index of the family
$w_t$ for a large $t$, it follows from the explanation given in the above
paragraph that the map  $v \mapsto I(v\circ \phi_t)=w_t$ induces an isomorphism
$$
\det (\text{Index} D\delbar_{J_N}(v)) \cong \det(\text{Index} D\delbar_{J_M}(w_t))
$$
as an oriented vector space.
By flowing the orientation
to $t=0$ by the deformation $\phi_t$, we have proven that the map
$v \mapsto I(v)$ respects the orientations of the moduli spaces.
\par
Now we compare the product $\cup_Q$ in  (\ref{qproduct}) and the product on the quantum
cohomology, presuming, for a while, that
they can be calculated by the contribution from the interior of the moduli spaces only.
\par
\begin{defn}\label{equivonpi2}
We define the equivalence relation $\sim$ on $\pi_2(N)$ by
$\alpha \sim \alpha'$ if and only if $c_1(N)[\alpha] = c_1(N) [\alpha']$ and
$\omega (\alpha ) = \omega (\alpha')$.
\end{defn}
For $\beta=[w:(D^2, \partial D^2) \to (N \times N, \Delta_N)] \in \Pi (\Delta_N)$,
we set
$\widetilde{\beta}=[({\rm pr}_2 \circ w) \# ({\rm pr}_1 \circ w): D^2 \cup \overline{D}^2 \to N] \in \pi_2(N)/\sim$,
where $\overline D^2$ is the unit disc with the complex structure reversed and
$D^2 \cup \overline{D}^2$ is the union of discs glued along boundaries.
This defines a homomorphism
\begin{equation}\label{picorresp}
\psi : \Pi(\Delta_N) \to \pi_2(N)/\sim.
\end{equation}
For $\alpha \in \pi_2(N)$ let $\mathcal M_3^{\text{\rm sph,reg}}(J_{N};\alpha)$
be the moduli space of pseudo-holomorphic map
$v : S^2 \to N$ of homotopy class $\alpha$ with three marked points,
(without bubble). For $\rho \in \pi_2(N)/\sim$, we put
$$
\mathcal M_3^{\text{\rm sph,reg}}(J_{N};\rho)
= \bigcup_{\alpha \in \rho}
\mathcal M_3^{\text{\rm sph,reg}}(J_{N};\alpha).
$$
For $\beta \in \Pi(\Delta_N)$, let $\mathcal M_3^{\text{\rm reg}}(J_{N\times N};\beta)$
be the moduli space of pseudo-holomorphic map
$u : (D^2,\partial D^2) \to (N\times N,\Delta_N)$ of class $\beta$ with
three boundary marked points (without disc or sphere bubble).
We denote by $\mathcal M_3^{\text{\rm main,reg}}(\beta)$
the subset that consists of elements in the main component.
We put
$$
\mathcal M_3^{\text{\rm main,reg}}(J_{N\times N};\rho)
= \bigcup_{\psi(\beta) = \rho}
\mathcal M_3^{\text{\rm main,reg}}(J_{N\times N};\beta).
$$
Summing up the above construction, we have the following proposition.
For a later purpose, we define the map
$\overset{\circ}{\mathfrak I}$ by the inverse of $I$.
\begin{prop}\label{regoripres}
\begin{equation}\label{mapI}
\overset{\circ}{\mathfrak I} :  \mathcal M_3^{\text{\rm main,reg}}(J_{N\times N};\rho)  \to \mathcal M_3^{\text{\rm sph,reg}}(J_{N};\rho)
\end{equation}
is an isomorphism as spaces with Kuranishi structure.  Moreover, $\overset{\circ}{\mathfrak I}$ respects the orientations in the sense of
Kuranishi structure.
\end{prop}
Denote by $*$ the quantum cup product on the quantum cohomology 
$QH^*(N; \Lambda_{0,{\rm nov}}^{\Q})$.  
\par
For cycles $P_0, P_1, P_2$ in $N$ such that 
$\dim {\mathcal M}_3^{\text{\rm sph}}(J_N;\rho) = \deg P_0 + \deg P_1 + \deg P_2$, 
we can take a multi-valued perturbation, {\it multi-section}, 
of ${\mathcal M}_3^{\text{\rm sph}}(J_N;\rho)$ such that the intersection of its zero set and 
$(ev_0 \times ev_1 \times ev_2 )^{-1}(P_0 \times P_1 \times P_2)$ is a finite subset in 
${\mathcal M}_3^{\text{\rm sph, reg}}(J_N;\rho)$, i.e., 
it does not contain elements with domains with at least two irreducible components.  
Counting these zeros with weights, we obtain the intersection number 
$$(P_0 \times P_1 \times P_2) \cdot [{\mathcal M}_3^{\text{\rm sph, reg}}(J_N;\rho)].$$  
In other words, 
for homology classes $[P_0], [P_1], [P_2] \in H_*(N)$, we take cocycles $a_0, a_1, a_2$ 
which represent the Poincar\'e dual of $[P_0], [P_1], [P_2]$, respectively.  
We can take a multi-section of $\mathcal M_3^{\text{\rm sph}}(J_{N};\rho)$ such that 
the intersection of 
its zero set and the support of $ev_0^* a_0 \cup ev_1^* a_1 \cup ev_2^* a_2$  
is contained in ${\mathcal M}_3^{\text{\rm sph, reg}}(J_N;\rho)$.     
Since the zero set of the multi-valued perturbation of ${\mathcal M}_3^{\text{\rm sph}}(J_N;\rho)$ 
is compact, $ev_0^* a_0 \cup ev_1^* a_1 \cup ev_2^* a_2$ is regarded as 
a cocycle with a compact support.  
Using such a multi-valued perturbation, we obtain 
$[\CM^{\text{\rm sph,reg}}_3(J_{N};\rho)]$, which ia locally finite fundamental cycle.  
Thus we find that  $(ev_0^*a_0 \cup ev_1^* a_1\cup ev_2^* a_2) [\CM^{\text{\rm sph,reg}}(J_N;\rho)]$ 
makes sense.  
\par
The Poincar\'e pairing on cohomology is given by 
\begin{equation}\label{cohompairing}
\langle a, b \rangle = (a \cup b) [N].
\end{equation}
By definition we have 
\begin{eqnarray}
\langle  a_0, a_1*a_2 \rangle 
& 
=  &\sum_{\rho \in \pi_2(N)/\sim}
( ev_0^* a_0 \cup ev_1^* a_1 \cup ev_2^* a_2)
 [\CM^{\text{\rm sph}}_3(J_{N};\rho)]T^{\omega (\rho)}e^{c_1(N)[\rho]}  \nonumber \\
& = & \sum_{\rho \in \pi_2(N)/\sim}
( ev_0^* a_0 \cup ev_1^* a_1 \cup ev_2^* a_2)
 [\CM^{\text{\rm sph,reg}}_3(J_{N};\rho)]T^{\omega (\rho)}e^{c_1(N)[\rho]}. \nonumber 
\end{eqnarray}
From the assumption we made at the beginning of this subsection,  the map $\mathfrak{m}_2$ is given by
\begin{equation}\label{comefrommain}
\sum_{\beta; \psi(\beta) = \rho}
\mathfrak{m}_{2,\beta} (P_1,P_2)T^{\omega(\beta)}e^{\mu(\beta)/2}
=(\CM^{\text{main,reg}}_3(J_{N\times N};\rho;P_1,P_2),ev_0)T^{\omega(\beta)}e^{\mu(\beta)/2}.
\end{equation}
Here $P_1$ and $P_2$ are cycles,
and
$$
\CM^{\text{main,reg}}_3(J_{N\times N};\rho;P_1,P_2)
= (-1)^{\epsilon} \CM^{\text{main,reg}}_3(J_{N\times N};\rho) \times_{N^2} (P_1\times P_2), 
$$
where $\epsilon = (\dim \Delta_N +1)\deg P_1= \deg P_1$, see \eqref{epsilonPQ}.
(We also assume that
the right hand side becomes a cycle.)
These assumptions are removed later in Subsections \ref{6.3}, \ref{6.4} and \ref{6.5}.
\par
Taking a homological intersection number with another cycle $P_0$, we have
$$
\aligned
& P_0 \cdot (\CM^{\text{main,reg}}_3(J_{N\times N};\rho;P_1,P_2),ev_0) \\
= & (-1)^{\epsilon} P_0 \cdot \left( \CM^{\text {main,reg}}_3(J_{N\times N};\rho) \times _
{(ev_1,ev_2)}(P_1\times P_2), ev_0  \right) \\
= & (-1)^{\epsilon} (ev_0^* PD[P_0] \cup
(ev_1,ev_2)^* PD[P_1 \times P_2] )[\CM^{\text{main,reg}}_3(J_{N\times N};\rho)],
\endaligned
$$
where $PD[P_i]$, resp. $PD[P_j \times P_k]$ is the Poincar\'e dual of
$P_i$ in $N$, resp. $P_j \times P_k$ in $N \times N$.  
We adopt the convention that 
\begin{equation}\label{PDconvention}
(PD [P] \cup a) [N] = a [P] \ \ \text{~for~} a \in H^*(N).
\end{equation}
Since $\dim \Delta_N$ is even, \eqref{PDconvention} implies that  
$$
ev_1^*PD[P_1] \cup ev_2^*PD[P_2]=(-1)^{\deg P_1 \cdot \deg P_2} (ev_1,ev_2)^*PD[P_1 \times P_2].$$
By identifying $\CM_{3}^{\text{\rm sph,reg}}(J_{N};\rho)$ and
$\CM^{\text{main,reg}}_{3}(J_{N\times N};\rho)$ as spaces with oriented Kuranishi structures, we find that
$$
\langle PD[P_0],  PD[P_1]  * PD[P_2] \rangle
 = (-1)^{\deg P_1 (\deg P_2 +1)}  \langle P_0, \mathfrak{m}_2(P_1, P_2) \rangle, 
$$
or equivalently, 
\begin{equation}
\langle PD[P_1]  * PD[P_2], PD[P_0] \rangle
= (-1)^{\deg P_1 (\deg P_2 +1)}  \langle  \mathfrak{m}_2(P_1, P_2), P_0 \rangle. \label{comparisonprod}
\end{equation}

Here the right hand side is the intersection product of $P_0$ and $\mathfrak{m}_2(P_1, P_2)$.  
Namely, we put 
\begin{equation}\label{signpairing}
\langle P, Q \rangle = P \cdot Q = (-1)^{\deg P \deg Q} \# (P \times_N Q) = \# (Q \times_N P).
\end{equation}
Note that we use a different convention of the pairing on cycles from \cite{fooobook1}, \cite{fooobook2}, 
cf. Definition 8.4.6 in \cite{fooobook2}, but the same as one in Definition 3.10.4, Subsection 3.10.1 in \cite{fooomirror1}.  
Therefore we observe the following consistency between pairings on homology \eqref{signpairing} and cohomology \eqref{cohompairing} 
$$\langle P, Q \rangle = \langle PD [P], PD [Q] \rangle.$$

\subsubsection{Proof of Theorem  \ref{Proposition34.25} (2), II: the isomorphism as modules}\label{6.3}
To complete the proof of Theorem \ref{Proposition34.25} (2), we need to
remove the assumption (\ref{comefrommain}), that is, the product ${\mathfrak m}_2(P_1,P_2)$ is
determined only on the part of the moduli space
where there is no bubble.
We study
how our identification of the moduli spaces of pseudo-holomorphic discs (attached to the diagonal
$\Delta_N$) and of pseudo-holomorphic spheres (in $N$) extends to their compactifications
for this purpose.
To study this point, we define the isomorphism in Theorem \ref{Proposition34.25} (2)
as {\it $\Lambda_{0,{\rm nov}}$ modules} more explicitly.
\par
As discussed in the introduction, this isomorphism follows
from the degeneration at $E_2$-level of the spectral sequence of
Theorem D \cite{fooobook1}. The proof of this degeneration is based on the
fact that the image of the differential is contained in the
Poincar\'e dual to the kernel of the inclusion induced homomorphism
$H(\Delta_N;\Lambda_{0,{\rm nov}}) \to H(N \times N;\Lambda_{0,{\rm nov}})$, which
is actually injective in our case.
This fact (Theorem D (D.3) \cite{fooobook1}) is proved by using
the operator $\mathfrak p$ introduced in \cite{fooobook1} Section 3.8.
Therefore to describe this isomorphism we recall
a part of the construction of this operator below.
\par
Let $\beta \in \Pi(\Delta_N) =\pi_2(N\times N, \Delta_N) /\sim$.
We consider $\mathcal M_{1;1}(J_{N\times N};\beta)$,
the moduli space of bordered stable maps of genus zero
with one interior and one boundary marked point in homotopy class $\beta$.
Let $ev_0 : \mathcal M_{1;1}(J_{N\times N};\beta) \to \Delta_N$ be the
evaluation map at the boundary marked point and
$ev_{\text{\rm int}} : \mathcal M_{1;1}(J_{N\times N};\beta) \to N \times N$ be
the evaluation map at the interior marked point.
Let $(P,f)$ be a smooth singular chain in $\Delta_N$. We put
$$
\mathcal M_{1;1}(J_{N\times N};\beta;P)
=  \mathcal M_{1;1}(J_{N\times N};\beta) \, {}_{ev_0}\times_f
P.
$$
It has a Kuranishi structure. We take its multisection $\mathfrak s$ and
a triangulation of its zero set $\mathcal M_{1;1}(J_{N\times N};\beta;P)^{\mathfrak s}$.
Then $(\mathcal M_{1;1}(J_{N\times N};\beta;P)^{\mathfrak s},ev_{\text{\rm int}})$ is a  singular
chain in $N \times N$, which is by definition
$\mathfrak p_{1,\beta}(P)$.
(See \cite{fooobook1} Definition 3.8.23.)
In our situation, where $\mathfrak m_{0}(1) = 0$ for
$\Delta_N$ in the chain level by Theorem \ref{Proposition34.25} (1), we have:
\par
\begin{lem}\label{pmainformula} We identify $N$ with $\Delta_N$. Then
for any singular chain $P \subset N$ we have
\begin{equation}\label{frakpfrakm}
(-1)^{n+1}\partial_{N \times N} (\mathfrak p_{1,\beta}(P) )
+ \sum_{\beta_1+\beta_2=\beta}
\mathfrak p_{1,\beta_1}(\mathfrak m_{1,\beta_2}(P)) = 0.
\end{equation}
Here $\partial_{N \times N}$ is the boundary operator in the singular chain complex of $N\times N$.
\end{lem}
\begin{rem}
When $\beta = 0$, ${\mathfrak p}_{1,0}$ is the identity map and
the second term in \eqref{frakpfrakm} is equal to ${\mathfrak m}_{1,0}(P)$.
Recalling ${\mathfrak m}_{1,0}(P)=(-1)^n \partial_{\Delta_N}(P)$
in Definition \ref{def:mk}, \eqref{frakpfrakm} turns out to be
\begin{equation}\label{frakpfrakmclasical}
(-1)^{n+1}\partial_{N \times N} (\mathfrak p_{1,0}(P) )
+ (-1)^n
\mathfrak p_{1,0}(\partial_{\Delta_N}(P)) = 0.
\end{equation}
When $\beta \neq 0$, \eqref{frakpfrakm} is equal to
$$
(-1)^{n+1}\partial_{N \times N} (\mathfrak p_{1,\beta}(P) )
+\mathfrak p_{1,\beta}(\mathfrak m_{1,0} (P))
+ \mathfrak p_{1,0}(\mathfrak m_{1,\beta}(P))
+ \sum_{\genfrac{}{}{0pt}{}{\beta_1+\beta_2=\beta,}
{\beta_1, \beta_2 \ne 0}}
\mathfrak p_{1,\beta_1}(\mathfrak m_{1,\beta_2}(P)) = 0.
$$
\end{rem}
Lemma \ref{pmainformula} is a particular case of \cite{fooobook1}
Theorem 3.8.9 (3.8.10.2).
See Remark \ref{rem:signNN} for the sign.
We also note that $\mathfrak p_{1,0}(P) = P$.
(\cite{fooobook1} (3.8.10.1).)
We remark that even in the case when $P$ is a singular cycle $\mathfrak m_1(P)$
may not be zero. In other words the identity map
\begin{equation}\label{notchainmap}
(C(\Delta_N;\Lambda_{0,{\rm nov}}),\partial) \to (C(\Delta_N;\Lambda_{0,{\rm nov}}),\mathfrak m_1)
\end{equation}
is {\it not} a chain map. We use the operator
$$
\mathfrak p_{1,\beta}: C(\Delta_N;\Lambda_{0,{\rm nov}}) \to C(N \times N; \Lambda_{0,{\rm nov}})
$$
to modify the identity map to obtain a chain map (\ref{notchainmap}).
Using the projection to the second factor, we define 
$p_2 : N \times N \ni (x,y) \mapsto (y,y) \in \Delta_N$.
We put
$$
\overline{\mathfrak p}_{1,\beta} = p_{2 *} \circ \mathfrak p_{1,\beta}.
$$
Then by applying $p_{2 *}$ to the equation \eqref{frakpfrakm},
we obtain, for $\beta \neq 0$,
\begin{equation}
\label{pbarformula}
-\mathfrak m_{1,0} (\overline{\mathfrak p}_{1,\beta}(P) )
+\overline{\mathfrak p}_{1,\beta}(\mathfrak m_{1,0} (P))
+ \mathfrak m_{1,\beta}(P)
+ \sum_{\genfrac{}{}{0pt}{}{\beta_1+\beta_2=\beta,}
{\beta_1, \beta_2 \ne 0}}
\overline{\mathfrak p}_{1,\beta_1}(\mathfrak m_{1,\beta_2}(P)) = 0.
\end{equation}
\begin{rem}\label{rem:signNN}
The sign in Formula (\ref{frakpfrakm})
looks slightly different from one in 
Formula (3.8.10.2) in \cite{fooobook1}. 
The sign for $\delta_M$ in \cite{fooobook1}, \cite{fooobook2} was not
specified, since it was not necessary there. 
Here we specify it as $\delta_M=(-1)^{n+1}\partial_M$. 
This sign is determined by considering the case
when $\beta=0$, which is nothing but (\ref{frakpfrakmclasical}).
Another way to determine this sign is as follows. In the proof of  
(3.8.10.2) given in Subsection 3.8.3 \cite{fooobook1}, we use the same argument in the proof of Theorem 3.5.11 (=Theorem \ref{thm:Ainfty} in this article) 
where we define $\mathfrak m_{1,0}=(-1)^n\partial_L$
to get the $A_{\infty}$ formula. 
The proof of Theorem 3.5.11 uses 
Proposition 8.5.1 \cite{fooobook2} which is the case $\ell = \ell_1 =\ell_2 = 0$ in the formulas 
in Proposition 8.10.5  \cite{fooobook2}. 
On the other hand, in the case of (3.8.10.2) 
we use the $0$-th {\it interior} marked point as the output evaluation point instead of the $0$-th {\it boundary} marked point. 
Then the proof of (3.8.10.2) uses  
Proposition 8.10.4 instead of Proposition 8.10.5  \cite{fooobook2}.
We can see that the difference of every corresponding sign appearing 
in Proposition 8.10.4 and Proposition 8.10.5  \cite{fooobook2} is exactly $-1$.
Thus we find that 
$\delta_{M}=(-1)^{n+1}\partial_M$ in  
the formula (3.8.10.2) (and also (3.8.10.3)) of \cite{fooobook1}.
This difference arises from 
the positions of the factors corresponding to the $0$-th {\it boundary} marked point and
the $0$-th {\it interior} marked point
in the definitions of  
orientations on $\mathcal{M}_{(1,k),\ell}(\beta)$ and 
$\mathcal{M}_{k,(1,\ell)}(\beta)$ respectively. See the formulas given just before Definition 8.10.1 and Definition 8.10.2  \cite{fooobook2} for the 
notations $\mathcal{M}_{(1,k),\ell}(\beta)$,  
$\mathcal{M}_{k,(1,\ell)}(\beta)$ respectively.
\end{rem}
\begin{defn}\label{P_beta}
For each given singular chain $P$ in $N$,
we put
$$
P(\beta) = \sum_{k=1}^{\infty} \sum_{\genfrac{}{}{0pt}{}{\beta_1+\dots+\beta_k = \beta}
{\beta_i  \ne 0}}
(-1)^{k}\left( \overline{\mathfrak p}_{1,\beta_1} \circ \cdots \circ
\overline{\mathfrak p}_{1,\beta_k} \right)(P)
$$
regarding $P$ as a chain in $\Delta_N$. Then we define
a chain $\mathcal I(P) \in C(N;\Lambda_{0,{\rm nov}})$ by
$$
\mathcal I(P) = P + \sum_{\beta\ne 0} P(\beta)T^{\omega(\beta)}e^{\mu(\beta)/2}.
$$
\end{defn}
\begin{lem}
$$
\mathcal I : (C(\Delta_N;\Lambda_{0,{\rm nov}}),\mathfrak m_{1,0}) \to (C(\Delta_N;\Lambda_{0,{\rm nov}}),\mathfrak m_1)
$$
is a chain homotopy equivalence.
\end{lem}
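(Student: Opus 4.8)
The plan is to recognize $\frak I$ as the Neumann inverse of an operator that strictly raises the energy filtration, so that the intertwining relation $\frak I\circ\frak m_{1,0}=\frak m_1\circ\frak I$ falls out of $(\ref{pbarformula})$. Concretely, I would set
$$
\mathcal R=\sum_{\beta\neq 0}\overline{\frak p}_{1,\beta}\,T^{\omega(\beta)}e^{\mu(\beta)/2},
$$
viewed as a $\Lambda_{0,nov}$-linear endomorphism of $C(\Delta_N;\Lambda_{0,nov})$ after the identification $N\cong\Delta_N$ (hence $C(N;\Lambda_{0,nov})\cong C(\Delta_N;\Lambda_{0,nov})$) induced by $p_2$. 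Only classes $\beta$ with nonempty moduli space, hence with $\omega(\beta)>0$, contribute to $\mathcal R$, so $\mathcal R$ strictly increases the energy filtration; since the monoid of occurring $(\omega(\beta),\mu(\beta))$ is $G$-gapped, the geometric series $\sum_{k\geq 0}(-\mathcal R)^k$ converges in $C(\Delta_N;\Lambda_{0,nov})$, which is complete for this filtration. Comparing term by term and using additivity of $\omega$ and $\mu$ under concatenation of classes, the coefficient of $T^{\omega(\beta)}e^{\mu(\beta)/2}$ in $\sum_{k\geq 0}(-\mathcal R)^k$ applied to $P$ is precisely $P(\beta)$ (resp. $P$ for $\beta=0$), so $\frak I=\sum_{k\geq 0}(-\mathcal R)^k=(\operatorname{id}+\mathcal R)^{-1}$; in particular $\frak I$ is an isomorphism of $\Lambda_{0,nov}$-modules, with inverse $\operatorname{id}+\mathcal R$.

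Next I would extract an operator identity from $(\ref{pbarformula})$. Write $\frak m_1=\frak m_{1,0}+\frak m_1^{+}$ with $\frak m_1^{+}=\sum_{\beta\neq 0}\frak m_{1,\beta}\,T^{\omega(\beta)}e^{\mu(\beta)/2}$. Multiplying the $\beta$-instance of $(\ref{pbarformula})$ by $T^{\omega(\beta)}e^{\mu(\beta)/2}$ and summing over $\beta\neq 0$ gives
$$
-\,\frak m_{1,0}\circ\mathcal R+\frak m_1^{+}+\mathcal R\circ\frak m_{1,0}+\mathcal R\circ\frak m_1^{+}=0,
$$
the last term reproducing the convolution sum $\sum_{\beta_1+\beta_2=\beta,\ \beta_i\neq 0}$ in $(\ref{pbarformula})$. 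Here one must restrict to $\beta\neq 0$: by the remark following $(\ref{pbarformula})$ its $\beta=0$ instance is the degenerate tautology $\frak m_{1,0}=\frak m_{1,0}$, which is exactly why both $\mathcal R$ and $\frak m_1^{+}$ range over $\beta\neq 0$ only. Rearranging, $(\operatorname{id}+\mathcal R)\circ\frak m_1^{+}=\frak m_{1,0}\circ\mathcal R-\mathcal R\circ\frak m_{1,0}$, and adding $(\operatorname{id}+\mathcal R)\circ\frak m_{1,0}$ to both sides yields $(\operatorname{id}+\mathcal R)\circ\frak m_1=\frak m_{1,0}\circ(\operatorname{id}+\mathcal R)$. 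Composing with $\frak I=(\operatorname{id}+\mathcal R)^{-1}$ on both sides gives $\frak m_1\circ\frak I=\frak I\circ\frak m_{1,0}$, so $\frak I$ is a cochain map from $(C(\Delta_N;\Lambda_{0,nov}),\frak m_{1,0})$ to $(C(\Delta_N;\Lambda_{0,nov}),\frak m_1)$; both are genuine complexes, since $\frak m_{1,0}=(-1)^n\partial$ and, by Theorem \ref{thm:Ainfty} with $\frak m_0=0$, $\frak m_1\circ\frak m_1=0$.

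Finally, $\frak I$ is a bijective cochain map, so its module-theoretic inverse $\operatorname{id}+\mathcal R$ is automatically a cochain map as well; hence $\frak I$ is an isomorphism of cochain complexes and, a fortiori, a chain homotopy equivalence. I expect the only genuine obstacle to be the bookkeeping: tracking that the signs in $(\ref{pbarformula})$ (whose first-term sign was fixed precisely by the $\beta=0$ consistency check recalled there) propagate correctly into the operator identity, handling the degenerate $\beta=0$ instance of that relation, and verifying the $G$-gappedness needed for the Neumann series to make sense inside $C(\Delta_N;\Lambda_{0,nov})$. Given Lemma \ref{pmainformula} and the conventions of \cite{fooobook}, none of this should be substantial.
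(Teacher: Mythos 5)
Your proof is correct, and it rests on the same two pillars as the paper's: the identity \eqref{pbarformula} and the completeness of the energy filtration. The packaging, however, is genuinely different in a way worth noting. The paper proves the chain-map property $\frak m_1\circ\frak I = \frak I\circ\frak m_{1,0}$ by induction on the energy $\omega(\beta)$, matching terms of each order $T^{\omega(\beta)}$ and invoking \eqref{pbarformula} at each step together with the induction hypothesis; it then concludes the homotopy equivalence by remarking that $\frak I\equiv\operatorname{id}\bmod\Lambda_{0,nov}^{+}$. Your route instead derives a single global operator identity: after setting $\mathcal R=\sum_{\beta\neq 0}\overline{\frak p}_{1,\beta}T^{\omega(\beta)}e^{\mu(\beta)/2}$, multiplying \eqref{pbarformula} by the Novikov weights and summing gives $(\operatorname{id}+\mathcal R)\circ\frak m_1 = \frak m_{1,0}\circ(\operatorname{id}+\mathcal R)$ in one stroke, with no induction; recognizing $\frak I=\sum_{k\ge 0}(-\mathcal R)^k=(\operatorname{id}+\mathcal R)^{-1}$ then gives the chain-map property by conjugation, and makes the conclusion stronger and more explicit, since $\frak I$ is exhibited as an honest cochain \emph{isomorphism} with inverse $\operatorname{id}+\mathcal R$ rather than merely a homotopy equivalence. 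This avoids the bookkeeping of equations \eqref{ordbetaterms1}--\eqref{ordbetaterms3}, at the modest cost of having to verify that the convolution term in \eqref{pbarformula} really does sum to $\mathcal R\circ\frak m_1^{+}$ (which you correctly justify by additivity of $\omega$ and $\mu$) and that the Neumann series converges (which you correctly justify by gappedness of the energy spectrum). Both arguments also rely implicitly on $\frak m_1\circ\frak m_1=0$, which holds here because $\frak m_0(1)=0$ at the chain level for the diagonal; you state this, so the proof is complete.
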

\begin{proof}
We can use \eqref{pbarformula} to show that $\mathcal I$ is a chain map
as follows.
We prove that $\mathfrak m_1 \circ \mathcal I -   \mathcal I \circ\mathfrak m_{1,0} \equiv 0
\mod T^{\omega(\beta)}$ by induction on $\omega(\beta)$.
We assume that it holds modulo  $T^{\omega(\beta)}$ and
will study the terms of order  $T^{\omega(\beta)}$.
Those terms are sum of
\begin{equation}\label{ordbetaterms1}
\mathfrak m_{1,0}(P(\beta))  + {\mathfrak m}_{1, \beta} (P)+
\sum_{\genfrac{}{}{0pt}{}{\beta_1+\beta_2=\beta}{\beta_i \ne 0}}
\mathfrak m_{1,\beta_1}(P(\beta_2)) - ({\mathfrak m}_{1,0} P)(\beta),
\end{equation}
for given $\omega(\beta)$'s.
By definition, (\ref{ordbetaterms1}) becomes:
\begin{equation}\label{ordbetaterms2}
\aligned
&\sum_{k=1,2,\dots}\sum_{\genfrac{}{}{0pt}{}{\beta_1+\dots+\beta_k = \beta}
{\beta_i  \ne 0}} (-1)^k \left(\mathfrak m_{1,0} \circ
 \overline{\mathfrak p}_{1,\beta_1} \circ \cdots \circ
\overline{\mathfrak p}_{1,\beta_k}\right)(P) \\
&+\sum_{k=1,2,\dots}
\sum_{\genfrac{}{}{0pt}{}{\beta_1+\dots+\beta_k = \beta}
{\beta_i  \ne 0}} (-1)^{k-1} \left(\mathfrak m_{1,\beta_1} \circ
 \overline{\mathfrak p}_{1,\beta_2} \circ \cdots \circ
\overline{\mathfrak p}_{1,\beta_k}\right)(P) \\
&  - ({\mathfrak m}_{1,0} P)(\beta).
\endaligned
\end{equation}
Using \eqref{pbarformula}
we can show that \eqref{ordbetaterms2} is equal to
\begin{equation}\label{ordbetaterms3}
\aligned
&\sum_{k=1,2,\dots}
\sum_{\genfrac{}{}{0pt}{}{\beta_1+\dots+\beta_k = \beta}
{\beta_i  \ne 0}} (-1)^k \left(  \overline{\mathfrak p}_{1,\beta_1}\circ
\mathfrak m_{1,0} \circ \overline{\mathfrak p}_{1,\beta_2}\circ\cdots \circ
\overline{\mathfrak p}_{1,\beta_k}\right)(P) \\
&+\sum_{k=1,2,\dots}
\sum_{\genfrac{}{}{0pt}{}{\beta_1+\dots+\beta_k = \beta}
{\beta_i  \ne 0}} (-1)^k \left( \overline{\mathfrak p}_{1,\beta_1} \circ
\mathfrak m_{1,\beta_2} \circ  \overline{\mathfrak p}_{1,\beta_3} \circ \cdots \circ
\overline{\mathfrak p}_{1,\beta_k}\right)(P) \\
& - \sum_{k=1,2,\dots}
\sum_{\genfrac{}{}{0pt}{}{\beta_1+\dots+\beta_k = \beta}
{\beta_i  \ne 0}} (-1)^k \left( \overline{\mathfrak p}_{1,\beta_1} \circ
\overline{\mathfrak p}_{1,\beta_2} \circ  \cdots \circ \overline{\mathfrak p}_{1,\beta_{k}} \circ {\mathfrak m}_{1,0} \right)(P) \\
= & \sum_{k=1,2,\dots}
\sum_{\beta_1 \neq 0}  \overline{\mathfrak p}_{1,\beta_1} \circ
\sum_{\genfrac{}{}{0pt}{}{\beta_1+\dots+\beta_k = \beta}
{\beta_i  \ne 0}} (-1)^k \bigl(
\mathfrak m_{1,0} \circ \overline{\mathfrak p}_{1,\beta_2}\circ\cdots \circ
\overline{\mathfrak p}_{1,\beta_k}(P) \\
& + \mathfrak m_{1,\beta_2} \circ  \overline{\mathfrak p}_{1,\beta_3} \circ \cdots \circ
\overline{\mathfrak p}_{1,\beta_k}(P)
- \overline{\mathfrak p}_{1,\beta_2} \circ  \cdots \circ \overline{\mathfrak p}_{1,\beta_{k}} \circ {\mathfrak m}_{1,0}(P)
\bigr).
\endaligned
\end{equation}
(\ref{ordbetaterms3}) vanishes by induction hypothesis.
\par
On the other hand $\mathcal I$ is identity modulo $\Lambda_{0,{\rm nov}}^+$.
The lemma follows by the standard spectral sequence argument.
\end{proof}
Thus we obtain an isomorphism as $\Lambda_{0,{\rm nov}}$-modules
$$
\mathcal I_{\#} : H(N;\Lambda_{0,{\rm nov}}) \cong HF(\Delta_N,\Delta_N;\Lambda_{0,{\rm nov}}).
$$
In order to complete the proof of Theorem \ref{Proposition34.25} (2),
we need to prove
\begin{equation}\label{diagmaineq'}
\langle
\mathcal I(P_1) \cup_Q \mathcal I(P_2),\mathcal I(P_0)\rangle
=
\langle
PD[P_1] * PD[P_2],PD[P_0]\rangle, 
\end{equation}
i.e., 
\begin{equation}\label{diagmaineq}
(-1)^{\deg P_1 (\deg P_2 +1)} 
\langle
\mathfrak m_2(\mathcal I(P_1),\mathcal I(P_2)),\mathcal I(P_0)\rangle
=
\langle
PD[P_1] * PD[P_2],PD[P_0]\rangle.
\end{equation}
For the orientation of the moduli spaces which define the operations in \eqref{diagmaineq},
see  
Sublemma \ref{sublemma}, \eqref{M_{1,1}}-\eqref{def:Mhat} for the left hand side
and Definition \ref{notationmoduli} for the right hand side.
We will prove (\ref{diagmaineq}) in the next two subsections.

\subsubsection{Proof of Theorem \ref{Proposition34.25} (2), III: moduli space  of stable
maps with circle system}\label{6.4}
To define the left hand side of \eqref{diagmaineq} we use 
the moduli spaces and multisections used in Lagrangian Floer theory, while we use
other moduli spaces and multisections to define the right hand side of \eqref{diagmaineq}.
Recall that in 
Theorem \ref{thm:Ainfty} and Theorem \ref{Theorem34.20}
we took particular multisections to get the 
$A_{\infty}$ structure. The point of the whole proof we will give here 
is to show that two 
multisections, one used for Lagrangian Floer theory and
the other one more directly related to quantum cup product,  
can be homotoped in the {\it moduli space of stable maps 
with circle system}, which will be introduced in Definition \ref{StableMapsWithCircles}, so that those two multisections 
finally give the same answer.
The purpose of this subsection is to define the moduli space of stable maps 
with circle system and to describe the topology and the Kuranishi structure in detail. In the next subsection we will compare two moduli spaces and multisections to complete the proof by interpolating the moduli space of stable maps 
with circle system.
\par
We consider a class $\rho \in \pi_2(N)/\sim$.
Put
$$
\mathcal M_{3}^{\text{\rm sph}}(J_N;\rho) = \bigcup_{\alpha \in \rho}
\mathcal M_{3}^{\text{\rm sph}}(J_N;\alpha),
$$
where $\mathcal M_{3}^{\text{\rm sph}}(J_N;\alpha)$ is the moduli space
of stable maps of genus $0$ with $3$ marked points and of homotopy class
$\alpha$.
Let $((\Sigma,(z_0,z_1,z_2)),v)$ be a representative of its element.
We decompose $\Sigma = \bigcup \Sigma_a$ into irreducible components.
\begin{defn}
\begin{enumerate}
\item
Let $\Sigma_0 \subset \Sigma$ be the minimal connected union of
irreducible components containing three marked points $z_0,z_1,z_2$.
\item
An irreducible component $\Sigma_a$ is said to be
{\it Type I} if it is contained in $\Sigma_0$.
Otherwise it is said to be {\it Type II}.
\item
Let $\Sigma_a$ be an irreducible component of Type I.
Let $k_a$ be the number of its singular points in
$\Sigma_a$ which do not intersect irreducible components of $\Sigma \setminus \Sigma_0$.
Let $k'_a$ be the number of marked points on $\Sigma_a$.
It is easy to see that $k_a + k'_a$ is either $2$ or $3$.
(See Lemma \ref{TypeI} below or the proof.)
We say that $\Sigma_a$ is {\it Type I-1} if $k_a + k'_a = 3$ and
{\it Type I-2} if $k_a + k'_a = 2$.
We call these $k_a+k'_a$ points
{\it the interior special points}.
\end{enumerate}
\end{defn}

\begin{lem}\label{TypeI} Let $k_a$, $k_a'$ be the numbers defined above.
Then $k_a + k'_a$ is either $2$ or $3$. Moreover, there exists exactly one irreducible component of
Type I-1.
\end{lem}
\begin{proof} Consider the dual graph $T$ of the prestable curve $\Sigma_0$.
Note that $T$ is a tree with 3 exterior edges with a finite number of interior vertices.
Therefore it follows that there is a unique 3-valent vertex and all others are 2-valent.
Since the number $k_a + k'_a$ is precisely the valence of the vertex associated to
the component $\Sigma_a$, the first statement follows.
Furthermore since the dichotomy of $k_a + k'_a$ being $3$ and $2$ corresponds to
the component $\Sigma_a$ being of Type I-1 and of Type I-2 respectively, 
the second statement follows.
\end{proof}
\par
Our next task is to relate a bordered stable map to $(N \times N, \Delta_N)$ of genus zero
with three boundary marked points to a stable map to $N$ of genus 0 with three marked points.
For this purpose, we introduce the notion of stable maps of genus 0 with circle systems,
see Definition \ref{StableMapsWithCircles}.
\par
Firstly, we fix a terminology ``circle'' on the Riemann sphere.
We call a subset $C$ in $\C P^1$ a {\it circle}, if it is the image of  $\R \cup \{ \infty \}$
by a projective linear transformation $\Phi$ as in complex analysis, i.e., $\Phi(\R \cup \{ \infty \}) =C$.
From now on, we only consider  $C$ with an orientation.  The projective linear transformation $\Phi$ can be chosen
so that the orientation coincides with the one as the boundary of the upper half space $\H$.
\par
For a pseudoholomorphic disc $w$ in$(N \times N, J_{N \times N})$ with boundary on the diagonal $\Delta_N$,
the map $\overset{\circ}{\mathfrak I}$ in Proposition \ref{regoripres} gives a pseudoholomorphic sphere $v=\overset{\circ}{\mathfrak I}(w)$ in $(N,J_N)$.
For three boundary marked points $z_0, z_1, z_2$ of the domain of $w$, we have corresponding marked points,
which we also denote by $z_0, z_1, z_2$ by an abuse of notation, on the Riemann sphere, which is the domain of $v$.
We also note that the boundary of the disc corresponds to the circle passing through $z_0, z_1, z_2$ on the Riemann sphere.
Thus, when describing the pseudoholomorphic sphere corresponding to a pseudoholomorphic disc,
we regard it as a stable map of genus $0$ with the image of
the boundary of the disc, which is a circle on the Riemann sphere.
\begin{rem}\label{bubbletree}
\begin{enumerate}
\item For each genus $0$ bordered stable map to $(N \times N, \Delta_N)$, we construct a genus $0$ stable map to $N$ as follows.  
The construction goes componentwise.  For a disc component, we apply $\overset{\circ}{\mathfrak I}$ as we explained above.
For a bubble tree $w^{\text{\rm bt}}$ of pseudoholomorphic spheres attached to a disc component at $z^{\text{\rm int}}$,
we attach $({\rm pr}_1)_* w^{\text{\rm bt}}$ (resp. $({\rm pr}_2)_* w^{\text{\rm bt}}$) to the lower hemisphere at $\overline{z^{\text{\rm int}}}$
(resp. the upper hemisphere at $z^{\text{\rm int}}$).  Here $({\rm pr}_i)_* w^{\text{\rm bt}}$ is ${\rm pr}_i \circ w^{\text{\rm bt}}$ with each unstable
component shrunk to a point.  If ${\rm pr}_i \circ w^{\text{\rm bt}}$ is unstable, we do not attach it to the Riemann sphere.
\item  For $\beta \in \pi_2(N \times N, \Delta_N)$, pick a representative $w:(D^2, \partial D^2) \to (N\times N, \Delta_N)$.
Although $w$ is not necessarily pseudoholomorphic, we have $v: S^2 \to N$ in the same way as $\overset{\circ}{\mathfrak I}$.
We call the class $[v] \in \pi_2(N)$ the {\it double} of the class $[w] \in \pi_2(N \times N, \Delta_N)$.
\end{enumerate}
\end{rem}
In preparation for the definition of stable maps of genus 0 with circle systems, we define {\it admissible systems of circles}
in Definitions \ref{def:circlesystemI}, \ref{TypeII} and \ref{circlesonSigma}.
\par
Definition \ref{def:circlesystemI} includes the case of moduli space representing
various terms of \eqref{diagmaineq},
that is, a union of doubles of several discs.
We glue them at a boundary marked point of one
component and an interior or a boundary marked
point with the other component.
\par
Let $\Sigma_a$ be an irreducible component of $\Sigma$ where
$((\Sigma,(z_0,z_1,z_2)),v)$ is an element of $\mathcal M_{3}^{\text{\rm sph}}(J_N;\alpha)$.
A {\it domain which bounds $C_a$} is a disc in $\C P^1$ whose boundary (together with orientation)
is $C_a$.
We decompose $\Sigma_a$ into the union
of two discs $D_a^\pm$ so that $\C P^1 = D_a^+ \cup D_a^-$
where $\del D_a^+ = C_a$ as an oriented manifold. Then
$\del D_a^- = - C_a$ as an oriented manifold.
\par
Now consider a component $\Sigma_a$ of Type I-2.
We say an interior special point of $\Sigma_a$ is {\it inward} if it is contained
in the connected component of the closure of $\Sigma_0 \setminus \Sigma_a$
that contains the unique Type I-1 component.
Otherwise it is called {\it outward}.
(An inward interior special point must necessarily be a singular point.)
Note that each Type I-2 component contains a unique inward interior marked point and a unique outward interior marked point.
\begin{defn}\label{def:circlesystemI}
An {\it admissible system of circles of Type I} for $((\Sigma,(z_0,z_1,z_2)),v)$
is an assignment of $C_a$, which is a circle or an empty set (which may occur in the case (2) below, see Example \ref{example} (2)), to each irreducible component $\Sigma_a$ of Type I,
such that the following holds:
\begin{enumerate}
\item
If $\Sigma_a$ is Type I-1, $C_a$ contains all the three interior special points.
\item Let $\Sigma_a$ be Type I-2.
If $C_a$ is not empty, we require the following two conditions.
\begin{itemize}
\item $C_a$ contains the outward interior special point.
\item The inward interior special point lies on the disc $D_a^+$, namely either on $C_a$ or $\text{\rm Int }D_a^+$.
If the inward interior special point $p$ lies on $C_a$, the circle $C_{a'}$ on the adjacent component $\Sigma_{a'}$ contains
$p$.  Here $\Sigma_{a'}$ meets $\Sigma_a$ at the node $p$.
\end{itemize}
\item
Denote by $\Sigma_{a_0}$ the unique irreducible component of Type I-1 and
let $C$ be the maximal connected union of $C_a$'s containing $C_{a_0}$.
If $C$ contains all $z_i$, we require the orientation of $C$
to respect the cyclic order of $(z_0,z_1,z_2)$.
If some $z_i$ is not on $C$, we instead consider the
following point $z_i'$ on $C$ described below and require that the orientation of $C$
respects the cyclic order of $(z_0',z_1',z_2')$:
There exists a unique irreducible component $\Sigma_{a}$ such that
$C$ is contained in a connected component of the closure of $\Sigma_0 \setminus \Sigma_{a}$,
$z_i$ is contained in the other connected component or $\Sigma_a$,
and that $C$ intersects $\Sigma_a$ (at the inward interior special point of
$\Sigma_a$). Then the point $z_i'$ is this inward interior
special point of $\Sigma_a$.
\end{enumerate}
\end{defn}
\begin{exm}\label{example}
(1) Let us consider the admissible system of circles as in Figure 1
below. The left sphere is of Type I-2 and the right sphere is of Type I-1.
The circle in the right sphere is $C$ in Definition
\ref{def:circlesystemI} (3).
\par
\centerline{
\epsfbox{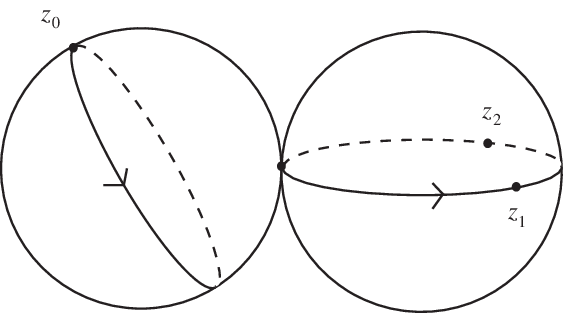}}
\par
\centerline{\bf Figure 1}
\par\smallskip
This is the double of the following configuration:
\par
\centerline{
\epsfbox{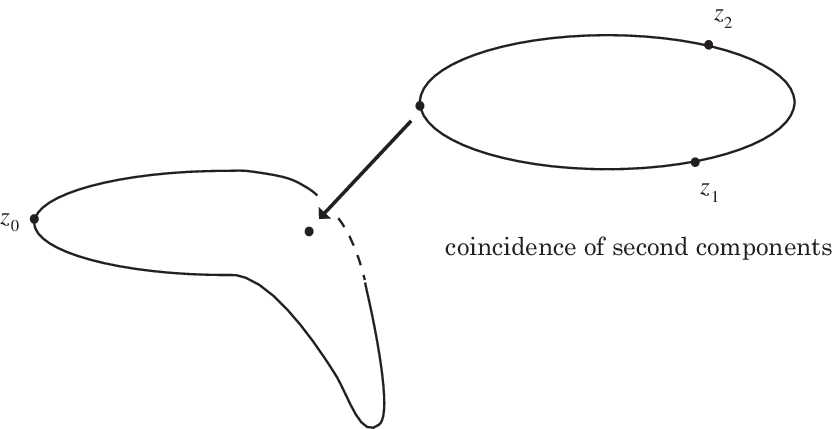}}
\par
\centerline{\bf Figure 2}
\par\smallskip
The moduli space of such configurations is
identified with the moduli space that is used to define
\begin{equation}\label{Fig12eq}
\langle \mathfrak m_{2,\beta_2}(P_1,P_2),
\overline{\mathfrak p}_{1,\beta_1}(P_0)\rangle.
\end{equation}
(2) Type I-2 components may not have circles. For example, see Figure 3.
\par\smallskip
\centerline{
\epsfbox{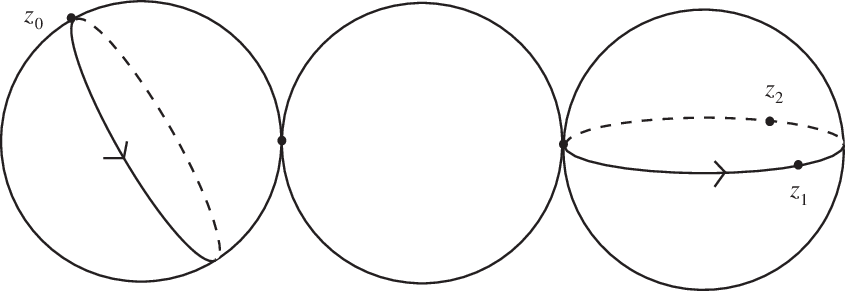}}
\par
\centerline{\bf Figure 3}
\par\smallskip
\end{exm}
We next discuss the admissible system of circles on the irreducible
components of Type II.
A {\it connected component of Type II} of $\Sigma$
is by definition the closure of a connected component of
$\Sigma \setminus \Sigma_0$.
Each connected component of Type II intersects
$\Sigma_0$ at one point.
We call this point the {\it root} of our connected component of Type II.
\par
We denote by $\Sigma_\rho$ a connected component of Type II
and decompose it into the irreducible components:
$$
\Sigma_{\rho} = \bigcup_{a \in I_{\rho}}\Sigma_a.
$$
Then we consider a Type II irreducible component $\Sigma_a$
contained in a $\Sigma_{\rho}$.
If $\Sigma_a$ does not contain the root of $\Sigma_{\rho}$,
we consider the connected component
of the closure of $\Sigma_{\rho} \setminus \Sigma_a$ that
contains the root of $\Sigma_{\rho}$.
Then, there is a unique singular point of $\Sigma_a$
contained therein. We call this singular point the {\it root} of $\Sigma_a$.
Note that if $\Sigma_a$ contains the root of $\Sigma_{\rho}$,
it is, by definition, the root of $\Sigma_a$.
\begin{defn}\label{TypeII}
Let an admissible system of circles of Type I on
$\Sigma$ be given. We define an {\it admissible system of circles of Type II} on $\Sigma_{\rho}$ to
be a union
$$
C_{\rho} = \bigcup_{a \in I_{\rho}} C_a
$$
in which $C_a$ is either a circle or an empty set and which
we require to satisfy the following:
\begin{enumerate}
\item
If the root of $\Sigma_{\rho}$ is not contained in our system
of circles of  Type I, then
all of $C_a$ are empty set.
\item
If $C_{\rho}$ is nonempty, then it is connected and
contains the root of $\Sigma_{\rho}$.
\item Let $\Sigma_a$ be a Type II irreducible component
contained in  $\Sigma_{\rho}$ and
let $\Sigma_b$ be the irreducible component
of $\Sigma$ that contains the root of $\Sigma_a$ and such that $a \ne b$.
If the root of $\Sigma_a$ is contained in $C_b$, we require
$C_a$ to be nonempty.
\end{enumerate}
\end{defn}
\begin{defn}\label{circlesonSigma}
An {\it admissible system of circles} on $\Sigma$ is, by definition,
an admissible system of circles of Type I together with that of Type II
on each connected component of Type II.
\end{defn}
\par

\begin{defn}\label{smoothable}
Let $\Sigma = \cup_a \Sigma_a$ be the decomposition into irreducible components and $\{C_a\}$ the admissible system of circles.   
($C_a$ is either a circle or an empty set.)
Let $p$ be a node joining components $\Sigma_a$ and $\Sigma_b$.
We call $p$ a {\it non-smoothable} node if  $p$ lies exactly one of  $C_a$ and $C_b$.  Otherwise, we call $p$ a {\it smoothable} node.
That is, a node $p$ is smoothable, if and only if one of the following conditions holds:  (1) $p \in C_a$ and $p \in C_b$, (2) $p \notin C_a$ and $p \notin C_b$.
\end{defn}
\begin{rem}\label{gluesmoothablenode}
For a smoothable node $p$ joining two components $\Sigma_a$ and $\Sigma_b$, we can glue them in the following way.
We call such a process the {\it smoothing} at the smoothable node $p$.
If neither $C_a \subset \Sigma_a$ nor $C_b \subset \Sigma_b$ contains the node $p$, we can perform the gluing of stable maps at the node $p$.
(In such a case, the admissibility of the circle system prohibits that both $C_a$ and $C_b$ are non-empty.)
If both $C_a$ and $C_b$ contain the node $p$, we choose a complex coordinate $z_a$, resp. $z_b$, of $\Sigma_a$, resp. $\Sigma_b$, around $p$
such that $C_a$, resp. $C_b$ is described as the real axis with the standard orientation.
Here we give an orientation on the real axis by the positive direction.
Gluing $\Sigma_a$ and $\Sigma_b$ by $z_a \cdot z_b = -t$, $t \in [0, \infty)$, we obtain the gluing of stable maps such that $C_a$ and $C_b$ are
glued to an oriented circle.
\end{rem}
\begin{defn}\label{StableMapsWithCircles}
Let $\Sigma=\bigcup_a \Sigma_a$ be a prestable curve, i.e., a singular Riemann surface
of genus $0$ at worst with nodal singularities,
$z_0,z_1,z_2$ marked points on the smooth part of $\Sigma$, $u:\Sigma \to N$ a holomorphic map
and let $C_a \subset \Sigma_a$ be either an oriented circle or an empty set.
We call ${\mathbf x}=(\Sigma, z_0,z_1,z_2, \{C_a\}, u:\Sigma \to N)$ a {\it stable map of genus $0$ with circle system}, if the following conditions are
satisfied:
\begin{enumerate}
\item $\{C_a\}$ is an admissible system of circles in the sense of Definition \ref{circlesonSigma}.\\
\item Let $P$ be the set of non-smoothable nodes on $(\Sigma, z_0,z_1,z_2, \{C_a\})$.
For the closure $\Sigma'$ of each connected component  of $\Sigma \setminus P$, one of  the following conditions holds.
\begin{enumerate}
\item With circles forgotten, $u\vert_{\Sigma'} :\Sigma' \to N$ is still a stable map.  Here we put marked points $(\{z_0,z_1,z_2\} \cup P) \cap \Sigma'$ on $\Sigma'$. \\
\item The map $u$ is non-constant on some irreducible component of $\Sigma'$.
\end{enumerate}
\item The automorphism group ${\rm Aut} ({\mathbf x})$ is finite.  Here we set
${\rm Aut} ({\mathbf x})$ the group of automorphisms $\phi$ of the singular Riemann surface $(\Sigma, z_0,z_1,z_2)$ such that $u \circ \phi = u$ and
$\phi (\cup C_a) = \cup C_a$.
\end{enumerate}
\end{defn}
Since we only consider stable maps of genus $0$, we omit ``of genus $0$'' from now on.
\begin{rem}\label{remcircle}
\begin{enumerate}
\item If $C_a \neq \emptyset$, $C_a$ must contain a node or a marked point. \\
\item Let $\Sigma_a$ be an irreducible component, which becomes unstable after the circle system forgotten, i.e.,
the map $u$ is constant on $\Sigma_a$ and the number of special points contained in $\Sigma_a$ is less than $3$.
Here a special point means a marked point or a node.
In such a case,  we find that the number of special points on $\Sigma_a$ is exactly $2$ and exactly one of them is
on $C_a$.
The Type I-1 component contains three special points, hence it cannot be such a component.
There are three possibilities, case (i), case (ii$+$), case (ii$-$) described in Definition \ref{unstwocirc} below.
\end{enumerate}
\end{rem}
By an abuse of terminology, we call the inward special point of a Type I-2 component $\Sigma_a$ the root of $\Sigma_a$.
(The definition of the root of a Type II component is given just before Definition \ref{TypeII}.)
\begin{defn}\label{unstwocirc}
Let $\Sigma_a$ be an irreducible component, which becomes unstable after the circle system forgotten
as we discussed in Remark \ref{remcircle} (2).
There are the following three cases (see Figure 4):
\begin{enumerate}
\item [(\text{\rm i})]
the root is in $\text{\rm Int }D_a^+$ and another special point is on $C_a$,
\item[(\text{\rm ii}$+$)]
the root is on $C_a$ and the other node is contained in $\text{\rm Int }D_a^+$,
\item[(\text{\rm ii}$-$)]
the root is on $C_a$ and  the other node is contained in $\text{\rm Int }D_a^-$.
\end{enumerate}
\end{defn}
\par
\centerline{
\epsfbox{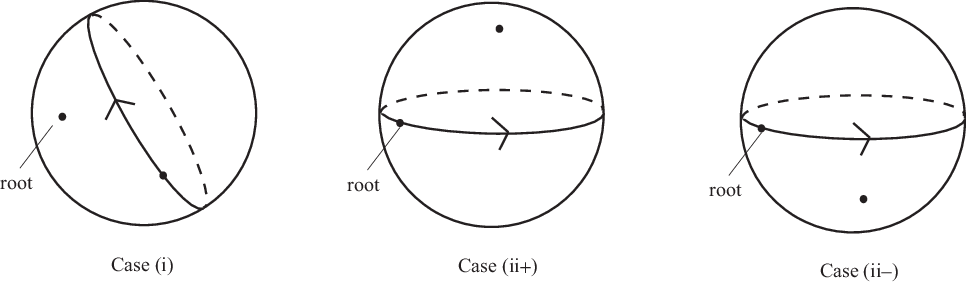}}
\par
\centerline{\bf Figure 4}
\par\smallskip
\begin{rem}\label{remcircle2}
Suppose that $\Sigma_a$ is a component of case (i).  Since the root is not on $C_a$, $\Sigma_a$ must be Type I.
Suppose that  $\Sigma_a$ is a component of case (ii).  Since the node, which is different from the root, is not on $C_a$, $\Sigma_a$ must be
Type II.
\end{rem}
\begin{lem}\label{adjacentleq2}
Let ${\mathbf x}=(\Sigma, z_0,z_1,z_2, \{C_a\}, u:\Sigma \to N)$ be a stable map of genus $0$ with circle system.
If two adjacent components become unstable when forgetting the circle system 
in the sense of Remark \ref{remcircle} (2), both of them are of case (i).
Moreover, there cannot appear more than two consecutive components, which become unstable after the circle system forgotten.
\end{lem}
\begin{proof}
Let $\Sigma_{a_1}, \Sigma_{a_2}$ be adjacent components which become unstable after the circle system forgotten.
Without loss of generality, we assume that the root of $\Sigma_{a_2}$ is attached to $\Sigma_{a_1}$.
Then there are three cases:
\begin{enumerate}
\item[{(A)}] $\Sigma_{a_1}, \Sigma_{a_2}$ are Type I.
\item[{(B)}] $\Sigma_{a_1}$ is Type I and $\Sigma_{a_2}$ is Type II.
\item[{(C)}]  $\Sigma_{a_1}, \Sigma_{a_2}$ are Type II.
\end{enumerate}
By Remark \ref{remcircle2}, we find that, in each case,
\begin{enumerate}
\item[{(A)}]  $\Sigma_{a_1}, \Sigma_{a_2}$ belong to case (i).
\item[{(B)}]  $\Sigma_{a_1}$ belongs to case (i) and $\Sigma_{a_2}$ belongs to case (ii).
\item[{(C)}]  $\Sigma_{a_1}, \Sigma_{a_2}$ belong to case (ii).
\end{enumerate}
Consider Case (B).  Since $\Sigma_{a_1}$ is Type I, $C_{a_1}$ must contain either one of $z_0, z_1, z_2$ or a root of a component of Type I.
But the node at $\Sigma_{a_1} \cap \Sigma_{a_2}$ is the only special point on $C_{a_1}$ and $\Sigma_{a_2}$ is Type II.  Hence Case (B) cannot occur.
Next consider Case (C).  In this case, $\Sigma_{a_2}$ is Type II, but its root is not on $C_{a_1}$.  Hence Case (C) cannot occur.  
Therefore the only remaning case is (A).  Namely, both $\Sigma_{a_1}$ and $\Sigma_{a_2}$ are Type I, hence case (i) in Definition \ref{unstwocirc}.
\par
Suppose that $\Sigma_{a_1}, \Sigma_{a_2}, \Sigma_{a_3}$ are consecutive components, which become unstable when forgetting the circle system.
As we just showed, all of them are case (i).  Note that the nodes $\Sigma_{a_1} \cap \Sigma_{a_2}$, $\Sigma_{a_2} \cap \Sigma_{a_3}$ are non-smoothable
nodes.  Thus the middle component $\Sigma_{a_2}$ does not satisfy Condition (2) in Definition \ref{StableMapsWithCircles}.  Hence the proof.
\end{proof}
\begin{defn}\label{notationmoduli}
We denote by $\mathcal M^{\text{sph}}_3(J_N;\alpha)$ (resp. $\mathcal M^{\text{sph}}_3(J_N;\alpha;\mathcal C)$)
the moduli space consisting of stable maps (resp. stable maps with circle system) with three marked points
$\vec z=(z_0,z_1,z_2)$ representing class $\alpha$.
We put 
$$\aligned
\mathcal M^{\text{\rm sph}}_3(J_N;\alpha;P_1,P_2,P_0)
&= 
 P_0 \times_{ev_0} \left(\mathcal M^{\text{\rm sph}}_3(J_N;\alpha) {}_{(ev_1,ev_2)}
\times (P_1 \times P_2) \right)\\
\mathcal M^{\text{\rm sph}}_3(J_N;\alpha;\mathcal C;P_1,P_2,P_0)
&= 
 P_0 \times_{ev_0} \left(\mathcal M^{\text{\rm sph}}_3(J_N;\alpha;\mathcal C) {}_{(ev_1,ev_2)}
\times (P_1 \times P_2)\right).
\endaligned$$
As a space with oriented Kuranishi structure, we define 
$$
\mathcal M^{\text{\rm sph}}_3(J_N;\alpha;P_1,P_2,P_0)
= (-1)^{\deg P_1 \deg P_2} P_0 \times_{ev_0} 
\left(\mathcal M^{\text{\rm sph}}_3(J_N;\alpha) {}_{(ev_1,ev_2)}
\times (P_1 \times P_2)\right).
$$
To define the orientation on $\mathcal M^{\text{\rm sph}}_3(J_N;\alpha;\mathcal C;P_1,P_2,P_0)$, we consider 
$$
\aligned
\mathcal M^{\text{\rm sph, reg}}_3(J_N;\alpha;P_1,P_2,P_0)
& = (-1)^{\deg P_1 \deg P_2} P_0 \times_{ev_0} 
\left(\mathcal M^{\text{\rm sph, reg}}_3(J_N;\alpha) {}_{(ev_1,ev_2)}
\times (P_1 \times P_2)\right) \\
& \subseteq \mathcal M^{\text{\rm sph}}_3(J_N;\alpha;P_1,P_2,P_0).
\endaligned
$$
(See the sentence after \eqref{picorresp} for the notation $\mathcal M^{\text{\rm sph, reg}}_3(J_N;\alpha)$.)
For any element in 
$\mathcal M^{\text{\rm sph, reg}}_3(J_N;\alpha;P_1,P_2,P_0)$, 
there is a unique circle passing through 
$z_0, z_1, z_2$ in this order.  
Hence $\mathcal M^{\text{\rm sph, reg}}_3(J_N;\alpha;P_1,P_2,P_0)$ can be identified with  
a subset of $\mathcal M^{\text{\rm sph}}_3(J_N;\alpha;\mathcal C;P_1,P_2,P_0)$.  
We denote this subset by 
$\mathcal M^{\text{\rm sph, reg}}_3(J_N;\alpha;\mathcal C;P_1,P_2,P_0)$.
We define an orientation on $\mathcal M^{\text{\rm sph, reg}}_3(J_N;\alpha;\mathcal C;P_1,P_2,P_0)$
in such a way that this identification respects the orientations. 
We will explain in Remark \ref{inversionTypeII} (1) how to equip the whole space  
$\mathcal M^{\text{\rm sph}}_3(J_N;\alpha;\mathcal C;P_1,P_2,P_0)$ 
with an orientation.  
\par
For $\rho \in \pi_2(N)/\sim$ we define 
$\mathcal M^{\text{\rm sph}}_3(J_N;\rho;P_1,P_2,P_0)$, 
$\mathcal M^{\text{\rm sph}}_3(J_N;\rho;\mathcal C;P_1,P_2,P_0)$
and $ \mathcal M^{\text{\rm sph}}_3(J_N;\rho;\mathcal C)$ in an
obvious way.
\end{defn}

Then we have  
$$
\aligned
& \langle PD [P_1] * PD [P_2], PD [P_0] \rangle \\
= & (-1)^{\epsilon_1 } \sum_{\rho} (ev_0^* PD [P_0] \cup ev_1^* PD [P_1] \cup ev_2^* PD [P_2]) [\mathcal M^{\text{\rm sph}}_3(J_N;\rho)] \\
= & (-1)^{\epsilon_1 + \epsilon_2} \sum_{\rho} (ev_0^* PD [P_0] \cup (ev_1 \times ev_2)^* PD [P_1 \times P_2]) [\mathcal M^{\text{\rm sph}}_3(J_N;\rho)] \\
= & (-1)^{\epsilon_2} \sum_{\rho} (ev_0^* PD [P_0]) [\mathcal M^{\text{\rm sph}}_3(J_N;\rho)_{(ev_1,ev_2)} \times (P_1 \times P_2)] \\
= &  \sum_{\rho} (ev_0^* PD [P_0]) [\mathcal M^{\text{\rm sph}}_3(J_N;\rho;P_1,P_2)] \\
= & \sum_{\rho} (-1)^{\epsilon_1} P_0 \cdot  [\mathcal M^{\text{\rm sph}}_3(J_N;\rho;P_1,P_2)] \\
= & \sum_{\rho} \# {\mathcal M}_3^{\text{\rm sph}}(J_N;\rho;P_1, P_2, P_0), 
\endaligned
$$
where $\epsilon_1= \deg P_0 (\deg P_1 + \deg P_2)$ and $\epsilon_2 = \deg P_1 \cdot \deg P_2$.  
For the second equality we use that $PD [P_1 ] \times PD [P_2] = (-1)^{\deg P_1 \cdot \deg P_2} PD [P_1 \times P_2]$ 
in $H^*(N \times N)$.  
The third and fifth equalities follow from our convention of the Poincar\'e dual in \eqref{PDconvention}.  
The fourth equality is due to the definition of $\mathcal M^{\text{\rm sph}}_3(J_N;\rho;P_1,P_2)$.  
The last equality follows from \eqref{signpairing}.  
In sum, we have 
\begin{equation}
\langle PD[P_1] * PD [P_2], PD [P_0] \rangle = 
\sum_{\rho} \# {\mathcal M}_3^{\text{\rm sph}}(J_N;\rho;P_1, P_2, P_0). 
\end{equation}
\par
Now we will put a topology on the moduli space $\mathcal M^{\text{\rm sph}}_3(J_N;\alpha;\mathcal C)$ derived from the topology on
the moduli spaces $\mathcal M^{\text{\rm sph}}_{3+L}(J_N;\alpha)$ of stable maps of genus $0$ with $3+L$ marked points in Definition \ref{top} and
Proposition \ref{nbhdtop}.
Here $L$ is a suitable positive integer explained later.
To relate $\mathcal M^{\text{\rm sph}}_3(J_N; \alpha; C)$ to $\mathcal M^{\text{\rm sph}}_{3+L}(J_N; \alpha)$,
we will put $L$ marked points on the source curve of elements of $\mathcal M^{\text{\rm sph}}_3(J_N;\alpha; C)$.
(This process of adding additional marked points is somewhat reminiscent of a similar process
in the definition of stable map topology given in \cite{FO}.)
\par
We start with an elementary fact.  That is,
for distinct three points $p, q, r$ on ${\C}P^1$, the circle passing through $p,q,r$ is characterized as
the set of points $w \in {\C}P^1$ such that the cross ratio of $p,q,r,w$ is either real number or the infinity.

The following lemma is clear.

\begin{lem}\label{crossratio}
Let $u^{(i)}, u: \Sigma^{(i)} \to N$ be pseudoholomorphic maps from prestable curves $\Sigma^{(i)}, \Sigma$ representing the class $\alpha$.
Let $w^{(i)}_1, w^{(i)}_2, w^{(i)}_3, w^{(i)}_4$ be distinct four points on $\Sigma^{(i)}$ and
$w_1, w_2, w_3, w_4$ distinct four points on an irreducible component $\Sigma_a$ of $\Sigma$
 such that a sequence
$(\Sigma^{(i)}, \vec{z}\,^{(i)} \cup \{w^{(i)}_1, \dots, w^{(i)}_4 \}, u^{(i)})$ converges to
$(\Sigma, \vec{z} \cup \{w_1, \dots, w_4\}, u)$ in $\mathcal M^{\text{\rm sph}}_{3+4}(J_N;\alpha)$.
Then  $w^{(i)}_1, \dots, w^{(i)}_4$ belong to an irreducible component of $\Sigma^{(i)}$ for sufficiently large $i$ and
the cross ratio $[w^{(i)}_1: \dots : w^{(i)}_4]$ converges to $[w_1: \dots :w_4]$ as $i$ tends to $+\infty$.
\end{lem}
We consider a stratification of $\mathcal M^{\text{\rm sph}}_3(J_N;\alpha;\mathcal C)$ by their combinatorial types as follows.
Recall that by circles we always mean oriented circles in $\C P^1$ as we stated in the second paragraph after the proof of Lemma \ref{TypeI}.
\begin{defn}\label{combtype}
The combinatorial type $\mathfrak{c}({\mathbf x})$ of ${\mathbf x} \in \mathcal M^{\text{\rm sph}}_3(J_N;\alpha;\mathcal C)$ is defined by the following data:
\begin{enumerate}
\item The dual graph, whose vertices (resp. edges) correspond to irreducible components of the domain $\Sigma$ (resp., nodes of $\Sigma$).
\item The data that tells the irreducible components which contain $z_0$, $z_1$, $z_2$, respectively.
\item The homology class represented by $u$ restricted to each irreducible component.
\item For each irreducible component $\Sigma_a$, whether $C_a$ is empty or not.  If $C_a \neq \emptyset$, 
we include the data of the list of all nodes contained in $\text{\rm Int }D_a^+$ bounded by the oriented circle $C_a$ 
and the list of all nodes on $C_a$.
For each node $p$ that is not the root of $\Sigma_a$, 
we include the data that determines
whether $p$ lies in the domain bounded by $C_a$ or not.
This data determines 
which side of $C_a$ $p$ lies on.  (Recall that $C_a$ is an oriented circle, 
and that by `domain $D_a^+$ bounded by $C_a$' we involve the orientation together as we mentioned in the third paragraph after Remark \ref{bubbletree}. So the orientation of $C_a$ is a part of data of the combinatorial type.)
\end{enumerate}
For a node $p$ of $\Sigma$, we denote by $\Sigma_{\text{\rm in}, p}$ (resp. $\Sigma_{\text{\rm out}, p}$) the component of $\Sigma$, which
contains $p$ as an outward node (resp. the root node).
The combinatorial type of $p$ is defined by the following data:
\begin{enumerate}
\item $C_{\text{in},p}$ contains $p$ or not.
\item $C_{\text{out},p}$ contains $p$ or not.
\end{enumerate}
\end{defn}

\begin{rem}
The combinatorial type of a node $p$ of $\Sigma$ only depends on the components which contain $p$.
The combinatorial data ${\mathfrak c}({\mathbf x})$ determine the combinatorial type of each node $p$, in particular, whether the node $p$ is smoothable or not.
\end{rem}

\begin{lem}\label{finite-combi-type} There are only finitely many combinatorial types in
$\mathcal M^{\text{sph}}_3(J_N;\alpha;\mathcal C)$.
\end{lem}
\begin{proof} This follows from Lemma \ref{adjacentleq2} and the finiteness of combinatorial types of
$\mathcal M^{\text{sph}}_3(J_N;\alpha)$.
\end{proof}

\begin{defn}
Let $\mathfrak{c}_1, \mathfrak{c}_2$ be the combinatorial types of some elements in $\mathcal M^{\text{\rm sph}}_3(J_N;\alpha;\mathcal C)$.
We define the partial order $\mathfrak{c}_1 \preceq  \mathfrak{c}_2$ if and only if $\mathfrak{c}_2$ is obtained from $\mathfrak{c}_1$
by smoothing some of smoothable nodes.
See Remark \ref{gluesmoothablenode} for smoothing of a smoothable node.

We set
$${\mathcal M}_3^{{\text{\rm sph}}, \succeq{\mathfrak{c}}({\mathbf x})} (J_N;\alpha;{\mathcal C}) =
\{ {\mathbf x}' \in {\mathcal M}_3^{\text{\rm sph}}(J_N;\alpha;{\mathcal C}) \ \vert \  {\mathfrak c}({\mathbf x}') \succeq {\mathfrak c}({\mathbf x}) \}$$
and
$${\mathcal M}_3^{{\text{\rm sph}}, ={\mathfrak{c}}({\mathbf x})} (J_N;\alpha;{\mathcal C}) =
\{ {\mathbf x}' \in {\mathcal M}_3^{\text{\rm sph}}(J_N;\alpha;{\mathcal C}) \ \vert \  {\mathfrak c}({\mathbf x}') = {\mathfrak c}({\mathbf x}) \}. $$
\end{defn}
\begin{rem}\label{dualgraph}
\begin{enumerate}
\item 
A combinatorial type $\mathfrak c$ determines the intersection pattern of the circle systems.
For $\mathbf x \in {\mathcal M}_3^{\text{\rm sph}} (J_N;\alpha;{\mathcal C})$, consider
the dual graph $\Gamma ({\mathbf x})$ of $\cup C_a$.
Note that $\Gamma ({\mathbf x})$ is a subgraph of the dual graph of the domain of ${\mathbf x}
\in {\mathcal M}_3^{\text{\rm sph}}(J_N; \alpha; {\mathcal C})$.
Since the genus of the domain of ${\mathbf x}$ is $0$, each connected
component of $\Gamma({\mathbf x})$ is a  tree.
Namely, we assign a vertex $v_a({\mathbf x})$ to each non-empty circle $C_a$ and an edge joining the
vertices $v_{a_1}({\mathbf x})$ and $v_{a_2}({\mathbf x})$ corresponding to $C_{a_1}$ and $C_{a_2}$
respectively if they intersect at a node of $\Sigma({\mathbf x})$.
The graph $\Gamma({\mathbf x})$ is determined by the combinatorial type ${\mathfrak c}({\mathbf x})$
and we also denote it by $\Gamma({\mathfrak c})$.
The smoothing of a node, which is an intersection point of circles $C_{a_1}$ and $C_{a_2}$,
corresponds to the process of contracting the edge joining $v_{a_1}({\mathbf x})$ and
$v_{a_2}({\mathbf x})$.
Hence, if ${\mathfrak c}_1 \preceq {\mathfrak c}_2$, $\Gamma({\mathfrak c}_2)$ is obtained from
$\Gamma({\mathfrak c}_1)$ by contracting some of edges.
Therefore we have a canonical one-to-one correspondence between
connected components of $\Gamma({\mathfrak c}_1)$ and $\Gamma({\mathfrak c}_2)$.
In particular, we find that
$$\# \pi_0(\Gamma({\mathfrak c}_1)) = \# \pi_0(\Gamma({\mathfrak c}_2)),$$
where $\#\pi_0(\Gamma({\mathfrak c}))$ denotes the number of connected components of
$\Gamma({\mathfrak c})$.
\item
Each circle $C_a$ in the admissible system of circles on $\mathbf x$
is oriented.  If $C_a$ intersects any other $C_b$ in
the circle system, we cut $C_a$ at these intersection points to get a collection of oriented arcs.
Recall that each connected component $J$ of $\Gamma({\mathbf x})$ is a tree.
Hence the union of oriented circles corresponding to the vertices in $J$ is regarded as
an oriented Eulerian circuit, i.e.,  an oriented loop $\ell_J({\mathbf x})$ which is a concatenation of
the oriented arcs arising from $C_a$ ($a$ is a vertex in $J$).
The oriented loop $\ell_J({\mathbf x})$ is determined up to orientation preserving reparametrization. 
\end{enumerate}
\end{rem}
For a stable map with circle system, we can put appropriate additional marked points on the circles in such a way that the circles can be recovered from the additional
marked points.
More precisely, we consider the following conditions for the elements in 
$\mathcal{M}_{3+L}^{\text{\rm sph}}(J_N;\alpha)$:
\par
Let $\tilde{\mathbf y} \in \mathcal{M}_{3+L}^{\text{\rm sph}}(J_N;\alpha)$ and let $\vec{z}\,^+=\vec{z} \cup \{ z_3, \dots, z_{2+L} \}$ be the marked points.
Namely, $\tilde{\mathbf y}=(\Sigma, \vec{z}\,^+, u: \Sigma \to N)$ is a stable map representing the class $\alpha$.
We consider the following.
\begin{conds}\label{cond}
For each irreducible component $\Sigma_a$ of the domain $\Sigma$, 
$\Sigma_a \cap \vec{z}\,^+$ is either empty or consisting of at least three points.  In the latter case,
$\Sigma_a \cap \vec{z}\,^+$ lies on a unique circle $C_a$ on $\Sigma_a$.  
\end{conds} 
Note, however, the orientation of circles is not directly determined by Condition 6.41.
For the circles $\{ C_a \}$ in Condition \ref{cond}, we can associate its dual graph 
$\Gamma(\tilde{\mathbf y})$ in the same manner. 
Let $E(\tilde{\mathbf y})$ be the set of edges of $\Gamma(\tilde{\mathbf y})$.   
\par
For a fixed $\mathbf x \in {\mathcal M}_3^{\text{\rm sph}}(J_N;\alpha;{\mathcal C})$, we put 
additional marked points on the union of circles in the admissible system of circles to 
obtain $\tilde{\mathbf x} \in {\mathcal M}_{3+L}^{\text{\rm sph}}(J_N;\alpha)$ such that 
Condition \ref{cond} is satisfied.  
We can find a neighborhood $V(\tilde{\mathbf x})$ of $\tilde{\mathbf x}$ in 
$$\{ \tilde{\mathbf y} \in {\mathcal M}_{3+L}^{\text{\rm sph}}(J_N;\alpha) ~\vert~ 
\text{$\tilde{\mathbf y}$ satisfies Condition \ref{cond}}. \},$$
so that 
if $\tilde{\mathbf y} \in V(\tilde{\mathbf x})$, 
we can obtain 
$\Gamma(\tilde{\mathbf y})$ from $\Gamma(\tilde{\mathbf x})$ by contracting edges 
in $E'$ and removing edges in $E''$.  
Here $E', E''$ are disjoint subsets of $E(\tilde{\mathbf x})$, which may possibly be empty.  
In particular, we have 
$$\# \pi_0 (\Gamma(\tilde{\mathbf x})) \leq \# \pi_0 (\Gamma(\tilde{\mathbf y})).
$$
\begin{conds}\label{cond2}  $\tilde{\mathbf y} \in V(\tilde{\mathbf x})$ and 
$\# \pi_0 (\Gamma(\tilde{\mathbf x})) = \# \pi_0 (\Gamma(\tilde{\mathbf y})).$
\end{conds}
Under Condition \ref{cond2}, we have a canonical one-to-one correspondence between 
connected components of $\Gamma(\tilde{\mathbf x})$ and those of $\Gamma(\tilde{\mathbf y})$.  
To prove this it suffices to describe the way to determine the orientation of circles $\{ C_a\}$.
We consider Condition \ref{cond3} below for this purpose.
Let $J(\tilde{\mathbf x})$ be a connected component of $\Gamma(\tilde{\mathbf x})$ 
and $J(\tilde{\mathbf y})$ be the corresponding 
connected component of $\Gamma(\tilde{\mathbf y})$, i.e., 
$J(\tilde{\mathbf y})$ obtained from $J(\tilde{\mathbf x})$ by contracting some edges.  
\par
To make clear that $\vec{z}^+=\vec{z} \cup \{ z_3, \dots, z_{2+L}\}$ are marked points 
on $\Sigma(\tilde{\mathbf x})$, we denote it by 
$\vec{z}^+(\tilde{\mathbf x})=(z_0(\tilde{\mathbf x}), \dots, z_{2+L}(\tilde{\mathbf x})))$.  
Let $\Sigma_{J(\tilde{\mathbf x})}(\tilde{\mathbf x})$ be the union of irreducible components, 
which contain circles $C_a$ corresponding to the vertices in $J({\mathbf x})$.  
We can assign a cyclic order on 
$$I_{J(\tilde{\mathbf x})}=\{i ~\vert~ z_i (\tilde{\mathbf x}) \in \Sigma_{J(\tilde{\mathbf x})}(\tilde{\mathbf x}), ~i=0,1,2,3,\dots,2+L\}$$ 
using the oriented loop $\ell_{J(\tilde{\mathbf x})}(\tilde{\mathbf x})$
defined as in Remark \ref{dualgraph} (2).  
Namely, the oriented loop $\ell_{J(\tilde{\mathbf x})}(\tilde{\mathbf x})$ passes at $z_i$, 
$i \in I_{J(\tilde{\mathbf x})}$ compatible with the cyclic order on $I_{J(\tilde{\mathbf x})}$.   
\par
For $\tilde{\mathbf y}$ satisfying Condition \ref{cond2}, 
we find that $I_{J(\tilde{\mathbf x})} = I_{J(\tilde{\mathbf y})}$, which we denote 
by $I_J$.  
\begin{conds}\label{cond3}
Each circle $C_{a'}$ on the domain of $\tilde{\mathbf y}$ given in Condition \ref{cond} is equipped with an orientation with the following property.  
For each connected component $J$ of $\Gamma(\tilde{\mathbf x})$, 
the cyclic order on $I_J$ coming from 
the oriented loop $\ell_J(\tilde{\mathbf x})$
coincides with the one coming from the oriented loop $\ell_J (\tilde{\mathbf y})$
defined as in Remark \ref{dualgraph} (2) using the orientation on $C_{a'}$ for 
any vertex $a'$ in $J(\tilde{\mathbf y})$.
\end{conds}
Note that such an orientation on $C_{a'}$ is unique, if there exists one.   
Under Condition \ref{cond3}, each circle $C_a$ in Condition \ref{cond} is oriented 
in such a manner. 
This is the way to define the 
canonical one-to-one correspondence between 
connected components of $\Gamma(\tilde{\mathbf x})$ and those of $\Gamma(\tilde{\mathbf y})$.  
\begin{conds}\label{cond4}
The quadruple $(\Sigma, \vec{z}, \{ C_a \}, u)$ defines a stable map with circle system 
${\mathcal C}=\{C_a\}$.  
\end{conds}
\begin{defn}
We set
$$
 {\mathcal U}(\tilde{\mathbf x})=\{ \tilde{\mathbf y} \in V(\tilde{\mathbf x})  ~\vert~  \tilde{\mathbf y} \text{  satisfies Conditions  \ref{cond}, \ref{cond2}, \ref{cond3}, \ref{cond4}.}  \}.
$$
We denote the natural map by
\begin{align}
\pi^L_{\tilde{\mathbf x}} : {\mathcal U}(\tilde{\mathbf x}) \to {\mathcal M}_{3}^{\text{\rm sph}}(J_N;\alpha;{\mathcal C}) \nonumber \\
(\Sigma, \vec{z}\,^+, u) \mapsto (\Sigma, \vec{z}, \{ C_a\}, u). \nonumber
\end{align}
\end{defn}
\begin{rem}\label{choiceV(x)}Note that we can take the above set $V(\tilde{\mathbf x})$ 
for $\tilde{\mathbf x}$ in such a way that 
${\mathcal M}_3^{\text{\rm sph}, ={\mathfrak c}(\mathbf x)} (J_N;\alpha;{\mathcal C})$ is contained in the image of 
$\pi^L_{\tilde{\mathbf x}} : {\mathcal U}(\tilde{\mathbf x}) \to {\mathcal M}_{3}^{\text{\rm sph}}(J_N;\alpha;{\mathcal C})$.  
Note also that we have 
$$\pi^L_{\tilde{\mathbf x}} ({\mathcal U}(\tilde{\mathbf x})) \subset {\mathcal M}_3^{{\text{\rm sph}}, \succeq{\mathfrak{c}}({\mathbf x})} (J_N;\alpha;{\mathcal C}).$$
\end{rem}
Summarizing the construction above, we have 
\begin{lem}\label{add}
For any ${\mathbf x} \in {\mathcal M}_3^{\text{\rm sph}}(J_N;\alpha; {\mathcal C})$, there exist a positive integer $L$ and $\tilde{\mathbf x} \in {\mathcal M}_{3+L}^{\text{\rm sph}}(J_N;\alpha)$ 
such that the above naturally defined map $\pi^L_{\tilde{\mathbf x}} : {\mathcal U}(\tilde{\mathbf x}) \to {\mathcal M}_{3}^{\text{\rm sph}}(J_N;\alpha;{\mathcal C})$ satisfies 
$\pi^L_{\tilde{\mathbf x}}(\tilde{\mathbf x}) = {\mathbf x}$.
\end{lem}
We equip ${\mathcal U}(\tilde{\mathbf x})$ with the subspace topology of ${\mathcal M}_{3+L}^{\text{\rm sph}}(J_N;\alpha)$.
\begin{defn}\label{top}
For $U \subset {\mathcal M}_3^{\text{\rm sph}}(J_N;\alpha;{\mathcal C})$, $U$ is defined to be a neighborhood of ${\mathbf x}$ if and only if
there exist a positive number $L$, 
$\tilde{\mathbf x} \in {\mathcal M}_{3+L}^{\text{\rm sph}}(J_N;\alpha)$
as in Lemma \ref{add}
such that there exists a neighborhood $\widetilde{U} \subset {\mathcal U}(\tilde{\mathbf x})$ of $\tilde{\mathbf x}$ satisfying
$\pi^L_{\tilde{\mathbf x}}(\widetilde{U}) \subset U$.  Let ${\mathfrak N}(\mathbf x)$ be the collection of all neighborhoods of $\mathbf x$.
\end{defn}
We can show the following
\begin{prop}\label{nbhdtop}
The collection $\{ {\mathfrak N}(\mathbf x) ~ \vert  ~ \mathbf x \in {\mathcal M}_3^{\text{\rm sph}}(J_N;\alpha;{\mathcal C}) \}$ 
satisfies the axiom of the system of neighborhoods.
Thus it defines a topology on ${\CM}_3^{\text{\rm sph}}(J_N;\alpha;{\mathcal C})$.
\end{prop}
\begin{proof}
Let $U$ be a neighborhood of $\mathbf x$ and 
$\widetilde{U}$ a neighborhood of $\tilde{\mathbf x}$ in ${\mathcal U}(\tilde{\mathbf x})$ as in Definition \ref{top}.  
Take an open neighborhood ${\widetilde U}^{\circ}$ of $\tilde{\mathbf x}$ in $\widetilde U$.   
For any $\tilde{\mathbf y} \in {\widetilde U}^{\circ}$, there is a neighborhood $\widetilde W$ of 
$\tilde{\mathbf y}$ in ${\mathcal U}(\tilde{\mathbf x}) \subset {\mathcal M}_{3+L}^{\text{\rm sph}}(J_N;\alpha)$ such that 
$\widetilde W$ is contained in ${\mathcal U}(\tilde{\mathbf y})$.  
Thus $\widetilde W \subset {\mathcal U}(\tilde {\mathbf x}) \cap {\mathcal U}(\tilde{\mathbf y})$.  
Hence $\pi^L_{\tilde{\mathbf y}}(\widetilde W)=\pi^L_{\tilde{\mathbf x}}(\widetilde W)$ is a neighborhood of $\mathbf y$.  
It remains to show that the definition of the neighborhood in Definition \ref{top} is independent of 
the choice of $\tilde{\mathbf x}$.
 This follows from the next lemma.  
\end{proof}
\begin{lem}\label{lemaddpoints}
Let $\tilde{\mathbf x} \in {\mathcal M}_{3+L}^{\text{\rm sph}}(J_N;\alpha)$ and 
$\tilde{\mathbf x}' \in {\mathcal M}_{3+L'}^{\text{\rm sph}}(J_N;\alpha)$ as in Lemma \ref{add} such that 
$\pi^L_{\tilde{\mathbf x}}(\tilde{\mathbf x})=\pi^{L'}_{\tilde{\mathbf x}'}(\tilde{\mathbf x}')={\mathbf x}$.
Then for any neighborhood $\widetilde{U} \subset {\mathcal U}(\tilde{\mathbf x})$ of $\tilde{\mathbf x}$,
there exists a neighborhood $\widetilde{U}' \subset {\mathcal U}(\tilde{\mathbf x}')$ of $\tilde{\mathbf x}'$
such that $\pi^{L'}_{\tilde{\mathbf x}'} (\widetilde{U}') \subset \pi^L_{\tilde{\mathbf x}} (\widetilde{U})$.
\end{lem}
\begin{proof}
Let $\widetilde{W}' \subset {\mathcal U}(\tilde{\mathbf x}')$ be a sufficiently small neighborhood 
of $\tilde{\mathbf x}'$, which will be specified later.  
We will define a continuous mapping $\Phi_{\tilde{\mathbf x}'}: \widetilde{W}' \to {\mathcal U}(\tilde{\mathbf x}) \subset 
{\mathcal M}_{3+L}^{\text{\rm sph}}(J_N;\alpha)$ such that 
$\pi^L_{\tilde{\mathbf x}} \circ \Phi_{\tilde{\mathbf x}'} = \pi^{L'}_{\tilde{\mathbf x}'}$. 
In other words, for $\tilde{\mathbf y}' \in {\widetilde W}'$, 
we will find $\tilde{\mathbf y} = \Phi_{{\mathbf x}'}(\widetilde{\mathbf y}') \in {\mathcal U}(\tilde{\mathbf x})$ with
$\pi^L_{\tilde{\mathbf x}}(\tilde{\mathbf y})={\pi^{L'}_{\tilde{\mathbf x}'}(\tilde{\mathbf y}')=\mathbf y'}$.  
Namely, we find additional $L$ marked points on the circles on the domain 
$\Sigma({\mathbf y}')$ of ${\mathbf y}'$.  
We define $\tilde{\mathbf y}$ in the following steps.  
Firstly, for $\tilde{\mathbf y}' \in {\mathcal U}(\tilde{\mathbf x}')$, we define $L$ mutually distinct marked points 
$w_j(\tilde{\mathbf y}')$, $j=3, \dots,  2+L$, on the union of circles $\cup C({\mathbf y}')$ 
in the admissible circle systems of ${\mathbf y}'$.  
By the construction below, any $w_j(\tilde{\mathbf y}')$ does not coincide with 
$z_0({\mathbf y}'), z_1({\mathbf y}'), z_2({\mathbf y}')$.  
Then there exists a neighborhood of $\tilde{\mathbf x}'$ in ${\mathcal U}(\tilde{\mathbf x}')$ 
such that $w_j(\tilde{\mathbf y}')$ does not coincide with any nodes of ${\mathbf y}'$ and 
defines an element in ${\mathcal M}_{3+L}^{\text{\rm sph}}(J_N;\alpha)$.  
Finally, we find a neighborhood $\widetilde{W}'$ of $\tilde{\mathbf x}'$
 in ${\mathcal U}(\tilde{\mathbf x}')$ 
such that $\tilde{\mathbf y}' \in {\widetilde W}'$ and $\tilde{\mathbf y} \in {\mathcal U}(\tilde{\mathbf x})$.  
\par
Denote by $\vec{z}\,^+ (\tilde{\mathbf x})$, (resp. $\vec{z}\,^{+} (\tilde{\mathbf x}')$),  the set of marked points in 
$\tilde{\mathbf x}$, (resp. $\tilde{\mathbf x}')$.
For $j=3, \dots, 2 + L$, we need to find $w_j(\tilde{\mathbf y'})$ on the admissible system of circles on  
$\Sigma({\mathbf y}')$.  Here recall that ${\mathbf y}' = \pi^{L'}_{\tilde{\mathbf x}'}(\tilde{\mathbf y}')$.  
\par
Firstly, we pick three distinct points $z_{i_1(a)} (\tilde{\mathbf x}'), z_{i_2(a)}(\tilde{\mathbf x}'), z_{i_3(a)}(\tilde{\mathbf x}')$ 
on each circle $C({\mathbf x})_a$ in the admissible circle system of ${\mathbf x}$.  
Let $C({\mathbf x})_{a(j)} \subset \Sigma({\mathbf x})_{a(j)}$ be a circle on an irreducible
component $\Sigma({\mathbf x})_{a(j)}$ of the domain of $\mathbf x$
such that $z_j(\tilde{\mathbf x}') \in \vec{z}\,^{+}(\tilde{\mathbf x}')$ is on $C({\mathbf x})_{a(j)}$.
Note that the graph $\Gamma(\tilde{\mathbf y}')$ is obtained by contracting some edges of $\Gamma(\tilde{\mathbf x}')$.  
Denote by $a'(j)$ the vertex of $\Gamma(\tilde{\mathbf y}')$, which is the image of the vertex $a(j)$ of $\Gamma(\tilde{\mathbf x}')$ 
by the contracting map.  
Then the component $\Sigma({\mathbf y}')_{a'(j)}$ of the domain of ${\mathbf y}'$ contains  
$z_{i_1(a(j)))}(\tilde{\mathbf y}'), z_{i_2(a(j)))}(\tilde{\mathbf y}'), z_{i_3(a(j))}(\tilde{\mathbf y}')$.  
There is a unique circle passing through these three points, which is nothing but 
$C({\mathbf y}')_{a'(j)} \subset \Sigma({\mathbf y}')_{a'(j)}$, since 
${\mathbf y}' = \pi^{L'}_{\tilde{\mathbf x}'}(\tilde{\mathbf y}')$.  
Then we define $w_j(\tilde{\mathbf y}')$ on
$C({\mathbf y}')_{a'(j)} \subset \Sigma({\mathbf y}')_{a'(j)}$ so that the cross ratio of
$z_{i_1(a(j))}(\tilde{\mathbf x}'), z_{i_2(a(j))}(\tilde{\mathbf x}'), z_{i_3(a(j))}(\tilde{\mathbf x}'), z_j(\tilde{\mathbf x})$ 
is equal to that of $z_{i_1(a(j))}(\tilde{\mathbf y}'), z_{i_2(a(j))}(\tilde{\mathbf y}'), z_{i_3(a(j))}(\tilde{\mathbf y}'),
w_j(\tilde{\mathbf y}')$.  
Here the cross ratio is taken on $\Sigma({\mathbf x})_{a(j)}$ and $\Sigma({\mathbf y}')_{a'(j)}$, which are biholomorphic to ${\mathbb C}P^1$, respectively.  
Note that $w_j(\tilde{\mathbf y}')$ depends continuously on $\tilde{\mathbf y}'$ in the following sense.  
Take a real projectively linear isomorphism $\psi_{\tilde{\mathbf y}'}$ from $C({\mathbf y}')_{a'(j)}$ to ${\mathbb R}P^1$ 
in ${\mathbb C}P^1$ such that 
$z_{i_1(a(j))}(\tilde{\mathbf y}')$, $z_{i_2(a(j))}(\tilde{\mathbf y}'), z_{i_3(a(j))}(\tilde{\mathbf y}')$ are sent to 
$0,1, \infty$.  
Then $\psi_{\tilde{\mathbf y}'}(w_j(\tilde{\mathbf y}'))$ is continuous with respect to $\tilde{\mathbf y}'$.  
Since $w_j(\tilde{\mathbf x}')=z_j(\tilde{\mathbf x})$ by the construction and $w_j(\tilde{\mathbf y}')$ depends continuously 
on $\tilde{\mathbf y}'$, 
if $\tilde{\mathbf y'}$ is sufficiently close to $\tilde{\mathbf x}'$ in ${\mathcal U}(\tilde{\mathbf x}')$,
the 0-th, 1-st, 2-nd marked points on $\mathbf y'$ and
$w_j(\tilde{\mathbf y}')$, $j=3, \dots, 2+L$, are mutually distinct and do not coincide nodes of $\Sigma({\mathbf y}')$ on 
$\Sigma({\mathbf y}')_{a'}$.  
Hence ${\mathbf y}'$ with marked points $z_0({\mathbf y}'), z_1({\mathbf y}'), z_2({\mathbf y}'), w_3(\tilde{\mathbf y}'), 
\dots, w_{2+L}(\tilde{\mathbf y}')$ defines an element in ${\mathcal M}_{3+L}^{\text{\rm sph}}(J_N;\alpha)$, 
which we denote by $\tilde{\mathbf y}$.   
We take a neighborhood $\widetilde{W}'$ of $\tilde{\mathbf x}'$ in ${\mathcal U}(\tilde{\mathbf x}')$ in order that 
$\tilde{\mathbf y} \in V(\tilde{\mathbf x})$ for $\tilde{\mathbf y}' \in \widetilde{W}'$.  
Note that the number of marked points of $\tilde{\mathbf y}$ is either zero or at least three by the construction.  
Since $\pi^{L'}_{\tilde{\mathbf x}'}(\tilde{\mathbf y}')={\mathbf y}'$ is a stable map with admissible system of circles, 
any $\tilde{\mathbf y} \in \widetilde{W}'$ satisfies Conditions  \ref{cond}, \ref{cond2}, \ref{cond3}, \ref{cond4}.
We define $\Phi_{\tilde{\mathbf x}'}:\widetilde{W}' \to {\mathcal U}(\tilde{\mathbf x})$ 
by $\Phi_{\tilde{\mathbf x}'}(\tilde{\mathbf y}')=\tilde{\mathbf y}$ constructed above.   
It is continuous and $\Phi_{\tilde{\mathbf x}'}(\tilde{\mathbf x}') 
= \tilde{\mathbf x}$.  Hence, for any neighborhood $\widetilde{U}$ of $\tilde{\mathbf x}$ in 
${\mathcal U}(\tilde{\mathbf x})$, there exists a neighborhood $\widetilde{U}'$ of $\tilde{\mathbf x}'$ 
in ${\mathcal U}(\tilde{\mathbf x}')$ such that 
$\Phi_{\tilde{\mathbf x}'}(\widetilde{U}') \subset {\mathcal U}(\tilde{\mathbf x})$.  
It implies that $\pi^{L'}_{\tilde{\mathbf x}'}(\widetilde{U}') \subset \pi^L_{\tilde{\mathbf x}}(\widetilde{U})$.   
\end{proof}
\par

When the combinatorial type $\mathfrak{c}=\mathfrak{c}({\mathbf x})$ is fixed, 
we have mentioned in Remark \ref{choiceV(x)} that ${\mathcal U}(\tilde{\mathbf x})$ enjoys the following property.

\begin{equation} 
{\mathcal M}_3^{{\text{\rm sph}}, ={\mathfrak{c}}({\mathbf x})} (J_N;\alpha;{\mathcal C}) \subset
\pi^L_{\tilde{\mathbf x}}({\mathcal U}(\tilde{\mathbf x})). \label{samecomb}
\end{equation}

We observe that ${\mathcal U}(\tilde{\mathbf x})$ satisfies the second axiom
of countability, since so does the moduli space
${\mathcal M}_{3+L}^{\text{\rm sph}}(J_N;\alpha)$.  
We recall from Lemma \ref{finite-combi-type} that
${\mathcal M}_3^{\text{\rm sph}}(J_N;\alpha;{\mathcal C})$
carries only a finite number of combinatorial types.
Combining these observations with  \eqref{samecomb}, we obtain
the following
\begin{prop}
${\mathcal M}_3^{\text{\rm sph}}(J_N;\alpha;{\mathcal C})$ satisfies the second axiom of countability.
\end{prop}
Hence compactness is equivalent to sequential compactness and
Hausdorff property is equivalent to the uniqueness of the limit of convergent sequences
for moduli spaces of stable maps with circle system.
\begin{prop}\label{seqcpt}
The moduli space ${\mathcal M}_3^{\text{\rm sph}}(J_N;\alpha;{\mathcal C})$ is sequentially compact.
\end{prop}
\begin{proof}
Let  ${\mathbf x}^{(j)}= \bigl( \Sigma({\mathbf x}^{(j)}), z_0^{(j)},z_1^{(j)}, z_2^{(j)}, \{C_a^{(j)}\}, u^{(j)}:\Sigma({\mathbf x}^{(j)}) \to N \bigr)$ be a sequence in
${\mathcal M}_3^{\text{\rm sph}}(J_N;\alpha;{\mathcal C})$.
Because there are a finite number of combinatorial types, we may assume that ${\mathfrak{c}}({\mathbf x}^{(j)})$
are independent of $j$.
First of all, we find a candidate of the limit of a subsequence of ${\mathbf x}^{(j)}$.
In Step 1, we consider irreducible components in $\Sigma({\mathbf x}^{(j)})$ explained in Remark \ref{remcircle} (2).
Since these components are not stable after forgetting the admissible system of circles, we put a point on each component
so that these components become stable.  We obtain ${\mathbf x}^{(j)+}$ at this stage.
Because the moduli space of stable maps with marked points is compact, we can take a convergent subsequence.
We denote its limit  by ${\mathbf x}_{\infty}^+$.
We will find an admissible system of circles on ${\mathbf x}_{\infty}^+$ and insert  irreducible components explained in
Remark \ref{remcircle} (2), if necessary, to obtain a candidate of the limit of ${\mathbf x}^{(j)}$ in Steps 2 and 3.
There are three cases, case (i), case (ii$+$), case (ii$-$) in Definition \ref{unstwocirc}.
In Step 2, we deal with sequences $C_a^{(j)}$ of circles on $\Sigma({\mathbf x}^{(j)+})_a$, which do not collapse
to any node of  $\Sigma({\mathbf x}_{\infty}^+)$.
In Step 3, we discuss when insertions of such  irreducible components are necessary and explain how to
perform insertions to obtain the candidate of the limit of ${\mathbf x}^{(j)}$.  The detail follows.

{\bf Step 1.} \ \ If ${\mathbf x}^{(j)}$ has irreducible components which become unstable
after the circle system forgotten, we put additional marked point $z_a^{(j),++}$ for each such component
$\Sigma({\mathbf x}^{(j)})_a$.
Such an irreducible component contains its root $p_{\text{\rm root}, a}^{(j)}$, another special point $p_{\text{\rm out}, a}^{(j)}$
and the circle $C_a^{(j)}$.
We can take $z_a^{(j),++}$ for each $j$ with the following property.
For any $j, j'$,  there exists a biholomorphic map $\phi_{j',j} :\Sigma({\mathbf x}^{(j)})_a \to \Sigma({\mathbf x}^{(j')})_a$ such that
$\phi_{j',j}$ sends $p_{\text{\rm root}, a}^{(j)}$, $p_{\text{\rm out}, a}^{(j)}$, $z_a^{(j), ++}$ and $C_a^{(j)}$ to
$p_{\text{\rm root}, a}^{(j')}$, $p_{\text{\rm out}, a}^{(j')}$, $z_a^{(j'), ++}$ and $C_a^{(j')}$, respectively.
Such components are determined by the combinatorial data, hence the number of these components are independent of $j$.
If there are $\ell$ such components, we obtain a sequence
${\mathbf x}^{(j)+} \in {\mathcal M}_{3+\ell}^{\text{\rm sph}}(J_N;\alpha)$.

{\bf Step 2.} \ \ Since ${\mathcal M}_{3+ \ell}^{\text{\rm sph}}(J_N;\alpha)$ is compact,
there is a convergent subsequence of
${\mathbf x}^{(j)+}$.  We may assume that ${\mathbf x}^{(j)+}$ is convergent.
Denote its limit by
${\mathbf x}_{\infty}^+ \in {\mathcal M}_{3+ \ell}^{\text{\rm sph}}(J_N;\alpha)$.
Forgetting the marked points $z_3, \dots , z_{2+ \ell}$,   $\Sigma({\mathbf x}_{\infty}^+)$ is a prestable curve of genus 0 with three marked points
$z_0, z_1, z_2$.  Hence we can define a unique irreducible component of Type I-1 in the same way as in Definition \ref{def:circlesystemI} (1).

Let $\{V_k\}_k$, $V_{k+1} \subset V_k$, be a sequence of open neighborhoods of the set of nodes on the domain $\Sigma({\mathbf x}_{\infty}^+)$
such that $\cap_k V_k$ is the set of nodes.  There exists a positive integer $N(k)$ such that
if $j \geq N(k)$, there is a holomorphic embedding
$$\phi^{(j)}_k: \Sigma({\mathbf x}_{\infty}^+) \setminus V_k \to \Sigma({\mathbf x}^{(j)+}).$$
For each component $\Sigma({\mathbf x}_{\infty}^+)_a$,
there is an irreducible component $\Sigma({\mathbf x}^{(j)+})_{a'}$ such that $\phi^{(j)}_k(\Sigma({\mathbf x}_{\infty}^+)_a
\setminus V_k) \subset \Sigma({\mathbf x}^{(j)+})_{a'}$.

From now on, we will take subsequences of $\{ j \}$ successively and rename it $\{ j \}$.

{\bf Step 2-1. } \ \ If $C_{a'}^{(j)} \subset \Sigma({\mathbf x}^{(j)+})_{a'}$ is empty for any $j$,
we set $C_a \subset \Sigma({\mathbf x}_{\infty}^+)_a$ to be an empty set.

{\bf Step 2-2.} \ \  Consider the case where  there is $k$ such that $\phi^{(j)}_k (\Sigma({\mathbf x}_{\infty}^+)_a
\setminus V_k)$ intersects the circle $C_{a'}^{(j)}$ on $\Sigma({\mathbf x}^{(j)+})_{a'}$ for any $j$.

We treat the following two cases separately.

{\bf Case 1:} \ \ For any point $p \in \Sigma({\mathbf x}_{\infty}^+)_a$, which is not a node, there is a neighborhood $U(p)$ of $p$ such that
$C^{(j)}_{a'}$ is not contained in $\phi^{(j)}_k (U(p))$ for any $j$.

Since $\Sigma({\mathbf x}_{\infty}^+)_a \setminus V_k$ is compact,
after taking a subsequence of $j$, we may assume that
there are three distinct points $p_1, p_2, p_3 \in \Sigma({\mathbf x}_{\infty}^+)_a
\setminus V_k$ such that there are mutually disjoint neighborhoods $U(p_1), U(p_2), U(p_3)$
and $\phi^{(j)}_k(U(p_i)) \cap C^{(j)}_{a'} \neq \emptyset$. 
We pick $p_i^{(j)} \in \phi^{(j)}_k(U(p_i)) \cap C^{(j)}_{a'}$.  
After taking a suitable subsequence of $j$, we may assume that $p_i^{(j)}$ converges to $p_i^{(\infty)}$ 
for $i=1,2,3$.  
Then we take the circle passing through $p_1^{(\infty)}, p_2^{(\infty)}, p_3^{(\infty)}$, which we denote by
 $C_a \subset\Sigma({\mathbf x}_{\infty}^+)_a$.  Since $C^{(j)}_{a'}$ are oriented and  
all ${\mathbf x}^{(j)}$ have the same
combinatorial type, $C_a$ is also canonically oriented.  
It is clear that the circle $C_a$ is uniquely determined once the subsequence of ${j}$ is taken as above.  
(For example, consider the cross ratios.)  

Note that  Case 1 is applied to each irreducible component treated in Step 1, hence an oriented circle is
put on each of such components.

{\bf Case 2:} \ \ There is a point $p$ on $\Sigma({\mathbf x}_{\infty}^+)_a$  such that $p$ is not a node and,
for any neighborhood $U(p)$ of $p$, there exists a positive integer $N$ such that
if $j \geq N$, $C_{a'}^{(j)}$ is contained in $\phi^{(j)}_k(U(p))$.

Since $p$ as above is not a node, $C_{a'}^{(j)}$ must contain a unique special point by the admissibility
of the circle system.  (If there are at least two special points, then these points get closer as $j \to \infty$.
Hence there should appear a new component attached at $p$ in
$\Sigma({\mathbf x}_{\infty}^+)$.)
In this case, we attach a new irreducible component at $p$ as in case (i) in Definition \ref{unstwocirc}.
Note that the attached component contains a circle of Type I.

{\bf Step 3.} \ \ Let $p$ be a node on $\Sigma({\mathbf x}_{\infty}^+)$.
Then either $p$ is the limit of nodes $p^{(j)}$ on $\Sigma({\mathbf x}^{(j)+})$ or
$p$ appears as a degeneration of $\Sigma({\mathbf x}^{(j)+})$.
We insert an irreducible component explained in Remark \ref{remcircle} (2) as discussed
in Case 1-2 and Case 2-2 below.

{\bf Case 1:} \ \ $p$ is the limit of nodes $p^{(j)}$.

{\bf Case 1-1:}
If $p$ and $p^{(j)}$ are either both smoothable or both non-smoothable,
we keep the node $p$ as it is.

{\bf Case 1-2:}
If one of them is smoothable and the other is non-smoothable,
we insert a new irreducible component  as in Remark \ref{remcircle} (2) in the following way.

Firstly, we consider the case that $p^{(j)}$ are non-smoothable but $p$ is smoothable.
In this case, we find that both component containing $p^{(j)}$ are of Type I.  We insert
the component of case (i) in Definition \ref{unstwocirc} at $p$.

Next,  we consider the case that $p^{(j)}$ are smoothable but $p$ is non-smoothable.
If  $p^{(j)}$ are smoothable nodes joining two components of Type I in  $\Sigma({\mathbf x}^{(j)+})$,
we insert the component of case (i) in Definition \ref{unstwocirc}.
Suppose that at least one of irreducible components containing $p^{(j)}$ is of Type II.
There are two possibilities, which we discuss separately.
The first possibility is  that both components contain non-empty circles in the admissible system.
Let $\Sigma({\mathbf x}^{(j)+})_a$ be the component such that $p^{(j)}$ is its outward node and
$\Sigma({\mathbf x}^{(j)+})_b$ the component such that $p^{(j)}$ is its root node.
In this case, $C_a^{(j)}$ cannot collapse to $p$, since $C_a^{(j)}$ must contain at least one special points
other than $p^{(j)}$.  Namely, if $\Sigma({\mathbf x}^{(j)})_a$ is of Type I, then $C_a^{(j)}$ must contain one of $z_0, z_1, z_2$ 
or a node of a tree of components of Type I.  If  $\Sigma({\mathbf x}^{(j)})_a$ is of Type II, then $C_a^{(j)}$ must contain
the root node of $\Sigma({\mathbf x}^{(j)})_a$.
Hence we consider the case that $C_b^{(j)}$ collapse to $p$.
Recall that $D_b^{(j)+}$ denotes the domain in $\Sigma({\mathbf x}^{(j)+})_b$  bounded by the oriented circle $C_b^{(j)}$.
If $D_b^{(j)+}$ collapse to $p$, then we insert the component of case (ii$-$) in Definition \ref{unstwocirc}.
Otherwise, we insert the component of case (ii$+$) in Definition \ref{unstwocirc}.
The second possibility is that only one of $\Sigma({\mathbf x}^{(j)+})_a$ or $\Sigma({\mathbf x}^{(j)+})_b$ contains a non-empty circle in the admissible system.
Note that the component $\Sigma({\mathbf x}^{(j)+})_a$ must contain $p^{(j)}$ as an outward node.
It is impossible that $C_a^{(j)} = \emptyset$ and $C_b^{(j)} \neq \emptyset$.
Hence $C_a^{(j)}$ is non-empty.  Since $p^{(j)}$ is a smoothable node, $C_a^{(j)}$ does not contain $p^{(j)}$.
Denote by $D_a^{(j)+}$ the domain in $\Sigma({\mathbf x}^{(j)+})_a$ bounded by the oriented circle $C_a^{(j)}$.
If $D_a^{(j)+}$ contains the node $p^{(j)}$, we insert the component of case (ii$+$) in Definition \ref{unstwocirc} at the node $p$.
Otherwise, we insert the component of case (ii$-$) in Definition \ref{unstwocirc} at the node $p$.

{\bf Case 2:} \ \ $p$ appears as a degeneration of $\Sigma({\mathbf x}^{(j)+})$.

{\bf Case 2-1:}  If $p$ is smoothable, we keep the node $p$ as it is.

{\bf Case 2-2:}  .  If $p$ is non-smoothable, we insert a new irreducible component
explained in Remark \ref{remcircle} (2) in a similar way to Case 1-2 above as follows.

Suppose that a sequence $\Sigma({\mathbf x}^{(j)+})_a$ degenerates to a nodal curve
with a node $p$, which is non-smoothable.
Let $\Sigma({\mathbf x}_{\infty}^+)_{a_1}$ and $\Sigma({\mathbf x}_{\infty}^+)_{a_2}$ be components containing the node $p$
such that $\Sigma({\mathbf x}_{\infty}^+)_{a_2}$ is farther from the component of Type I-1 than $\Sigma({\mathbf x}_{\infty}^+)_{a_1}$.
Here the component of Type I-1 of $\Sigma({\mathbf x}_{\infty}^+)$ is defined in the beginning of Step 2.

There are two possibilities.
The first possibility is that  there exists a positive integer $k$ such that
$C_a^{(j)} \cap \phi_k^{(j)} (\Sigma({\mathbf x}_{\infty}^+)_{a_1} \setminus V_k) = \emptyset$
for any sufficiently large $j > N(k)$.
In this case, we find that $\Sigma({\mathbf x}^{(j)+})_a$ is of Type I.
We insert the component of case (i) in Definition \ref{unstwocirc} at the node $p$.
The second possibility is that there exists a positive integer $k$ such that
$C_a^{(j)} \cap \phi_k^{(j)} (\Sigma({\mathbf x}_{\infty}^+)_{a_2} \setminus V_k) = \emptyset$
for any sufficiently large $j > N(k)$.
If there exists a positive integer $k'$ such that $D_a^{(j)+} \cap \phi_{k'}^{(j)} (\Sigma({\mathbf x}_{\infty}^+)_{a_2} \setminus V_{k'}) = \emptyset$,
we insert the component of case (ii$-$) in Definition \ref{unstwocirc} at the node $p$.
Here $D_{a}^{(j)+}$ be the domain in $\Sigma({\mathbf x}^{(j)+})_a$ bounded by the oriented circle $C_a^{(j)}$.
Otherwise, we insert the component of case (ii$+$) in Definition \ref{unstwocirc} at the node $p$.

\begin{rem}
Suppose that the adjacent components $\Sigma({\mathbf x}_{\infty}^+)_{a_1}$
and $\Sigma({\mathbf x}_{\infty}^+)_{a_2}$
contain circles $C_{a_1}$ and $C_{a_2}$, respectively, which are put in Step 2-2, and suppose that
$\Sigma({\mathbf x}^{(j)})_{a_1'}=\Sigma({\mathbf x}^{(j)})_{a_2'}$,
i.e., this component  degenerates to a nodal curve including
$\Sigma({\mathbf x}_{\infty}^+)_{a_1}$ and $\Sigma({\mathbf x}_{\infty}^+)_{a_2}$.
Since $C_{a'_1}^{(j)}$ is a circle which is connected, it passes through the neck region corresponding to
the node between $\Sigma({\mathbf x}_{\infty}^+)_{a_1}$ and
$\Sigma({\mathbf x}_{\infty}^+)_{a_2}$.
Hence $C_{a_1}$ and $C_{a_2}$ pass through the node, which is smoothable in the sense of
Definition \ref{smoothable}.
Case 2-2 is the case that one of $C_{a_1}$ and $C_{a_2}$ is a circle passing through
the node and the other is empty.
\end{rem}

After these processes, we obtain ${\mathbf x}_{\infty}^{\prime \prime}$ as the candidate of the limit of
${\mathbf x}^{(j)}$.
By the construction, ${\mathbf x}_{\infty}^{\prime \prime}$ is equipped with
an admissible system of circles.

\begin{rem}
By our construction, in particular, Step 3 Case 1-2 and Case 2-2, we find
$${\mathfrak{c}}({\mathbf x}_{\infty}^{\prime \prime}) \preceq {\mathfrak{c}}({\mathbf x}^{(j)}).$$
\end{rem}

Now we show the following

\begin{lem}\label{lemcomp}
There exists a subsequence of ${\mathbf x}^{(j)}$, which  converges to ${\mathbf x}_{\infty}^{\prime \prime}$
in ${\mathcal M}_3^{\text{\rm sph}}(J_N;\alpha;{\mathcal C})$.
\end{lem}

\begin{proof}
We will add suitable marked points on ${\mathbf x}^{(j)}$ to obtain $\tilde{\mathbf x}^{(j)}$.
For an irreducible component in ${\mathbf x}^{(j)}$, which becomes unstable when forgetting the circle, we added the marked point $z_a^{(j),++}$ in Step 1.
Since such an irreducible component contains three special points, i.e., nodes or marked points and the holomorphic map is constant on the irreducible component,
the limit  ${\mathbf x}_{\infty}^+$ must contain an irreducible component of the same type.
We add one more marked point on the circle on the component in ${\mathbf x}^{(j)}$ and ${\mathbf x}_{\infty}^{\prime \prime}$, respectively,
so that the four special  points on the component have the fixed cross ratio.  (In total, we add two marked points on the circle in this case.)

We put additional marked points on other irreducible components as follows.
If an irreducible component does not contain a circle in the admissible system, we do not put additional marked points.
Then the remaining components are either those discussed in Step 2 or those discussed in Step 3.
We deal with them separately.

For an irreducible component in Case 1 of Step 2-2, we have three marked points $p_1, p_2, p_3$ on the circle $C_a$.
We pick $p_i^{(j)} \in \phi_k^{(j)}(U(p_i)) \cap C_{a'}^{(j)}$ , $i=1,2,3$.  (Here we added three marked points on the circle.)

For an irreducible component in Case 2 of Step 2-2, we have a special point $q$ on the new attached irreducible component $\Sigma({\mathbf x}_{\infty}^{\prime \prime})_b$.
For sufficiently large $j$, the circle $C^{(j)}_{a'}$ is contained in $\phi^{(j)}_k(U(p))$.
By the admissibility of the circle system, there should be a special point $q^{(j)}$ on $C^{(j)}_{a'}$.
We put two more marked points $q_1 ,q_2$ on the circle $C_b$ on $\Sigma({\mathbf x}_{\infty}^{\prime \prime})_b$.
We choose $p^{(j)} \notin \phi_k^{(j)}(U(p))$ which converges to some $p' \in \Sigma({\mathbf x}_{\infty}^{\prime \prime})_a$.
We choose $q_1^{(j)}, q_2^{(j)} \in C^{(j)}_{a'}$ in such a way that the cross ratio of $p^{(j)}, q^{(j)}, q_1^{(j)}, q_2^{(j)}$ is equal to
the one of $\overline{p}, q, q_1, q_2$ for any $j$.
Here $\overline{p}$ is the node, where we attach
$\Sigma({\mathbf x}_{\infty}^{\prime \prime})_b$
to $\Sigma({\mathbf x}_{\infty}^{\prime \prime})_a$ in Case 2 of Step 2-2.
(In this case, we add two marked points.)

For each newly inserted component $\Sigma({\mathbf x}_{\infty}^{\prime \prime})_c$
in Step 3, we add additional two marked points $q_1, q_2 \in C_c$  as follows.
Firstly, we consider Case 1.
Let $\Sigma({\mathbf x}_{\infty}^+)_a$ and $\Sigma({\mathbf x}_{\infty}^+)_b$ be the irreducible components of
$\Sigma({\mathbf x}_{\infty}^+)$, which intersect at $p$
and let $\Sigma({\mathbf x}^{(j)+})_a$ and $\Sigma({\mathbf x}^{(j)+})_b$ be the irreducible components of
$\Sigma({\mathbf x}^{(j)+})$, which intersect at $p^{(j)}$.
Here we arrange that $p$ (resp. $p^{(j)}$) is the root node of $\Sigma({\mathbf x}_{\infty}^+)_b$ (resp. $\Sigma({\mathbf x}^{(j)+})_b$).
The new component $\Sigma({\mathbf x}_{\infty}^{\prime \prime})_c$ is inserted between $\Sigma({\mathbf x}_{\infty}^+)_a$ and $\Sigma({\mathbf x}_{\infty}^+)_b$.
We denote by $\tilde{p}$  the node of $\Sigma({\mathbf x}_{\infty}^{\prime \prime})_c$, which has the same combinatorial type as the node $p^{(j)}$ (see
Definition \ref{combtype} for the definition of the combinatorial type of a node) and by ${\tilde p}'$ the other node of $\Sigma({\mathbf x}_{\infty}^{\prime \prime})_c$.
Let $d=a \text{\rm ~or~} b$ such that $\tilde p$ is (resp. is not) the root of $\Sigma({\mathbf x}_{\infty}^{\prime \prime})_c$  if and only if
$p^{(j)}$ is (resp. is not) the root of $\Sigma({\mathbf x}^{(j)+})_d$.
We discuss the following two cases separately:

\begin{enumerate}
\item[({\rm a})] $\tilde p \in C_c$ and ${\tilde p}' \notin C_c$,
\item[({\rm b})] $\tilde p \notin C_c$ and ${\tilde p}' \in C_c$.
\end{enumerate}

Pick and fix $k$.  Then we have $V_k$ and $\phi^{(j)}_k: \Sigma({\mathbf x}_{\infty}^+) \setminus V_k \to \Sigma({\mathbf x}^{(j)+})$ as in the beginning of
Step 2.
In Case (a), we find that $p^{(j)} \in C_d^{(j)}$.  Pick $q_1, q_2 \in C_c \setminus \{\tilde p\}$.
We choose points $q_1^{(j)}, q_2^{(j)}$ on $C_{d}^{(j)}$ as follows.
Pick $\tilde{p}^{\prime (j)}$ on $\Sigma({\mathbf x}^{(j)})_{d} \cap \phi_k^{(j)}(\Sigma({\mathbf x}_{\infty}^+) \setminus V_k)$.  
Then we take $q_1^{(j)}, q_2^{(j)} \in C_d^{(j)}$ such that
the cross ratios of $p^{(j)}, q_1^{(j)}, q_2^{(j)}, \tilde{p}^{\prime (j)}$ are
equal to the one of $\tilde{p}, q_1, q_2, \tilde{p}'$.

In Case (b), we find that $p^{(j)} \notin C_d^{(j)}$.  Pick $q_1, q_2 \in C_c \setminus \{\tilde p' \}$.
We choose points $q_1^{(j)}, q_2^{(j)}$ on $C_{d}^{(j)}$ as follows.
Pick $\tilde{p}^{\prime (j)}$ on $C_{d}^{(j)} \cap \phi_k^{(j)}(\Sigma({\mathbf x}_{\infty}^+) \setminus V_k)$.
Then we take $q_1^{(j)}, q_2^{(j)} \in C_d^{(j)}$ such that
the cross ratios of $p^{(j)}, q_1^{(j)}, q_2^{(j)}, \tilde{p}^{\prime (j)}$ are
equal to the one of $\tilde{p}, q_1, q_2, \tilde{p}'$.

Next we consider Case 2.
Let $\Sigma({\bf x}_{\infty}^+)_{a_1}$ and $\Sigma({\bf x}_{\infty}^+)_{a_2}$ be the irreducible components,
which share the node $p$.
Pick distinct three points $q_1,q_2,q_3$ on the newly inserted components so that none of them are nodes.
We note that the circle $C_{a'}^{(j)}$ intersects at least one of
$\phi^{(j)}_k(\Sigma({\mathbf x}_{\infty}^+)_{a_i} \setminus V_k)$, $i=1,2$.
From now on, we fix such a $k$ and denote it by $k_0$.
We may assume that  $\phi^{(j)}_{k_0}(\Sigma({\bf x}_{\infty}^+)_{a_1} \setminus V_{k_0})$ intersects $C_{a'}^{(j)}$.
(The other case is similar.)
Then  Case 2 in Step 2-2 is applied to  $\Sigma({\mathbf x}_{\infty}^+)_{a_2}$.
Pick $p' \in C_a \cap \Sigma({\bf x}_{\infty}^+)_{a_1} \setminus V_{k_0}$.
For all sufficiently large $j$, we arrange the neck region $V_{\text{\rm neck}, p}^{(j)}$,
which is a connected component of the complement of
$\phi^{(j)}_{k_0}(\Sigma({\bf x}_{\infty}^+) \setminus V_{k_0})$
and degenerates to a neighborhood of the node $p$, as follows.
Pick and fix a suitable biholomorphic map
$\varphi^{(j)}: \Sigma({\bf x}^{(j)})_{a'} \to {\bf C}P^1$  such that
\begin{itemize}
\item $V_{\text{\rm neck}, p}^{(j)}$ is mapped to an annulus
$\{z \in \C ~  \vert ~ r^{(j)} < \vert z \vert < R^{(j)}\}$ for some $0 < r^{(j)} < 1/2, R^{(j)} >1$,
\item
$(\varphi^{(j)})^{-1}(\{z \in \C ~  \vert ~  \vert z \vert < r^{(j)} \})$ contains
 $\phi^{(j)}_k(\Sigma({\bf x}_{\infty}^+)_{a_1} \setminus V_k)$.
\end{itemize}
By applying a  dilation fixing $0, \infty$, we may assume that the circle $C^{(j)}_{a'}$ is tangent to the unit circle
$\{z \in \C  ~ \vert ~ \vert z \vert =1\}$.
Since $C^{(j)}_{a'} \cap \phi^{(j)}_k(\Sigma({\bf x}^+_{\infty})_{a_1} \setminus V_k)$ is not empty for any $k$,
we find that $r^{(j)}$ tends to $0$.
Similarly, since for each given $k$,  $C^{(j)}_{a'}$ does not intersect
$\phi^{(j)}_k(\Sigma({\bf x}^+_{\infty})_{a_2} \setminus V_k)$ for all sufficiently large $j$,
the number $R^{(j)}$ tends to $+\infty$.
Pick $p^{\prime (j)} \in C^{(j)}_{a'}$ such that $\vert \varphi^{(j)}(p^{\prime (j)}) \vert < r^{(j)}$ and
${\bf x}_j^+$ with $p^{\prime (j)}$ added converges to ${\bf x}_{\infty}^+$ with $p'$ added.
We pick $q^{(j)}_1, q^{(j)}_2, q^{(j)}_3$ on $C^{(j)}_{a'}$ such that
$\vert \varphi^{(j)}(q^{(j)}_1) \vert =1$, $\vert \varphi^{(j)}(q^{(j)}_2) \vert =1/2$
and the cross ratios of $p^{\prime (j)}, q^{(j)}_1, q^{(j)}_2, q^{(j)}_3$ are the same as the cross ratio of
$p', q_1, q_2, q_3$.

After adding those new marked points, we obtain $\tilde{\mathbf x}^{(j)}$ for all sufficiently large $j$ and $\tilde{\mathbf x}_{\infty}^{\prime \prime}$
in ${\mathcal M}_{3+L}^{\text{\rm sph}}(J_N;\alpha)$ for some $L$.
By the choice of those points, we find that $\tilde{\mathbf x}^{(j)}$ converges to $\tilde{\mathbf x}_{\infty}^{\prime \prime}$ in 
${\mathcal M}_{3+L}^{\text{\rm sph}}(J_N;\alpha)$.
By Definition \ref{top}, ${\mathbf x}^{(j)}$ converges to
${\mathbf x}^{\prime \prime}_{\infty}$ in
${\mathcal M}_{3}^{\text{\rm sph}}(J_N;\alpha; {\mathcal C})$.
\end{proof}
This finishes the proof of Proposition \ref{seqcpt}.
\end{proof}

Now we have

\begin{thm}
The moduli space ${\mathcal M}_3^{\text{\rm sph}}(J_N;\alpha;{\mathcal C})$ is compact and Hausdorff.
\end{thm}

\begin{proof}
Since ${\mathcal M}_3^{\text{\rm sph}}(J_N;\alpha;{\mathcal C})$ satisfies the second axiom of countability,
Proposition \ref{seqcpt} implies the compactness.
Then the Hausdorff property follows in the same way as in Lemma 10.4 in \cite{FO}.
\end{proof}

\begin{thm}\label{KuraonMsph}
The moduli space $\mathcal M^{\text{\rm sph}}_3(J_N;\alpha;\mathcal C)$ carries a Kuranishi structure.
\end{thm}

\begin{proof}
We construct a Kuranishi structure on
$\mathcal M^{\text{\rm sph}}_3(J_N;\alpha;\mathcal C)$ in the same way as  the case of moduli space of stable maps, \cite{FO}
see \cite{foootech} Part 3 and Part 4 for more details.

Our strategy to construct Kuranishi structures on the moduli space of bordered stable maps with admissible system of
circles is to reduce the construction to the one for moduli spaces of bordered stable maps with marked points
by putting a suitable number of points on circles.
The only points, which we have to take care of, are the following two points.

The first point to be taken care of is the way how to deal with the admissible system of circles in terms of additional marked points on the domain curve.
An element in $\mathcal M^{\text{\rm sph}}_3(J_N;\alpha;\mathcal C)$ is a stable map with an admissible system of circles.  
For $\mathbf x \in {\mathcal M}^{\text{\rm sph}}_3(J_N;\alpha;\mathcal C)$, we put suitable marked points on circles in the admissible system of 
circles to obtain an element $\tilde{\mathbf x} \in {\mathcal M}_{3+L}^{\text{\rm sph}}(J_N;\alpha)$ and 
\begin{eqnarray}
\pi^L_{\tilde{\mathbf x}} : {\mathcal U}(\tilde{\mathbf x})  \to {\mathcal M}_{3}^{\text{\rm sph}}(J_N;\alpha;{\mathcal C}) \nonumber \\
(\Sigma, \vec{z}\,^+, u) \mapsto (\Sigma, \vec{z}, \{ C_a\}, u). \nonumber
\end{eqnarray}
(See Lemma \ref{add}.)
Note that  $\pi^L_{\tilde{\mathbf x}}$ is not injective.
For ${\mathbf x} \in  {\mathcal M}_{3}^{\text{\rm sph}}(J_N;\alpha;{\mathcal C})$,
we take a subspace of a Kuranishi neighborhood of $\tilde {\mathbf x} \in {\mathcal U}(\tilde{\mathbf x})$ as we explain in the next paragraph
to obtain a Kuranishi neighborhood of ${\mathbf x}$.

If an irreducible component $\Sigma_a \subset \Sigma$ contains a circle $C_a \neq \emptyset$, $C_a$ must contain
at least one special point, i.e., a node or a marked point.  
Note that $C_a$ is an oriented circle on $\Sigma_a$.  
If a holomorphic automorphism $\varphi$ of $\Sigma_a$ is of finite order preserving $C_a$ and 
its orientation and if $\varphi$ fixes a point on $C_a$, then $\varphi$ must be the identity.  
Hence the stabilizer of this component must be trivial.
If the number of special points on $C_a$ is less than $3$,  we take the minimal number of marked points on $C_a$
in such a way that the total number of special points  is $3$.
Let $p_1, \dots, p_c$ be nodes on $C_a$ and $w_1, \dots, w_k$ marked points on $C_a$.
Then, for each marked point  $w_j$ on $C_a$, we choose a short embedded arc  $A_{w_j}$ on $\Sigma_a$ which is transversal to $C_a$ at  $w_j$.  
We allow to move the marked point $w_j$ to  $w'_j$ on $A_{w_j}$ such that
$p_1, \dots, p_c, w'_1, \dots, w'_k$ lie on a common circle.
This last condition is expressed using the cross ratio and
these constraints cut out the set of such $w'_1, \dots, w'_k$ transversally.
Thus if we restrict $\pi^L_{\tilde{\mathbf x}}$ to the subset of ${\mathcal U}(\tilde{\mathbf x})$
such that the extra $L$ marked points hit
$A_{w_j}$, the restricted map is injective.
(More precisely a similar map on the Kuranishi neighborhood is
injective.)
Therefore we can use this subset of Kuranishi neighborhood of
${\mathcal U}(\tilde{\mathbf x})$
as a Kuranishi neighborhood of ${\mathcal M}^{\text{\rm sph}}_3(J_N;\alpha;\mathcal C)$.

The second point to be taken care of is about the gluing construction.
We use gluing construction following Part 3 and Part 4 of \cite{foootech} at smoothable nodes.
Let $p$ be a smoothable node.
If no circle in the admissible system of circles  passes through $p$,  we use the gluing construction as in the case of stable maps.
Otherwise, we proceed as follows.
Let $\Sigma_a$ and $\Sigma_b$ intersect at the node $p$.  Then the circles $C_a$ and $C_b$ contain $p$.
In order to perform gluing, we need {\it coordinate at infinity}, Definition 16.2 in \cite{foootech}.
For a stable map with an admissible system of circles, we use a coordinate at infinity adapted to the circle system as follows.   
We pick a complex local coordinate $\xi$, $\eta$ on $\Sigma_a$, resp., $\Sigma_b$, around $p$ such that
$C_a$, resp., $C_b$, with the given orientation corresponds to the real line oriented from $-\infty$ to $+\infty$
in the $\xi$-plane, resp. the $\eta$-plane.
For the gluing construction,
we use the gluing parameter $T \in [T_0, \infty]$ for a sufficiently large $T_0  >0$ such that the $\xi$-plane and $\eta$-plane are glued by
$\xi \cdot \eta = - e^{-T}$.
In this case the parameter to smooth this node is $[T_0,\infty)$.
Therefore such a point corresponds to a boundary. (Or corner if there are more such points.)
We remark that in the case when there is no circle on the node the
parameter to smooth this node is $[T_0,\infty) \times S^1$.
Then the construction of a Kuranishi structure goes through as in Part 3 and Part 4 in \cite{foootech}.
\end{proof}

Once we have Kuranishi structures on ${\mathcal M}_3^{\text{\rm sph}}(J_N;\alpha)$ and ${\mathcal M}_3^{\text{\rm sph}}(J_N;\alpha;{\mathcal C})$,
we also have Kuranishi structures on their fiber products with singular chains $P_i$'s.

For $\mathbf x = (\Sigma({\mathbf x}), \vec{z}, \{ C({\mathbf x})_a \}, u:\Sigma({\mathbf x}) \to N)
\in \mathcal M^{\text{\rm sph}}_3(J_N;\alpha;\mathcal C)$, we set
\begin{eqnarray}
 n({\mathbf x})&= &\text{the number of irreducible components of $\Sigma({\mathbf x})$},  \nonumber \\
 n_{I,\mathfrak p}({\mathbf x})&= &\text{the number of irreducible components $\Sigma({\mathbf x})_a$ of Type I-2 in $\Sigma({\mathbf x})$,}
\nonumber \\
& & \text{such that $C({\mathbf x})_a \neq \emptyset$  does not contain the root node},  \nonumber \\
n_{I, \text{\rm bubble}}({\mathbf x})&= &\text{the number of irreducible components $\Sigma({\mathbf x})_a$ of Type I-2 in $\Sigma({\mathbf x})$,}
\nonumber \\
& & \text{such that $C({\mathbf x})_a$ contains the root node}, \nonumber \\
n_{I, \emptyset}({\mathbf x}) &= &\text{the number of irreducible components $\Sigma({\mathbf x})_a$ of Type I-2 in $\Sigma({\mathbf x})$,}
\nonumber \\
& & \text{such that $C({\mathbf x})_a = \emptyset $}, \nonumber \\
n_{II,\text{\rm circ}}({\mathbf x})&= &\text{the number of irreducible components $\Sigma({\mathbf x})_a$ of Type II in
$\Sigma({\mathbf x})$,}
\nonumber\\
& & \text{such that $C({\mathbf x})_a$ is non-empty}. \nonumber \\
 n_{II, \emptyset}({\mathbf x}) & = & \text{the number of irreducible components $\Sigma({\mathbf x})_a$ of Type II,} \nonumber \\
 & & \text{such that $C({\mathbf x})_a$ is empty.}  \nonumber
\end{eqnarray}
Since these numbers depend only on the combinatorial type $\mathfrak c$, we also denote them by
$n({\mathfrak c})$, $n_{I,\mathfrak p}({\mathfrak c})$, $n_{I,\text{\rm bubble}}({\mathfrak c})$, $n_{I, \emptyset}({\mathfrak c})$,
$n_{II,\text{\rm circ}}({\mathfrak c})$, $n_{II, \emptyset}({\mathfrak c})$.
Note that
$$n({\mathfrak c}) =1+n_{I, \mathfrak p}({\mathfrak c}) + n_{I, \text{\rm bubble}}({\mathfrak c}) +
n_{I, \emptyset}({\mathfrak c}) + n_{II, \text{\rm circ}}({\mathfrak c}) + n_{II, \emptyset}({\mathfrak c}),$$
because there always exists a unique irreducible component of Type I-1.
Then we find the following proposition.
The proof is easy so omitted.
\begin{prop}\label{virdimcombtype}
\begin{enumerate}
\item The virtual codimension $\text{\rm vcd}({\mathfrak c})$ of the stratum with the combinatorial type $\mathfrak c$ is equal to
\begin{eqnarray}
& &   2\left( n({\mathfrak c} \right)-1) - 2 n_{I,\mathfrak p}({\mathfrak c}) - n_{I,\text{\rm bubble}}({\mathfrak c}) -
n_{II,\text{\rm circ}}({\mathfrak c})  \nonumber \\
& = &  n_{I,\text{\rm bubble}}({\mathfrak c}) +  2 n_{I, \emptyset}({\mathfrak c}) + n_{II, \text{\rm circ}}({\mathfrak c}) + 2 n_{II, \emptyset}({\mathfrak c}) ,
\nonumber
\end{eqnarray}
which is non-negative.
\item $\text{\rm vcd}({\mathfrak c})=0$ if and only if $n({\mathfrak c})=n_{I,\mathfrak p}({\mathfrak c})+1$.
Namely, all irreducible components are of Type I with non-empty circles and  the circle on each component of Type I-2 does not contain
the root node.

\item $\text{\rm vcd}({\mathfrak c})=1$ if and only if either
Case (A) $n({\mathfrak c})=n_{I,\mathfrak p}({\mathfrak c})+2$ and
$n_{I,\text{\rm bubble}}({\mathfrak c})=1$ or Case (B) $n({\mathfrak c})=n_{I,\mathfrak p}({\mathfrak c})+2$ and $n_{II,\text{\rm circ}}({\mathfrak c})=1$.
Namely, either all irreducible components are of Type I with non-empty circles and there is exactly one irreducible component $\Sigma_a$ of
Type I-2 such that the circle $C_a$ contains the root node of $\Sigma_a$, or
there is exactly one Type II component $\Sigma_a$ with $C_a \neq \emptyset$, all others are of Type I with non-empty circles and the circle
$C_a$ on each irreducible component of Type I-2 does not contain the root node.
\end{enumerate}
\end{prop}

Proposition \ref{virdimcombtype} describes combinatorial types $\mathfrak c$ such that the corresponding
strata in $\mathcal M^{\text{\rm sph}}_3(J_N;\alpha;\mathcal C)$ is codimension $1$.
There are two cases:

(A)  $n({\mathfrak c})=n_{I,\mathfrak p}({\mathfrak c})+2$ and $n_{I,\text{\rm bubble}}({\mathfrak c})=1$

(B) $n({\mathfrak c})=n_{I,\mathfrak p}({\mathfrak c})+2$ and $n_{II,\text{\rm circ}}({\mathfrak c})=1$.

Case (A) and Case (B) are treated in a different way.  Firstly, we consider Case (A).

Note that the stable map is constant on the irreducible components explained in Remark \ref{remcircle} (2).
By our convention, we do not put obstruction bundles on these components.
Therefore we can identify the following two codimension 1 boundary components equipped with Kuranishi structures:
\begin{enumerate}
\item A Type I component splits into two irreducible components.
\item A Type I circle $C_a$ meets an inward interior marked point and an irreducible component of case (i) in Definition \ref{unstwocirc}
is inserted at the node of the two irreducible components.
\end{enumerate}
See Figure 5 which illustrates an example with $n_{I,\mathfrak p}({\mathfrak c})=0$.
\par
\centerline{
\epsfbox{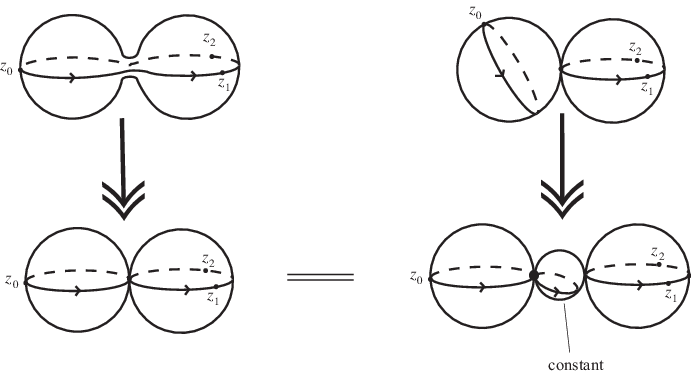}}
\par
\centerline{\bf Figure 5}
\par\smallskip
These two strata are glued to cancel codimension 1 boundaries.
See Remark \ref{inversionTypeII} (2) for the cancellation with orientation.
This is a key geometric idea to see the equality in Lemma \ref{pmainformula}.
From now on, we denote by $${\mathcal M}_3^{\text{\rm sph}}(J_N;\alpha;{\mathcal C})$$
the moduli space with the codimension $1$ boundaries of (1) and (2)  identified
as above.

The remaining codimension 1 boundary components are
Case (B), i.e., those with non-empty Type II circles in the admissible system of circles,
which correspond to codimension 1 disc bubbling phenomenon in
 ${\mathcal M}^{\text{\rm main}}_3(J_{N\times N};\beta)$.
We will study these codimension 1 boundary components in Lemma \ref{forgettypeII}.
See Remark \ref{inversionTypeII}  (3), (4) and Subsection 6.3.1 for the cancellation with orientation in Case (B).

\subsubsection{Proof of Theorem  \ref{Proposition34.25} (2), IV: completion of the proof}\label{6.5}
In this subsection we prove \eqref{diagmaineq}.
First of all, we recall
the following lemma, which is a well-known fact on the moduli space of pseudo-holomorphic spheres
which is used in the definition of quantum cup product \cite{FO}.
Let $\rho \in \pi_2(N)/\sim$, where the equivalence relation $\sim$ was defined in Definition \ref{equivonpi2}.
For given $\rho$ and cycles  $P_0,P_1,P_2$ in $N$,  we defined $\mathcal M^{\text{\rm sph}}_3(J_N;\rho;P_1,P_2,P_0)$ in Definition \ref{notationmoduli}.
\begin{lem}\label{moduliqp}
The moduli space $\mathcal M^{\text{\rm sph}}_3(J_N;\rho;P_1,P_2,P_0) $
carries a Kuranishi structure ${\mathfrak K}_0$ and a multisection $\mathfrak s_0$ such that
$$
\sum_{\rho}
\#\left(\mathcal M^{\text{\rm sph}}_3(J_N;\rho;P_1,P_2,P_0)
\right)^{\mathfrak s_0} T^{\omega(\rho)}e^{c_1(N)[\rho]}
= \langle
PD[P_1] *  PD[P_2],PD[P_0]\rangle
$$
where the sum is taken over $\rho$ for which the virtual dimension of
$\mathcal M^{\text{\rm sph}}_3(J_N;\rho;P_1,P_2,P_0)$ is zero.
\end{lem}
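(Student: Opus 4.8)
The plan is to recognize this statement as, in essence, the definition of the genus zero three point Gromov--Witten invariant of $N$ combined with the definition of the quantum cup product, and to assemble it from the standard virtual fundamental cycle technology of \cite{FO}. First I would invoke the construction of \cite{FO} (see also Section 7.1 of \cite{fooobook}) to equip the compactified moduli space $\mathcal M^{\text{\rm sph}}_3(J_N;\alpha)$ of genus zero stable maps to $N$ with three marked points in class $\alpha$ with a Kuranishi structure whose virtual real dimension is $\dim_{\mathbb R} N + 2c_1(N)[\alpha]$, chosen so that the evaluation maps $(ev_1,ev_2,ev_0)\colon \mathcal M^{\text{\rm sph}}_3(J_N;\alpha) \to N^3$ are weakly submersive; the canonical complex orientation of the smooth stratum $\mathcal M^{\text{\rm sph,reg}}_3(J_N;\alpha)$ then propagates to an orientation of the whole space as in Section 16 of \cite{FO}. (Here no relative spin structure is needed, since everything is canonically oriented over $\mathbb C$.) Taking the fiber product with the singular chains $f_i\colon P_i \to N$ along $(ev_1,ev_2,ev_0)$ produces $\mathcal M^{\text{\rm sph}}_3(J_N;\alpha;P_1,P_2,P_0)$ with an induced oriented Kuranishi structure of virtual dimension $\dim_{\mathbb R} N + 2c_1(N)[\alpha] - \sum_i \deg P_i$, and summing over $\alpha$ in a fixed class $\rho \in \pi_2(N)/\sim$ yields $\mathcal M^{\text{\rm sph}}_3(J_N;\rho;P_1,P_2,P_0)$.

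Next I would choose the multisection. For each $\rho$, pick a multisection $\frak s_0$ of this Kuranishi space transverse to the zero section, compatible with the fiber product decomposition over the boundary strata by the usual induction on energy and on the number of irreducible components, exactly as in Sections 3.2--3.3 of \cite{fooobook}; such a choice exists by the general theory. When the virtual dimension is zero the perturbed zero locus is a finite set of points carrying rational weights $\text{mul}$, and Gromov compactness together with compactness of the chains $P_i$ guarantees that only finitely many $\rho$ contribute below any fixed energy level, so the left hand side is a well-defined element of $\Lambda^{\mathbb Q}_{0,nov}$. By the definition of the weighted count (Definition A1.27 of \cite{fooobook}), $\#\bigl(\mathcal M^{\text{\rm sph}}_3(J_N;\rho;P_1,P_2,P_0)\bigr)^{\frak s_0}$ is the virtual intersection number of the virtual fundamental cycle of $\mathcal M^{\text{\rm sph}}_3(J_N;\rho)$ with $P_1\times P_2\times P_0$, which by the projection formula equals
$$
\bigl(ev_0^*PD[P_0]\cup ev_1^*PD[P_1]\cup ev_2^*PD[P_2]\bigr)\bigl[\mathcal M^{\text{\rm sph}}_3(J_N;\rho)\bigr],
$$
i.e.\ the genus zero three point Gromov--Witten invariant $\langle PD[P_1],PD[P_2],PD[P_0]\rangle_{0,\rho}$.

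Finally I would compare this with the definition of the quantum cup product recalled just above the lemma, namely $\langle PD[P_1]*PD[P_2],PD[P_0]\rangle = \sum_{\rho}\bigl(ev_0^*PD[P_0]\cup ev_1^*PD[P_1]\cup ev_2^*PD[P_2]\bigr)[\mathcal M^{\text{\rm sph}}_3(J_N;\rho)]\,T^{\omega(\rho)}e^{c_1(N)[\rho]}$, which then gives the asserted identity term by term. The hard part --- to the extent there is one, since this is a classical fact --- is purely bookkeeping: matching the orientation conventions so that the signed weighted count agrees with the Gromov--Witten invariant and hence with the structure constant of $*$ carrying exactly the sign used in the preceding $\epsilon$-sign computation, and checking that the multisections can be chosen consistently over all boundary strata. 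Both points are handled by the standard arguments of \cite{FO} and \cite{fooobook}; independence of the count from $J_N$ and from $\frak s_0$ follows from the usual cobordism argument, which simultaneously identifies the number with the deformation-invariant Gromov--Witten invariant.
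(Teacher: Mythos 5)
Your proposal is correct and matches the approach the paper implicitly intends: the paper in fact gives \emph{no} proof of this lemma, instead dismissing it with the remark that it ``is a well-known fact on the moduli space of pseudo-holomorphic spheres which is used in the definition of quantum cup product \cite{FO}''. What you have written out — construct the Kuranishi structure on $\mathcal M^{\text{sph}}_3(J_N;\alpha)$ via \cite{FO} with weakly submersive evaluation maps and the canonical complex orientation, fiber-product with the $P_i$, sum over $\alpha\in\rho$, choose a transversal multisection compatible with the boundary strata, and identify the resulting weighted count with the genus-zero three-point Gromov--Witten invariant and hence with the structure constant of $*$ — is precisely the standard argument from \cite{FO} that the paper is taking for granted, including the correct observation that no relative spin structure is needed because of the canonical complex orientation on the sphere moduli space.
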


Now we consider the moduli space used to define
the left hand side of (\ref{diagmaineq}).
Let $\vec \beta = (\beta_{1},\ldots,\beta_{{k}}), \beta_{j} \neq 0 \in \Pi (\Delta_N)$.  
Set $\text{length}(\vec{\beta)}=k$.  
We define $\mathcal M_{1;1}(J_{N\times N};\vec \beta;P)$ by induction on $\text{length}(\vec{\beta})$. 
Firstly, we consider the moduli space 
$\mathcal M_{1,(1,0)}(J_{N\times N};\beta)$ of bordered stable maps 
representing the class $\beta$ attached to $(N\times N, \Delta_N)$ with one interior marked point and 
one boundary marked point.   
Here to specify the interior marked point as an output marked point
we use the notation $\mathcal M_{1,(1,0)}(J_{N\times N};\beta)$ 
used in Subsection 8.10.2 \cite{fooobook2}. 
See the line just before Definition 8.10.2 \cite{fooobook2} where  
the orientation on $\mathcal M_{1,(1,0)}(J_{N\times N};\beta)$ is given.
We denote by $z_1$ the first (only one) boundary marked point.
Then we define 
$$
\mathcal M_{1,{(1,0)}}(J_{N\times N};\beta;P)=\mathcal M_{1,(1,0)}(J_{N\times N};\beta)_{ev_1}\times P.  
$$
This is a special case of Definition 8.10.2 in \cite{fooobook2} with $k=1, \ell=0$ and 
the sign is $(-1)^{\epsilon}=+1$ in this case.
\par
When $\text{length}(\beta)=1$, i.e., $\vec{\beta}=(\beta_{1})$, we set 
\begin{equation} \label{M_{1,1}}
\mathcal M_{1;1}(J_{N\times N};\vec \beta;P)
= -\mathcal M_{1,(1,0)}(J_{N\times N}; \beta_1;P).
\end{equation}
Here we reversed the orientation of $\mathcal M_{1,(1,0)}(J_{N\times N}; \beta_1;P)$ so that 
it is compatible with Definition \ref{P_beta} in the case that $k=1$. 
\par
Suppose that the orientation of 
$\mathcal M_{1;1}(J_{N\times N};\vec \beta;P)$ is given for $\text{length}(\vec{\beta}) \leq k$.  
For $\vec{\beta}=(\beta_1, \dots, \beta_{k+1})$, we write 
$\vec{\beta}^-=(\beta_2, \dots, \beta_{k+1})$.  
Then we define 
\begin{equation}\label{def:M11}
\aligned
\mathcal M_{1;1}(J_{N\times N};\vec \beta;P)
=
& - \mathcal M_{1,1}(J_{N \times N};\beta_1)_{ev_1} \times_{p_2 \circ ev_{int}} 
\mathcal M_{1,1}(J_{N \times N};\vec{\beta}^-;P).
\endaligned\end{equation}
Namely we reversed the orientation so that it is consistent with Definition \ref{P_beta} 
for each positive integer $k= \text{length}(\vec{\beta})$.  
We also denote by $\mathcal M_{1;1}(J_{N\times N};\vec \beta;P)$ the chain 
$$
p_2 \circ ev_{int}:\mathcal M_{1;1}(J_{N\times N};\vec \beta;P) \to \Delta_N,
$$ 
where $p_2(x,y)=(y,y)$.  
By an abuse of notation, we set 
$\mathcal M_{1,1}(J_{N \times N};\emptyset;P)=P$.  
From now on, $\vec{\beta}$ is either $\emptyset$ or $(\beta_1, \dots, \beta_k)$ with 
$\beta_j \neq 0$ for each $j=1, \dots, k$.  
For each $P_0,, P_1, P_2$ and $\vec{\beta}_0, \vec{\beta}_1, \vec{\beta}_2$, we define 
\begin{equation}\label{def:Mhat}
\aligned
&\widehat{\mathcal M}(J_{N\times N};\beta';\vec \beta_1,\vec \beta_2,\vec \beta_0;P_1,P_2,P_0)
\\
&= \mathcal M_{1,1}(J_{N \times N};\vec{\beta}_0;P_0)_{ev_1} \times_{ev_0} 
\mathcal M_3^{\text{\rm main}}(J_{N \times N}; \beta';\mathcal M_{1,1}(J_{N \times N};\vec{\beta}_1;P_1), 
\mathcal M_{1,1}(J_{N \times N};\vec{\beta}_2;P_2)).
\endaligned\end{equation}
Taking our Convention 8.2.1 (4) in \cite{fooobook2} and the pairing \eqref{signpairing} into account, 
the following is immediate from definition.
\begin{lem}\label{multiondisc}
There exist a Kuranishi structure ${\mathfrak K}_1$ on
$\widehat{\mathcal M}(J_{N\times N};\beta';\vec \beta_1,\vec \beta_2,\vec \beta_0;P_1,P_2,P_0)$
and a multisection $\mathfrak s_1$ with the
following properties. We denote by $n_\beta$ the sum of
$$
\# \left(
\widehat{\mathcal M}(J_{N\times N};\beta';\vec \beta_1,\vec \beta_2,\vec \beta_0;P_1,P_2,P_0)
\right)^{\mathfrak s_1}
$$
over $(\beta';\vec \beta_1,\vec \beta_2,\vec \beta_0)$ whose total sum
is $\beta$.  Then we have
\begin{equation}\label{eq627}
\langle\mathfrak m_2(\mathcal I(P_1),\mathcal I(P_2)),\mathcal I(P_0)\rangle
= \sum_{\beta} n_{\beta}T^{\omega(\beta)}e^{\mu(\beta)/2}.
\end{equation}
Moreover, The multi-section ${\mathfrak s}_1$ is invariant under the involution $\tau_*$ on
each disc component with bubble trees of spheres.
\end{lem}
The last statement follows from the fact that
the multi-section ${\mathfrak s}_1$  is constructed by induction on the energy of
the bordered stable maps keeping the invariance under $\tau_*$, see the explanation in Remark \ref{remonmult}.

Consider the union of
$\widehat{\mathcal M}(J_{N\times N};\beta';\vec \beta_1,\vec \beta_2,\vec \beta_0;P_1,P_2,P_0)$
over $(\beta';\vec \beta_1,\vec \beta_2,\vec \beta_0)$ such that
the total sum of $(\beta';\vec \beta_1,\vec \beta_2,\vec \beta_0)$ is $\beta$
whose double belongs to class $\rho \in\pi_2(N)/\sim$.
(See Remark \ref{bubbletree} (2) for the double of $\beta$.)
We glue them along virtual codimension one strata appearing in Case (A) in Proposition \ref{virdimcombtype} and denote it by 
\begin{equation}
\widehat{\mathcal M}(J_{N\times N};\rho;P_1,P_2,P_0).  \label{hatmoduli}
\end{equation}
See Sublemma \ref{sublemma} for the description of codimension one strata which we identify.  
Each $\widehat{\mathcal M}(J_{N\times N};\beta';\vec \beta_1,\vec \beta_2,\vec \beta_0;P_1,P_2,P_0)$ has Kuranishi structure in such a way that
we can glue them to obtain a Kuranishi structure on $\widehat{\mathcal M}(J_{N\times N};\rho;P_1,P_2,P_0)$ (see also Lemma \ref{frak s_2}). 
Namely we have
\begin{sublem}\label{sublemma}
The orientations of $\widehat{\mathcal M}(J_{N\times N};\beta';\vec \beta_1,\vec \beta_2,\vec \beta_0;P_1,P_2,P_0)$ are compatible and $\widehat{\mathcal M}(J_{N\times N};\rho;P_1,P_2,P_0)$ 
has an oriented Kuranishi structure.  
\end{sublem} 
\begin{proof} 
It is sufficient to see that two top dimensional strata adjacent along a stratum of codimension 1 induce 
opposite orientations on the stratum of codimension 1.  
\par
Firstly, we consider the case that a transition of strata occurs in one of $\mathcal M_{1,1}(J_{N \times N};\vec{\beta}_i;P_i)$, $i=0,1,2$.   
It suffices to check the compatibility of orientations inside $\mathcal M_{1,1}(J_{N \times N};\vec{\beta};P)$ 
for a given $\vec{\beta}$.  
Let $\vec{\beta}=(\beta_1, \dots, \beta_i, \beta_{i+1}, \dots, \beta_k)$, 
$\vec{\beta}_{(1)}=(\beta_1, \dots, \beta_i)$,  $\vec{\beta}_{(2)}=(\beta_{i+1}, \dots, \beta_k)$ 
and $\vec{\beta}'=(\beta_1, \dots, \beta_{i-1}, \beta_i + \beta_{i+1}, \beta_{i+2}, \dots, \beta_k)$.  
We define 
$$\mathcal M_2(J_{N \times N}; \vec{\beta}_{(2)};P)=\mathcal M_2(J_{N \times N}; \beta_{i+1}; 
\mathcal M_{1,1}(J_{N\times N}; (\beta_{i+2}, \dots, \beta_k);P)).$$  
By Proposition 8.10.4 \cite{fooobook2} with $\beta_1=0$, 
$k=k_2=1$ and $\ell_1=\ell_2=0$, \eqref{M_{1,1}} 
and the proof that $\mathfrak p_{1,0} \equiv i_! \mod \Lambda^+_{0, \text{nov}}$ in page 739 thereof,  we find that 
$$
\mathcal M_2(J_{N \times N};\vec{\beta}_{(2)};P) \subset (-1)^{\dim \Delta_N+1} \partial \mathcal M_{1,1}(J_{N \times N};\vec{\beta}_{(2)};P).
$$  
By Proposition 8.10.4 (2) \cite{fooobook2}, we obtain 
$$
\mathcal M_{1,1}(J_{N \times N};\vec{\beta}_{(1)};\mathcal M_2(J_{N \times N};\vec{\beta}_{(2)};P)) 
\subset (-1)^{\dim \Delta_N +1} \partial \mathcal M_{1,1}(J_{N \times N}; \vec{\beta};P).
$$ 
On the other hand, Proposition 8.10.4 (1) \cite{fooobook2} also implies that 
$$
\mathcal M_{1,1}(J_{N \times N};\vec{\beta}_{(1)};\mathcal M_2(J_{N \times N};\vec{\beta}_{(2)};P))
\subset (-1)^{\dim \Delta_N} \partial \mathcal M_{1,1}(J_{N \times N}; \vec{\beta}';P).
$$ 
Hence the orientations of $ \mathcal M_{1,1}(J_{N \times N}; \vec{\beta};P)$ and 
$\mathcal M_{1,1}(J_{N \times N}; \vec{\beta}';P)$ are compatible along 
$\mathcal M_{1,1}(J_{N \times N};\vec{\beta}_{(1)};\mathcal M_2(J_{N \times N};\vec{\beta}_{(2)};P))$.  
\par
Next we consider the remaining case, i.e., a transition of strata involving $\mathcal M_3(J_{N \times N};\beta')$.  
For $\vec{\beta}_1=(\beta_{1,1}, \dots, \beta_{1,k})$, we write $\vec{\beta}_1^-=(\beta_{1,2}, \dots, 
\beta_{1,k})$.  
The moduli spaces 
$\widehat{\mathcal M}(J_{N\times N};\beta';\vec \beta_1,\vec \beta_2,\vec \beta_0;P_1,P_2,P_0)$ 
and 
$\widehat{\mathcal M}(J_{N\times N};\beta' + \beta_{1,1};\vec \beta^-_1,\vec \beta_2,\vec \beta_0;P_1,P_2,P_0)$ 
are adjacent along a stratum of codimension 1 
\begin{equation}
\mathcal M_{1,1}(J_{N \times N};\vec{\beta}_0;P_0)_{ev_1} \times_{ev_0} 
\mathcal M_3(J_{N \times N}; \beta';\mathcal M_2(J_{N \times N};\vec{\beta}_1;P_1), 
\mathcal M_{1,1}(J_{N \times N};\vec{\beta}_2;P_2)). \label{cod1stratum}
\end{equation}
Instead of Proposition 8.10.4 (1), (2), we use Proposition 8.5.1 in \cite{fooobook2} and find that 
the orientations of $\widehat{\mathcal M}(J_{N\times N};\beta';\vec \beta_1,\vec \beta_2,\vec \beta_0;P_1,P_2,P_0)$ and $\widehat{\mathcal M}(J_{N\times N};\beta' + \beta_{1,1};\vec \beta^-_1,\vec \beta_2,\vec \beta_0;P_1,P_2,P_0)$ are compatible along the stratum given in \eqref{cod1stratum}.  
\par
The same argument applies to $\mathcal M_{1,1}(J_{N \times N}; \vec{\beta}_i;P_i)$, $i=0,2$.  
Hence the orientations of the moduli spaces $\widehat{\mathcal M}(J_{N\times N};\beta';\vec \beta_1,\vec \beta_2,\vec \beta_0;P_1,P_2,P_0)$ are compatible with one another and so defines 
$\widehat{\mathcal M}(J_{N\times N};\rho;P_1,P_2,P_0)$.  
\par
Hence we can glue oriented Kuranishi structures on  
$\widehat{\mathcal M}(J_{N\times N};\beta';\vec \beta_1,\vec \beta_2,\vec \beta_0;P_1,P_2,P_0)$ 
to obtain an oriented Kuranishi structure on $\widehat{\mathcal M}(J_{N\times N};\rho;P_1,P_2,P_0)$. 
\end{proof}

The map $\overset{\circ}{\mathfrak I}$ in \eqref{mapI} induces
\begin{equation}\label{Ireg}
{\mathfrak I}^{\text{\rm reg}}: {\mathcal M}_3^{\text{\rm main, reg}}(J_{N \times N}; \rho;P_1, P_2, P_0) \to
{\mathcal M}_3^{\text{\rm sph, reg}}(J_N;\rho; P_1, P_2, P_0),
\end{equation}
where we recall the definition 
$$
{\mathcal M}_3^{\text{\rm sph, reg}}(J_N;\rho ;P_1, P_2, P_0)= 
(-1)^{\deg P_1 \cdot \deg P_2} P_0 \times_{ev_0} \left({\mathcal M}_3^{\text{\rm sph, reg}}(J_N;\rho )_{(ev_1, ev_2)} \times_{N^2}(P_1 \times P_2)\right)
$$
from Definition \ref{notationmoduli}.  
We extend 
$\mathfrak J^{\text{\rm reg}}$ to a map 
\begin{equation}
\label{doublingmap}
\mathfrak I: \widehat{\mathcal M}(J_{N\times N};\rho;P_1,P_2,P_0)
\to
\mathcal M^{\text{\rm sph}}_3(J_N;\rho;\mathcal C;P_1,P_2,P_0)
\end{equation}  
defined on the full moduli space as follows.
Let $(p_i,(S_{i,j},(z_{i,j;0},z_{i,j;\text{\rm int}}),u_{i,j})_{j=1}^{k_i})$ be an element of
$\mathcal M_{1;1}(J_{N\times N};\vec \beta_i;P_i)$. Here
$p_i \in \vert P_i \vert$, $(S_{i,j},(z_{i,j;0},z_{i,j;\text{\rm int}}),u_{i,j}) \in \mathcal M_{1,1}(J_{N\times N};\beta_{i,j})$
such that
$$
f(p_i) = u_{i,1}(z_{i,1;0}), \,\,
u_{i,1}(z_{i,1;\text{\rm int}}) = u_{i,2}(z_{i,2;0}), \,\,
\ldots,\,\,
u_{i,k_i-1}(z_{i,k_i-1;\text{\rm int}}) = u_{i,k_i}(z_{i,k_i;0}),
$$
where $P_i$ is $(\vert P_i\vert,f)$, $\vert P_i\vert$ is a simplex,
and $f : \vert P_i\vert \to N$ is a smooth map.
\par
Suppose that $S_{i,j}$ is a disc component.
Writing $u_{i,j} = (u_{i,j}^+,u_{i,j}^{-})$, we obtain
a map $\widehat u_{i,j} : \Sigma_{i,j} \to N$ with
$\Sigma_{i,j}$ a sphere, the double of $S_{i,j}$.
For bubble trees, we goes as in Remark \ref{bubbletree} (1).
\par
We denote by $C_{i,j}\subset \Sigma_{i,j}$
the circle along which we glued two copies of $S_{i,j}$.
Then $\widehat u_{i,j}$ is defined by gluing  $u_{i,j}^+$ and $u_{i,j}^{-}\circ c$
along $C_{i,j}$ in $\Sigma_{i,j}$ where $c: \Sigma_{i,j} \to \Sigma_{i,j}$
is the conjugation with $C_{i,j}$ as its fixed point set.
\par
We glue $(\Sigma_{i,j},u_{i,j})$ and $(\Sigma_{i,j+1},u_{i,j+1})$ at $z_{i,j;\text{\rm int}}$
and $z_{i,j+1;0}$. Here we identify $z_{i,j;\text{\rm int}} \in S_{i,j}$ as the corresponding point
in $\Sigma_{i,j}$ such that it is in the disc bounding $C_{i,j}$.
We thus obtain a configuration of tree of spheres and system of circles on it,
for each $i=1,2,0$. We glued them with the double of an element
$\mathcal M^{\text{\rm main}}_3(J_{N\times N};\beta')$ in an obvious way. Thus we obtain the map
(\ref{doublingmap}).
(In case some of the sphere component becomes unstable we need to
shrink it. See the proof of Lemma \ref{Inotiso} below.)
\par
It is easy to see that $\mathfrak I$ is surjective.
\begin{lem}\label{Inotiso}
There is a subset ${\mathcal D}({\mathfrak I}) \subset \widehat{\mathcal M}(J_{N\times N};\rho;P_1,P_2,P_0)$ of
codimension $\geq 2$ such that
the map $\mathfrak I$ is an isomorphism outside ${\mathcal D}({\mathfrak I})$.
\end{lem}
\begin{proof}
We can easily check that the map $\mathfrak I$ fails to be an isomorphism only by the
following reason.
Let $((\Sigma^{\text{\rm dis}},(z_1,z_2,z_0)),v)$ be an element of
$\widehat{\mathcal M}(J_{N\times N};\rho;P_1,P_2,P_0)$
and let $\Sigma^{\text{\rm dis}}_i$ be one of its irreducible sphere component.
Suppose that  $\Sigma^{\text{\rm dis}}_i$ is unstable.
(Namely we assume that it has one or two singular points.)
Then its automorphism group $\text{Aut}(\Sigma^{\text{\rm dis}}_i)$ will
have positive dimension by definition of stability.
(We require the elements of $\text{Aut}(\Sigma^{\text{\rm dis}}_i)$
to fix the singular point.) By restricting $v$ to $\Sigma^{\text{\rm dis}}_i$, we
obtain $v_i = (v_i^+,v_i^-)$ where $v_i^{\pm}:\Sigma^{\text{\rm dis}}_i \to N$
are maps from the sphere domain $\Sigma^{\text{\rm dis}}_i$.
On the double (which represents $\mathfrak I((\Sigma^{\text{\rm dis}},(z_1,z_2,z_0)),v)$)
the domain $\Sigma^{\text{\rm dis}}$ contains two sphere components $\Sigma_{i}^{+}$
and $\Sigma_{i}^{-}$ on which the maps $v_i^+$ and $v_i^-$ are defined respectively.
We have two alternatives:
\begin{enumerate}
\item If one of $v_i^+$ and $v_i^-$ is a constant map, then this double itself is not a stable
map. So we shrink the corresponding component $\Sigma_{i}^{+}$ or $\Sigma_{i}^{-}$
to obtain a stable map. (This is actually a part of the construction used in
the definition of $\mathfrak I$.)
\item Suppose both $v_i^+$ and $v_i^-$ are nonconstant and let $g \in \text{Aut}(\Sigma^{\text{\rm dis}}_i)$.
Then the map $v_i^g = (v_i^+, v_i^-\circ g)$ defines
an element of $\widehat{\mathcal M}(J_{N\times N};\rho;P_1,P_2,P_0)$
different from $v_i = (v_i^+,v_i^-)$ but is mapped to the same element under the map $\mathfrak I$.)
\end{enumerate}
We denote by ${\mathfrak D}({\mathfrak I})$ the subset of $\widehat{\mathcal M}(J_{N\times N};\rho;P_1,P_2,P_0)$
consisting of $((\Sigma^{\text{\rm dis}},(z_1,z_2,z_0)),v)$ with at least one unstable sphere component $\Sigma_i^{\text{\rm dis}}$.
This phenomenon occurs only at the stratum of codimension $\ge 2$ because
it occurs only when there exists a sphere bubble.

This finishes the proof.
\end{proof}

\begin{rem}\label{inversionTypeII}

\begin{enumerate}
\item 
We give the orientation on 
$\mathcal M^{\text{\rm sph}}_3(J_N;\rho;\mathcal C;P_1,P_2,P_0)$
as follows: 
We recall from Proposition \ref{regoripres} that the map
$$\overset{\circ}{\mathfrak I}:{\mathcal M}_3^{\text{\rm main, reg}}(J_{N\times N};\rho) \to {\mathcal M}_3^{\text{\rm sph, reg}}(J_N;\rho)$$
is an orientation preserving isomorphism between spaces with oriented Kuranishi structures.  
Taking Definitions \ref{Definition8.4.1} and \ref{notationmoduli} into account, 
we find that the map ${\mathfrak I}^{\text{\rm reg}}$ in \eqref{Ireg} induces an isomorphism from 
$${P_0 \times_{ev_0} \mathcal M}^{\text{\rm main, reg}}_3(J_{N\times N};\rho;P_1,P_2)$$ to 
$\mathcal M^{\text{\rm sph, reg}}_3(J_N;\rho;\mathcal C;P_1,P_2,P_0)$, which is orientation 
preserving if and only if $(-1)^{\deg P_1 (\deg P_2 +1)}=1$.   
Since $P_i = {\mathcal M}_{1,1}(J_{N \times N}; \emptyset; P_i)$, the orientation of the fiber product 
${P_0 \times_{ev_0} \mathcal M}^{\text{\rm main, reg}}_3(J_{N\times N};\rho;P_1,P_2)$ 
is the restriction of the orientation of $\widehat{\mathcal M}(J_{N \times N}; \rho; P_1, P_2, P_0)$.  
Recall that $\overset{\circ}{\mathfrak I}$ extends to ${\mathfrak I}$ in \eqref{doublingmap}, which is 
an isomorphism outside ${\mathcal D}({\mathfrak I})$ of codimension at least $2$ (Lemma \ref{Inotiso}).  
Hence we can use ${\mathfrak I}$ to equip the moduli space 
$\mathcal M^{\text{\rm sph}}_3(J_N;\rho;\mathcal C;P_1,P_2,P_0)$ with an orientation in such a way 
that 
${\mathfrak I}$ is orientation preserving if and only if  
$(-1)^{\deg P_1 (\deg P_2 +1)}=1$.  
\item
For strata of virtual codimension $1$, there are two cases, i.e., Case (A) and Case (B) in Proposition \ref{virdimcombtype} (3).    
We also explained that
each stratum in Case (A) arises in two ways of codimension $1$ boundary of top dimensional strata,
i.e., phenomena (1) and (2), see Figure 5.
Note that there is a canonical identification, i.e., inserting/forgetting the component of case (i) in Definition \ref{unstwocirc},
between those arising from the phenomenon (1) and those arising from the phenomenon (2).
The orientation of each stratum of top dimension in
${\mathcal M}_3^{\text{\rm sph}}(J_N;\rho;{\mathcal C};P_1, P_2, P_0)$ is defined using the orientation
of $\widehat{\mathcal M}(J_{N\times N};\rho;P_1, P_2, P_0)$ as we just mentioned in Remark \ref{inversionTypeII} (1).
Since the moduli space $\widehat{\mathcal M}(J_{N\times N};\rho;P_1,P_2,P_0)$ is oriented,
these two orientations are opposite under the above identification to give an orientation on the glued space
with Kuranishi structure.  As for the cancellation in Case (B), see Subsection \ref{1.9(1)}, Lemma \ref{forgettypeII} and the following items (3) and (4).
\item We consider the involution $\tau_*$
applied to one of the disc component $S$ of the fiber product factors
appearing in a stratum of
$\widehat{\mathcal M}(J_{N\times N};\rho;P_1,P_2,P_0)$,  such that
the double of $S$ is Type II.
Then the orientation of the circle $C=\del S$ embedded in the domain $\Sigma$ of the corresponding
sphere component is inverted under the operation on
$\mathcal M^{\text{\rm sph}}_3(J_N;\rho;\mathcal C;P_1,P_2,P_0)$ induced by
$\tau_*$ under the map $\mathfrak I$.
\item
The moduli space $\mathcal M^{\text{\rm sph}}_3(J_N;\rho;\mathcal C;P_1,P_2,P_0)$ is stratified accroding to
their combinatorial types $\mathfrak c$, see Definition \ref{combtype}.
When $\mathfrak c$ is fixed, there are finitely many Type II components.  There are involutions  acting on
these components by the reflection with respect to the circle of Type II.
Namely, the involution reverses the orientation of the circle of Type II.
We call a Kuranishi structure, resp. a multi-section, on $\mathcal M^{\text{\rm sph}}_3(J_N;\rho;\mathcal C;P_1,P_2,P_0)$
{\it invariant under the inversion of the orientaion of circles of Type II}, if the Kuranishi structure, resp. the multi-section, restricted to each stratum
corresponding to ${\mathfrak c}$ is invariant under the involution acting on each Type II component in $\mathfrak c$.
Note that these involutions are defined on the corresponding strata, not on the whole moduli space
$\mathcal M^{\text{\rm sph}}_3(J_N;\rho;\mathcal C;P_1,P_2,P_0)$.
%
\end{enumerate}
\end{rem}

\begin{lem}\label{frak s_2}
The moduli space $\mathcal M^{\text{\rm sph}}_3(J_N;\rho;\mathcal C;P_1,P_2,P_0)$
carries a Kuranishi structure ${\mathfrak K}_2$ invariant under the inversion of the orientation of circles of Type II.
The Kuranishi structure can be canonically pull-backed to a Kuranishi structure on  the space $\widehat{\mathcal M}(J_{N\times N};\rho;P_1,P_2,P_0)$.
Moreover, there is a multisection ${\mathfrak s}_2$ of the Kuranishi structure on $\mathcal M^{\text{\rm sph}}_3(J_N;\rho;\mathcal C;P_1,P_2,P_0)$ with the
following properties.
\begin{enumerate}
\item The multisection $\mathfrak s_2$ is transversal to the zero section.
\item The multisection $\mathfrak s_2$ is invariant under the inversion of the orientation of the circles of Type II.
\item The multisection $\mathfrak s_2$ does not vanish on ${\mathcal D}({\mathfrak I})$.
\end{enumerate}
\end{lem}
\begin{proof}
Lemma \ref{frak s_2} is clear from construction except the following points.

Firstly, we consider the point of the moduli space $\mathcal M^{\text{\rm sph}}_3(J_N;\rho;\mathcal C;P_1,P_2,P_0)$ such that
one of the following two conditions is satisfied.
\begin{enumerate}
\item A circle in Type II component $\Sigma_a$ hits the singular point of $\Sigma_a$
other than the root thereof.
\item A circle in Type I-2 component $\Sigma_a$ hits the
singular point other than its outward interior special point.
\end{enumerate}
We have to glue various different strata meeting at such a point in 
$\mathcal M^{\text{\rm sph}}_3(J_N;\rho;\mathcal C;P_1,P_2,P_0)$.  
We have already given such a construction during the proof of Theorem \ref{KuraonMsph}. 
By examining the way how the corresponding strata 
are glued in $\widehat{\mathcal M}(J_{N\times N};\rho;P_1,P_2,P_0)$, 
the gluing of corresponding strata of $\mathcal M^{\text{\rm sph}}_3(J_N;\rho;\mathcal C;P_1,P_2,P_0)$ 
are performed in the same way. 
We like to note that the phenomenon spelled out in the proof of Lemma \ref{Inotiso}
concerns {\it sphere} bubbles of the elements of $\widehat{\mathcal M}(J_{N\times N};\rho;P_1,P_2,P_0)$,
while the phenomenon we concern here arises from  {\it disc} bubbles.
Therefore they do not interfere with each other.
\par
Secondly, we need to make the choice of the
obstruction bundle of the Kuranishi structure of
$\mathcal M^{\text{\rm sph}}_3(J_N;\rho;\mathcal C;P_1,P_2,P_0)$
in such a way that  it is compatible with one of
$\widehat{\mathcal M}(J_{N\times N};\rho;P_1,P_2,P_0)$.
Lemma \ref{Inotiso} describes the locus ${\mathcal D}({\mathfrak I})$ where the map $\mathfrak I$ fails to be an isomorphism.
Let $\nu_i=(\nu_i^+,\nu_i^-)$ be the sphere bubble as in the proof of Lemma \ref{Inotiso}.
By ${\mathfrak I}$, $\nu_i^+$ (resp. $\nu_i^-$) corresponds to a sphere bubble attached to a pseudoholomorphic sphere at a point
in the lower hemisphere (resp. the upper hemisphere).
In the construction of a Kuranishi structure on the moduli space of holomorphic spheres (or stable maps
of genus $0$), we take obstruction bundles in order to construct Kuranishi neighborhoods.
Let $E(\nu_i^{\pm})$ be a finite dimensional subspace in $\Omega^{0,1}((\nu_i^{\pm})^*TN)$ such that
the linearization operator of the holomorphic curve equation at $\nu_i^{\pm}$ becomes surjective modulo $E(\nu_i^{\pm})$.
In order to extend $E(\nu_i^{\pm})$ to a neighborhood of $\nu_i^{\pm}$, we used {\it obstruction bundle data}, introduced in \cite{foootech} Definition 17.7,
in particular, additional marked points $w_{i,j}^{\pm}$ and local transversals ${\mathcal D}_{i,j}^{\pm}$ to the image of $\nu_i^{\pm}$ at  $\nu_i^{\pm}(w_{i,j}^{\pm})$.
For $\nu_i:{\mathbb C}P^1 \to N \times N$, we regard $E(\nu_i^{+})$ (resp. $E(\nu_i^{-})$) as a subspace of $\Omega^{0,1}((\nu_i)^*(TN \oplus 0)) \cong
\Omega^{0,1}((\nu_i^+)^*TN \oplus 0)$ (resp. $\Omega^{0,1}((\nu_i)^*(0 \oplus TN)) \cong \Omega^{0,1}(0\oplus (\nu_i^-)^*TN)$).   
Note that the linearization operator of the pseudoholomorphic curve equation at $\nu_i$ is surjective
modulo $E(\nu_i^+) \oplus E(\nu_i^-) \subset \Omega^{0,1}((\nu_i)^*(T (N \times N))$.
When we extend $E(\nu_i^{+})$ (resp. $E(\nu_i^{-})$)  to a neighborhood of $\nu_i$, we use $w_{i,j}^+$ and ${\mathcal D}_{i,j}^+ \times N$
(resp. $w_{i,j}^-$ and $N \times {\mathcal D}_{i,j}^-$).
Namely, we use the data $w_{i,j}^+$ and ${\mathcal D}_{i,j}^+$ and the data $w_{i,j}^-$ and ${\mathcal D}_{i,j}^-$ separately, not simultaneously.

The Kuranishi structure can be taken invariant under stratawise involutions, since the Kuranishi structure is constructed
by induction on the energy and we can keep the finite symmetries as in the explanation in Remark \ref{remonmult}.

The existence of a multisection in the statement follows from general theory of Kuranishi structures once the following point is taken into account.
By Lemma \ref{Inotiso}, there is a subset ${\mathcal D}({\mathfrak I})$ of codimension at least $2$ such that $\mathfrak I$ is an isomorphism outside
${\mathcal D}({\mathfrak I})$.
Since the expected dimension of $\mathcal M^{\text{\rm sph}}_3(J_N;\rho;\mathcal C;P_1,P_2,P_0)$ is $0$, $\mathfrak s_2$ can be chosen such that
$\mathfrak s_2$ does not vanish on ${\mathcal D}({\mathfrak I})$.
\end{proof}

\begin{rem}\label{reasonforC}
Generally, note that we can pull back a Kuranishi structure ${\mathfrak K}$ on 
$\mathcal M^{\text{\rm sph}}_3(J_N;\rho;\mathcal C;P_1,P_2,P_0)$ 
to a Kuranishi structure on  the space 
$\widehat{\mathcal M}(J_{N\times N};\rho;P_1,P_2,P_0)$ in a canonical way, 
if the next condition (*) is satisfied.
By construction of the Kuranishi structure 
we take a sufficiently dense finite subset $\frak P \subset 
\mathcal M^{\text{\rm sph}}_3(J_N;\rho;\mathcal C;P_1,P_2,P_0)$
and for each $\frak x  \in \frak P$ we take a finite 
dimensional subspace $E_0(\frak x)$ of 
$
\Omega^{0,1}(\Sigma_{\mathfrak x}, u_{\mathfrak x}^*TN)=
C^{\infty}(\Sigma_{\frak x}, \Lambda^{0,1} \otimes u_{\mathfrak x}^*TN)
$,
where $(\Sigma_{\frak x}, \vec{z}_{\frak x}^+, u_{\frak x}: \Sigma_{\frak x} \to N)$
is a stable map appearing in $\frak x$.
The subspace $E_0(\frak x)$ consists of smooth sections of compact 
support away from nodes.
Moreover the union of $E_0(\frak x)$ and the image of linearized 
operator of the Cauchy-Riemann equation spans the 
space $\Omega^{0,1}(\Sigma_{\mathfrak x}, u_{\mathfrak x}^*TN)$.
(See section 12 in \cite{FO}.)
Now we require
\begin{enumerate}
\item[(*)]
The support of any element of $E_0(\frak x)$ does not intersect
with the circles consisting $\frak x 
\in \mathcal M^{\text{\rm sph}}_3(J_N;\rho;\mathcal C;P_1,P_2,P_0)$.
\end{enumerate}
We show that the condition (*) implies that the Kuranishi structure $\mathfrak K$ can be pulled back 
to  $\widehat{\mathcal M}(J_{N\times N};\rho;P_1, P_2, P_0)$ below.  
\par
We recall the construction of Kuranishi structure on 
$\mathcal M^{\text{\rm sph}}_3(J_N;\rho;\mathcal C;P_1,P_2,P_0)$ in Theorem \ref{KuraonMsph}
a bit more.
For each $\frak x 
\in \frak P$ we take a sufficiently small closed neighborhood $U(\frak x)$ of
$\frak x$ in $\mathcal M^{\text{\rm sph}}_3(J_N;\rho;\mathcal C;P_1,P_2,P_0)$.  
Let $\frak y \in \mathcal M^{\text{\rm sph}}_3(J_N;\rho;\mathcal C;P_1,P_2,P_0)$. We consider 
$\frak P(\frak y) 
= \{ \frak x \in \frak P \mid \frak y \in U(\frak x)\}$.  
Using the complex linear part of parallel transport  along minimal geodesics 
as in Definition 17.15, Lemma 18.6, Definition 18.7 in \cite{foootech}, 
we transform a subspace $E_0(\frak x)$ with $\frak x \in \frak P(\frak y)$
to a subspace of $\Omega^{0,1} (\Sigma_{\mathfrak y}, u_{\mathfrak y}^*TN)$,  
where $(\Sigma_{\frak y}, \vec{z}_{\frak y}^+, u_{\frak y}: \Sigma_{\frak y} \to N)$ 
is a stable map appearing in $\frak y$.
We fix various data such as obstruction bundle data on $\frak x$ for our construction. (See \cite[Definition 17.7]{foootech}.)
We define 
$E(\frak y) \subset \Omega^{0,1} (\Sigma_{\mathfrak y}, u_{\mathfrak y}^*TN)$
as the sum of those subspaces for various $\frak x \in \frak P(\frak y)$.
(We remark that this sum can be taken to be a direct sum \cite[Lemma 18.8]{foootech}.)    

\par 
By taking $U(\frak x)$ small we may and will require 
that the supports of elements of $E(\frak y)$ are disjoint from the 
circles consisting $\frak y$.  
\par
Now let $\tilde{\frak y} \in \widehat{\mathcal M}(J_{N\times N};\rho;P_1,P_2,P_0)$ with 
${\mathfrak I}(\tilde{\frak y}) = {\frak y}$.
Using the fact that 
the supports of elements of $E(\frak y)$ are disjoint from the 
circles consisting $\frak y$ we can lift  $E(\frak y)$
to a subspace  $E(\tilde{\frak y})$ of 
$\Omega^{0,1}(\Sigma_{\tilde{\mathfrak y}}, u_{\tilde{\frak y}}^{ *}TN)$, where 
$(\Sigma_{\tilde{\frak y}}, \vec{z}_{\tilde{\frak y}}^+, u_{\tilde{\frak y}}: \Sigma_{\tilde{\frak y}} \to N)$
is a stable map appearing in $\tilde{\frak y}$.
We use $E(\tilde{\frak y})$ as the obstruction bundle to define the 
lift of our Kuranishi structure.  
See also Remark \ref{reasonforC2}.   
\end{rem}

Let $Z$ be a compact metrizable space and
$\mathfrak K_{0}^Z$, $\mathfrak K_{1}^Z$ its Kuranishi structures with orientation.
Let $\mathfrak s_0^Z$ and $\mathfrak s_1^Z$ be multisections of the Kuranishi structures
$\mathfrak K_{0}^Z$ and $\mathfrak K_1^Z$, respectively.
We say that $(\mathfrak K_{0}^Z,\mathfrak s_0^Z)$ is {\it homotopic} to
$(\mathfrak K_{1}^Z,\mathfrak s_1^Z)$  if there exists an
oriented Kuranishi structure ${\mathfrak K}^{Z \times [0,1]}$ on $Z\times [0,1]$ and its multisection
$\mathfrak s^{Z \times [0,1]}$ which restricts to $(\mathfrak K_{0}^Z,\mathfrak s_0^Z)$ and
$(\mathfrak K_{1}^Z,\mathfrak s_1^Z)$ at $Z\times \{0\}$ and $Z\times \{1\}$,
respectively.  We call such $({\mathfrak K}^{Z \times [0,1]}, \mathfrak s^{Z \times [0,1]})$ a homotopy between
$(\mathfrak K_{0}^Z,\mathfrak s_0^Z)$ and $(\mathfrak K_{1}^Z,\mathfrak s_1^Z)$.
\par

The moduli space $\widehat{\mathcal M}(J_{N\times N};\rho;P_1,P_2,P_0)$ is stratified according to combinatorial types.
For ${\mathbf u} \in  \widehat{\mathcal M}(J_{N\times N};\rho;P_1,P_2,P_0)$, we decompose the domain of ${\mathbf u}$
into disc components and sphere components.
An {\it extended disc component} of $\mathbf u$ is, by definition, the union of a disc component $D_a({\mathbf u})$ and all trees of spheres rooted on
$D_a({\mathbf u})$.  We denote it by $\widehat{D}_a({\mathbf u})$.
An extended disc component is said to be of Type I (resp. Type II), if the corresponding component of 
${\mathfrak I}({\mathbf u})$, i.e., the double of $D_a({\mathbf u})$, 
is of Type I (resp. Type II).
The involution $\tau_*$ acts on each extended disc component.
In particular, $\tau_*$ acting on an extended disc component $\widehat{D}_a({\mathbf u})$ of Type II is compatible with the inversion of the orientation of the circle
on the component of ${\mathfrak I}({\mathfrak u})$ of Type II, which is the double of the disc component $D_a({\mathbf u})$, see
Remark \ref{inversionTypeII}.

\begin{lem}\label{htpy}
For the pull-back ${\mathfrak I}^*({\mathfrak K}_2,{\mathfrak s}_2)$ and $({\mathfrak K}_1,{\mathfrak s}_1)$,
there is a homotopy $({\mathbf K}, {\mathbf s})$  between them such that it is invariant under $\tau_*$ on each extended disc component  of Type II
acting on the first factor of $\widehat{\mathcal M}(J_{N\times N};\rho;P_1,P_2,P_0) \times [0,1]$.
\end{lem}
\begin{proof}
By Proposition \ref{frak s_2},
the moduli space $\widehat{\mathcal M}(J_{N\times N};\rho;P_1,P_2,P_0)$ has the pair ${\mathfrak I}^*({\mathfrak K}_2,{\mathfrak s}_2)$
of Kuranishi structure and multisection, which are invariant under the inversion of the orientation of circles.
We also have another such a pair $({\mathfrak K}_1,{\mathfrak s}_1)$.
Then the standard theory of Kuranishi structure shows the existence of the desired homotopy.
\end{proof}
\begin{lem}\label{calIcoincidence}
We have
$$
\sum_{\beta} n_{\beta} =
\# \left(\mathcal M^{\text{\rm sph}}_3(J_N;\rho;\mathcal C;P_1,P_2,P_0)\right)^{\mathfrak s_2}.
$$
Here the sum is taken over the class $\beta \in \Pi(\Delta_N) =\pi_2(N\times N, \Delta_N) /\sim$ whose double belongs to
class $\rho \in \pi_2(N)/\sim$ and the virtual dimension of 
$\mathcal M^{\text{\rm sph}}_3(J_N;\rho;\mathcal C;P_1,P_2,P_0)$ is zero.
\end{lem}
\begin{proof}
If the moduli space  $\widehat{\mathcal M}(J_{N\times N};\rho;P_1,P_2,P_0) \times [0,1]$ had no codimension $1$
boundary in the sense of Kuranishi structure,
the existence of the homotopy $({\mathbf K}, {\mathbf s})$ in Lemma \ref{htpy} would immediately imply the conclusion.
But in reality, there exists a codimension 1 boundary.
However the codimension $1$ boundary of $\widehat{\mathcal M}(J_{N\times N};\rho;P_1,P_2,P_0) \times [0,1]$
consists of elements with at least one component of
Type II.
Since ${\mathbf s}$ is invariant under the action $\tau_*$ on the disc component of Type II, the contribution from the boundary  cancels
as in the proof of unobstructedness of the diagonal in 
$(N\times N, - {\rm pr}_1^*\omega + {\rm pr}_2^* \omega)$ in Subsection \ref{1.9(1)}.
Hence the proof.
\end{proof}
\begin{rem}
Even though the dimension of a space $Z$ with Kuranishi structure is $0$, 
codimension $1$ boundary can be non-empty.  This is because the dimension 
of a space with Kuranishi structure is {\it virtual} dimension.  After taking a suitable 
multi-valued perturbation, its zero set does not meet codimension $1$ boundary.  
\end{rem}
We now consider the forgetful map:
\begin{eqnarray}
\mathfrak F :
\mathcal M^{\text{\rm sph}}_3(J_N;\rho;\mathcal C;P_1,P_2,P_0)
&\to& \mathcal M^{\text{\rm sph}}_3(J_N;\rho;P_1,P_2,P_0),
\end{eqnarray}
which is defined by forgetting all the circles in the admissible system of circles.
We recall from Lemmas \ref{moduliqp} and \ref{frak s_2} that both moduli spaces
$\mathcal M^{\text{\rm sph}}_3(J_N;\rho;P_1,P_2,P_0)$ and
$\mathcal M^{\text{\rm sph}}_3(J_N;\rho;\mathcal C;P_1,P_2,P_0)$
carry Kuranishi structures.
We have also used multisections on
$\mathcal M^{\text{\rm sph}}_3(J_N;\rho;P_1,P_2,P_0)$ and
on $\mathcal M^{\text{\rm sph}}_3(J_N;\rho;\mathcal C;P_1,P_2,P_0)$,
denoted by $\mathfrak s_0$ and $\mathfrak s_2$ respectively.
\begin{lem}\label{forgettypeII}
Let  $(\mathfrak K_{2},\mathfrak s_2)$  be a pair of Kuranishi structures and multisections on $\mathcal M^{\text{\rm sph}}_3(J_N;\rho;\mathcal C;P_1,P_2,P_0)$
in Lemma \ref{frak s_2}.
Then $(\mathfrak K_{2},\mathfrak s_2)$ is homotopic to the pull-back $\mathfrak F^*(\mathfrak K_{0},\mathfrak s_0)$.
Moreover, there is a homotopy between them, which is invariant under the inversion of the orientation of the Type II circles.
In particular we have
$$
\# \left(\mathcal M^{\text{\rm sph}}_3(J_N;\rho;\mathcal C;P_1,P_2,P_0)\right)^{\mathfrak s_2}
=
\# \left(\mathcal M^{\text{\rm sph}}_3(J_N;\rho;\;P_1,P_2,P_0)\right)^{\mathfrak s_0},
$$
if the virtual dimensions of the moduli spaces of the both hand sides are zero.
\end{lem}
\begin{proof}
The existence of a homotopy between Kuranishi structures ${\mathfrak K}_2$ and ${\mathfrak F}^*{\mathfrak K}_0$
is again a consequence of general theory, once the following point is taken into account.
Note that ${\mathfrak F}^*{\mathfrak s}_0$ and ${\mathfrak s}_2$ are invariant under the inversion of the orientation of circles of Type II.
Then we can take a homotopy, which is also invariant under the inversion of the orientation of circles of Type II.

We note that there are several components of
$\mathcal M^{\text{\rm sph}}_3(J_N;\rho;\mathcal C;P_1,P_2,P_0)$
which are of codimension $0$ or $1$ and contracted
by $\mathfrak F$.
Note that we took $\mathfrak s_0$ in such a way that its zero set does not contain elements with domains of at least two irreducible components,
see the paragraph right after Proposition \ref{regoripres}.
Hence the zeros of ${\mathfrak F}^*\mathfrak s_0$ is contained in the subset where ${\mathfrak F}$ gives an isomorphism and we can count them
with signed weights to obtain a rational number. 
Namely, we have
$$
\# \left(\mathcal M^{\text{\rm sph}}_3(J_N;\rho;\mathcal C;P_1,P_2,P_0)\right)^{{\mathfrak F}^*\mathfrak s_0}
=
\# \left(\mathcal M^{\text{\rm sph}}_3(J_N;\rho;\;P_1,P_2,P_0)\right)^{\mathfrak s_0}.
$$
If the moduli space $\mathcal M^{\text{\rm sph}}_3(J_N;\rho;\mathcal C;P_1,P_2,P_0)$ had no codimension 1 boundary in the sense of 
Kuranishi structure, the existence of a homotopy between $({\mathfrak K}_2, {\mathfrak s}_2)$ and ${\mathfrak F}^*({\mathfrak K}_0, {\mathfrak s}_0)$
would immediately imply that
$$
\# \left(\mathcal M^{\text{\rm sph}}_3(J_N;\rho;\mathcal C;P_1,P_2,P_0)\right)^{\mathfrak s_2}
=
\# \left(\mathcal M^{\text{\rm sph}}_3(J_N;\rho;\mathcal C;P_1,P_2,P_0)\right)^{{\mathfrak F}^*\mathfrak s_0},
$$
which would complete the proof.
However, $\mathcal M^{\text{\rm sph}}_3(J_N;\rho;\mathcal C;P_1,P_2,P_0)$ has a codimension $1$ boundary, which consists of
stable maps with admissible systems of circles containing at least one circle of Type II.
All the contribution from those components
cancel out by the involution, which inverts the orientation
of the circles of Type II. (This is a geometric way
to see the vanishing of $\mathfrak m_0(1)$ in the chain level.
We have already checked that it occurs {\it with sign} in Subsection 6.3.1.
See also Remark \ref{inversionTypeII} (3), (4).)
Hence the lemma.
\end{proof}
\begin{rem}\label{reasonforC2}
The pull-back Kuranishi structure $\mathfrak F^*(\mathfrak K_{0},\mathfrak s_0)$ does not satisfy the condition (*) 
appearing in Remark \ref{reasonforC}.
In fact the obstruction bundle of $\mathfrak F^*(\mathfrak K_{0},\mathfrak s_0)$ is 
independent of the position of the circles.
Therefore we may not pull back $\mathfrak F^*(\mathfrak K_{0},\mathfrak s_0)$ to a Kuranishi structure on  
$\widehat{\mathcal M}(J_{N\times N};\rho;P_1,P_2,P_0)$, while we can pull back the Kuranishi structure $\mathfrak K_{2}$.
\end{rem}
By Lemmas \ref{moduliqp}, \ref{multiondisc},  \ref{calIcoincidence} and  \ref{forgettypeII},
the proof of Theorem \ref{Proposition34.25} (2) is now complete.
\qed

\begin{proof}[Proof of Corollary \ref{qMassey}]
Viewing $N$ as a closed relatively spin Lagrangian submanifold
of $(N\times N, -{\rm pr}_1^* \omega_N + {\rm pr}_2^* \omega_N)$,
we can construct a filtered $A_{\infty}$ structure
on $H(N;\Lambda_{0,{\rm nov}}^{\Q})$ which is homotopy equivalent
to the filtered $A_{\infty}$ algebra given by Theorem \ref{thm:Ainfty}.
This is a consequence of Theorem W \cite{fooobook1}.
See also Theorem A \cite{fooobook1}.
Then (1) and (2) follow from
Theorem \ref{Proposition34.25}.
The assertion (3) follows from Theorem X \cite{fooobook1}.
\end{proof}

\subsection{Calculation of Floer cohomology of $\R P^{2n+1}$}
\label{subsec:Appl2}

In this subsection, we apply the results proved in the previous
sections to calculate Floer cohomology of real projective space of odd dimension.
Since the case $\R P^1 \subset \C P^1$ is already discussed in Subsection 3.7.6 \cite{fooobook1},
we consider $\R P^{2n+1}$ for $n>0$.
We note that $\R P^{2n+1} \subset \C P^{2n+1}$ is monotone with
minimal Maslov index $2n+2 > 2$ if $n>0$.
Therefore by \cite{Oh93} and Section 2.4 \cite{fooobook1} Floer cohomology
over $\Lambda_{0,{\rm nov}}^{\Z}$ is defined.
In this case we do not need to use the notion of Kuranishi structure and the technique of the virtual fundamental chain.
From the proof of Corollary \ref{Corollary34.22},
we can take $0$ as a bounding cochain.
Hereafter we omit the bounding
cochain $0$ from the notation.
By \cite{Oh96} and Theorem D in \cite{fooobook1},
we have a spectral sequence converging to the Floer cohomology.
Strictly speaking,
in \cite{Oh96} the spectral sequence is constructed
over $\Z_2$ coefficients. However, we can generalize his results
to ones over $\Lambda_{0,{\rm nov}}^{\Z}$ coefficients in a straightforward way,
as long as we take the
orientation problem, which is a new and crucial point of this calculation, into account.
Thus Oh's spectral sequence over $\Lambda_{0,{\rm nov}}^{\Z}$
is enough for our calculation of this example.
(See Chapters 8 and 6 in \cite{fooo06} for a
spectral sequence over $\Lambda_{0,{\rm nov}}^{\Z}$ in more
general setting.)
\par
We use a relative spin structure in Proposition \ref{Proposition44.19} when $n$ is even and
a spin structure when $n$ is odd. We already check that $\R P^{2n+1} \subset \C P^{2n+1}$
has two inequivalent relative spin structures. The next theorem applies
to both of them.
\begin{thm}\label{Theorem44.24}
Let $n$ be any positive integer. Then the spectral sequence calculating $HF(\R P^{2n+1},\R P^{2n+1};\Lambda_{0,{\rm nov}}^{\Z})$
has unique nonzero differential
$$
d^{2n+1} : H^{2n+1}(\R P^{2n+1};\Z) \cong \Z
\longrightarrow  H^{0}(\R P^{2n+1};\Z) \cong \Z
$$
which is multiplication by $\pm 2$. In particular, we have
$$
HF(\R P^{2n+1},\R P^{2n+1};\Lambda_{0,{\rm nov}}^{\Z})
\cong (\Lambda_{0,{\rm nov}}^{\Z}/2\Lambda_{0,{\rm nov}}^{\Z})^{\oplus(n+1)}.
$$
\end{thm}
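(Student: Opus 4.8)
The plan is to run Oh's spectral sequence for $HF(\R P^{2n+1},\R P^{2n+1};\Lambda_{0,nov}^{\Z})$ and reduce the whole calculation to the evaluation of a single differential, which I then pin down using the orientation results of Sections \ref{sec:relspin}--\ref{sec:Proofth}. First I would record the standing facts: for $n>0$ the submanifold $\R P^{2n+1}\subset\C P^{2n+1}$ is monotone with minimal Maslov number $2n+2>2$, so by \cite{Oh93} (see also Section 2.4 of \cite{fooobook}) Floer cohomology over $\Lambda_{0,nov}^{\Z}$ is defined with $0$ as a bounding cochain (as in the proof of Corollary \ref{Corollary34.22}; equivalently $\frak m_0=0$ for degree reasons since $\mu_{\min}>2$), and for the standard $J$ one may work with honest smooth moduli spaces of the expected dimension rather than virtual chains. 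Oh's spectral sequence \cite{Oh96} together with Theorem D of \cite{fooobook} converges to $HF(\R P^{2n+1},\R P^{2n+1};\Lambda_{0,nov}^{\Z})$ with $E_2$-page $H^*(\R P^{2n+1};\Z)$. Since $\frak m_{1,\beta}$ lowers cohomological degree by $\mu_L(\beta)-1$, and every nonzero class $\beta$ carrying a disc has $\mu_L(\beta)\ge 2n+2=\dim\R P^{2n+1}+1$, the only possibly nonzero differential is the one induced by the classes with $\mu_L(\beta)=2n+2$, namely
\[
d^{2n+1}\colon H^{2n+1}(\R P^{2n+1};\Z)\cong\Z\longrightarrow H^0(\R P^{2n+1};\Z)\cong\Z .
\]
Thus $E_2=E_{2n+1}$ and $E_{2n+2}=E_\infty$, and the problem reduces to computing the integer $c$ with $d^{2n+1}=c$, i.e.\ $\frak m_{1,\beta_{\min}}(\mathrm{pt})=c\,[\R P^{2n+1}]$ modulo higher energy.

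Next I would identify the relevant discs. Doubling a disc $w$ of Maslov index $2n+2$ yields a sphere $v$ with $c_1(T\C P^{2n+1})[v]=\mu_L([w])=2n+2$, i.e.\ a line; hence every minimal Maslov disc is a half of a $\tau$-invariant line in $\C P^{2n+1}$, obtained by complexifying a real projective line $\R P^1\subset\R P^{2n+1}$ and cutting it along $\R P^1$. For two generic points $p,q\in\R P^{2n+1}$ there is a unique real line through them, whose complexification is cut into exactly two hemispheres; putting the two boundary marked points at $p$ and $q$ and dividing by $PSL(2,\R)$ shows that the relevant $0$-dimensional moduli space $X$ (the fibre $ev_0^{-1}(q)$ of $\CM^{\rm main}_2(J;\beta_{\min};p)\to\R P^{2n+1}$) consists of exactly two points, which are interchanged by the induced involution $\tau_*^{\rm main}=\tau_*$ (complex conjugation swaps the two hemispheres). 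Consequently $c=\pm(s_a+s_b)$, where $s_a,s_b=\pm1$ are the local orientation signs of the two points; so $c=\pm2$ if they agree and $c=0$ if they disagree, and everything comes down to this sign comparison.

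This sign comparison is the heart of the argument and the step I expect to be most delicate, because it is exactly where the two parities of $n$ interact. For $n$ odd, $\R P^{2n+1}$ is spin, hence $\tau$-relatively spin, and $\tau_*$ is an honest involution of $X^{[(V,\sigma)]}$; applying Theorem \ref{Proposition38.11} (equivalently Theorem \ref{Lemma38.17} with $k=1$, $m=0$, $P_1$ a point, so $\epsilon=\mu_L(\beta_{\min})/2+k+1+m=n+3$ and the $\sum\deg'P_i\deg'P_j$ term is empty) shows this involution is orientation preserving, using $\mathrm{Rev}_1=\mathrm{id}$ and the fact that the fibre products with the $\tau$-fixed points $p,q$ do not change parities; hence $s_a=s_b$ and $c=\pm2$. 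For $n$ even, $\R P^{2n+1}=\R P^{4k+1}$ is relatively but not $\tau$-relatively spin by Proposition \ref{Proposition44.19}, so $\tau^*[(V,\sigma)]\neq[(V,\sigma)]$ and $\tau_*$ now maps $X^{\tau^*[(V,\sigma)]}\to X^{[(V,\sigma)]}$; Theorem \ref{Lemma38.17} (with $\epsilon=n+3$ odd) makes it orientation reversing, while Proposition \ref{Proposition44.16} makes the identity map $X^{[(V,\sigma)]}\to X^{\tau^*[(V,\sigma)]}$ orientation reversing as well, because the class $\frak x\in H^2(\C P^{4k+1},\R P^{4k+1};\Z_2)\cong\Z_2$ distinguishing the two relative spin structures is nonzero and pairs nontrivially with the generator $\beta_{\min}$ of $H_2(\C P^{4k+1},\R P^{4k+1};\Z_2)$, so $\frak x[\beta_{\min}]=1$. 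Composing these two orientation-reversing maps gives an orientation-preserving self-map of $X^{[(V,\sigma)]}$ that swaps the two points, so again $s_a=s_b$ and $c=\pm2$. The genuinely laborious (but conceptually routine) work here is the bookkeeping: verifying that Proposition \ref{Proposition44.16} applies verbatim to the moduli space with boundary marked points and point constraints (the change of relative spin structure twists only the factor $\widetilde{\CM}^{\rm reg}(J;\beta)$, uniformly by $(-1)^{\frak x[\beta]}$), computing $\frak x[\beta_{\min}]=1$, and checking that the twisting conventions of Definition \ref{Definition8.4.1} together with the fibre-product orientations and the restriction to $ev_0^{-1}(q)$ are all compatible, so that ``$s_a=s_b$'' really yields $\pm2$ and not some other even integer.

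Finally I would read off the cohomology of the $E_{2n+1}$-page: with $d^{2n+1}=\pm2\colon H^{2n+1}\to H^0$ the only differential, the kernel at $H^{2n+1}$ is trivial, the cokernel at $H^0$ is $\Z/2$, and the torsion summands $H^{2j}(\R P^{2n+1};\Z)\cong\Z/2$ for $1\le j\le n$ survive untouched; passing to $\Lambda_{0,nov}^{\Z}$-coefficients then gives
\[
HF(\R P^{2n+1},\R P^{2n+1};\Lambda_{0,nov}^{\Z})\cong(\Lambda_{0,nov}^{\Z}/2\Lambda_{0,nov}^{\Z})^{\oplus(n+1)}.
\]
As a consistency check, reducing mod $2$ makes $d^{2n+1}=\pm2$ vanish, recovering Oh's computation \cite{Oh93} that over $\Z_2$ the Floer cohomology of $\R P^{2n+1}$ agrees with its ordinary cohomology, so the discrepancy over $\Z$ is entirely a torsion phenomenon detected by the sign count above.
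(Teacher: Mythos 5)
Your proposal is correct and follows essentially the same approach as the paper: the paper likewise identifies the two minimal-Maslov-index classes $B_1, B_2$ as the two hemispheres of complexified real lines (proving the degree-$\pm1$ property of the evaluation map as its Lemma \ref{Lemma44.28}), shows via Theorem \ref{Proposition38.11} that $\tau_*: \CM_2(J;B_1) \to \CM_2(J;B_2)$ is orientation preserving when $\R P^{2n+1}$ is $\tau$-relatively spin, and in the $\R P^{4n+1}$ case combines the orientation reversal from Theorem \ref{Proposition38.11} with the second reversal from Proposition \ref{Proposition44.16} applied to the identity map on moduli spaces (using $\frak x[B_i] \ne 0$), to conclude the two contributions agree in sign. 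Your reformulation in terms of the two-point fiber $ev_0^{-1}(q)$ and its local signs $s_a, s_b$ is equivalent to the paper's pushforward argument, and the case bookkeeping ($\epsilon = n+3$ vs. parity of $n$) matches.
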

\begin{rem}\label{Remark44.25}
(1)
Floer cohomology of $\R P^m$ over $\Z_2$ is calculated in
\cite{Oh93} and
is isomorphic to the ordinary cohomology.
This fact also follows from Theorem 34.16
in \cite{fooo06}, which implies that
Floer cohomology of $\R P^m$ over $\Lambda_{0,{\rm nov}}^{\Z_2}$
is isomorphic to the ordinary cohomology
over $\Lambda_{0,{\rm nov}}^{\Z_2}$.
\par
(2) Theorem \ref{Theorem44.24} gives an example where Floer cohomology
of the real point set is different from its ordinary cohomology.
Therefore it is necessary to use $\Z_2$ coefficient to study the
Arnold-Givental conjecture (see Chapter 8 \cite{fooo06}).
\par
\end{rem}
\begin{proof}[Proof of Theorem \ref{Theorem44.24}] If $n\ge 1$,
the set $\pi_2(\C P^{2n+1},\R P^{2n+1})$ has exactly one element $B_1$
satisfying $\mu_{\R P^{2n+1}}(B_1) = 2n+2$,
which is the minimal Maslov number of
$\R P^{2n+1}$.
By the monotonicity of $\R P^{2n+1} \subset \C P^{2n+1}$,
a degree counting argument shows that
only $\CM_2(J;B_1)$, among the moduli spaces
$\CM_2(J;B), \, B \in \pi_2(\C P^{2n+1},\R P^{2n+1})$,
contributes to the differential of the spectral sequence.
First of all, we note that $\tau$ induces an isomorphism
modulo orientations
\begin{equation}\label{44.27}
\tau_* : \CM_2(J;B_1) \longrightarrow  \CM_2(J;B_1).
\end{equation}
Later we examine whether $\tau_{\ast}$ preserves the orientation or not, after we specify relative spin structures.

Since $\omega [B_1]$ is the smallest positive
symplectic area,
$\CM_2(J;B_1)$ has codimension $1$ boundary corresponding to the strata
consisting of elements which have a constant disc component with two marked points and a disc bubble.
However when we consider the evaluation map
$ev=(ev_0, ev_1): \CM_2(J;B_1) \to \R P^{2n+1} \times \R P^{2n+1}$,
these strata are mapped to the diagonal set whose codimension is bigger than $2$.
Thus we can define
fundamental cycle over $\Z$ of $ev(\CM_2(J;B_1))$ which we denote by
$[ev(\CM_2(J;B_1))]$.
We also note
$$
\dim \CM_2(J;B_1) = 2n +2 + 2n + 1 + 2 - 3 = 2 \dim \R P^{2n+1}.
$$
\begin{lem}\label{Lemma44.28} Consider the evaluation map
$ev: \CM_2(J;B_1) \to \R P^{2n+1} \times \R P^{2n+1}$. Then we have
$$
[ev(\CM_2(J;B_1))] =  \pm 2[\R P^{2n+1} \times \R P^{2n+1}]
$$
where $[\R P^{2n+1} \times \R P^{2n+1}]$ is the fundamental cycle of $\R P^{2n+1} \times \R P^{2n+1}$.
\end{lem}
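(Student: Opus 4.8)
The plan is to reduce to the standard complex structure $J_0$ on $\C P^{2n+1}$ and then to classify the relevant discs explicitly. As already noted above, since $\R P^{2n+1}\subset\C P^{2n+1}$ is monotone and $B_i$ realizes the minimal Maslov number $2n+2$, the moduli space $\CM_2(J;B_i)$ admits no sphere or disc bubble and is compact without boundary; moreover its fundamental cycle, and hence the homology class $ev_*[\CM_2(J;B_i)]$, is independent of the $\omega$-compatible $J$ (by the usual cobordism argument). So I would fix $J=J_0$, for which $J_0$ is regular for the minimal disc classes $B_i$ (cf. \cite{Oh93} and Section 2.4 of \cite{fooobook}) and $\CM_2(J_0;B_i)$ is a smooth closed oriented manifold of dimension $4n+2 = 2\dim\R P^{2n+1}$, and compute everything by hand.

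The classification step I would carry out is the following. A $J_0$-holomorphic disc $w:(D^2,\partial D^2)\to(\C P^{2n+1},\R P^{2n+1})$ of Maslov index $2n+2$ is simple, since its Maslov index equals the minimal Maslov number; its Schwarz reflection double $\widehat w(z)=\tau(w(\bar z))$ is then a simple holomorphic sphere with $c_1[\widehat w]=\mu_L([w])=2n+2$, hence a complex line, and it is invariant under the complex conjugation $\tau$, so its image is a \emph{real} line $\ell$. The real locus $C_\ell=\ell\cap\R P^{2n+1}\cong\R P^1$ is a great circle that divides $\ell\cong\C P^1$ into two closed discs, and $w$ is a biholomorphism of $D^2$ onto one of them. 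Conversely every real line produces exactly these two discs; $\tau$ takes one to the other, and since $\tau_*$ sends $\CM_2(J;B_1)$ to $\CM_2(J;B_2)$ by (\ref{44.27}), for each class $B_i$ precisely one of the two half-discs of $\ell$ lies in $\CM^{\mathrm{reg}}(J_0;B_i)$. Taking into account the $PSL(2,\R)$-quotient and the positions of the two boundary marked points on $C_\ell$, this yields a natural identification of the open dense subset $ev^{-1}\bigl((\R P^{2n+1}\times\R P^{2n+1})\setminus\Delta\bigr)$ of $\CM_2(J_0;B_i)$ with the graph of the map
\[
(\R P^{2n+1}\times\R P^{2n+1})\setminus\Delta\;\longrightarrow\;\mathrm{Gr}_{\R}(2,2n+2),\qquad (p,q)\;\longmapsto\;\ell_{p,q},
\]
where $\ell_{p,q}$ is the unique complex line through two distinct real points $p,q$, which is automatically real; under this identification $ev$ becomes the projection to $L\times L$.

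Consequently $ev$ restricts to a diffeomorphism onto $(\R P^{2n+1}\times\R P^{2n+1})\setminus\Delta$. Since $\CM_2(J;B_i)$ is compact, $ev$ is proper, and since $\Delta$ has codimension $2n+1$ its complement is connected, so the degree of $ev$ can be read off there, where it is $\pm1$; hence $ev_*[\CM_2(J;B_i)]=\pm[\R P^{2n+1}\times\R P^{2n+1}]$. (The case $i=2$ also follows from $i=1$ via the diffeomorphism $\tau_*$ of (\ref{44.27}), which has degree $\pm1$ and satisfies $ev\circ\tau_*=ev$ because $\tau$ fixes $\R P^{2n+1}$ pointwise.) The sign is irrelevant for this statement; the signs that matter are later pinned down through the orientation behaviour of $\tau_*$ in Theorem \ref{Proposition38.7}.

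The part that will need the most care is not the degree count but the bridge between the explicit picture above and the manifold structure on $\CM_2(J_0;B_i)$ coming from \cite{fooobook}: namely the regularity of $J_0$ for the minimal discs, the absence of bubbling, and the fact that the identification with the graph extends over the whole moduli space --- including the locus where the two boundary marked points collide, which maps into $\Delta$ and therefore does not affect the degree. Once these points are in place, the lemma follows at once.
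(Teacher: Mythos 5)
Your proposal takes essentially the same approach as the paper: identify the elements of $\CM_2(J_0;B_i)$ with the two halves into which a real line in $\C P^{2n+1}$ is divided by its real locus, observe that the resulting correspondence gives a bijection over $(\R P^{2n+1}\times\R P^{2n+1})\setminus\Delta$, and conclude that $ev$ has degree $\pm 1$. You are somewhat more explicit than the paper on the supporting points (reduction to the integrable $J_0$ and its regularity, compactness and absence of bubbling in the minimal Maslov class, connectivity of the complement of the diagonal), but the underlying argument is the same.
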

\begin{proof}
For any distinct two points $p,q \in \C P^{2n+1}$ there exists
a holomorphic map $w : S^2=\C \cup \{\infty \} \to \C P^{2n+1}$
of degree $1$ such that $w(0) = p$, $w(\infty) = q$, which is unique up to the action of $\C \setminus \{0\} \cong \text{\rm Aut}(\C P^1;0,\infty)$.
In case $p,q \in \R P^{2n+1}$ the uniqueness implies that
$w(\overline z) = \tau w(c z)$ for some $c \in \C \setminus \{0\}$.
Using this equality twice, we have $w(z) = w(\vert c \vert^2 z)$.   In particular, we find that $\vert c \vert =1$.
Let $a$ be a square root of $c$ and
set $w'(z) = w (a z)$.  Since $\overline{a} c/a = 1$,  we obtain $w' ({\overline z}) = \tau w' (z)$.
(Note that $w$ and $w'$ define the same element in the moduli space $\CM^{\text{\rm sph}}_2(J; [{\mathbb C}P^1])$.)
Thus the restriction of $w$ to the upper or lower half plane defines  elements $w_u$ or $w_l \in
\CM_2(J;B_1)$. Namely there exist $w_u, w_l \in \CM_2(J;B_1)$
such that $ev(w_u) = ev(w_l) = (p,q)$.
Conversely, any such elements determine a degree one curve by
the reflection principle.
\par
To complete the proof of Lemma \ref{Lemma44.28} we have to show that
the orientations of the evaluation map $ev$ at $w_u$ and $w_l$ coincide.
Note that
$\tau_{\ast}(w_u)=w_l$ and $\tau_{\ast}\circ ev=ev$.
Thus it suffices to show that $\tau_{\ast}$ in \eqref{44.27} preserves the orientation.
First, we consider the case of $\R P^{4n+3}$.
In this case, $\R P^{4n+3}$ is $\tau$-relatively spin. Therefore, by Theorem \ref{Proposition38.11},
the map (\ref{44.27}) is orientation preserving,
because
$$
\frac{1}{2}\mu_{\R P^{4n+3}}(B_1) + 2 = 2n+4
$$
is even. We next consider the case of $\R P^{4n+1}$. We pick its relative spin structure
$[(V,\sigma)]$. By Theorem \ref{Proposition38.11} again, the map
$$
\tau_* : \CM_2(J;B_1)^{\tau^*[(V,\sigma)]} \longrightarrow \CM_2(J;B_1)^{[(V,\sigma)]}
$$
is orientation reversing, because
$$
\frac{1}{2}\mu_{\R P^{4n+1}}(B_1) + 2 = 2n+3
$$
is odd. On the other hand, by Proposition \ref{Proposition44.19}
we have $\tau^*[(V,\sigma)] \ne [(V,\sigma)]$.
Let $\mathfrak x$ be the
unique nonzero element
of $H^2(\C P^{4n+1},\R P^{4n+1};\Z_2) \cong \Z_2$.
It is easy to see that
$\mathfrak x [B_1] \ne 0$. Then by Proposition \ref{Proposition44.16} the
identity induces an {\it orientation reversing} isomorphisms
$$
\CM_2(J;B_1)^{\tau^*[(V,\sigma)]} \longrightarrow \CM_2(J;B_1)^{[(V,\sigma)]}.
$$
Therefore we can find that
$$
\tau_* : \CM_2(J;B_1)^{[(V,\sigma)]} \longrightarrow \CM_2(J;B_1)^{[(V,\sigma)]}
$$
is orientation preserving.
This completes the proof of Lemma \ref{Lemma44.28}.
\end{proof}
Then
Lemma \ref{Lemma44.28} and the definition of
the differential $d$ imply
$$
d^{2n+1}(PD([p]))
= [ev_{0}\left( \CM_2(J;B_1) {}_{ev_1}\times [p] \right)]
= \pm 2PD[\R P^{2n+1}],
$$
which finishes the proof of Theorem \ref{Theorem44.24}.
\end{proof}

\begin{rem}
In Subsection 3.6.3 \cite{fooobook1}, we introduced
the notion of {\it weak unobstructedness} and {\it weak bounding cochains} using the homotopy unit of the filtered
$A_{\infty}$ algebra.
We denote by $\CM _{\rm weak} (L;\Lambda_{0,{\rm nov}})$ the
set of all weak bounding cochains.
We also defined the potential
function $\mathfrak {PO}: \CM _{\rm weak} (L;\Lambda_{0,{\rm nov}})
\to \Lambda_{0,{\rm nov}}^{+(0)}$,
where $\Lambda_{0,{\rm nov}}^{+(0)}$ is the degree zero part
of $\Lambda_{0,{\rm nov}}^+$.
Then the set of bounding cochains $\CM (L;\Lambda_{0,{\rm nov}})$
is characterized by $\CM (L;\Lambda_{0,{\rm nov}})=
\mathfrak {PO}^{-1}(0)$.
About the value of the potential function, we have the
following problem:

\begin{prob}\label{Problem44.26}
Let $L$ be a relatively spin Lagrangian submanifold
of a symplectic manifold $M$.
We assume that $L$ is weakly unobstructed
and that the Floer cohomology
$HF((L,b),(L,b);\Lambda_{0,{\rm nov}}^F)$
deformed by $b \in \CM_{\operatorname{weak}}(L)$
does not vanish for some field $F$.
In this situation, the question is whether $\mathfrak{PO}(b)$ is an eigenvalue of the operation
$$
c \mapsto c \cup_Q c_1(M)  : QH(M;\Lambda_{0,{\rm nov}}^F) \longrightarrow QH(M;\Lambda_{0,{\rm nov}}^F).
$$
Here $(QH(M;\Lambda_{0,{\rm nov}}^F), \cup_Q )$
is the quantum cohomology ring of $M$ over
$\Lambda_{0,{\rm nov}}^F$.
\end{prob}
Such statement was made by M. Kontsevich in 2006 during a conference
of homological mirror symmetry at Vienna. (According to some physicists this
had been known to them before.) See also \cite{Aur07}.
As we saw above,  $\R P^{2n+1} \subset \C P^{2n+1}$, $n \geq 1$, is unobstructed.
Since the minimal Maslov number is strictly greater than $2$, we find that any $b \in H^1(\R P^{2n+1}; F) \otimes \Lambda_{0, \text{\rm nov}}^F$ of
total degree $1$ is a bounding cochain, i.e.,
$\mathfrak{PO}(b)=0$, by the dimension counting argument.
On the other hand, Theorem \ref{Theorem44.24} shows that
the Floer cohomology does not vanish for $F=\Z_2$, and
the eigenvalue is zero in the field $F=\Z_2$ because
$c_1(\C P^{2n+1})\equiv 0 \mod 2$.
Thus this is consistent with the problem.
(If we take $F=\Q$, the eigenvalue is not zero in $\Q$.
But Theorem \ref{Theorem44.24} shows that
the Floer cohomology over $\Lambda_{0,{\rm nov}}^{\Q}$ vanishes.
So the assumption of the problem is not satisfied in this case.)
Besides this, we prove this statement for the
case of Lagrangian fibers of smooth toric manifolds
in \cite{fooomirror1}.
We do not have any counter example to this statement at the time of writing this paper.
\end{rem}

\subsection{Wall crossing term in \cite{fukaya;counting}}
\label{subsec:wall}

Let $M$ be a $6$-dimensional symplectic manifold and
let $L$ be
its relatively spin Lagrangian submanifold.
Suppose the Maslov index homomorphism $\mu_L
: H_2(M,L;\Z) \to \Z$ is zero.
In this situation the first named author \cite{fukaya;counting}
introduced an invariant
$$
\Psi_J : \mathcal M(L;\Lambda_{0,{\rm nov}}^{\C}) \to \Lambda_{0,{\rm nov}}^{+ \C}.
$$
In general, it depends on a compatible almost structure $J$ and
the difference $\Psi_J  - \Psi_{J'}$ is an element of $\Lambda_{0,{\rm nov}}^{+ \Q}$.
\par
Let us consider the case where $\tau : M  \to M$ is an
anti-symplectic involution and  $L = \operatorname{Fix}\,\tau$.
We take
the compatible almost complex structures $J_0$, $J_1$ such that
$\tau_* J_0 = - J_0$, $\tau_* J_1 = - J_1$.
Moreover, we assume that there exists a one-parameter family of compatible almost
complex structures
$\mathcal J = \{J_t\mid t\in [0,1]\}$  such that $\tau_* J_t = - J_t$.
We will study the difference
\begin{equation}\label{nowallcross}
\Psi_{J_1} - \Psi_{J_0}
\end{equation}
below.
Namely, we will study the wall crossing phenomenon by the method of this paper.
\par
Let $\alpha \in H_2(M;\Z)$. Denote by
$\mathcal M_1(\alpha;J)$ the moduli space of $J$-holomorphic {\it sphere}
with one interior marked point and of homology class $\alpha$.
We have an evaluation map $ev : \mathcal M_1(\alpha;J) \to M$.
We assume
\begin{equation}\label{nonintassum}
ev(\mathcal M_1(\alpha;J_0)) \cap L = ev(\mathcal M_1(\alpha;J_1)) \cap L = \emptyset
\end{equation}
for any $\alpha \ne 0$. Since the virtual dimension of $\mathcal M_1(\alpha;J)$ is
$2$, (\ref{nonintassum}) holds in generic cases.
The space
\begin{equation}\label{wallcrossingmoduli}
\CM_1(\alpha;\mathcal J;L)
= \bigcup_{t \in [0,1]} \{t\} \times \left(\CM_1(\alpha;J_t) {}_{ev}\times_M L\right)
\end{equation}
has a Kuranishi structure of dimension $0$, which is fibered on $[0,1]$.
The assumption (\ref{nonintassum})
implies that (\ref{wallcrossingmoduli}) has no boundary.
Therefore its virtual fundamental cycle is well-defined and gives a rational
number, which we denote by $\# \CM_1(\alpha;\mathcal J;L)$.
By Theorem 1.5 \cite{fukaya;counting} we have
$$
\Psi_{J_1} - \Psi_{J_0} = \sum_{\alpha}
\# \CM_1(\alpha;\mathcal J;L)T^{\omega(\alpha)}.
$$
The involution naturally induces a map
$
\tau : \CM_1(\alpha;\mathcal J;L)  \to \CM_1(\tau_*\alpha;\mathcal J;L)
$.
\begin{lem}\label{oripreversing}
The map
$\tau : \CM_1(\alpha;\mathcal J;L)  \to \CM_1(\tau_*\alpha;\mathcal J;L)$
is orientation preserving.
\end{lem}
\begin{proof}
In the same way as Proposition \ref{Lemma38.9}, we can
prove that $\tau : \CM_1(\alpha;J) \to \CM_1(\tau_*\alpha;J)$ is orientation
reversing. In fact, this case is similar to the case
$k=-1$, $\mu_L(\beta) = 2c_1(\alpha) = 0$ and $m=1$ of Proposition \ref{Lemma38.9}.
Note that $\tau$ also reverses the orientation on $M$ if $\frac{1}{2}\dim_{\R}M$ is odd.
Therefore for any $t\in [0,1]$, $\tau$ respects the orientation on
$\CM_1(\alpha;J_t){}_{ev}\times_M L$ and
that on $\CM_1(\tau_{\ast}\alpha;J_t){}_{ev}\times_M L$.
Hence the lemma.
\end{proof}
Lemma \ref{oripreversing} implies that, in the case of $\tau$ fixed point,
the cancelation of the wall crossing term via involution,
does {\it not} occur, because of the sign.
Namely Formula (8.1) in the first author's paper \cite{fukaya;counting} is wrong.

\section{Appendix: Review of Kuranishi structure -- orientation and
group action}
\label{sec:appendix}

In this appendix, we briefly review the orientation on a space with
Kuranishi structure and notion of group action on a space with Kuranishi structure for the readers convenience.
For more detailed explanation, we refer Sections A1.1 \cite{fooobook2}
and A1.3 \cite{fooobook2} for Subsections 7.1 and 7.2, respectively.

\subsection{Orientation}
\label{subsec:AOri}

To define orientation on a space with Kuranishi structure,
we first recall the notion of tangent bundle of it.
Let $\CM$ be a compact topological space and let $\CM$ have
a Kuranishi structure. That is, $\CM$ has a collection
of a finite number of Kuranishi neighborhoods
$(V_p,E_p,\Gamma_p,\psi_p,s_p), p\in \CM$
such that
\begin{enumerate}
\item[(k-1)]
$V_{p}$ is a finite dimensional smooth manifold which may have boundaries or corners;
\item[(k-2)]
$E_p$ is a finite dimensional real vector space and
$\dim V_p -\dim E_p$ is independent of $p$;
\item[(k-3)]
$\Gamma_{\alpha}$ is a finite group acting smoothly and effectively on $V_{p}$,
and on $E_p$ linearly;
\item[(k-4)]
$s_{p}$, which is called a {\it Kuranishi map}, is a $\Gamma_{p}$- equivariant smooth section of
the vector bundle $E_{p}\times V_p \to V_p$ called an {\it obstruction bundle};
\item[(k-5)]
$\psi_{p}: s_{p}^{-1}(0)/\Gamma_{p} \to \mathcal M$ is a homeomorphism to its image;
\item[(k-6)]
$\bigcup_p \psi_{p}(s_{p}^{-1}(0)/\Gamma_{p})=\CM$;
\item[(k-7)]
the collection $\{(V_p,E_p,\Gamma_p,\psi_p,s_p)\}_{p\in \CM}$ satisfies
certain compatibility conditions under coordinate change.
\end{enumerate}
See Definition A1.3 and Definition A1.5 in \cite{fooobook2} for the precise
definition and description of the {\it coordinate change} and the compatibility conditions in (k-7), respectively.
We denote by $\CP$ the finite set of $p \in \CM$ above.
By Lemma 6.3 \cite{FO}, we may assume that
$\{(V_p,E_p,\Gamma_p,\psi_p,s_p)\}_{p\in P}$
is a {\it good coordinate system} in the sense of
Definition 6.1 in \cite{FO}.
In other words, there is a partial
order $<$ on $\CP$ such that the following conditions hold:
Let $q < p, (p,q \in \CP)$
with
$
\psi_p(s_p^{-1}(0)/\Gamma_p) \cap \psi_q(s_q^{-1}(0)/\Gamma_q) \ne \emptyset
$.
Then there exist
\begin{enumerate}
\item[(gc-1)]
a $\Gamma_q$-invariant open subset $V_{pq}$ of $V_q$ such that
$$
\psi_q^{-1}(\psi_p(s_p^{-1}(0)/\Gamma_p) \cap \psi_q(s_q^{-1}(0)/\Gamma_q)) \subset V_{pq}/\Gamma_q,
$$
\item[(gc-2)]
an injective group homomorphism $h_{pq}:\Gamma_q \to \Gamma_p$,
\item[(gc-3)]
an
$h_{pq}$-equivariant smooth embedding $\phi_{pq} : V_{pq} \to V_p$
such that the induced map
$V_{pq}/\Gamma_{q} \to V_p/\Gamma_p$ is injective,
\item[(gc-4)]
an $h_{pq}$-
equivariant embedding $\widehat{\phi}_{pq}:E_q\times{V_{pq}} \to E_p\times V_{p}$ of vector bundles which covers $\phi_{pq}$ and
satisfies
$$
\widehat{\phi}_{pq} \circ s_q =s_p \circ \phi_{pq}, \quad
\psi_q= \psi_p \circ \underline{\phi}_{pq}.
$$
Here $\underline{\phi}_{pq}:V_{pq}/\Gamma_q \to V_p/\Gamma_p$
is the map induced by $\phi_{pq}$.
\end{enumerate}
Moreover, if $r<q<p$ and
$\psi_p(s_p^{-1}(0)/\Gamma_p) \cap \psi_q(s_q^{-1}(0)/\Gamma_q)
\cap \psi_r(s_r^{-1}(0)/\Gamma_r) \ne \emptyset$, then there exists
\begin{enumerate}
\item[(gc-5)]
$\gamma_{pqr} \in \Gamma_p$
such that
$$
h_{pq} \circ h_{qr} =
\gamma_{pqr} \cdot
h_{pr}  \cdot
\gamma^{-1}_{pqr}
, \quad
\phi_{pq} \circ \phi_{qr} = \gamma_{pqr} \cdot
\phi_{pr}, \quad
\widehat{\phi}_{pq} \circ \widehat{\phi}_{qr} = \gamma_{pqr} \cdot
\widehat{\phi}_{pr}.
$$
Here the second and third equalities
hold on $\phi_{qr}^{-1}(V_{pq}) \cap V_{qr}
\cap V_{pr}$ and on
$E_r\times({\phi_{qr}^{-1}(V_{pq}) \cap V_{qr}}\cap V_{pr})$),
respectively.
\end{enumerate}
\par\smallskip
Now we identify a neighborhood of
$\phi_{pq}(V_{pq})$ in $V_p$ with a neighborhood of the zero section
of the normal bundle $N_{V_{pq}}V_p \to V_{pq}$.
Then the differential of the Kuranishi map $s_p$ along the fiber
direction defines an $h_{pq}$-equivariant bundle homomorphism
$$
d_{\text{fiber}}s_p : N_{V_{pq}}V_p \to E_p \times V_{pq}.
$$
See Lemma A1.58 \cite{fooobook2} and
also Theorems 13.2, 19.5 \cite{foootech} for detail.
\begin{defn}\label{def:tangent}
We say that the space $\CM$ with Kuranishi structure
{\it has a tangent bundle} if $d_{\text{fiber}}s_p$
induces a bundle isomorphism
\begin{equation}\label{dfiber}
N_{V_{pq}}V_p \cong \frac{E_p \times V_{pq}}{\widehat{\phi}_{pq}(E_q\times {V_{pq})}}
\end{equation}
as $\Gamma_q$-equivariant bundles on a neighborhood of $V_{pq}\cap s_q^{-1}(0)$. (See also Chapter 2 \cite{foootech}.)
\end{defn}
\begin{defn}\label{def:oriKura}
Let $\CM$ be a space with Kuranishi structure with a tangent bundle.
We say that the Kuranishi structure on $\CM$ is {\it orientable} if
there is a trivialization of
$$
\Lambda^{\text{\rm top}}E^*_p \otimes \Lambda^{\text{\rm top}}TV_p
$$
which is compatible with the isomorphism (\ref{dfiber})
and whose homotopy class is preserved by
the $\Gamma_p$-action.
The {\it orientation} is the choice of the homotopy class of such a trivialization.
\end{defn}
Pick such a trivialization.
Suppose that $s_p$ is transverse to zero at $p$.
Then we define an orientation on the zero locus $s_p^{-1}(0)$
of the Kuranishi map $s_p$, which may be assumed so that $p \in s^{-1}_p(0)$, by
the following equation:
$$
E_{p} \times T_ps_p^{-1}(0) = T_p V.
$$
Since we pick a trivialization of
$\Lambda^{\text{\rm top}}E^*_p \otimes \Lambda^{\text{\rm top}}TV_p$ as in Definition \ref{def:oriKura}, the above equality
determines an orientation on $s_p^{-1}(0)$, and also on $s_p^{-1}(0)/\Gamma_p$.
See Section 8.2 \cite{fooobook2} for more detailed explanation of orientation on a space with
Kuranishi structure.

\subsection{Group action}
\label{subsec:Aaction}

We next recall the definitions of a finite group action on a space with Kuranishi structure
and its quotient space.
In this paper we used the $\Z_2$-action and its quotient space
(in the proof of Theorem \ref{Theorem34.20}).

\par
Let $\CM$ be a compact topological space with
Kuranishi structure.
We first define the notion of automorphism of Kuranishi structure.

\begin{defn}\label{def:auto}
Let $\varphi : \CM \to \CM$ be a homeomorphism of $\CM$.
We say that it induces an
{\it automorphism of Kuranishi structure} if the following holds:
Let $p \in \CM$ and $p' = \varphi(p)$. Then, for the
Kuranishi neighborhoods $(V_p,E_p,\Gamma_p,\psi_p,s_p)$ and
$(V_{p'},E_{p'},\Gamma_{p'},\psi_{p'},s_{p'})$ of $p$ and $p'$ respectively,
there exist $\rho_p : \Gamma_p \to \Gamma_{p'}$, $\varphi_p : V_p
\to V_{p'}$,  and
$\widehat{\varphi}_p : E_p \to E_{p'}$ such that
\begin{enumerate}
\item[(au-1)]
$\rho_p$ is an isomorphism of groups;
\item[(au-2)]
$\varphi_p$
is a $\rho_p$-equivariant diffeomorphism;
\item[(au-3)]
$\widehat{\varphi}_p$ is a $\rho_p$-equivariant
bundle isomorphism which covers $\varphi_p$;
\item[(au-4)]
$s_{p'} \circ \varphi_p = \widehat{\varphi}_p \circ s_p$;
\item[(au-5)]
$
\psi_{p'}\circ \underline{\varphi}_p = \varphi \circ\psi_p,
$
where $\underline{\varphi}_p : s_p^{-1}(0)/\Gamma_p \to s_{p'}^{-1}(0)
/\Gamma_{p'}$ is a homeomorphism induced by
$\varphi_p \vert_{s_p^{-1}(0)}$.
\end{enumerate}
We require that $\rho_p$, $\varphi_p$, $\widehat{\varphi}_p$ above satisfy
the following compatibility conditions with the coordinate changes of Kuranishi structure:
Let $q \in \psi_p(s_p^{-1}(0)/\Gamma_p)$ and
$q' \in \psi_{p'}(s_{p'}^{-1}(0)/\Gamma_{p'})$ such that
$\varphi(q) = q'$. Let
$(\widehat{\phi}_{pq},\phi_{pq},h_{pq})$,
$(\widehat{\phi}_{p'q'},\phi_{p'q'},h_{p'q'})$ be the coordinate changes.
Then there exists $\gamma_{pqp'q'} \in \Gamma_{p'}$ such that the following conditions hold:
\begin{enumerate}
\item[(auc-1)]
$\rho_p\circ h_{pq} =
\gamma_{pqp'q'} \cdot (h_{p'q'} \circ
\rho_q)  \cdot \gamma_{pqp'q'}^{-1}$;
\item[(auc-2)]
$\varphi_p\circ \phi_{pq} = \gamma_{pqp'q'} \cdot  (\phi_{p'q'} \circ
\varphi_q)$;
\item[(auc-3)]
$\widehat{\varphi}_p\circ \widehat{\phi}_{pq} = \gamma_{pqp'q'} \cdot  (\widehat{\phi}_{p'q'} \circ
\widehat{\varphi}_q)$.
\end{enumerate}
Then we call $((\rho_p,\varphi_p,\widehat{\varphi}_p)_p;\varphi)$ an {\it automorphism
of the Kuranishi structure}.
\begin{rem}
Here $(\rho_p, \phi_p, \widehat{\phi}_p)_p$ are included as data of an automorphism.
\end{rem}
\end{defn}
\begin{defn}\label{def:conjauto}
We say that an automorphism $((\rho_p,\varphi_p,\widehat{\varphi}_p)_p;\varphi)$
is {\it conjugate} to
$((\rho'_p,\varphi'_p,\widehat{\varphi}'_p)_p;\varphi')$,
if $\varphi = \varphi'$ and
if there exists $\gamma_p \in \Gamma_{\varphi(p)}$ for each $p$ such that
\begin{enumerate}
\item[(cj-1)]
$\rho'_p = \gamma_p \cdot \rho_p \cdot \gamma^{-1}_p$;
\item[(cj-2)]
$\varphi'_p = \gamma_p \cdot \varphi_p$;
\item[(cj-3)]
$\widehat{\varphi}'_p = \gamma_p \cdot \widehat{\varphi}_p$.
\end{enumerate}
\end{defn}
The {\it composition} of the two automorphisms is defined by
the following formula:
$$
\aligned
& ((\rho^1_p,\varphi^1_p,\widehat{\varphi}^1_p)_p;\varphi^1)\circ
((\rho_p^2,\varphi_p^2,\widehat{\varphi}_p^2)_p;\varphi^2) \\
&=
((\rho^1_{\varphi^2(p)} \circ \rho^2_p,\varphi^1_{\varphi^2(p)}\circ
\varphi^2_p,\widehat{\varphi}^1_{\varphi^2(p)}\circ\widehat{\varphi}^2_p)_p;\varphi^1\circ\varphi^2).
\endaligned
$$
Then we can easily check that the right hand side also satisfies the compatibility
conditions (auc-1)-(au-3).
Moreover, we can find that the composition induces the composition
of the conjugacy classes of automorphisms.

\par
\begin{defn}\label{def:oripres}
 An automorphism$((\rho_p,\varphi_p,\widehat{\varphi}_p)_p;\varphi)$ is {\it  orientation preserving}, if it is
compatible with the trivialization of $\Lambda^{\text{\rm top}}E^*_p \otimes \Lambda^{\text{\rm top}}TV_p$.
\end{defn}

\par
Let $\text{\rm Aut}(\CM)$ be the set of all conjugacy classes of the automorphisms of
$\CM$ and let $\text{\rm Aut}_0(\CM)$ be the set of all conjugacy classes of the orientation
preserving automorphisms of $\CM$.
Both of them become groups by composition.

\begin{defn}\label{def:action}
Let $G$ be a finite group which acts on a compact space $\CM$.
Assume that $\CM$ has a Kuranishi structure.
We say that $G$ {\it acts} on $\CM$ (as a space with Kuranishi structure) if, for
each $g\in G$, the homeomorphism $x \mapsto gx$, $\CM \to \CM$
induces an automorphism $g_*$ of the Kuranishi structure and
the composition of $g_*$ and $h_*$ is conjugate to $(gh)_*$.
In other words, an action of $G$ to $\CM$ is a group homomorphism
$G \to \text{\rm Aut}(\CM)$.
\par
An {\it involution} of a space with Kuranishi structure
is a $\Z_2$ action.
\end{defn}

Then we can show the following:
\begin{lem}[Lemma A1.49 \cite{fooobook2}]\label{lem:quot}
If a finite group $G$ acts on a space $\CM$ with Kuranishi structure, then
the quotient space $\CM/G$ has a Kuranishi structure.
\par
If $\CM$ has a tangent bundle and the action preserves it, then
the quotient space has a tangent bundle.
If $\CM$ is oriented and the action preserves the orientation, then the
quotient space has an orientation.
\end{lem}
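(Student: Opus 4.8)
The plan is to build a Kuranishi structure on $\CM/G$ chart by chart, using the isotropy groups of the $G$-action together with the given charts of $\CM$. Fix $[x]\in\CM/G$, choose a representative $x\in\CM$, and let $G_x=\{g\in G\mid gx=x\}$ be its isotropy group. Let $(V_x,E_x,\Gamma_x,\psi_x,s_x)$ be a Kuranishi neighborhood of $x$. For each $g\in G_x$ the automorphism $g_*$ of Definition \ref{def:action} restricts near $x$ to data $(\rho_x^g,\varphi_x^g,\widehat\varphi_x^g)$ with $\varphi_x^g\colon V_x\to V_x$ a $\rho_x^g$-equivariant diffeomorphism, $\widehat\varphi_x^g\colon E_x\to E_x$ covering it, and $s_x\circ\varphi_x^g=\widehat\varphi_x^g\circ s_x$; moreover $\varphi_x^g\circ\varphi_x^h$ is conjugate to $\varphi_x^{gh}$ by an element of $\Gamma_x$, and likewise on $E_x$ and $\Gamma_x$. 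First I would use this cocycle relation to assemble a finite group extension
\[
1\longrightarrow \Gamma_x \longrightarrow \widetilde\Gamma_x \longrightarrow G_x \longrightarrow 1,
\]
whose underlying set is $\Gamma_x\times G_x$ with multiplication twisted by the cocycle $(g,h)\mapsto\gamma_{g,h}\in\Gamma_x$ and by the action $g\mapsto\rho_x^g$. After shrinking $V_x$ to a $\widetilde\Gamma_x$-invariant neighborhood, $\widetilde\Gamma_x$ acts on $V_x$ and on $E_x$, the Kuranishi map $s_x$ is $\widetilde\Gamma_x$-equivariant, and $\psi_x$ descends to a homeomorphism $\widetilde\psi_x\colon s_x^{-1}(0)/\widetilde\Gamma_x=(s_x^{-1}(0)/\Gamma_x)/G_x\to\CM/G$ onto a neighborhood of $[x]$, giving a Kuranishi neighborhood $(V_x,E_x,\widetilde\Gamma_x,\widetilde\psi_x,s_x)$ of $[x]$.

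Next I would construct the coordinate changes of the quotient. Given $[q]<[p]$ in $\CM/G$ with overlapping charts, choose representatives $q,p$ and an element of $G$ carrying the relevant part of the chart of $q$ into that of $p$, then compose the coordinate change $(\widehat\phi_{pq},\phi_{pq},h_{pq})$ of $\CM$ with the automorphism data $g_*$ to obtain an equivariant embedding of a piece of $V_q$ into $V_p$ and the covering bundle map on $E$. Checking that these satisfy (gc-1)–(gc-5), i.e. that they are genuine coordinate changes and are compatible over triple overlaps, is then a matter of tracking the conjugating elements $\gamma_{pqr}$ and $\gamma_{pqp'q'}$ furnished by (gc-5) and (auc-1)–(auc-3); finiteness of $G$ ensures the resulting collection of charts is finite, so we obtain a Kuranishi structure on $\CM/G$. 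This proves the first assertion.

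For the second assertion I would note that the fiberwise derivative $d_{\text{fiber}}s_p$ and the isomorphism (\ref{dfiber}) are, by hypothesis that the action preserves the tangent bundle, compatible with each $g_*$; hence they are $\widetilde\Gamma_p$-equivariant and pass to the quotient charts, so $\CM/G$ has a tangent bundle. For the third assertion, the chosen trivialization of $\Lambda^{\mathrm{top}}E_p^*\otimes\Lambda^{\mathrm{top}}TV_p$ is, by hypothesis that the action preserves orientation, invariant up to homotopy not only under $\Gamma_p$ but also under each $g_*$; its homotopy class is therefore $\widetilde\Gamma_p$-invariant and yields, via Definition \ref{def:oriKura}, an orientation on the quotient Kuranishi structure.

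The step I expect to be the main obstacle is the construction of $\widetilde\Gamma_x$ together with a genuine (not merely up-to-conjugacy) action on a chart: since the $g_*$ are only defined up to conjugacy by $\Gamma_x$ one must make consistent choices — or pass to the appropriate extension built from the cocycle — so that multiplication is associative, and one must shrink $V_x$ and, if necessary, modify the chart so that $\widetilde\Gamma_x$ acts \emph{effectively}, as required in (k-3), the point being that an element $g$ acting trivially on $V_x$ near $x$ may still act nontrivially on $E_x$. The remaining verifications, though lengthy, are the standard bookkeeping for Kuranishi structures already carried out in \cite{fooobook} and \cite{FO}.
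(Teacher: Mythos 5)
The paper itself contains no proof of this lemma; it is stated in the appendix and cited verbatim from \cite{fooobook}, so there is nothing internal to compare against. Evaluating your proposal on its own terms: the approach is correct and is the standard one. Building a finite extension $1\to\Gamma_x\to\widetilde\Gamma_x\to G_x\to 1$ from the lifts $(\rho_x^g,\varphi_x^g,\widehat\varphi_x^g)$, realized concretely as the subgroup of bundle automorphisms of $E_x\times V_x\to V_x$ generated by $\Gamma_x$ and the chosen representatives $\varphi_x^g$, is exactly how one promotes the chart of $x$ to a chart of $[x]$; the cocycle identity needed for the twisted multiplication follows from associativity of composition together with the effectiveness of the $\Gamma_x$-action (which makes $\gamma\mapsto\gamma\cdot\varphi_x^{ghk}$ injective). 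One small point you gloss over: after shrinking $V_x$ you get $\widetilde\psi_x\colon s_x^{-1}(0)/\widetilde\Gamma_x\to\CM/G_x$, and you must additionally shrink so that $gU_x\cap U_x=\emptyset$ for $g\notin G_x$ to identify $U_x/G_x$ with its image in $\CM/G$; this is routine since $G$ is finite. The coordinate-change construction by composing $(\widehat\phi_{pq},\phi_{pq},h_{pq})$ with a representative-moving $g_*$ is the right thing, and (auc-1)--(auc-3) together with (gc-5) supply the required conjugating elements, though one should say that the good coordinate system must first be arranged $G$-equivariantly (take the $G$-saturation of a good coordinate system and re-index by $G$-orbits) before the partial order descends. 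You have also correctly isolated the one genuinely delicate point, namely that $\widetilde\Gamma_x$ need not act effectively on $V_x$ when some $g\in G_x$ has $\varphi^g_x=\operatorname{id}$ but acts nontrivially on $E_x$, and that one must either enlarge the chart to restore effectiveness or work with a formulation of Kuranishi structure that permits ineffective actions; this is the subtlety one must resolve to carry axiom (k-3) to the quotient. With those caveats, the argument and the passage of the tangent bundle and orientation through $\widetilde\Gamma_p$-equivariance are sound.
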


\subsection{Invariant promotion}
\label{subsec:promotion}
As we promised in the proof of Theorem \ref{Theorem34.20},
we explain how we adopt the obstruction theory
developed in Subsections 7.2.6--7.2.10 \cite{fooobook2}
to promote
a filtered $A_{n,K}$ structure
to a filtered $A_{\infty}$ structure keeping
the symmetry \eqref{34.21}.
Since the modification is straightforward,
we give the outline for readers' convenience.
We use the same notation as in Subsection \ref{subsec:Ainfty}.
We first note that in our geometric setup the Lagrangian submanifold $L$
is the fixed point set of the involution $\tau$. So $\tau$ acts trivially  on $C(L;\Q)$ in Theorem \ref{thm:Ainfty}.
Moreover the induced map $\tau_{\ast}$ given by Definition \ref{def:inducedtau} is also trivial on the monoid $G(L)$. (See Remark \ref{rem:tau}.)
\par
Let $G\subset \R_{\ge 0}\times 2\Z$ be a monoid as in Subsection \ref{subsec:Ainfty}. We denote by $\beta$
an element of $G$.
Let $R$ be a field containing $\Q$ and
$\overline{C}$ a graded $R$-module.
Put $C=\overline{C}\otimes \Lambda_{0,\text{nov}}^R$.
Following Subsection 7.2.6 \cite{fooobook2}, we use the Hochschild cohomology to describe the obstruction to the promotion.
In this article we use
$\overline{C}^e =\overline{C} \otimes R[e,e^{-1}]$
instead of $\overline{C}$ to encode the data of the Maslov index
appearing in \eqref{34.21}.
Here
$e$ is the formal variable in $\Lambda_{0,\text{nov}}^R$.
Note that the promotion is made by induction on the partial order
$<$ on the set $G\times \Z_{\ge 0}$ which is absolutely independent of
the variable $e$. Thus the obstruction theory given in Subsections 7.2.6--7.2.10 \cite{fooobook2} also works on $\overline{C}^e$.
Let $\{\mathfrak m_{k,\beta}\}$ and $\{ \mathfrak f_{k,\beta}\}$ be a filtered $A_{n,K}$ algebra structure (Definition \ref{def:AnK}) and a filtered $A_{n,K}$ homomorphism (Definition \ref{def:AnKhom}). They are
$R$ linear maps from $B_k(\overline{C}[1])$ to $\overline{C}$.
We naturally extend them as $R[e,e^{-1}]$ module homomorphisms and denote the extensions by the same symbols.
We put
\begin{equation}\label{mkeandfke}
\aligned
\mathfrak{m}^e_{k,\beta} & = \mathfrak{m}_{k,\beta}e^{\text{pr}_2(\beta)/2} \quad : B_k(\overline{C}^e[1]) \to \overline{C}^e[1]\\
\mathfrak{f}^e_{k,\beta} & = \mathfrak{f}_{k,\beta}e^{\text{pr}_2(\beta)/2} \quad : B_k(\overline{C}^e[1]) \to \overline{C}^e[1],
\endaligned
\end{equation}
where $\text{pr}_2 : G\subset \R_{\ge 0}\times 2\Z \to 2\Z$ is the projection to the second factor.
In the geometric situation $\text{pr}_2$ is the Maslov index $\mu$.
(See \eqref{eq:defmk}.)
For each $K$ we have a map
\begin{equation}\label{OP}
\text{Op} ~:~ B_K\overline{C}^e[1] \to B_K\overline{C}^e[1]
\end{equation}
defined by
$$
\text{Op} ~(a_1x_1 \otimes \dots \otimes a_K x_K)=(-1)^{\ast}
(a_K^{\dag}x_K \otimes \dots \otimes a_1^{\dag}x_1)
$$
for $a_i=\sum_j c_j e^{\mu_j} \in R[e,e^{-1}]$ and $x_i \in \overline{C}$.
Here
\begin{equation}\label{opsign}
\ast = K+ 1 + \sum_{1\le i < j \le K}
\deg'x_i\deg'x_j
\end{equation}
and
\begin{equation}\label{adag}
a_i^{\dag} = \sum_j c_j (-e)^{\mu_j}.
\end{equation}
Obviously we have $\text{Op} \circ \text{Op}=id$.
\begin{defn}\label{def:Opinv}
An $R[e,e^{-1}]$ module homomorphism $\mathfrak g \in \text{Hom }B(\overline{C}^e_K[1]), \overline{C}^e[1])$ is called {\it $\text{\rm Op}$-invariant} if $\mathfrak g \circ \text{Op} = \text{Op} \circ \mathfrak g$.
\end{defn}
Then it is easy to check the following. Recall that $\tau_{\ast}\beta=\beta$ for $\beta \in G$.
\begin{lem}\label{lem:Opmkbeta}
A filtered $A_{n,K}$ structure $\{ \mathfrak m_{k,\beta} \}$ satisfies
\eqref{34.21} if and only if $\mathfrak m^e_{k,\beta}$ defined by
\eqref{mkeandfke} is $\text{Op}$-invariant.
\end{lem}
\begin{defn}\label{def:OpinvAnK}
A filtered $A_{n,K}$ algebra $(C, \{ \mathfrak m_{k,\beta} \})$ is called
an {\it $\text{Op}$-invariant filtered $A_{n,K}$ algebra} if
$\{ \mathfrak m_{k,\beta}^e \}$ is $\text{Op}$-invariant.
A filtered $A_{n,K}$ homomorphism $\{ \mathfrak f_{k,\beta} \}$
is called {\it $\text{Op}$-invariant} if $\{ \mathfrak f_{k,\beta}^e \}$
is $\text{Op}$-invariant.
We define an {\it \text{Op}-invariant filtered $A_{n,K}$ homotopy equivalence} in a similar way.
\end{defn}
The following is the precise statement of our invariant version of Theorem \ref{ext(n,K)} (=Theorem 7.2.72 \cite{fooobook2}) which is
used in the proof of Theorem \ref{Theorem34.20}.
\begin{thm}\label{invariantext(n,K)}
Let $C_1$ be an $\text{Op}$-invariant filtered $A_{n,K}$ algebra and
$C_2$ an $\text{Op}$-invariant filtered $A_{n',K'}$ algebra such that
$(n,K) < (n',K')$.
Let ${\mathfrak h}:C_1 \to C_2$
be an $\text{Op}$-invariant filtered $A_{n,K}$ homomorphism.
Suppose that ${\mathfrak h}$ is an $\text{Op}$-invariant
filtered $A_{n,K}$ homotopy equivalence.
Then there exist an $\text{Op}$-invariant filtered $A_{n',K'}$ algebra structure on $C_1$
extending the given $\text{Op}$-invariant filtered $A_{n,K}$ algebra structure and an $\text{Op}$-invariant filtered $A_{n',K'}$ homotopy equivalence
$C_1 \to C_2$ extending the given
$\text{Op}$-invariant filtered $A_{n,K}$ homotopy equivalence
${\mathfrak h}$.
\end{thm}
\begin{proof}
To prove this theorem,
we mimic the obstruction theory to promote
a filtered $A_{n,K}$ structure
to a filtered $A_{\infty}$ structure
given in Subsections 7.2.6--7.2.10 \cite{fooobook2}.
\par
Let $(C, \{ \mathfrak m_{k,\beta} \})$ is a filtered $A_{n,K}$ algebra. As mentioned before, we first rewrite the obstruction theory
by using $\overline{C}^e$ instead of $\overline{C}$.
This is done by extending the coefficient ring $R$ to
$R[e,e^{-1}]$ and replacing $\mathfrak m_{k,\beta}$ by $\mathfrak m^e_{k,\beta}$ in \eqref{mkeandfke} as follows:
We put $\overline{\mathfrak m}_{k}=
\mathfrak m_{k,\beta_0}$ for $\beta_0=(0,0)$.
(Note that $\overline{\mathfrak m}_{k}=\mathfrak m^e_{k, \beta_0}$.)
We naturally extend $\overline{\mathfrak m}_{k}$ to an
$R[e,e^{-1}]$ module homomorphism
$B_k(\overline{C}^e[1]) \to \overline{C}^e[1]$.
By abuse of notation we also write $\overline{\mathfrak m}_{k}~:~B_k(\overline{C}^e[1]) \to \overline{C}^e[1]$.
\par
As in the proof of Theorem 7.2.72 \cite{fooobook2}, we may assume that
$(n,K)<(n',K')=(n+1,K-1)$ or $(n,K)=(n,0)<(n',K')=(0,n+1)$ to consider
the promotion.
We consider an $R[e,e^{-1}]$-module
$
\text{Hom } (B_{K'}\overline{C}^e[1], \overline{C}^e[1])
$
and
define the coboundary operator $\delta_1$ on it by
\begin{equation}\label{def:Hcoboundary}
\delta_1(\varphi) = \overline{\mathfrak m}_1 \circ \varphi
+ (-1)^{\deg\varphi +1}\varphi \circ \widehat{\overline {\mathfrak m}}_1
\end{equation}
for $\varphi \in B_{K'}\overline{C}^e[1]$.
Here $\widehat{\overline {\mathfrak m}}_1: B\overline{C}^e[1] \to
B\overline{C}^e[1]$ is a coderivation induced by
$\overline{\mathfrak
m}_1$ on $\overline{C}^e$ as in \eqref{eq:hatmk}.
We denote the $\delta_1$-cohomology by
\begin{equation}\label{def:Hoch}
H(\text{Hom } (B_{K'}\overline{C}^e[1], \overline{C}^e[1]),\delta_1)
\end{equation}
and call it {\it Hochschild cohomology} of $\overline{C}^e$.
We modify the definition of the obstruction class as follows.
As in the proof of Lemma 7.2.74 \cite{fooobook2}
we put $(\Vert \beta \Vert, k)=(n',K')$. (See \eqref{def:betanorm} for the definition of $\Vert \beta \Vert$.)
Then, replacing $\mathfrak m_{k,\beta}$
by $\mathfrak m_{k,\beta}^e$,
(7.2.75) \cite{fooobook2} is modified so that
$$
\sum_{\beta_1+\beta_2 = \beta, k_1+k_2 = k+1, (k_i,\beta_i) \ne (k,
\beta)} \sum_i
(-1)^{\deg'\text{\bf x}_i^{(1)}
}\mathfrak m^e_{k_2,\beta_2}\left(\text{\bf x}_i^{(1)} ,
\mathfrak m^e_{k_1,\beta_1}(\text{\bf x}_i^{(2)}),
\text{\bf x}_i^{(3)}\right),
$$
where $\text{\bf x} \in B_{K'}\overline{C}^e[1]$.
Note that $e^{\text{pr}_2(\beta_1)/2}e^{\text{pr}_2(\beta_2)/2}=e^{\text{pr}_2(\beta)/2}$.
Then this defines an element
$$
o_{K',\beta}^e(C) \in \text{Hom } (B_{K'}\overline{C}^e[1],\overline{C}^e[1])
$$
which is a $\delta_1$-cocycle. Thus
we can define the {\it obstruction class}
\begin{equation}\label{def:obst}
[o_{K',\beta}^e(C)] \in H(\text{Hom } (B_{K'}\overline{C}^e[1], \overline{C}^e[1]),\delta_1)
\end{equation}
for each $\Vert \beta \Vert =n'$.
Under this modification
Lemma 7.2.74 \cite{fooobook2} holds for $\overline{C}^e$ as well.
\par
Now we consider the $\text{Op}$-invariant version.
The map $\text{Op}$ defined by \eqref{OP} acts on
$B_{K'}\overline{C}^e[1]$, $\overline{C}^e[1]$ and so
on
$\text{Hom } (B_{K'}\overline{C}^e[1],\overline{C}^e[1])$
as involution.
We decompose
$\text{Hom } (B_{K'}\overline{C}^e[1],\overline{C}^e[1])$
so that
\begin{equation}
\text{Hom } (B_{K'}\overline{C}^e[1],\overline{C}^e[1]) =
\text{Hom } (B_{K'}\overline{C}^e[1],\overline{C}^e[1])^{\text{Op}}
\oplus
\text{Hom } (B_{K'}\overline{C}^e[1],\overline{C}^e[1])^{-\text{Op}},
\end{equation}
where $\text{Hom } (B_{K'}\overline{C}^e[1],\overline{C}^e[1])^{\text{Op}}$ is the $\text{Op}$-invariant part and
$\text{Hom } (B_{K'}\overline{C}^e[1],\overline{C}^e[1])^{-\text{Op}}$
is the anti $\text{Op}$-invariant part.
\par
Suppose that $(C, \{ \mathfrak m_{k,\beta} \})$ is an $\text{Op}$-invariant  filtered $A_{n,K}$ algebra.
By Lemma \ref{lem:Opmkbeta} we have $\text{Op}$-invariant elements
$$
\mathfrak{m}^e_{k,\beta} = \mathfrak{m}_{k,\beta}e^{\text{pr}_2(\beta)/2}
\in \text{Hom } (B_{K'}\overline{C}^e[1],\overline{C}^e[1])^{\text{Op}}.
$$
Note that the map $\text{Op}$ and $\delta_1$ defined by
\eqref{def:Hcoboundary} commute. Therefore  if we
use $\mathfrak{m}^e_{k,\beta}$,
we can define
an {\it $\text{Op}$-invariant Hochschild cohomology}
\begin{equation}\label{def:OpHoch}
H(\text{Hom } (B_{K'}\overline{C}^e[1], \overline{C}^e[1]),\delta_1)^{\text{Op}}:=
H(\text{Hom } (B_{K'}\overline{C}^e[1], \overline{C}^e[1])^{\text{Op}}
,\delta_1).
\end{equation}
Moreover the construction of the obstruction class above
yields the $\text{Op}$-invariant obstruction class
\begin{equation}\label{def:Opobst}
[o_{K',\beta}^e(C)]^{\text{Op}} \in H(\text{Hom } (B_{K'}\overline{C}^e[1], \overline{C}^e[1]),\delta_1)^{\text{Op}}.
\end{equation}
Then the following lemma is the $\text{Op}$-invariant version of
Lemma 7.2.74 \cite{fooobook2} whose proof is straightforward.
\begin{lem}
Let $(n',K')$ be as above and $C$
an $\text{Op}$-invariant filtered $A_{n,K}$ algebra. Then the obstruction classes
$$
[o_{K',\beta}^e(C)]^{\text{\rm Op}} \in
H(\text{\rm Hom }(B_{K'}\overline{C}^e[1],\overline{C}^e[1]), \delta_{1})^{\text{\rm Op}}
$$
vanish
for all $\beta$ with $\Vert \beta\Vert = n'$
if and only if there exists an $\text{Op}$-invariant
filtered $A_{n',K'}$ structure extending the given
$\text{Op}$-invariant filtered $A_{n,K}$ structure.
\par
Moreover, if $C \to C'$ is an $\text{Op}$-invariant
filtered $A_{n,K}$ homotopy
equivalence, then $[o_{K',\beta}^e(C)]^{\text{\rm Op}}$ is mapped to
$[o_{K',\beta}^e(C')]^{\text{\rm Op}}$ by the isomorphism
$$H(\text{\rm Hom }(B_{K'}\overline{C}^e[1],\overline{C}^e[1]), \delta_{1})^{\text{\rm Op}}
\cong
H(\text{\rm Hom }(B_{K'}\overline{C'}^e[1],\overline{C'}^e[1]), \delta_{1})^{\text{\rm Op}}
$$
induced by the $\text{Op}$-invariant homotopy equivalence.
\end{lem}
Once $\text{Op}$-invariant obstruction theory is established,
the rest of the proof of Theorem \ref{invariantext(n,K)} is parallel to one in Subsection 7.2.6 \cite{fooobook2}.
\end{proof}
\par
Similarly, using the $\text{Op}$-invariant obstruction theory for
an $\text{Op}$-invariant filtered $A_{n,K}$ homomorphism, we can also show the $\text{Op}$-invariant version
of Lemma 7.2.129 \cite{fooobook2} in a straightforward way.
\begin{lem}\label{Opinvhompromot}
Let $(n,K) < (n',K')$ and
$C_1,C_2,C'_1,C'_2$ be $\text{Op}$-invariant filtered $A_{n',K'}$ algebras.
Let $\mathfrak h : C_1 \to C_2$, $\mathfrak h': C'_1 \to C'_2$ be
$\text{Op}$-invariant filtered $A_{n',K'}$
homotopy equivalences.
Let $\mathfrak g_{(1)}: C_1 \to C'_1$ be an $\text{Op}$-invariant
filtered $A_{n,K}$ homomorphism and $\mathfrak g_{(2)}: C_2 \to C'_2$ an
$\text{Op}$-invariant
filtered $A_{n',K'}$ homomorphism. We assume that $\mathfrak g_{(2)}
\circ \mathfrak h$ is an
$\text{Op}$-invariant $A_{n,K}$ homotopic to
$\mathfrak h' \circ \mathfrak
g_{(1)}$.
\par
Then there exists an $\text{Op}$-invariant filtered $A_{n',K'}$ homomorphism
$\mathfrak g_{(1)}^+: C_1 \to C'_1$ such that $\mathfrak g_{(1)}^+$ coincides to
$\mathfrak g_{(1)}$ as an $\text{Op}$-invariant filtered $A_{n,K} $ homomorphism and that
$\mathfrak g_{(2)} \circ \mathfrak h$ is $\text{Op}$-invariant filtered $A_{n',K'}$ homotopic to $\mathfrak h' \circ \mathfrak g_{(1)}^+$.
\end{lem}

\bibliographystyle{amsalpha}

\end{document}